\documentclass[english,oneside,reqno]{amsart}
\usepackage{hyperref}
\usepackage[utf8]{inputenc} 
\usepackage[english]{babel}
\usepackage[T1]{fontenc}
\usepackage{geometry} 
\usepackage{xcolor}
\usepackage{cancel}
\usepackage{todonotes}

\usepackage{graphicx} 				
\usepackage{booktabs} 				
\usepackage{array}    				
\usepackage[parfill]{parskip}		

\usepackage[normalem]{ulem}
\usepackage{enumitem} 				
\usepackage{nicefrac}
\usepackage{bbm}
\usepackage{tikz}
\usepackage{tikz-cd} 
\usetikzlibrary{decorations.markings,positioning}
\usetikzlibrary{matrix,decorations.pathmorphing,decorations.pathreplacing,calligraphy,}
\usetikzlibrary{calc, intersections,hobby}
\usetikzlibrary{arrows,shapes.geometric,calc}	
\usetikzlibrary{bending}		
\usetikzlibrary{backgrounds, fit}
\usetikzlibrary{spath3}
\usepackage{rank-2-roots}
\usepackage{dynkin-diagrams}
\usepackage{mathrsfs}
\usepackage{dutchcal} 
\usepackage{amsfonts}
\usepackage{amssymb}
\usepackage{amsmath}
\usepackage{ytableau}
\usepackage{amsthm}
\usepackage{thmtools}
\usepackage{textgreek}
\usepackage{newunicodechar}
\usepackage[alphabetic, initials]{amsrefs}
\usepackage{hyperref}
\usepackage[capitalize, nameinlink]{cleveref}
\crefalias{lemma}{lem}
\theoremstyle{plain}
\newtheorem{lem}{Lemma}[section]
\newtheorem{lemma}[lem]{Lemma}
\newtheorem{prop}[lem]{Proposition}
\newtheorem{cor}[lem]{Corollary}
\newtheorem{thm}[lem]{Theorem}
\newtheorem{thmintro}{Theorem}

\theoremstyle{definition}
\newtheorem{definition}[lem]{Definition}
\newtheorem{remark}[lem]{Remark}

\newtheorem{notation}[lem]{Notation}

\newtheorem{eg}[lem]{Example}
\newtheorem{convention}[lem]{Convention}
\newtheorem{conjecture}[lem]{Conjecture}

\crefname{align}{}{}
\newcommand{\ytabcenter}[1]{
\,\ytableausetup{smalltableaux}
	\vcenter{\hbox{$\begin{ytableau}#1\end{ytableau}$}}\,
}

\newcommand{\tempGraph}{\tilde{\linearGraph}}

\newcommand*\circled[1]{\tikz[baseline=(char.base)]{
            \node[shape=circle,draw, inner sep=1pt] (char) {#1};}}

\newcommand{\nocontentsline}[3]{}
\let\origcontentsline\addcontentsline	
\newcommand\stoptoc{\let\addcontentsline\nocontentsline}
\newcommand\resumetoc{\let\addcontentsline\origcontentsline}

\newcommand{\g}{\mathfrak{g}}
\newcommand{\classicalFixPointSubalg}[1][\linearGraph]{\g^\theta_{#1}}

\newcommand{\gr}{\operatorname{gr}}

\newcommand{\bo}{B_0}

\newcommand{\dCoideal}{\hat{D}}

\newcommand{\bbZ}{\mathbb{Z}}
\newcommand{\bbN}{\mathbb{N}}

\newcommand{\coideal}{U_q'}

\newcommand{\inv}{^{-1}}

\newcommand{\specht}{\mathbb{S}}

\newcommand{\basis}{v}

\newcommand{\kCoideal}{\hat{K}}
	
\newcommand{\content}{\operatorname{cont}}

\newcommand{\kmatrix}{\mathbb{K}}

\newcommand{\id}{\operatorname{id}}

\newcommand{\negidx}[1]{\overline{#1}}
\newcommand{\half}{{\diamondsuit}}
\newcommand{\heckeSpecialized}[1]{\mathscr{H}_{1,q}(B_{#1})}

\newcommand{\graphGeneral}{\mathtt{\Gamma}}
\newcommand{\linearGraph}{\mathtt{\Gamma}}
\newcommand{\source}[1]{\sigma(#1)}
\newcommand{\target}[1]{\tau(#1)}

\newcommand{\graphVertices}{\mathcal{P}}
\newcommand{\graphEdges}{\mathcal{I}}
\newcommand{\cont}{\operatorname{cont}}

\newcommand{\vecrep}[1][]{\mathbb{V}_{#1}}

\newcommand{\C}{\mathbb{C}}
\newcommand{\Z}{\mathbb{Z}}

\newcommand{\N}{\mathbb{Z}_{\geq 0}}

\newcommand{\gl}{\mathfrak{gl}}

\renewcommand{\sl}{\mathfrak{sl}}

\newcommand{\Uq}{U_q}

\newcommand{\dottedArrowRight}{
\geq
}

\newcommand{\catFont}[1]{\mathit{#1}}

\newcommand{\sgn}{\epsilon}

\newcommand{\qbinom}{ 
	\genfrac{[}{]}{0pt}{}
}

\newcommand{\Qint}[1]{[#1]_{\textit{\tiny Q}}}
\newcommand{\Qbinom}[2]{\qbinom{#1}{#2}_Q}

\newcommand{\undercrossing}[4]{
	\draw[in=-90,out=90]  (#1) to (#3) ;
	\draw	[overline,in=-90,out=90](#2) to (#4);
}
\newcommand{\overcrossing}[4]{
		\draw[in=-90,out=90] (#2) to (#4);
		\draw[overline,in=-90,out=90] (#1) to (#3);
}

\newcommand{\webQpar}{Q}

\newcommand{\Kar}{\operatorname{Kar}}

\newcommand{\Ijbar}[2]{\flip{#2}{#1}}

\newcommand{\dotNotationText}[1][k]{~\begin{tikzpicture}[baseline=0.5ex, every path/.style={thick},scale=0.35]
\draw (0,0) node[right]{\tiny$#1$} to (0,1);
\draw[fill] (0,0.5) circle (0.1);
\end{tikzpicture}}
\newcommand{\dimV}{n}
\newcommand{\foldVertex}[1]{\overline{#1}}
\newcommand{\bnumber}{b}

\newcommand{\finker}{f}

\newcommand{\qwedge}{\Lambda}
\newcommand{\monWedge}{\qwedge}

\newcommand{\webCategoryB}{\mathit{Web}^B}

\newcommand{\webCenter}[3]{
	\coordinate[inner sep=0] (WebCenter) at (barycentric cs:#1=0.8,#2=0.3,#3=0.3);
}

\newcommand{\webCategoryA}{\mathit{Web}^A}

\renewcommand{\hom}{\operatorname{Hom}}

\newcommand{\Aring}{{\mathcal{R}}}
\newcommand{\webker}{\mathit{M}}

\newcommand{\groundring}{\Aring}

\newcommand{\Fund}[1][\linearGraph]{\mathit{Fund}_{#1}}
\newcommand{\polyRepCat}[1][{\classicalFixPointSubalg[\linearGraph]}]{\catFont{Poly}({#1})}

\newcommand{\funcField}{{\C(q)}}
\newcommand{\funcA}{\Phi^A}

\newcommand{\End}{\operatorname{End}}
\newcommand{\symFuncElmtry}{{e}}

\newcommand{\matrixElmtFont}[1]{\mathsf{#1}}

\newcommand{\splitNotation}{{\mathsf{Y}}}

\newcommand{\mergeNotation}{\text{\rotatebox[origin=c]{180}{$\mathsf{Y}$}}}

\newcommand{\splitWebTextStyle}[3]{
		\begin{tikzpicture}[scale=0.35,anchorbase]
				
				\node (ab) at (0,-2) {$#1$}; 
				\node (a) at (-1,1) {$#2$};
				\node (b) at (1,1) {$#3$};
				\webCenter{ab}{a}{b}
				
				\path[very thick,out=-90,in=90,]	(WebCenter)	   edge  (ab)
				[out=130,in=-90,]		edge (a)
				[out=50,in=-90,]		edge (b);

			\end{tikzpicture}
}

\newcommand{\mergeWebTextStyle}[3]{
	\begin{tikzpicture}[scale=0.35,anchorbase]
						
						\node (ab) at (0,1) {$#1$}; 
						\node (a) at (-1,-2) {$#2$};
						\node (b) at (1,-2) {$#3$};
						\webCenter{ab}{a}{b};

						\path[very thick,out=90,in=-90,] (WebCenter)  edge  (ab);
						\path[draw,in=-130,out=90, 
						] (a) to	(WebCenter) ;
						\path[draw,in=-50,out=90, ]	(b) to (WebCenter) ;

					\end{tikzpicture}
}

\newcommand{\intUq}{U'_{\groundring}}

\newcommand{\rank}{\operatorname{rank}}

\newcommand{\img}{\operatorname{Im}}
\newcommand{\coker}{\operatorname{coker}}

\newcommand{\boxdiagramSet}{\mathcal{D}_{\square}}

\newcommand{\unfoldSupScript}{\mathrm{un}}

\newcommand{\hecke}[1][d]{\mathscr{H}_q(S_{#1})}

\newcommand{\FundCoideal}{\Fund'}

\newcommand{\smiley}{\smile}

\usepackage{wasysym}
\usepackage{marvosym}

\let\oldbigwedge\bigwedge
\renewcommand{\bigwedge}{{\textstyle\oldbigwedge}}

\newcommand{\functorWebB}{\Phi^B}

\newcommand{\elmntSn}{w}
\newcommand{\abshuffleSet}{\mathrm{Sh}}
\newcommand{\dSh}{{\abshuffleSet^\bullet}}
\newcommand{\len}{\operatorname{len}}

\newcommand{\interval}{\mathfrak{I}}
\newcommand{\webnotation}{\mathfrak{w}}

\newcommand{\matUnit}{\mathsf{E}}
\newcommand{\supp}{\operatorname{supp}}
\newcommand{\lgl}{\matrixElmtFont{d}}

\newcommand{\Aalg}{{A}}
\newcommand{\kleinCSA}{\mathrm{H}}
\newcommand{\delustig}{\partial^L}
\newcommand{\Upoly}{U^{poly}}
\newcommand{\letzterIso}{\hat{\psi}}

\newcommand{\irrquot}{\mathbb{L}}

\newcommand{\hwfont}[1]{\underline{#1}}

\newcommand{\UmPlus}{U^{-}_{\succ}}
\newcommand{\UmMinus}{U^{-}_{\prec}}
\newcommand{\UmZero}{U^-_{\circ}}

\newcommand{\rtPos}[2]{\alpha_{[#1,#2]}}

\newcommand{\PBWset}{\mathsf{PBW}}

\newcommand{\posCoideal}{U'_{>}}
\newcommand{\negCoideal}{U'_{<}}
\newcommand{\csaCoideal}{U'_{\circ}}

\newcommand{\verma}{\mathbb{M}}

\newcommand{\SR}{\operatorname{Serre}}
\newcommand{\flip}[2]{#2\updownarrow{#1}}

\crefformat{equation}{#2#1#3}
\Crefformat{equation}{#2#1#3}

\newcommand{\delL}{\partial^{\mathrm L}}
\newcommand{\delR}{\partial^{\mathrm R}}

\newcommand{\posposCoideal}{U'_{\succ,+}}
\newcommand{\negnegCoideal}{U'_{\prec,-}}

\newcommand{\wtZero}{U^\kleinCSA}
\newcommand{\pr}{\operatorname{pr}}

\newcommand{\ersatzUoUplus}{\widehat{U}'_>}

\newcommand{\hwboxdiag}[1][k]{{}_{#1}\boxdiagramSet^{\mathrm{hw}}}
\allowdisplaybreaks

\newcommand*{\StrikeThruDistance}{0.1cm}%
\tikzset{
	strike thru/.style={
		postaction={
			decorate,
			decoration={
				markings,
				mark=at position 0.5 with {
					\draw
						++ (-\StrikeThruDistance,-\StrikeThruDistance)
						-- (\StrikeThruDistance,\StrikeThruDistance);
				}
			}
		}
	},
	anchorbase/.style={baseline={([yshift=-0.5ex]current bounding box.center)}},
	every path/.style={draw,very thick},
	every picture/.style={anchorbase,auto,font=\tiny},
	overline/.style={preaction={draw,white,line width=4pt}},
	>=To,
}

\tikzset{
	webdotted/.style={
    postaction={decorate},
    decoration={markings,
      mark=at position #1 with {\fill (0,0) circle (1.2pt);}
    }
  	},
	webdotted/.default=0.08,
	every node/.style={anchor=center},
}

\crefformat{equation}{(#2#1#3)}
\crefrangeformat{equation}{(#3#1#4) to (#5#2#6)}
\crefmultiformat{equation}{(#2#1#3)}{ and (#2#1#3)}{, (#2#1#3)}{ and (#2#1#3)}

\ytableausetup{mathmode,boxsize=.9em}
\title[Webs and finite dimensional Representation theory for quantum symmetric pairs]{Webs and finite dimensional Representation theory\\ for quantum symmetric pairs}
\author{Liao Wang}
\address{LW: Mathematisches Institut, University of Bonn, Endenicher Allee 60, 53115 Bonn, Germany, s6liwang@math.uni-bonn.de}
\begin{document}
\maketitle

\begin{abstract}
	We develop a highest weight framework and construct explicit finite dimensional irreducible modules for the type AIII (quasi split) quantum symmetric pair subalgebras $\coideal(\gl_n^\theta)$. 
	In Part~1, we provide a uniform construction of Cartan subalgebras and Verma modules for both subfamilies. Our construction recovers Letzter's Cartan subalgebra in one subfamily and gives a new Cartan subalgebra in the other. 
	We construct a weak triangular decomposition of $\coideal(\gl_n^\theta)$ via the Letzter map. Then we define Verma modules as induced modules and prove that finite dimensional simple $\coideal(\gl_n^\theta)$-modules are quotients of our Verma modules. 
	In Part~2, we define a diagrammatic category $\webCategoryB$ that controls polynomial representations of $\coideal(\gl_n^\theta)$. We prove a multiplicity-free decomposition of the quantum exterior powers $\bigwedge^k\vecrep$ as a $\coideal(\gl_n^\theta)$-module, using a explicit eigenspace decomposition of certain  dot morphisms in $\webCategoryB$. 
	As applications, we deduce the multiplicity-freeness of the involved anti-spherical Hecke module. For $\bigwedge^k\vecrep$, we explicitly determine the highest weight vectors and their weights, thereby determining the isomorphism classes of their irreducible summands.
	Finally, we determine the kernel of the $\webCategoryB$-action on certain finite dimensional representation category of $\coideal(\gl_n^\theta)$.
\end{abstract}

\tableofcontents

\renewcommand{\dimV}{r}

\section{Introduction}
Let $\g$ be a complex semisimple Lie algebra.
The associated Drinfeld--Jimbo quantum group $\Uq(\g)$ is a certain Hopf algebra deformation of the universal enveloping algebra $U(\g)$ of $\g$. 
The quantum group $\Uq(\g)$ comes with a distinguished presentation which lifts the Serre presentation of $U(\g)$. 
Given further a Lie algebra involution $\theta$ of $\g$, there is an attached a symmetric pair $(\g,\g^\theta)$ where $\g^\theta\subset \g$ is the fixed point subalgebra under $\theta$. 
The quantum analogue of the embedding $U(\g^\theta) \hookrightarrow U(\g)$ is 
the \emph{quantum symmetric pair} $(\Uq(\g), \coideal(\g^\theta))$, where $\coideal(\g^\theta)$ is a certain \emph{coideal} subalgebra of $\Uq(\g)$ 
first introduced by Letzter in \cite{Letzter-symmetric-pairs-quantized-enveloping} and \cite{Letzter-coideal-subalgebras}. We refer to \cite{kolbQKacMoodyPairs} for a detailed treatment and classification of quantum symmetric pairs. A uniform Serre-type presentation in finite types was achieved in \cite{Letzter-symmetric-pairs-quantized-enveloping}. Since then, there have been rapid developments in the structure theory of quantum symmetric pairs by \cite{BKolb},  \cite{BaoWang}, \cite{EhSt-nw-algebras-howe}, \cite{Le-cartan-coideal}, \cite{BaKo-universal-k}, \cite{Kolb-Yakimov-defining-rel-QSP}, \cite{VlAp-kac-moody-univeral-K}, \cite{Ko-plain-sight}, and \cite{kolb2026short-star-products}.
Recently works  \cite{ARK-branching-rules}, \cite{Watanabe-crystal-basis-quantum-symmetric-pair}, \cite{Wenzl}, \cite{Watanabe2}, \cite{Watanabe1}, \cite{Watanabe0}, \cite{StWoj-coidealDiagrammatics}, \cite{Kolb-Stephens-very-non-standard}, \cite{StWweight-mod-qsp}, developed various aspects of the finite dimensional representation theory of $\coideal(\g^\theta)$.

In this paper we consider the quasi-split type AIII case. In small examples, \cite{ARK-branching-rules}, \cite{StWoj-coidealDiagrammatics} and \cite{StWweight-mod-qsp} classified good irreducible finite dimensional representations. In the second subfamily in \cref{satakes}, \cite{Watanabe-crystal-basis-quantum-symmetric-pair} gave a highest-weight classification of irreducible polynomial representations also in higher ranks.
The major difficulty here is that Letzter's Cartan subalgebras introduced in \cite{Le-cartan-coideal} are not generated by grouplike elements and thus do not allow the usual weight space decompositions. 
Moreover, a good notion of a triangular decomposition is also unavailable in general, except a construction of (weak) triangular decomposition and Verma modules for many types in \cite{Watanabe2}. 
Towards a uniform theory of \emph{weight modules}, our first main idea is a novel application of the Letzter map approach developed (following the spirit of the projection method in \cite{Le-cartan-coideal}) in \cite{Kolb-Yakimov-defining-rel-QSP} and \cite{kolb2026short-star-products}. 
We obtain a uniform construction of Cartan subalgebras and Verma modules in both subfamilies (see \cref{satakes}) of quasi-split type AIII quantum symmetric pairs.

Complementary to the Letzter map approach which does not yet provide explicit descriptions of irreducible modules, we also take a diagrammatic point of view analogous to \cite{CaKaMo-webs-skew-howe} to construct explicit irreducible finite dimensional representations. 
A diagrammatic monoidal category was introduced in \cite{CaKaMo-webs-skew-howe} to control quantum $\sl_n$ representations.
A slight modification of this category, denoted $\webCategoryA$, contains both the type $A$ Hecke algebra and the corresponding $q$-Schur algebra, see \cite{Br-schur}. 
Via the Schur--Weyl duality and skew Howe duality from \cite{CaKaMo-webs-skew-howe}, $\webCategoryA$ governs all $\Uq(\gl_n)$-module maps between tensor powers of fundamental representations.
From this point of view, for example, the question of decomposition of tensor products of fundamental representations into simple modules is equivalent to the computation of certain idempotents in $\webCategoryA$. Although this is typically hard, it sometimes can be done inductively thanks to the monoidal structure. The famous Jones--Wenzl projectors provide such examples, see e.g. \cite{CarScSai-6j} and the references therein. analogous projectors in type $B$ and $D$ also exist in \cite{StWoj-coidealDiagrammatics}. 
The corresponding version of skew Howe duality from \cite{EhSt-nw-algebras-howe} and the Schur--Weyl duality from \cite{BaoWang}, \cite{EhSt-nw-algebras-howe} is the starting point of our work. As a result, we obtain interesting interaction between representation theory and diagrammatical categories.

To understand our results, we explain first their classical counterparts. For quasi-split type AIII symmetric pairs, we have $\g=\gl_n$ and  $\g^\theta=\gl_n^\theta\cong \gl_r\times \gl_{n-r}$ where $r = \lfloor n/2\rfloor$. 
Consider the matrix $\kmatrix= \left(\begin{smallmatrix}
	&&1\\
	&\text{\rotatebox{90}{$\ddots$}}&\\
	1
\end{smallmatrix} \right)\in \End_\C(\C^n)$, we have a decomposition $\C^n=W^+\oplus W^-$ where $W^+ ,W^-$ are the $\pm1$ eigenspaces of $\kmatrix$. Define $\kmatrix_k \in \End_{\C} (\bigwedge^k\C^n)$ to be the following composition of maps: 
\begin{equation}
	\label{K-mat-on-k}
	\begin{tikzcd}[font=\normalsize]
		\bigwedge^k \C^n \arrow[r,"\splitNotation"] &
		 \C^n\otimes  \bigwedge^{k-1} \C^n \arrow[r,"\kmatrix\otimes\id"] & \C^n\otimes  \bigwedge^{k-1} \C^n  \arrow[r,"\mergeNotation"]  & 
		 \bigwedge^{k} \C^n
	\end{tikzcd}
\end{equation}
where $\mergeNotation$ is given by the exterior product. Here, $\splitNotation$ is given by $v_1\wedge\cdots \wedge v_k \mapsto
\sum_j (-1)^{j-1} v_j\otimes v_1\wedge\cdots \hat{v}_j \cdots \wedge v_k$ (where $\hat{v}_j$ denotes omission of the factor $v_j$). These two maps are specialisations of the generating morphisms in $\webCategoryA$, and $\kmatrix$ is $\gl_n^\theta$-linear. Consequently, $\kmatrix_k$ is $\gl_n^\theta$-linear. The following innocent looking observation is the key motivation for us.
\begin{prop}\label{intro-prop}
	The following hold:
	\begin{enumerate}
		\item The action of $\kmatrix_k$ on $\bigwedge^k \C^n $ is diagonalisable.
		\item $\bigwedge^{k-i} W^+ \otimes \bigwedge^{i}W^-$ is the $k-2i$ eigenspace of $\kmatrix_k$. 
		\item The subspaces $\bigwedge^{k-i} W^+ \otimes \bigwedge^{i}W^-\, (k-\dim W^+\leq i\leq \dim W^-)$ are pairwise non-isomorphic irreducible $\gl_n^\theta$-submodules.
		\item The eigenspace decomposition $\bigwedge^k \C^n = \bigoplus \bigwedge^{k-i}W^+ \otimes \bigwedge^{i}W^-$ of $\kmatrix_k$ is also the multiplicity free decomposition into irreducible $\gl_n^\theta$-modules.
	\end{enumerate}
\end{prop}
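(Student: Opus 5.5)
The plan is to compute $\kmatrix_k$ on a basis adapted to the splitting $\C^n = W^+\oplus W^-$, where the answer falls out at once. First observe that $\kmatrix_k$ is a derivation-type operator. Composing the three maps in \eqref{K-mat-on-k} gives
\[
\kmatrix_k(v_1\wedge\cdots\wedge v_k)=\sum_{j}(-1)^{j-1}\,\kmatrix v_j\wedge v_1\wedge\cdots\widehat{v_j}\cdots\wedge v_k=\sum_j v_1\wedge\cdots\wedge \kmatrix v_j\wedge\cdots\wedge v_k ,
\]
where the sign $(-1)^{j-1}$ is exactly absorbed by moving $\kmatrix v_j$ back to position $j$. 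In other words, $\kmatrix_k$ is the action of $\kmatrix\in\gl_n$ on $\bigwedge^k\C^n$ as a Lie algebra element.

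Next choose eigenbases. Since $\kmatrix^2=\id$, the space $\C^n$ decomposes as $W^+\oplus W^-$. Pick a basis $w_1^+,\dots,w_a^+$ of $W^+$ and $w_1^-,\dots,w_b^-$ of $W^-$ (explicitly $e_i\pm e_{n+1-i}$, plus the middle vector $e_{(n+1)/2}\in W^+$ when $n$ is odd). The wedge monomials in these vectors form a basis of $\bigwedge^k\C^n$. By the derivation formula, a monomial containing $k-i$ vectors from $W^+$ and $i$ from $W^-$ has eigenvalue $(k-i)-i=k-2i$. This proves (1), and it shows that the $(k-2i)$-eigenspace is spanned by exactly these monomials. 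Under the canonical isomorphism $\bigwedge^k(W^+\oplus W^-)\cong\bigoplus_i\bigwedge^{k-i}W^+\otimes\bigwedge^iW^-$, that span is $\bigwedge^{k-i}W^+\otimes\bigwedge^iW^-$, giving (2). The eigenvalues $k-2i$ are distinct for distinct $i$.

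For (3), identify $\gl_n^\theta$. It is the centraliser of $\kmatrix$ (the involution being conjugation by $\kmatrix$, since $\kmatrix$ is $\gl_n^\theta$-linear and $\gl_n^\theta\cong\gl_r\times\gl_{n-r}$). Hence $\gl_n^\theta=\gl(W^+)\times\gl(W^-)$, acting factorwise. Now $\bigwedge^{k-i}W^+$ is an irreducible $\gl(W^+)$-module (a fundamental representation, or trivial when $k-i\in\{0,a\}$), and similarly for $\bigwedge^iW^-$. An outer tensor product of irreducibles for a product of Lie algebras is irreducible. The range condition $k-a\le i\le b$ is exactly the condition that both factors are nonzero.

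Pairwise non-isomorphism is the one step needing a check, because the trivial cases could a priori collide. I would use the central element $\id_{W^+}$, which acts on the $i$-th summand by the scalar $k-i$; this already separates the summands. The element $\kmatrix$ itself lies in $\gl_n^\theta$ and acts by the distinct scalars $k-2i$, which gives the same conclusion. Finally, (4) follows by combining (2) and (3): the eigenspace decomposition is a direct sum decomposition into pairwise non-isomorphic irreducible $\gl_n^\theta$-submodules, and so it is the multiplicity-free isotypic decomposition.
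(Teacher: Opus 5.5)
Your proof is correct, and it takes a different route from the paper. The paper gives no standalone proof of this classical statement. It presents it as motivation and recovers it as the $\webQpar=-1$ specialisation of \cref{mainTheorem} and \cref{liver-coideal-thm}. In that argument the eigenbasis $\Omega_\beta$ is built inductively from the $2\times 2$ computation of \cref{eigenvec-constr}, and the spectrum is read off by counting box diagrams. Multiplicity-freeness, irreducibility and pairwise non-isomorphism of the eigenspaces then all follow because $\End_{\webCategoryB}(k)$ is $(k+1)$-dimensional and generated by the dot, hence commutative.

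You instead use two purely classical facts. First, $\kmatrix_k$ is the derivation action of $\kmatrix$, so wedges of the eigenvectors $e_i\pm e_{n+1-i}$ diagonalise it at once. Second, $\gl_n^\theta=\gl(W^+)\times\gl(W^-)$; this is exactly the paper's isomorphism $\varphi$ in \S\ref{classical-sym-pair}. From these, irreducibility, non-isomorphism and even the isomorphism types of the summands come for free.

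Your argument is shorter and self-contained. The paper's is built to avoid both facts: neither the derivation property nor the factorisation $U(\gl_n^\theta)=U(\gl(W^+))\otimes U(\gl(W^-))$ survives quantisation, which is why the $\coideal(\gl_n^\theta)$ analogue needs the web category.

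Two cosmetic points. First, $\bigwedge^{a}W^+$ with $a=\dim W^+$ is the determinant character, not the trivial module. This is harmless, since it is one-dimensional and your central element $\id_{W^+}$ separates all summands anyway. Second, the cleanest reason $\gl_n^\theta$ is the centraliser of $\kmatrix$ is that rotation by $180^\circ$ is $X\mapsto\kmatrix X\kmatrix$. Your parenthetical justification only works if read as inclusion plus a dimension count.
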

The isomorphism classes of the irreducible summands $\bigwedge^{k-i} W^+ \otimes \bigwedge^{i}W^-$ are determined by computing the highest weights with respect to $\gl_n^\theta$, see \cref{classical-hw} and \cref{classical-hw-odd}. Toward a quantum analogue of classical highest weight theory, we first develop our Letzter map construction of Verma modules in Part~1 of this paper. We will establish the following results in \cref{verma-properties} and \cref{fd-quotients}.
\begin{thmintro}[Verma modules]\label{verma-intro}
	The following hold:
	\begin{enumerate}
	\item For any highest weight $\hwfont{\lambda}$, $\verma(\hwfont{\lambda})\cong\negCoideal$ as vector spaces.
	\item The PBW basis of any Verma module is a weight basis with respect to $\kleinCSA$.
	\item Let $V$ be any $\coideal(\gl_n^\theta)$-module and $x\in V$ a highest weight vector with weight $\hwfont{\lambda}$. Then there is a unique $\coideal(\gl_n^\theta)$-module homomorphism $\verma(\hwfont{\lambda})\to V$ with $1\otimes 1\mapsto x$.
	\item $\verma(\hwfont{\lambda}) \cong \verma(\hwfont{\mu})$ if and only if $\hwfont{\lambda} =\hwfont{\mu}$. 
	\item $\verma(\hwfont{\lambda})$ has a unique irreducible quotient $\irrquot(\hwfont{\lambda})$.
	\item $\irrquot(\hwfont{\lambda}) \cong \irrquot(\hwfont{\mu})$ if and only if $\hwfont{\lambda} =\hwfont{\mu}$. 
	\item Every finite dimensional irreducible weight representation of $\coideal(\gl_n^\theta)$ appears as the irreducible quotient of some Verma module.
	\end{enumerate}
\end{thmintro}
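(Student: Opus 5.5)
The plan is to derive all seven items from a weak triangular decomposition of $\coideal(\gl_n^\theta)$ coming from the Letzter map. Concretely, I would establish a multiplication isomorphism of vector spaces
\[
\negCoideal\otimes \csaCoideal\otimes \posCoideal \xrightarrow{\ \sim\ } \coideal(\gl_n^\theta),
\]
where $\csaCoideal$ contains the Cartan subalgebra $\kleinCSA$. The decomposition should be compatible with the $\kleinCSA$-adjoint grading, and the ideal $\coideal(\gl_n^\theta)\posCoideal^{+}$ should be a left ideal stable under the appropriate $\kleinCSA$-action. To get this, I would apply the Letzter map, which identifies $\coideal(\gl_n^\theta)$ with a deformation of a subalgebra of $\Uq(\gl_n)$ via the projection method. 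I would then lift the PBW-type triangular decomposition of the image and check that the lower-order corrections introduced by the coideal stay inside $\negCoideal\csaCoideal\posCoideal$, arguing by induction on the filtration degree.

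Next I would define $\verma(\hwfont{\lambda}) = \coideal(\gl_n^\theta)\otimes_{\csaCoideal\posCoideal}\C_{\hwfont{\lambda}}$, with $\posCoideal^+$ acting by zero and $\kleinCSA$ acting through the character $\hwfont{\lambda}$.
\begin{itemize}
\item Item (1) follows immediately from the decomposition, since $\coideal(\gl_n^\theta)$ is then free as a right $\csaCoideal\posCoideal$-module with basis coming from $\negCoideal$.
\item For item (2), I would show that PBW monomials in $\negCoideal$ are $\kleinCSA$-weight vectors for the adjoint action. The commutation relations produced by the Letzter map should give $h\,y = y\,(h+\text{shift})$ for each PBW root vector $y$, possibly only modulo terms that are killed after tensoring. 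Each PBW monomial times $1\otimes 1$ is then a weight vector of weight $\hwfont{\lambda}$ minus a sum of roots.
\item Item (3) is the universal property of induction: a highest weight vector $x$ spans a one-dimensional $\csaCoideal\posCoideal$-module isomorphic to $\C_{\hwfont{\lambda}}$, and Frobenius reciprocity gives the unique map.
\item For item (4), the weight $\hwfont{\lambda}$ is recovered as the unique weight whose weight space in the Verma module is annihilated by $\posCoideal^+$ and is maximal for the dominance order. Equivalently, the $\hwfont{\lambda}$-weight space is exactly $\C(1\otimes1)$.
\end{itemize}

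For items (5) and (6) I would follow the standard argument. Item (2) gives a weight space decomposition with the top weight space $\C(1\otimes 1)$ one-dimensional. Any proper submodule is a sum of weight spaces, because $\kleinCSA$ acts semisimply with finite-dimensional weight spaces, and it cannot contain the top weight. The sum of all proper submodules is therefore proper, which gives a unique maximal submodule and the unique irreducible quotient $\irrquot(\hwfont{\lambda})$. Since the highest weight of $\irrquot(\hwfont{\lambda})$ is recovered intrinsically, item (6) follows.

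For item (7), take a finite dimensional irreducible weight module $V$. Its $\kleinCSA$-weights form a finite set, and $\posCoideal^+$ raises weights. A vector of maximal weight, for a partial order in which the roots of $\posCoideal$ are positive, is therefore killed by $\posCoideal^+$. It is then a highest weight vector, once we check that $\csaCoideal$ acts on it by the scalar $\hwfont{\lambda}$. By item (3) we get a nonzero map $\verma(\hwfont{\lambda})\to V$, which is surjective by irreducibility, so by item (5) $V\cong\irrquot(\hwfont{\lambda})$.

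The main obstacle is the weak triangular decomposition together with the weight-shift behaviour in item (2). Because Letzter's Cartan elements are not grouplike, commuting $h\in\kleinCSA$ past root vectors can produce extra terms. My first attempt would be to show, through the Letzter map and a degree filtration, that these extra terms lie in $\coideal(\gl_n^\theta)\posCoideal^+$ or strictly lower PBW degree. Triangularity with respect to the PBW basis would then allow diagonalisation, possibly after modifying the basis by a unitriangular change.
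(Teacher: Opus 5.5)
The decomposition $\negCoideal\otimes\csaCoideal\otimes\posCoideal\cong\coideal(\gl_n^\theta)$ is fine; it is \cref{PBW-coideal}. The gap is your Verma module $\coideal(\gl_n^\theta)\otimes_{\csaCoideal\posCoideal}\funcField_{\hwfont{\lambda}}$, which is not defined. Tensoring over $\csaCoideal\posCoideal$ requires it to be a subalgebra, and it is not, except for $\linearGraph_1$ and $\linearGraph_1'$. Because $\letzterIso$ is only a linear isomorphism, reordering $xh$ with $x\in\posposCoideal$ and $h\in\csaCoideal$ produces genuine $\negCoideal$-components.

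Concretely, put $R_i=F_{[i,-i]}$. By \cref{Ri*Ri-computation}, $\letzterIso\bigl(\letzterIso^{-1}(R_i)^2\bigr)=R_i*R_i$ contains mixed terms $F_{[i,j]}k_jF_{[-j,-i]}$. The remark after \cref{pbw-hilfslemma} then states that $B_{i+1}\,\letzterIso^{-1}(R_i)^2\notin\csaCoideal\posCoideal$ for $i=1$ (resp.\ $i=\half$), although both factors lie in $\csaCoideal\posCoideal$. So no induction on filtration degree can push the lower-order corrections into $\csaCoideal\posCoideal$. The best available is \cref{pbw-hilfslemma}, $\posposCoideal\csaCoideal\subset\negCoideal\csaCoideal\posposCoideal$, proved by a $\kleinCSA$-weight argument. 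As a result, three of your steps have nothing to stand on:
\begin{itemize}
\item the freeness argument for (1);
\item the Frobenius reciprocity in (3);
\item the existence of a one-dimensional module on which $\posposCoideal$ acts by $0$ and $\csaCoideal$ by an arbitrary character $\hwfont{\lambda}$.
\end{itemize}

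The missing idea is to induce from a genuine subalgebra. Let $\ersatzUoUplus$ be the subalgebra generated by $\csaCoideal$ and $\{B_i\}_{i>0}$; it contains $\posCoideal$ via $B_{[i+1,j]}=[B_{[i,j]},B_{i+1}]_q$ from \cref{B-ij-ind}. The key lemma is \cref{ersatzUoUplus}: $J=\negCoideal\csaCoideal\posposCoideal\cap\ersatzUoUplus$ is the two-sided ideal of $\ersatzUoUplus$ generated by the $B_i$ ($i>0$), and $\ersatzUoUplus/J\cong\csaCoideal$. Its proof uses \cref{pbw-hilfslemma} and the commutativity of $\csaCoideal$. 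That commutativity is \cref{cartan-commute-both}, resting on the star-product computation \cref{Ri-Rj-commute}, and your proposal assumes it tacitly. With this lemma in hand:
\begin{itemize}
\item $\funcField_{\hwfont{\lambda}}$ is a genuine $\ersatzUoUplus$-module;
\item the splitting $\coideal(\gl_n^\theta)=\negCoideal\csaCoideal\oplus\coideal(\gl_n^\theta)J$ gives (1);
\item for (3) it suffices that $\funcField x$ is stable under the generators $\csaCoideal$ and $B_i$ ($i>0$) of $\ersatzUoUplus$.
\end{itemize}

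Your stated ``main obstacle'' for (2) is misplaced. $\kleinCSA$ is generated by the grouplike elements $\dCoideal_p$, and $\letzterIso$ is $\kleinCSA$-bilinear. Hence $\letzterIso^{-1}$ of a PBW monomial is automatically a $\kleinCSA$-weight vector, and no unitriangular rediagonalisation is needed. What actually breaks multiplicativity of the triangular decomposition is the non-grouplike part of $\csaCoideal$.

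Once the module is correctly defined, your arguments for (4)--(7) agree with the paper's. In (7) you should add that $\csaCoideal$ is commutative and commutes with $\kleinCSA$, so it has a common eigenvector in the top $\kleinCSA$-weight space.
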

Within the two subfamilies (see \cref{satakes}) of quasi-split type AIII case,
we dramatically simplify the explicit case-by-case calculation in \cite{Watanabe2}*{\S 4.4, \S4.5} to a short and uniform computation in \cref{Ri-Rj-commute}. 
We expect similar construction to be possible in many other types.

In Part~2 and Part~3, we prove the quantum analogue of \cref{intro-prop}, where a certain morphism $\dotNotationText\in \webCategoryB$ is the $q$ analogue of $\kmatrix_k$ from \cref{K-mat-on-k}.
We do this by constructing an explicit eigenbasis indexed by a certain set $ {}_{k}\boxdiagramSet(\linearGraph)$ of box diagrams from \cite{EhSt-nw-algebras-howe}. 

\begin{thmintro}\label{dotspec}
	The following hold with $Q=-q$:
	\begin{enumerate}
		\item An eigenbasis of $\bigwedge^k\vecrep$ is given by $ \{ \Omega_\beta \mid \beta \in {}_{k}\boxdiagramSet(\linearGraph) \}$, and we have
		\begin{align*}
			 \dotNotationText  (\Omega_\beta)  = \Qint{m_\beta} \Omega_\beta
			,\text{ where } 
			m_\beta = |\beta\inv(+)| + |\beta\inv(0)| - |\beta\inv(-)|
			.
		\end{align*}
		\item For $\coideal(\gl_{2r+2}^\theta)$, the spectrum of~ $\dotNotationText $ on $\bigwedge^k \vecrep$ is 
		\[
			\{\Qint{k-2i}\mid \max(0,k-r-1)\leq i\leq \min(k,r+1)\}, 
		\]
		where $\Qint{k-2i}$ has multiplicity $\binom{r+1}{k-i}\binom{r+1}{i}$. 
		\item For $\coideal(\gl_{2r+1}^\theta)$, the spectrum of~ $\dotNotationText $ on $\bigwedge^k \vecrep$ is 
		\[
			 \{\Qint{k-2i}\mid\max(0,k-r-1)\leq i\leq \min(k,r)\},
		\]
		where $\Qint{k-2i}$ has multiplicity $\binom{r+1}{k-i}\binom{r}{i}$.
		\item The eigenspaces of $\dotNotationText$ in $\bigwedge^k\vecrep$ are exactly its irreducible $\coideal(\gl_n^\theta)$-submodules. 
		\item An explicit subset of ${}_k\boxdiagramSet$ (see \cref{Zaction-picture}) gives rise to all highest weight vectors in $\bigwedge^k\vecrep$. Furthermore, their weights are explicitly given by \cref{hweven} and \cref{hwodd}.
	\end{enumerate}
\end{thmintro}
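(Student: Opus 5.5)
The plan is to prove the five parts in order. Each later part rests on the explicit eigenbasis of part (1), together with the Verma module theory of \cref{verma-intro} and the Schur--Weyl/skew Howe dualities of \cite{EhSt-nw-algebras-howe} and \cite{BaoWang}.

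\textbf{Part (1).} I would construct $\Omega_\beta$ for $\beta\in{}_k\boxdiagramSet(\linearGraph)$ as explicit linear combinations of the standard wedge basis $v_{i_1}\wedge\cdots\wedge v_{i_k}$. The combination would be a tensor-type product over the pairs $\{i,\negidx{i}\}$ swapped by $\kmatrix$, plus the fixed middle index when $n$ is odd. A label $+$ would mean a $q$-symmetric combination $v_i+q^{?}v_{\negidx{i}}$, a label $-$ the $q$-antisymmetric one, and a label $0$ would correspond either to the middle vector or to a pair $v_i\wedge v_{\negidx{i}}$ in which both indices occur. The dot morphism $\dotNotationText$ is the composite split, then $\kmatrix\otimes\id$, then merge, made explicit by the functor $\functorWebB$. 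I would first compute it on $k=1$, where $\dotNotationText=\kmatrix$ has eigenvalues $\pm1=\Qint{\pm1}$. I would then induct on $k$ using the web relations, namely associativity of splits and the dot-slide relation that moves a dot through a merge. The target is a recursion of the form $\Qint{m+1}=Q^{?}\Qint{m}+Q^{-?}$, which shows that $\Omega_\beta$ is an eigenvector with eigenvalue $\Qint{m_\beta}$. Linear independence would come from triangularity with respect to the standard basis, and a count shows $|{}_k\boxdiagramSet|=\binom{n}{k}$. This eigenvector computation is the main obstacle, because the correct $q$-powers in $\Omega_\beta$ must be chosen so that the cross terms in the dot-slide relation cancel. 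I would first settle the cases $n=2$ and $n=3$ by hand, and only then fix the normalisation.

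\textbf{Parts (2) and (3).} These are counting arguments once (1) is known. With $r+1$ or $r$ pairs, and possibly one fixed point, I would count the box diagrams $\beta$ with $m_\beta=k-2i$. The key identification is that a $0$ contributes $+1$ in the count of $m_\beta$ but uses one unit of $k$ in a way that pairs with a $-$. This matches the classical multiplicities $\binom{\dim W^+}{k-i}\binom{\dim W^-}{i}$ from \cref{intro-prop}. Since $Q$ is generic, the values $\Qint{k-2i}$ are pairwise distinct, so the eigenspaces are as claimed.

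\textbf{Part (4).} Each eigenspace is a $\coideal(\gl_n^\theta)$-submodule, because $\dotNotationText$ is a module map. For irreducibility I would argue by dimension. Specialising $q\to1$ recovers \cref{intro-prop}, so each eigenspace has the dimension of a classical irreducible. Alternatively, I would use that $\End_{\coideal}(\bigwedge^k\vecrep)$ is the image of $\End_{\webCategoryB}(k)$, which by double centraliser is spanned by polynomials in the dot. That algebra is then commutative of dimension equal to the number of eigenvalues, which forces each eigenspace to be irreducible and the summands to be pairwise non-isomorphic.

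\textbf{Part (5).} In each eigenspace I would pick the $\Omega_\beta$ whose diagram is extremal, namely all $+$ labels packed at the top end and all $-$ labels packed at the bottom end. I would then check directly that the positive generators of $\posCoideal$ annihilate it. Its $\kleinCSA$-weight would be read off from the PBW weight formulas of Part~1. By \cref{verma-intro}(3) and (5), each eigenspace is then the quotient $\irrquot(\hwfont{\lambda})$ for this weight $\hwfont{\lambda}$.
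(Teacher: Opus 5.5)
\textbf{Part (5) selects the wrong vectors.} By \cref{check-hwvec}, a highest weight vector must be killed by the $B_i$ with $i>0$. These move \emph{both} $+$ and $-$ symbols one box up. For example, in $\linearGraph_r$ one has $B_p(\basis_p-\basis_{\foldVertex{p}})=\basis_{p-1}-\basis_{\foldVertex{p-1}}\neq 0$. So a diagram with the $-$ labels packed at the bottom is never annihilated. Already for $k=1$ in $\linearGraph_r$, the highest weight vector of the $\Qint{-1}$-eigenspace is $\basis_0-\basis_{\foldVertex{0}}$, not $\basis_r-\basis_{\foldVertex{r}}$. Your choice also ignores the $\pm$ labels. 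For $0<i<k$, the vector of maximal $\kleinCSA$-weight in the $\Qint{k-2i}$-eigenspace carries $\min(i,k-i)$ of them, by \cref{D-wts}. The correct diagrams are those of \cref{Zaction-picture}:
\begin{itemize}
\item $\min(i,k-i)$ boxes $\pm$ at the top;
\item then $|k-2i|$ boxes all of one sign;
\item preceded by $\bullet$ in box $0$ for $\linearGraph_r'$, together with the exceptional diagram $\beta^-_k$.
\end{itemize}
You do not need to check $\posCoideal$ directly; it is only a subspace, so you would have to go through \cref{check-hwvec} anyway. Instead, observe that such an $\Omega_\beta$ is the unique eigenbasis vector of maximal $\kleinCSA$-weight in its dot-eigenspace. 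Each $B_i$ preserves that eigenspace and raises the weight, so $B_i\Omega_\beta=0$.

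\textbf{Part (5) also omits the weight computation.} The weights in \cref{hweven} and \cref{hwodd} are $\csaCoideal$-weights $(n_i\mid\zeta_i)$, and $\kleinCSA$-weights do not determine them. Take $k=2$ and the diagrams with $+,+$, respectively $-,-$, in boxes $0,1$. They have the same $\kleinCSA$-weight, yet generate the non-isomorphic summands with eigenvalues $\Qint{2}$ and $\Qint{-2}$, distinguished only by $\zeta_0=\pm1$.

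Since $Z_i$ and $W_{i+\half}$ are not grouplike, you must first show that $\Omega_\beta$ is a $\csaCoideal$-eigenvector at all. This is needed before \cref{check-hwvec} or \cref{verma-intro}(3) can be applied. The paper's argument: $\csaCoideal$ is commutative, contains $\kleinCSA$, and commutes with the module map $\functorWebB(\dotNotationText)$. Hence it preserves the joint $\kleinCSA$/dot eigenspace of $\Omega_\beta$, which is a line for these top-aligned $\beta$. The $\zeta_i$ must then actually be computed. This uses the explicit action of $B_{\pm i}$ on standard wedges (\cref{Bi-action-std-wedge}) and induction along the recursive definitions of $Z_i$ and $W_{i+\half}$ (\cref{Zaction1} to \cref{Waction2}). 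That computation is the bulk of Part (5), and your plan has nothing in its place.

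\textbf{Smaller points.}
\begin{itemize}
\item In Part (4), fullness only says that $\End_{\coideal}(\bigwedge^k\vecrep)$ is the image of $\End_{\webCategoryB}(k)$. That this image is spanned by powers of the dot needs $\dim\End_{\webCategoryB}(k)=k+1$ (\cref{basis-typeB-Web}) together with the large-$r$ argument of \cref{SURPRISE-SURPRISE}. Matching dimensions after $q\to1$ does not by itself give irreducibility.
\item In Parts (1)--(3), a $\pm$ box contributes $0$ to $m_\beta$, since such factors are killed by the dot (\cref{mondaylazy}). Only $\bullet$ contributes $+1$.
\item The mechanism behind your recursion in Part (1) is an invariant plane. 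For $p$ below the support of $w$, the dot preserves the span of $w\wedge\basis_p$ and $w\wedge\basis_{\foldVertex{p}}$ and acts by $\left(\begin{smallmatrix}\webQpar^{-1}\Qint{m}&1\\1&\webQpar\Qint{m}\end{smallmatrix}\right)$, whose eigenvalues are $\Qint{m\pm1}$ (\cref{eigenvec-constr}).
\end{itemize}
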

Together with \cref{verma-intro}, part e.) of \cref{dotspec} determine the isomorphism classes of irreducible summands of $\bigwedge^k\vecrep$. 
The eigenbasis of $\bigwedge^k\vecrep$ we construct is adapted to the study of simple modules, and is therefore substantially different from both the $\iota$-canonical basis in \cite{BaoWang} and the canonical basis given by decategorified classes of  indecomposable projective modules in \cite{EhSt-nw-algebras-howe}. Therefore, our results do not follow from these canonical basis construction in the literature. 

As byproducts, we obtain a multiplicity-free decomposition of an anti-spherical Hecke module in \cref{beef-liver-thm} and a diagrammatical relation in \cref{SURPRISE-SURPRISE} which we were not able to prove purely diagrammatically.
We also determine the kernel $\webker$ of the Schur--Weyl duality/skew Howe duality functor $\functorWebB: \webCategoryB\to \mathit{Fund}'_n$, where $\mathit{Fund}'_n$ is the category  $\mathit{Fund}_n$ of tensor products of fundamental representations of $\Uq(\gl_n)$  restricted to $\coideal(\gl_n^\theta)$. Furthermore, this is compatible with the diagrammatic equivalence $\funcA: \webCategoryA/ \id_{n+1} \cong \mathit{Fund}_n$. 
\begin{thmintro}
	There is an equivalence of categories $\bar\Phi:\webCategoryB_{\funcField}/\webker \cong \FundCoideal$ such that the following diagram commutes: \[
		\begin{tikzcd}[font=\normalsize]
			\webCategoryB_{\funcField}/\webker \times \webCategoryA_{\funcField}/\id_{n+1}  
			\arrow[r]\arrow[d,"\bar\Phi^B\times \bar\Phi^A"] 
			 & \webCategoryB_{\funcField}/\webker
			  \arrow[d,"\bar\Phi^B"] 
			\\
			\mathit{Fund}'_n\times \mathit{Fund}_n \arrow[r,"\otimes"] & \mathit{Fund}'_n
		\end{tikzcd}
	\]
\end{thmintro}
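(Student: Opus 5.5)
The functor $\functorWebB\colon \webCategoryB_{\funcField}\to\FundCoideal$ comes from the Schur--Weyl and skew Howe duality of \cite{EhSt-nw-algebras-howe}, and the plan is to show it is full and essentially surjective. Then $\bar\Phi^B$ is defined as the induced functor on the quotient by its kernel $\webker$. By definition $\webker$ is the tensor ideal of morphisms sent to zero; we first check that it is closed under composition and under the action of $\webCategoryA$. The quotient is then automatically faithful, and $\bar\Phi^B$ is an equivalence.

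Essential surjectivity is immediate. Objects of $\webCategoryB$ are sequences of labels $(k_1,\dots,k_m)$, and objects of $\FundCoideal$ are the tensor products $\bigwedge^{k_1}\vecrep\otimes\cdots\otimes\bigwedge^{k_m}\vecrep$ restricted to $\coideal(\gl_n^\theta)$. The functor $\functorWebB$ sends the first to the second, so it is bijective on objects up to isomorphism.

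Fullness is the main step. I would invoke the coideal Schur--Weyl/skew Howe duality of \cite{BaoWang} and \cite{EhSt-nw-algebras-howe}. On $\vecrep^{\otimes d}$, the commutant of $\coideal(\gl_n^\theta)$ is the image of the type $B$ Hecke algebra $\heckeSpecialized{d}$ with the specialised parameter. This image is generated by the $R$-matrices, which come from $\webCategoryA$ via $\funcA$, together with the $K$-matrix on the first tensor factor, which is the image of the dot morphism. Every $\bigwedge^{k}\vecrep$ is a direct summand of $\vecrep^{\otimes k}$, cut out by an idempotent built from split and merge webs. Hom spaces between tensor products of exterior powers are therefore compressions $e\,\End(\vecrep^{\otimes d})\,e'$ of the commutant, with $e,e'$ in the image of $\webCategoryB$. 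It follows that every $\coideal(\gl_n^\theta)$-morphism lies in the image. The main obstacle is the reduction when the source and target have different total degree $d$. In that case I would use that $\hom_{\coideal}(\vecrep^{\otimes d},\vecrep^{\otimes d'})$ is nonzero only when it is accounted for by the relevant weight considerations, or more precisely by how the generating morphisms of $\webCategoryB$ change $d$. This has to be matched with the grading conventions of $\webCategoryB$, and I expect it to require the most care.

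For commutativity of the diagram, $\webCategoryA$ acts on $\webCategoryB$ on the right by juxtaposition, so $\webCategoryB$ is a right module category over $\webCategoryA$. We have $\functorWebB(X\otimes Y)=\functorWebB(X)\otimes\funcA(Y)$ on generators, because $\FundCoideal$ is a right module category over $\mathit{Fund}_n$ via the coproduct, using that $\coideal$ is a coideal. This compatibility is already built into the construction in \cite{EhSt-nw-algebras-howe}. The action descends to the quotients for two reasons. First, if $f\in\webker$ then $f\otimes g$ maps to $\functorWebB(f)\otimes\funcA(g)=0$. Second, $\funcA$ kills the ideal generated by $\id_{n+1}$, because $\bigwedge^{n+1}\vecrep=0$. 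The square then commutes on generating objects and morphisms, hence everywhere.
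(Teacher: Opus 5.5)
Your argument only establishes the formal part of the statement, because you take $\webker$ to be, by definition, the class of morphisms killed by $\functorWebB$. With that reading the theorem is a tautology once fullness is known. Fullness is \cref{rep-webB}, proved by exactly your compression argument with $\heckeSpecialized{d}$. Since $\functorWebB$ is also bijective on objects, the induced functor on $\webCategoryB_\funcField/\ker\functorWebB$ is an equivalence, and the square commutes by \cref{module-functor}.

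In the paper, however, $\webker$ is \emph{not} defined as the kernel. It is the $\webCategoryA$-submodule generated by $\id_{\dim\vecrep+1}$ and by the explicit polynomials $\finker_{k,\linearGraph}$ in the dot from \cref{def-f-kGamma}, for $k\le\dim\vecrep$. The content of the theorem (``we determine the kernel'') is that this finitely generated submodule is all of $\ker\functorWebB$ in \emph{every} $\hom_{\webCategoryB_\funcField}(\underline a,\underline b)$. Your proposal never addresses this identification.

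The inclusion $\webker_\funcField\subseteq\ker\functorWebB$ is the easy half. Indeed $\bigwedge^{\dim\vecrep+1}\vecrep=0$, and by \cref{mainTheorem} the factors of $\finker_{k,\linearGraph}$ exhaust the spectrum of the dot on $\bigwedge^k\vecrep$.

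The reverse inclusion needs an idea that is missing from your plan. Even \cref{SURPRISE-SURPRISE}, which describes $\End_{\webCategoryB}(k)$ completely, says nothing directly about kernel elements between arbitrary objects $\underline a,\underline b$. The paper gets the reverse inclusion by specialising at $q=1$ over the local ring $\groundring$:
\begin{enumerate}
\item It introduces the integral form $\intUq$ and shows that $\hom_{\intUq}$ between the lattices $\bigwedge^{\underline a}\vecrep[\groundring]$ is free.
\item It shows this hom space has the same rank as the image of $\functorWebB$ on $\webCategoryB_\groundring(\underline a,\underline b)$, and that it specialises to $\hom_{\classicalFixPointSubalg}$.
\item At $q=1$ it computes $\ker\functorWebB_\C=\webker_\C$. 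This uses the semisimple equivalence $\Kar(\webCategoryB_\C)\simeq\polyRepCat[{\classicalFixPointSubalg[\linearGraph_\infty]}]$, and the fact that passing from $\linearGraph_\infty$ to finite $r$ kills exactly those summands $L_i$ of $\bigwedge^k\vecrep[\C]$ that do not fit into $W^\pm_\linearGraph$. Lagrange interpolation (\cref{J=J'}) then shows that their identities are generated by the specialised $\finker_{k,\linearGraph}$.
\item A rank count (\cref{compare-ranks}) lifts this to $\ker\functorWebB=\webker_\funcField$.
\end{enumerate}

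Incidentally, the ``different total degree'' step you single out as the main obstacle is vacuous. $\webCategoryB$ has no morphisms between objects of different total degree. The corresponding $\coideal(\linearGraph)$-hom spaces also vanish, because $(\prod_p D_p)^2$ is a product of the $\dCoideal_p$, is central, and acts on $\vecrep^{\otimes d}$ by $q^{2d}$.
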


\subsection*{Disclosure of AI use}
We used ChatGPT 5.6 Sol to assist in finding the Cartan elements defined in \cref{Wi-def} in small examples (see \cref{ai}), and to improve the wording of selected passages.

\subsection*{Acknowledgements}
We thank Mao Hoshino and Hideya Watanabe for their patience in explaining some technical details of their work, and Catharina Stroppel, Stefan Kolb, Michael Ehrig, Daniel Tubbenhauer, and Erik Koelink for helpful discussions. We gratefully acknowledge financial support from the Gottfried Wilhelm Leibniz Prize of the German Research Foundation and from the Hausdorff Center for Mathematics in Bonn. The results of this article form part of the author’s PhD thesis, written under the supervision of Catharina Stroppel.

\part{Quantum groups and quantum symmetric pairs}

\section{Preliminaries}

Let $\groundring\subset \C(q)$ be the subring of rational functions in $q$ regular at $q=1$. View $\C$ as an $\groundring$-module via the evaluation map $q\mapsto 1$.
Fix $\webQpar \in \groundring$. Define for  $n\in \Z$ the \emph{quantum integers} and for $m,k\in \N$ the \emph{quantum factorial} and \emph{quantum binomial coefficients} as follows:
\[
	\Qint{n} \coloneqq \frac{\webQpar^n-\webQpar^{-n}}{\webQpar-\webQpar^{-1}}
	, \quad 
	\Qint{m}! \coloneqq \Qint{m} \cdots \Qint{1}
	,\quad 
	\Qbinom{m}{k} \coloneqq \frac{\Qint{m}!}{\Qint{k}!\Qint{m-k}!}
	\in\bbZ[q,q^{-1}] \subset\groundring.
\]	
In the case $\webQpar = q^{\pm1} $ we omit the subscript $\webQpar$.

\subsection{Linear graphs}\label{sec-linear-graph}
\begin{definition}
	A \emph{linear graph} is one of the following four types of graphs, where $\half =\nicefrac{1}{2}$:
\begin{equation}\label{linear-graphs}
	\begin{gathered}
	\graphGeneral_{\dimV} \; = \; \begin{tikzpicture}[baseline={([yshift=-1ex]current bounding box.center)}]
			\newcommand{\leng}{1.5}
			\foreach \i/\j/\k in {7/4/{\dimV},0/{-4}/{\negidx{\dimV}},2/-2/{\negidx{1}},3/-1/{\negidx{0}},4/1/{0},5/2/{1}}{
				\node[] (\j) at (\i*\leng,0) {$\k$};
			}
			\node [] (-3) at (1*\leng,0) {$\cdots$};
			\node [] (3) at (6*\leng,0) {$\cdots$};
			\foreach \i/\j/\la in {1/2/1,3/4/\dimV,2/3/2}{
				\draw [thin,->] (\i) to node[above]{$-\la$} (\j);
				\draw [thin,->] (-\j) to node[above]{$\la$} (-\i);
			}
			\draw [thin,->] (-1) to node[above]{$0$} (1);
		\end{tikzpicture},\\
	\graphGeneral'_\dimV \; = \; 
	\begin{tikzpicture}[baseline={([yshift=-1ex]current bounding box.center)}]
		\node (L)  at (0,0)   {$\bar r$};
		\node (Ld) at (2.2,0) {$\cdots$};
		\node (L1) at (4.4,0) {$\bar 1$};
		\node (C)  at (6.2,0) {$0$};
		\node (R1) at (8.0,0) {$1$};
		\node (Rd) at (10.2,0) {$\cdots$};
		\node (R)  at (12.4,0) {$r$};

		\draw[thin,->] (L)  -- node[above] {$\half+r$} (Ld);
		\draw[thin,->] (Ld) -- node[above] {$\half+1$}        (L1);
		\draw[thin,->] (L1) -- node[above] {$\half$}        (C);
		\draw[thin,->] (C)  -- node[above] {$-\half$}       (R1);
		\draw[thin,->] (R1) -- node[above] {$-\half-1$}       (Rd);
		\draw[thin,->] (Rd) -- node[above] {$-\half-r$} (R);
	\end{tikzpicture},\\
	\graphGeneral_{\dimV}^+ \; = \; \begin{tikzpicture}[baseline={([yshift=-1ex]current bounding box.center)}]
			\newcommand{\leng}{1.5}
			\foreach \i/\j/\k in {7/4/{\dimV},4/1/{0},5/2/{1}}{
				\node[] (\j) at (\i*\leng,0) {$\k$};
			}
			\node [] (3) at (6*\leng,0) {$\cdots$};
			\foreach \i/\j/\la in {1/2/1,3/4/\dimV,2/3/2}{
				\draw [thin,->] (\i) to node[above]{$-\la$} (\j);
			}
		\end{tikzpicture},\quad
	(\graphGeneral'_\dimV )^+\; = \; \begin{tikzpicture}[baseline={([yshift=-1ex]current bounding box.center)}]
			\newcommand{\leng}{1.85}
			\foreach \i/\la in {0/0,1/1,2/{\cdots},3/\dimV}{
				\node[] (\i) at (\i*\leng,0) {$\la$};
			}

			\foreach \i/\la  [evaluate=\i as \j using int(\i+1)]  in {0/{-\half},1/{-\half-1},2/{-\half-r}}{
				\draw [thin,->] (\i) to node[above]{$\la$} (\j);
			}
		\end{tikzpicture}.
		\end{gathered}
	\end{equation}
We allow $r=\infty$, which means there are infinitely many vertices in both directions. We view the graphs $\linearGraph_r^+$ and $(\linearGraph_r')^+$ as subgraphs of $\linearGraph_r$ and $\linearGraph_r'$ respectively. Denote by $\graphVertices_{\linearGraph}$ respectively $\graphEdges_{\linearGraph}$ the set of vertices and edges of the linear graph $\linearGraph$.
\end{definition}

We denote the source and target of an edge $i$ by $\source{i}$ and $\target{i}$ respectively. 
Define a linear order on $\graphVertices_{\linearGraph}$ by $p\dottedArrowRight p' $ if there exists a sequence of arrows $p\to p_1 \to \cdots \to p_r \to p' \in \linearGraph$.  
For $p\in \graphVertices_{\linearGraph}$, denote by $p_{\rightarrow} <p $ and $ p_{\leftarrow} > p$ be the successor and predecessor of $p$ respectively.

For $\linearGraph\in\{\linearGraph_r,\linearGraph_r'\}$ there is an involution on $\graphVertices_{\linearGraph}$ by interchanging $\foldVertex{p}$ and $p$ for any $p>0-\{0\} $ and fixing $0$. For any $p\in \graphVertices_{\linearGraph}$ we also denote by $\foldVertex{p}$ the involution applied to $p$. Define \[
	\text{ for any } p\in \graphVertices_{\linearGraph} , \quad |p| \coloneqq
	\begin{cases}
		p, & \text{ if } p>0,\\
		\foldVertex{p}, & \text{ otherwise}.
	\end{cases}
\]

\renewcommand{\dimV}{n}

\subsection{Quantum groups}
\begin{definition}\label{convention: quantum group}
	Let $\linearGraph\in\{\linearGraph_r,\linearGraph_r'\}$, where $r\in \N\cup \{\infty\}$.
	The  \emph{quantum group} $\Uq(\linearGraph)\cong \Uq(\gl_{|\graphVertices_{\linearGraph}|})$ associated to $\linearGraph$ is the $\C(q)$-algebra generated by $E_i,F_i$ for $i\in \graphEdges_{\linearGraph}$ and $ D_p ^{\pm1}$ for $p \in \graphVertices_{\linearGraph}$, subject to the following relations 
	\begin{equation}\label{glrel1}
	\begin{gathered}
				  D_p   D_ p\inv = 1=    D_ p\inv   D_ p, \quad    D_ pE_i  D_ p\inv = q^{\delta_{\source{i},p}-\delta_{\target{i},p}} E_i, \\
				  D_ pF_i  D_ p\inv = q^{-\delta_{\source{i},p}+\delta_{\target{i},p}} F_i ,\quad
			E_iF_{i'} -F_{i'}E_i = \delta_{ii'} \frac{K_i-K_i\inv}{q-q\inv },\\
		\end{gathered}
	\end{equation}
	\begin{equation}\label{glrel2}
		\begin{gathered}
		E_{i}E_{i'} =E_{i'}E_i,	\quad 	 F_iF_{i'} =F_{i'} F_i , 
		\quad  \mbox{ if } i, i' \text{ are not adjacent,}
		\\
		\SR (E_i, E_{i'} )\coloneqq E_i ^2E_{i'} - {[2]} E_i E_{i'}E_i +E_{i'}E_i^2 =0 ,\quad 
		\SR(F_i, F_{i'}) = 0 \, ,
		 \quad \text{otherwise.}
		\end{gathered}
		\tag{Serre}
	\end{equation}
	where $\delta$ is the Kronecker symbol and $\displaystyle K_i =  D_ {\source{i}}   D_ {\target{i}}\inv$. 
	Recall that $\Uq(\linearGraph)$ is a Hopf algebra with comultiplication $\Delta$ given by \[
	\Delta(E_i) = E_i\otimes1 +  K_{i} \otimes E_i,\quad \Delta(F_i) = F_i\otimes  K_{i}\inv  + 1\otimes F_i,\quad \Delta(D_j) = D_j\otimes D_j.
\]
\end{definition}

Given $\linearGraph\in\{\linearGraph_r,\linearGraph_r'\}$ and $\mathbbm{k} \in \{\funcField,\groundring,\C\}$, let $\vecrep[\mathbbm{k}]$ be the free $\mathbbm{k}$-module with basis $\{\basis_p\mid p\in \graphVertices_{\linearGraph}\}$. We abbreviate $\vecrep = \vecrep[\funcField]$. 
Then $\vecrep$ becomes the natural representation of $\Uq(\linearGraph)$ via $F_i\basis_p=\delta_{p,\source{i}}\basis_{\target{i}}$, $E_i\basis_p=\delta_{p,\target{i}}\basis_{\source{i}}$ and $D_p \basis_{p'}= q^{\delta_{p,p'}}\basis_{p'}$. Thus, $\linearGraph$ is in fact the crystal graph of $\vecrep$. 

\subsection{Quantum wedge products}\label{wedge-label-section}
Let $\bigwedge^k \vecrep$ be the (type I) finite dimensional irreducible representation with highest weight being the $k$-th fundamental weight $({1,\ldots ,1},0,\ldots ,0)$ where the first $k$ entries are $1$. The quantum wedge product is given by \[
	v_{p'} \otimes v_{p}  \mapsto -q v_p\wedge v_{p'}  ,\qquad v_{p} \otimes v_{p'}  \mapsto  v_p\wedge v_{p'}, \qquad \text{ where } p>p' \in\graphVertices_{\linearGraph}.
\]
By convention, the standard basis of $\bigwedge^k \vecrep$ is $\basis_{p_1} \wedge \cdots \wedge \basis_{p_k} $ for $p_1> \cdots > p_k$. Clearly, this basis is indexed by $\binom{\graphVertices_{\linearGraph}}{k}$, the set of $k$-element subsets of $\graphVertices_{\linearGraph}$. 
Let $\Fund$ be the full subcategory of tensor products of fundamental representations in $\Uq(\linearGraph)$-modules.

\subsection{Box diagrams}\label{box-diag-section}
Let $\linearGraph\in\{\linearGraph_r,\linearGraph_r'\}$.
We will use a modification of the box diagrams from \cite{EhSt-nw-algebras-howe} to index standard wedges in $\bigwedge^k\vecrep$.
For this let $+,-,\pm,\bullet$ be formal symbols. 

\begin{definition}
    An \emph{unfolded box diagram} associated to $\linearGraph\in\{\linearGraph_r,\linearGraph_r'\}$ is a partially defined map $\gamma: \graphVertices_{\linearGraph} \to \{\bullet\}$. 
	Given an unfolded box diagram $\gamma$, define its \emph{support} $\supp(\gamma)$ to be its domain, i.e. the subset of $\graphVertices_{\linearGraph} $ where $\gamma$ is defined. 
\end{definition} 

We will mostly use the following folded version of box diagrams which suits better the study of quantum symmetric pairs. We will show that they are equivalent to the unfolded ones.
\begin{definition}
	A \emph{box diagram} associated to the linear graph $\linearGraph\in\{\linearGraph_r,\linearGraph_r'\}$ is a partially defined map $\beta: \graphVertices_{\linearGraph^+} \to\{+,-,\pm,\bullet\} $, such that $\beta(0)=\bullet$ if $\linearGraph=\linearGraph_r'$ and $\beta $ is defined at $0$.
\end{definition}

\begin{definition}
	Let  $\beta$  be  a box diagram.
	The \emph{straightening} $\beta^\unfoldSupScript$ of $\beta$ is the unfolded box diagram supported on $\{p\in \graphVertices_{\linearGraph}\mid \beta (|p|) \text{ is defined}\}$ given by $
	\beta^\unfoldSupScript (p)= \beta(|p|)$.
	Define the \emph{support} of $\beta$ to be $\supp(\beta) = \supp(\beta^\unfoldSupScript)$.
\end{definition}
Let ${}_{k}\boxdiagramSet^{\unfoldSupScript}(\linearGraph),{}_{k}\boxdiagramSet(\linearGraph)$ be the set of unfolded respectively folded box diagrams $\beta$ with $|\supp(\beta)|=k$. Then we have $\beta\in {}_{k}\boxdiagramSet(\linearGraph)$  if and only if $ \beta^\unfoldSupScript\in {}_{k}\boxdiagramSet^{\unfoldSupScript}(\linearGraph)$.

A box diagram $\beta\in {}_{k}\boxdiagramSet(\linearGraph)$ can be visualised as follows: we draw a column of $r+1$ boxes labeled with the vertices in $\graphVertices_{\linearGraph^+}$ from top to bottom, and then fill the $p$-th box by $\beta(p)$. For example, 
\begin{align}
	\beta = \begin{tabular}{cccccc}
		\hline\hline
		$0$ & $1$ &$2 $&$3$ &$4 $&$5$\\
		\hline
		$+$ &$\pm$ &  & $-$ & $+$\\
		\hline
		\hline
	\end{tabular}\;
	\in {}_{5}\boxdiagramSet(\linearGraph_5)
		\quad \rightsquigarrow\quad 
	\ytabcenter{
	+ \\
	\pm \\
	~ \\
	- \\
	+ \\
	~\\
	} \;\;,
	\quad  
	\text{ with } \supp(\beta)= \{0,\foldVertex{1},1,\foldVertex{3},4\}
	\label{box-diag-example}
\end{align}
We can also visualise an unfolded box diagram $\beta^\unfoldSupScript$ by taking a column of $|\graphVertices_{\linearGraph}|$ boxes and label them by the vertices of the graph from top to bottom, then fill $\beta^\unfoldSupScript(p)$ in the $p$-th box.

\begin{lem}\label{boxdiag-count}
	Let $\linearGraph\in\{\linearGraph_r,\linearGraph_r'\}$. Unfolding and taking support give bijections
	\[
		\begin{tikzcd}[row sep=tiny,font=\normalsize]
			{}_{k}\boxdiagramSet(\linearGraph)   \arrow[r,"\cong"] &
			{}_{k}\boxdiagramSet^{\unfoldSupScript}(\linearGraph)
			 \arrow[r,"\cong"] &
			\binom{\graphVertices_{\linearGraph}}{k} 
			\\
			\beta \arrow[r,mapsto] &
			\beta^\unfoldSupScript \arrow[r,mapsto] &
			\supp(\beta)
		\end{tikzcd}
	\]
\end{lem}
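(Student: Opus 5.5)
The plan is to show that each of the two maps is a bijection, treating the second one first since it is essentially tautological. An unfolded box diagram $\gamma$ is a partially defined map to the one-point set $\{\bullet\}$, so it is completely determined by its domain $\supp(\gamma)$. Conversely, every subset $S\subset\graphVertices_{\linearGraph}$ is the support of exactly one unfolded box diagram, namely the constant map $S\to\{\bullet\}$. Restricting to subsets of size $k$ gives the bijection ${}_{k}\boxdiagramSet^{\unfoldSupScript}(\linearGraph)\cong\binom{\graphVertices_{\linearGraph}}{k}$. Moreover $\supp(\beta)=\supp(\beta^\unfoldSupScript)$ holds by definition, so the composite of the two maps is $\beta\mapsto\supp(\beta)$. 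It therefore suffices to show that $\beta\mapsto\supp(\beta)$ is a bijection from all box diagrams onto all subsets of $\graphVertices_{\linearGraph}$ that preserves cardinality.

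Before doing so I will fix the precise meaning of straightening, using the example \cref{box-diag-example}. There the support of $\beta$ is $\{0,\foldVertex{1},1,\foldVertex{3},4\}$, so the symbols must be read as follows. A $+$ at $p$ contributes $p$. A $-$ contributes $\foldVertex{p}$. A $\pm$ contributes both $p$ and $\foldVertex{p}$. A $\bullet$ occurs only at the fixed vertex $0$ of $\linearGraph'_r$ and contributes $0$. Thus, for $v\in\graphVertices_{\linearGraph}$, we have $v\in\supp(\beta)$ exactly when $\beta(|v|)$ is defined and one of the following holds: $v=|v|$ and $\beta(|v|)\in\{+,\pm,\bullet\}$, or $v\neq|v|$ and $\beta(|v|)\in\{-,\pm\}$. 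For $\linearGraph_r$ the vertex $0$ is treated like every other vertex $p$, with $\foldVertex{0}$ as its partner. I will state this reading explicitly at the start of the proof.

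Next I use the fact that the involution partitions $\graphVertices_{\linearGraph}$ into fibres of $|\cdot|$ over $\graphVertices_{\linearGraph^+}$. Each fibre is either a pair $\{p,\foldVertex{p}\}$, or the singleton $\{0\}$ in the case $\linearGraph=\linearGraph'_r$. Under the reading above, the part of $\supp(\beta)$ lying in the fibre over $p$ depends only on $\beta(p)$. For a pair, the four possibilities (undefined, $+$, $-$, $\pm$) correspond bijectively to the four subsets $\emptyset,\{p\},\{\foldVertex{p}\},\{p,\foldVertex{p}\}$. For the singleton $\{0\}$, the constraint $\beta(0)=\bullet$ whenever $\beta(0)$ is defined leaves exactly two options, undefined or $\bullet$, matching the subsets $\emptyset,\{0\}$. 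Since a box diagram is the same as an independent choice of value (or no value) at each $p\in\graphVertices_{\linearGraph^+}$, and a subset of $\graphVertices_{\linearGraph}$ is the same as an independent choice of a subset of each fibre, taking these bijections fibrewise gives a bijection between all box diagrams and all subsets of $\graphVertices_{\linearGraph}$. This argument also works for $r=\infty$.

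Finally, cardinality is additive over fibres, so $|\supp(\beta)|=k$ if and only if $\beta$ lies in ${}_{k}\boxdiagramSet(\linearGraph)$; this gives the stated bijections. The only real obstacle is the definitional step: read literally, the definition of straightening ignores the symbols, which would make the map non-injective. The work therefore lies in justifying the sign-sensitive reading from the example and checking that it is consistent for both $\linearGraph_r$ and $\linearGraph'_r$. Once that is fixed, the remaining verification is routine.
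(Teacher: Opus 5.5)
Your proposal is correct and is essentially the paper's argument: the paper also notes that an unfolded diagram is determined by its support and then writes down an explicit inverse to straightening (``folding''), which is exactly your fibrewise bijection over the fibres of $|\cdot|$, leaving the verification as ``easily checked''. Your sign-sensitive reading of straightening, with $0\in\graphVertices_{\linearGraph_r}$ paired with $\foldVertex{0}$ and $\bullet$ reserved for the fixed vertex of $\linearGraph_r'$, is the intended one; the case ``$p=0$'' in the paper's folding map should be read as ``$p$ is fixed by the involution'', since otherwise it clashes with the $+$ case for $\linearGraph_r$.
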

\begin{proof}
	Clearly, an unfolded box diagram and its unfolded support determine each other. 
	Given an unfolded box diagram $\gamma$, define its folding to be the following  box diagram:\[
		\beta(p)= \begin{cases}
			+, & \gamma (p) = \bullet \text{ and }  \gamma (\foldVertex{p}) \text{ is not defined},
			\\
			-, & \gamma (\foldVertex{p}) = \bullet  \text{ and }  \gamma ({p}) \text{ is not defined},
			\\
			\pm, & \gamma (p) = \bullet = \gamma (\foldVertex{p}) = \bullet \text{ and } p\ne 0,
			\\
			\bullet, & p=0 \text{ and } \gamma(p)=\bullet.
		\end{cases}
	\]
	One easily checks that this is the inverse map of straightening.
\end{proof}

\begin{convention}
For $\beta\in {}_{k}\boxdiagramSet(\linearGraph)$ with $\supp(\beta)= \{p_1> \cdots > p_k\}$ we denote $\monWedge_\beta\coloneqq\basis_{p_1} \wedge \cdots \wedge \basis_{p_k} $.
\end{convention}

\begin{definition}\label{boxdiag-opeations}
	Let $\beta\in {}_{k}\boxdiagramSet(\linearGraph)$ be a box diagram. 
	\begin{enumerate}
		\item For $p\in \graphVertices_{\linearGraph}- \supp(\beta)$, let $\beta\cup{p}\in {}_{k+1}\boxdiagramSet(\linearGraph)$ be the box diagram with support $ \supp(\beta) \cup \{p\}$. 
		\item For $p\in \supp(\beta)$, let $\beta\setminus{p}\in {}_{k-1}\boxdiagramSet(\linearGraph)$ be the box diagram  with support $\supp(\beta)\setminus\{p\}$. 
		\item For $p \in \supp(\beta)$ with $\foldVertex{p_1}\notin \supp(\beta)$, let $\flip{{p_1}}\beta \in {}_{k}\boxdiagramSet(\linearGraph)$ be the box diagram  with support $ \supp(\beta) \cup \{\foldVertex{p_1}\}- \{p\}$. 
	\end{enumerate}
\end{definition}

\begin{definition}
    Given a  box diagram $\beta\in {}_{k}\boxdiagramSet(\linearGraph)$, for $p>0$, let $\bnumber_p^\beta$ be the number of `$-$'s minus the number of `$+$'s \emph{below} the $p$-th box  (excluding itself).
\end{definition}
For example, let $\beta$ be the box diagram in \eqref{box-diag-example}, we have $\bnumber_0^\beta=\bnumber_1^\beta=\bnumber_2^\beta=\bnumber_4^\beta=\bnumber_5^\beta=0$, $\bnumber_3^\beta=-1$.

\subsection{Quantum symmetric pairs}
Associated to the  linear graphs $\graphGeneral_r$, respectively $\graphGeneral_r'$, there is a quantum symmetric pair coideal subalgebra, \cite{Letzter-symmetric-pairs-quantized-enveloping} \cite{Letzter-coideal-subalgebras},  corresponding to the following type $\operatorname{AIII}$ Satake diagrams without black nodes. In the framework of \cite{kolbQKacMoodyPairs}, these algebras depend on a choice of certain parameters. In this paper we use the specific choice in \cite{EhSt-nw-algebras-howe}.
\begin{equation}\label{satakes}
	\dynkin[%
	edge length=0.75cm,
	labels={r,r{-}1,1,0,-1,1{-}r,-r},
	label directions={above,above,above,above,above,above,above,above},
	involution/.style={blue!50,stealth-stealth,thick},
	involutions={1{7};26;35}
	]{A}{oo.ooo.oo}
	\;,\quad\quad
		\dynkin[%
		edge length=0.85cm,
		labels={{r+\half},{1+\half},\half,-\half,{-1-\half},{-r-\half}},
		label directions={above,above,above,above,above,above},
		involution/.style={blue!50,stealth-stealth,thick},
		involutions={1{6};25;34}
		]{A}{o.oooo.o}
		\;.
\end{equation}

\begin{definition}
Let $\linearGraph\in\{\linearGraph_r,\linearGraph_r'\}$, where $r\in \N\cup \{\infty\}$. Define the following elements in $\Uq(\linearGraph)$:
\begin{gather}
\begin{aligned}
	\text{ for } \linearGraph_r: \begin{cases}
		B_{\pm i} \coloneqq F_{\pm{i}}+  E_{\mp i}K_{\pm i}\inv  ,  & \text{ if } i\ne0,\\
		B_0\coloneqq F_0+q\inv E_0 K_0\inv,
	\end{cases}
	&\qquad 
	\text{ for } \linearGraph_r': \begin{cases}
		B_{\pm i} \coloneqq F_{\pm{i}}+  E_{\mp i}K_{\pm i}\inv  ,  & \text{ if } i\ne -\half,\\
		B_{-\half} = F_{-\half} + q E_{\half} K_{-\half}\inv,
	\end{cases}
	\\
\dCoideal_p^{\pm1}\coloneqq & (D_{p}D_{\negidx{p}} )^{\pm1}\; (p\geq 0)
.
\end{aligned}
\label{Bi-def}
\end{gather}
The \emph{quantum symmetric pair coideal subalgebra} $\Uq'(\graphGeneral)$ is the subalgebra of $\Uq(\graphGeneral)$ generated by $\{B_i,\dCoideal_p^{\pm 1}\mid i\in \graphEdges_{\linearGraph}, p\geq 0 \}$. We will denote $\kCoideal_i =K_i\inv K_{-i}$ for $i\in  \graphEdges_{\linearGraph^+}$.  
\end{definition}

The defining relations for a presentation in these generators are as follows. For $\linearGraph=\linearGraph_r$,
the $\{B_i\}_{i\ne 0} $ and $\{\dCoideal_p\}$ satisfy the \emph{$ \Uq(\graphGeneral_{r}^+)$-relations}\footnote{That is the usual quantum group relations with $E_i, F_i, D _p$ replaced by the $\{B_{-i}\}_{i>0}$, $\{B_{i}\}_{i>0}$, $\dCoideal_p$ respectively.}  from \cref{glrel1}, \cref{glrel2}, and the following relations:
\begin{align}
	[\bo,\dCoideal_p]=0
	,\qquad
	\SR(\bo, B _{\pm1}) =   B _{\pm1}
	,\qquad
	\SR(B_{\pm1}, \bo )  = 0
	.
	\label{serre relations 1}
\end{align}
And for $\linearGraph=\linearGraph_r'$, the $\{B_i\}_{i\ne \pm\half} $ and $\{\dCoideal_p\}$ satisfy the $\Uq(\gl_{r})$-relations, and \begin{align}
	\begin{gathered}
	\dCoideal_1 B_{-\half} \dCoideal_1 \inv= q B_{-\half} 
	,\; 
	\dCoideal_1 B_{\half} \dCoideal_1 \inv= q\inv B_{\half} 
	,\; 
	\dCoideal_0 B_{-\half} \dCoideal_0 \inv= q^{-2} B_{-\half} 
	,\; 
	\dCoideal_0 B_{\half} \dCoideal_0 \inv=  q^2 B_{\half} ,
	\\
	[B_{\half+1},B_{-\half}]=0 
	,\qquad
	[B_{-\half-1},B_{\half}]=0 
	,\qquad
	D_{p} B_{\pm\half} =B_{\pm\half}  \;(p\notin\{0,\pm1\})
	,
	\end{gathered}\\
	\begin{gathered}
	\SR(B_\half,B_{-\half}) = -[2] B_{\half} (q\dCoideal_{0}\dCoideal_1\inv +q^{-1}\dCoideal_{0}\inv \dCoideal_1)
	,\qquad
	\SR( B_{\pm(\half+1)},B_{\pm\half}) =0
	,\\
	\SR(B_{-\half},B_\half) = -[2] B_{-\half} (q^2\dCoideal_{0}\inv \dCoideal_1  +q^{-2}\dCoideal_{0}\dCoideal_1\inv )
	,\qquad
	\SR(B_{\pm\half}, B_{\pm(\half+1)}) =0.
	\end{gathered}\label{SR-Gamma-prime}
\end{align}
We refer to \cite{EhSt-nw-algebras-howe}*{Prop. 7.17, Prop. 7.18} for the proof.

One easily verifies that $\coideal(\linearGraph)$ is a right coideal subalgebra and hence acts on any representation in $\Fund$ via comultiplication. 
Let $\FundCoideal$ be the full subcategory of ${\Uq'(\linearGraph)}$-modules whose objects are tensor products of fundamental representations of $\Uq(\linearGraph) $ restricted to $\coideal(\linearGraph)$.

\subsection{Schur--Weyl duality with the  type $\operatorname{B}$ Hecke algebra  \texorpdfstring{$\heckeSpecialized{d}$}{}}\label{hecke algebra section} Let $d\in \bbN$. 
\begin{definition}
	The Hecke algebra $\heckeSpecialized{d}$ is generated by $H_0,H_1,\ldots,H_{d-1}$ subject to 
	\begin{align}
		\begin{gathered}
		H_0^2=1, \qquad  H_{0}H_1H_0H_1=H_{1}H_0H_{1}H_0
		,\qquad 
		H_i^2 = 1+(q\inv-q)H_i\; (i>0)
			,\\
		H_i H_{i+1} H_i = H_{i+1} H_i H_{i+1}
			,\qquad 
		H_i H_j = H_j H_i \text{ for } |i-j|>1.
		\end{gathered}
		\label{DefHecke}
	\end{align}
\end{definition}

\begin{definition}\label{Kmatrix}
The \emph{$K$-matrix} is $\kmatrix=(\delta_{p_1,\foldVertex{p_2}})_{p_1,p_2\in \graphVertices_{\linearGraph}} \in \End_{\funcField}(\vecrep)$.  
\end{definition}

There is an action of $\heckeSpecialized{d}$ on $\vecrep^{\otimes d}$ given as follows:  $H_0$ acts on the first tensor factor $\vecrep$ via $\kmatrix$
and $H_i$ acts on the $i$-th and $(i+1)$-th tensor factor via 
$H \colon \vecrep^{\otimes 2}\to \vecrep^{\otimes 2}$ defined by 
\begin{align}
	H (\basis_{p}\otimes \basis_{p'} )&=
	\begin{cases}
		\basis_{p'}\otimes \basis_{p} +(q \inv-q)\cdot \basis_{p}\otimes \basis_{p'} & \text{if } p < p'  \text{ in } \graphVertices_{\graphGeneral}, 
		\\
			\basis_{p'}\otimes \basis_{p} & \text{if } p' < p \text{ in } \graphVertices_{\graphGeneral},
		\\
		q\inv \cdot \basis_{p}\otimes \basis_{p} & \text{if } p=p',
	\end{cases}
	\label{eqn:R-matrix generic form}
\end{align}
The following result was established in \cite[Thm. 5.4, Thm. 6.27]{newapproach} and \cite[Thm. 10.4]{EhSt-nw-algebras-howe}.
\begin{thm}[Schur--Weyl duality]\label{Schur-Weyl-quantum} 
Let $\linearGraph\in\{\linearGraph_r,\linearGraph_r'\}$. 
The action of $\coideal(\linearGraph)$ on $\vecrep^{\otimes d }$ and $\heckeSpecialized{d}$ generate each other's centraliser. Furthermore, if $\dim\vecrep \geq d$, then the action of $\heckeSpecialized{d}$ is faithful.
\end{thm}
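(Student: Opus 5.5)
The plan is to verify that the two actions commute, to prove faithfulness by a leading-term argument, and then to deduce the double centraliser property by a deformation argument from $q=1$, using the integral forms over $\groundring$.

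\emph{Commutation.} For $i>0$ the operator $H$ of \cref{eqn:R-matrix generic form} is, up to normalisation, the braiding $\check R$ on $\vecrep^{\otimes 2}$. It therefore commutes with all of $\Uq(\linearGraph)$, and in particular with $\coideal(\linearGraph)$, acting through the iterated comultiplication. For $H_0=\kmatrix\otimes\id$ I first check directly on $\vecrep$ that $\kmatrix$ commutes with every generator $B_i$ and $\dCoideal_p^{\pm1}$ of \cref{Bi-def}. This is a finite computation on basis vectors $\basis_p$, and the special scalars $q^{\mp1}$ in $B_0$ and $B_{-\half}$ are exactly what make it work. Since $\coideal(\linearGraph)$ is a right coideal, $\Delta(b)=\sum b'\otimes b''$ with $b'\in\coideal(\linearGraph)$. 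Hence $\kmatrix\otimes\id$ commutes with $\Delta(b)$, and iterating gives the claim on $\vecrep^{\otimes d}$. I also check that the relations \cref{DefHecke} hold, i.e.\ that we really have an action; the only new relation, $H_0H_1H_0H_1=H_1H_0H_1H_0$, is the reflection equation for $\kmatrix$ and is verified on $\vecrep^{\otimes2}$ by cases on $p,p'$.

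\emph{Faithfulness.} Assume $\dim\vecrep\ge d$. I choose vertices $p_1,\dots,p_d$ with pairwise distinct $|p_j|$; this is possible because there are $\lceil n/2\rceil$ classes under the involution, and since $\heckeSpecialized{d}$ has dimension $2^dd!$, a slightly more careful choice is needed when $n<2d$. The most robust route is to argue at $q=1$ first and then lift. The elements $H_w$, for $w$ a signed permutation given by reduced words, send $\basis_{p_1}\otimes\cdots\otimes\basis_{p_d}$ to a vector with leading term $\pm\basis_{w\cdot p}$ plus lower terms in a suitable order. So the images of the standard basis are linearly independent over $\groundring$ modulo $q-1$, hence over $\funcField$, which gives faithfulness.

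\emph{Double centraliser.} I work with the $\groundring$-lattice $\vecrep[\groundring]^{\otimes d}$, the integral form $\intUq$ generated by the $B_i$, $\dCoideal_p^{\pm1}$ and the divided differences $(\dCoideal_p-1)/(q-1)$, and the $\groundring$-Hecke algebra. At $q=1$ these specialise to $U(\gl_r\times\gl_{n-r})$ and the group algebra of the hyperoctahedral group $W(B_d)$ acting on $(\C^n)^{\otimes d}$ by signed place permutations, with signs coming from $\kmatrix$. For that classical situation the double centraliser theorem is known: decompose $\C^n=W^+\oplus W^-$; then $(\C^n)^{\otimes d}=\bigoplus_a \operatorname{Ind}_{S_a\times S_{d-a}}^{W(B_d)}\big((W^+)^{\otimes a}\otimes (W^-)^{\otimes d-a}\big)$ with the appropriate sign characters, and Schur–Weyl duality for $\gl(W^\pm)$ finishes it.

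Since $\heckeSpecialized{d}$ is split semisimple over $\funcField$ with the same decomposition numbers as at $q=1$ (Tits deformation), the module $\vecrep^{\otimes d}$ has the same multiplicities $m_\lambda$ as its classical specialisation. Thus $\dim\End_{\heckeSpecialized{d}}(\vecrep^{\otimes d})=\sum m_\lambda^2$ agrees with the classical value. The image $A$ of $\coideal(\linearGraph)$ lies inside this centraliser. The image of the lattice form has $\groundring$-rank at least the rank of its specialisation, which is the full classical centraliser. Hence $\dim A$ equals the dimension of the centraliser, and $A$ is the full centraliser. The reverse inclusion, that the centraliser of $A$ is the image of the Hecke algebra, then follows from semisimplicity and the double centraliser theorem.

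The main obstacle is this last step: making the semicontinuity rigorous. One must show that the specialisation at $q=1$ of the $\groundring$-span of the image of $\intUq$ really is the image of $U(\gl_n^\theta)$, i.e.\ that $B_i\mapsto F_i+E_{-i}$ and that the divided Cartan parts specialise correctly. I would handle this by explicit generators, following the integral forms in \cite{EhSt-nw-algebras-howe}.
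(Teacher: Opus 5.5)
\textbf{The gap is in the faithfulness step.} Your leading-term (free-orbit) argument needs vertices $p_1,\dots,p_d$ lying in $d$ pairwise distinct classes $\{p,\overline{p}\}$ with $p\neq\overline{p}$. The fixed vertex $0$ of $\linearGraph_r'$ cannot be used, because $\mathbb{K}v_0=v_0$ and the orbit is then not free. The number of such classes is $\dim W^-$, where $W^{\pm}$ are the $\pm1$-eigenspaces of $\mathbb{K}$. So your argument proves faithfulness exactly when $\dim W^-\ge d$, i.e.\ $r+1\ge d$ for $\linearGraph_r$ and $r\ge d$ for $\linearGraph_r'$.

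The ``slightly more careful choice'' you promise for $d\le\dim\vecrep<2d$ does not exist, and arguing at $q=1$ first does not help, because in that range the action is genuinely not faithful. Take $\linearGraph_0$ and $d=2$, so $\dim\vecrep=2=d$. The algebra $\heckeSpecialized{2}$ is split semisimple of dimension $8$, with simple modules of dimensions $1,1,1,1,2$. Hence any faithful module has dimension at least $6$, whereas $\dim\vecrep^{\otimes 2}=4$. More generally, the one-dimensional module $H_0\mapsto -1$, $H_i\mapsto -q$ specialises to the $W_d$-character that occurs in $\vecrep[\C]^{\otimes d}$ with multiplicity $\dim\bigwedge^d W^-$. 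By the same specialisation argument you use for multiplicities, it is absent from $\vecrep^{\otimes d}$ whenever $\dim W^-<d$.

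So the faithfulness clause with hypothesis $\dim\vecrep\ge d$ is false, as is the same claim in \cref{SW-duality-q=1}. It must be strengthened to $\dim W^-\ge d$, and under that hypothesis your argument goes through. This does not affect the paper, which only invokes faithfulness after choosing $r$ large, e.g.\ in the proofs of \cref{basis-typeB-Web} and \cref{SURPRISE-SURPRISE}.

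\textbf{The rest is correct.} Your commutation check is sound: since $\coideal(\linearGraph)$ is a right coideal, $\mathbb{K}\otimes\mathrm{id}$ only has to commute with the first tensor leg. Your rank/deformation argument for the double centraliser property is also sound.

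The paper gives no argument here; it only cites Bao--Wang and Ehrig--Stroppel. Your proof is therefore a self-contained alternative that reduces everything to classical type $B$ Schur--Weyl duality plus Tits deformation.

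The step you single out as the main obstacle is routine. In the standard tensor basis, each $B_i$ acts through the iterated coproduct by a matrix with entries in $q^{\mathbb{Z}}\cup\{0\}$. The element $(\hat D_p-1)/(q-1)$ acts diagonally by $(q^m-1)/(q-1)$. At $q=1$ these become the operators of $\mathsf{b}_i$ and $\hat{\mathsf{d}}_p$, which generate the action of $\mathfrak{gl}_n^\theta$. So no algebra map from the integral form onto $U(\mathfrak{gl}_n^\theta)$ is needed.
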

\begin{cor}\label{Cor:ss}
	The $\coideal(\linearGraph)$-action on $\vecrep^{\otimes d}$ is semisimple. Furthermore, $\FundCoideal$ is semisimple.
\end{cor}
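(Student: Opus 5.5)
The plan is to use the double centraliser property of \cref{Schur-Weyl-quantum} and reduce semisimplicity of the $\coideal(\linearGraph)$-action on $\vecrep^{\otimes d}$ to semisimplicity of the Hecke algebra $\heckeSpecialized{d}$ over $\funcField$. First, I will show that $\heckeSpecialized{d}$ is semisimple over $\funcField$. It is a flat deformation of the group algebra $\C[W(B_d)]$ of the hyperoctahedral group, with one parameter equal to $1$ (since $H_0^2=1$) and the other equal to $q$. It has the standard basis $\{H_w\}_{w\in W(B_d)}$, so its dimension is $2^d d!$. I would consider the trace form $\tau(H_w)=\delta_{w,1}$ and its Gram determinant, a Laurent polynomial in $q$. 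Its value at $q=1$ is the Gram determinant for the group algebra $\C[W(B_d)]$, which is nonzero, so it is nonzero generically. Hence the form is nondegenerate over $\funcField$. Alternatively, I could cite Tits' deformation theorem directly: the algebra specializes at $q=1$ to the semisimple algebra $\C[W(B_d)]$, so it is split semisimple over the field of rational functions.

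Let $A$ be the image of $\heckeSpecialized{d}$ in $\End_{\funcField}(\vecrep^{\otimes d})$. It is a quotient of a semisimple algebra, hence semisimple. By \cref{Schur-Weyl-quantum}, the image $B$ of $\coideal(\linearGraph)$ equals the centraliser $\End_A(\vecrep^{\otimes d})$. Since $\vecrep^{\otimes d}$ is a semisimple $A$-module, Wedderburn theory makes this centraliser a product of matrix algebras over division algebras, so $B$ is a semisimple algebra. Every module over a semisimple algebra is semisimple, so $\vecrep^{\otimes d}$ is semisimple as a $B$-module, hence as a $\coideal(\linearGraph)$-module.

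For the second claim, let $X=\bigwedge^{k_1}\vecrep\otimes\cdots\otimes\bigwedge^{k_m}\vecrep$ be an object of $\FundCoideal$. By semisimplicity of $\Uq(\linearGraph)$-modules of type I, each $\bigwedge^{k}\vecrep$ is a direct summand of $\vecrep^{\otimes k}$, for instance via the $q$-antisymmetriser idempotent. So $X$ is a $\Uq(\linearGraph)$-direct summand of $\vecrep^{\otimes d}$ with $d=\sum k_j$, hence also a $\coideal(\linearGraph)$-direct summand. A direct summand of a semisimple module is semisimple, so every object of $\FundCoideal$ is semisimple. Hom spaces are finite dimensional and idempotents split, so $\FundCoideal$ is semisimple in the sense that all its objects decompose into simple $\coideal(\linearGraph)$-modules.

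The main obstacle is the semisimplicity of $\heckeSpecialized{d}$ over $\funcField$. I would handle it by the Tits deformation and Gram determinant argument above, which needs only the existence of the standard basis and the specialisation at $q=1$. The only subtlety is that the specialisation is taken over $\groundring$, the local ring at $q=1$.
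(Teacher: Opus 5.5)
Your proof follows the paper's route. Semisimplicity of $\heckeSpecialized{d}$ together with the double centraliser property of \cref{Schur-Weyl-quantum} makes the image of $\coideal(\linearGraph)$ in $\End_{\funcField}(\vecrep^{\otimes d})$ semisimple, and objects of $\FundCoideal$ are summands of tensor powers of $\vecrep$. The one difference is that the paper cites Dipper--James for the semisimplicity of $\heckeSpecialized{d}$, whereas you try to prove it, and your Gram-determinant argument for this step is wrong as stated.

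The problem is the choice of form. For $\tau(H_w)=\delta_{w,1}$ one has $\tau(H_xH_y)=\delta_{xy,1}$, so the $\tau$-Gram matrix in the basis $\{H_w\}$ is a permutation matrix for every value of $q$. Nondegeneracy of $\tau$ therefore only says the algebra is symmetric, and it persists at specialisations that are not semisimple. For example, $\hecke[2]$ at $q=\sqrt{-1}$ is $\C[H_1]/(H_1+\sqrt{-1})^2$, which is not semisimple, yet its $\tau$-Gram matrix is the identity.

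The form that detects semisimplicity is the regular trace form $(a,b)\mapsto\operatorname{tr}_{\mathrm{reg}}(ab)$. The radical lies in its kernel, and its discriminant is a Laurent polynomial in $q$. At $q=1$ this discriminant becomes that of $\C[W_d]$, which is $\pm|W_d|^{|W_d|}\neq 0$, because there $\operatorname{tr}_{\mathrm{reg}}=|W_d|\,\tau$. With this replacement, or with Tits' deformation theorem (your stated alternative), or with the paper's citation, the rest of your argument goes through.
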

\begin{proof}
	By \cref{Schur-Weyl-quantum} and the double centraliser theorem, the action of $\coideal(\linearGraph)$ on $\vecrep^{\otimes d}$ is semisimple, since $\heckeSpecialized{d}$ is semisimple, \cite[Rk. 5.4, Thm. 5.5]{DipperJames}. Since objects in $\FundCoideal$  are summands of direct sums of $\vecrep^{\otimes d}$, it follows that any object in $\FundCoideal$ is semisimple. 
\end{proof} 

\begin{remark}\label{specht-module}
	The algebra $\heckeSpecialized{d}$ contains the recursively defined \emph{Jucys--Murphy elements} \[
			J_1 \coloneqq H_0,\qquad J_i = H_{i-1} J_{i-1} H_{i-1}, \text{ for } 2\leq i\leq d,
		\] 
	which pairwise commute, see \cite{ArKo-hecke-algebra}. They play the analogous role of Cartan subalgebras in the study of representation theory of $\heckeSpecialized{d}$.
	Parallel to $W_d$-modules (see \cite[5.5.4]{GeckPfeiffer} and \cite[Prop. 3]{MaSt-complex-reflection-groups}), simple modules over $\heckeSpecialized{d}$ are labeled by bipartitions of $d$ and are also known as \emph{Specht modules} $\specht({\lambda,\mu})$, see \cite[Thm. 3.10]{ArKo-hecke-algebra}. We denote the Specht module associated to the bipartition $(\lambda,\mu)$ by $\specht(\lambda,\mu)$. It has a basis labeled by \emph{standard bitableaux} of shape $(\lambda,\mu)$, which are tableaux of shape $\lambda$ and $\mu$ with total entries $\{1,2,\ldots,d\}$ such that, for each part, the entries in each row and column are strictly increasing. Given a standard bitableau $T$ of shape $(\lambda,\mu)$, the content, i.e. its column number minus its row number, of the box filled with $i$ in $T$ is denoted by $\content_T(i)$. The action of the Jucys--Murphy element $J_j$ on the basis element corresponding to $T$ is then $q^{-2\content_T(i)}$ if $i$ is in the first partition of $T$ and $-q^{-2\content_T(i)}$ if $i$ is in the second partition of $T$. This determines uniquely the simple module up to isomorphism, see \cite{ArKo-hecke-algebra} and \cite[\S5]{AO-cyclotomic-JM} for details.
\end{remark}

\subsection{Classical symmetric pairs}\label{classical-sym-pair}
For any linear graph $\linearGraph$, let $\gl(\linearGraph)$ be the general linear Lie algebra with matrix units $\{\matUnit_{p,p'}\mid p,p'\in \graphVertices_{\linearGraph} \}$ defined by $\matUnit_{p,p'} \basis_l = \delta_{p',l}\basis_p$. Denote the standard Lie algebra generators of $\gl(\linearGraph)$ by $\lgl_p = \matUnit_{p,p}\, (p\in \graphVertices_{\linearGraph})$ and $
	\matrixElmtFont{e}_i = \matUnit_{\source{i},\target{i}}
	,\;
	\matrixElmtFont{f}_i = \matUnit_{\target{i},\source{i}} ,\; (i \in\graphEdges_{\linearGraph})
	.
$ 

For $\linearGraph\in\{\linearGraph_r,\linearGraph_r'\}$, the classical counterpart of $\Uq'(\linearGraph)$ is the universal enveloping algebra of the fixed point subalgebra $\classicalFixPointSubalg\subset \gl(\linearGraph)$
under the involution $\theta$ given by rotating the matrix $180^\circ$. The $\Uq'(\linearGraph)$ generators in \cref{Bi-def} correspond to the following generators of $\classicalFixPointSubalg$: 
\begin{align}\label{gens-g-fixed}
	\matrixElmtFont{b}_{i}= \matrixElmtFont{f}_i + \matrixElmtFont{e}_{-i} =\matUnit_{\target{i},\source{i}} + \matUnit_{\source{-i},\target{-i}} 
	\;\; (i\in \graphEdges_{\linearGraph})
	,\;\; 
	\hat{\matrixElmtFont{d}}_p= \lgl_p +\lgl_{\foldVertex{p}} \;\; (p\in \graphVertices_{\linearGraph})
	.
\end{align}

Let $S_d,W_d$ be the symmetric group and the type $B$ Weyl group respectively. 
\begin{prop}[Classical type $B$ Schur-Weyl duality]\label{SW-duality-q=1}
	Let $\linearGraph\in\{\linearGraph_r,\linearGraph_r'\}$.
	The action of $\C[W_d]$ and the action of $\classicalFixPointSubalg$ on $\vecrep[\C]^{\otimes d}$ 
	generate each other's centralisers. Moreover, the $\C[W_d]$ action is faithful if $\dim \vecrep[\C]\geq d$. 
\end{prop}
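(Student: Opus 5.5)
Both $\C[W_d]$ and $U(\classicalFixPointSubalg)$ act semisimply on $\vecrep[\C]^{\otimes d}$: the first is a finite group algebra, and the second is the enveloping algebra of a reductive Lie algebra $\gl(W^+)\times\gl(W^-)$. So by the double centraliser theorem it suffices to prove one direction, namely that $\End_{\classicalFixPointSubalg}(\vecrep[\C]^{\otimes d})$ equals the image of $\C[W_d]$.

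\textbf{Step 1 (setup).} First I check that the $W_d$-action commutes with $\classicalFixPointSubalg$. The symmetric group generators act by flips $H$ at $q=1$, and these commute with all of $\gl(\linearGraph)$. The element $H_0$ acts by $\kmatrix$ on the first factor. Since $\theta$ is conjugation by $\kmatrix$, the fixed algebra $\classicalFixPointSubalg$ is exactly the centraliser of $\kmatrix$ in $\gl(\linearGraph)$, so $\kmatrix$ commutes with it.

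\textbf{Step 2 (diagonalise $\kmatrix$).} Decompose $\vecrep[\C]=W^+\oplus W^-$ into the $\pm1$ eigenspaces of $\kmatrix$. Then $\classicalFixPointSubalg=\gl(W^+)\oplus\gl(W^-)$, and
\[
\vecrep[\C]^{\otimes d}=\bigoplus_{S\subseteq\{1,\dots,d\}} W^{\epsilon_S(1)}\otimes\cdots\otimes W^{\epsilon_S(d)},
\]
with $\epsilon_S(i)=-$ exactly when $i\in S$. Grouping by $|S|=j$, the $j$-th block is $\mathrm{Ind}_{W_{d-j}\times W_j}^{W_d}$ of $(W^+)^{\otimes d-j}\otimes (W^-)^{\otimes j}$. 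On that inducing piece the sign flips act by $+1$ on the first $d-j$ slots and by $-1$ on the last $j$ slots, while $S_{d-j}\times S_j$ permutes the factors.

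\textbf{Step 3 (decompose).} Classical Schur--Weyl duality for $\gl(W^\pm)$ and $S_{d-j}$, $S_j$ gives
\[
(W^+)^{\otimes d-j}\otimes(W^-)^{\otimes j}\cong\bigoplus_{\lambda\vdash d-j,\ \mu\vdash j} (L_\lambda(W^+)\boxtimes L_\mu(W^-))\otimes (S^\lambda\boxtimes S^\mu).
\]
Inducing $S^\lambda\boxtimes S^\mu$, inflated with the appropriate sign characters, produces precisely the irreducible $W_d$-module labelled by the bipartition $(\lambda,\mu)$. This is the standard Clifford-theory construction of the simple $W_d$-modules, as in \cite[5.5.4]{GeckPfeiffer}. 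Hence
\[
\vecrep[\C]^{\otimes d}\cong\bigoplus_{(\lambda,\mu)}(L_\lambda(W^+)\boxtimes L_\mu(W^-))\otimes\specht(\lambda,\mu),
\]
where the sum runs over those bipartitions with $\ell(\lambda)\le\dim W^+$ and $\ell(\mu)\le\dim W^-$.

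The main obstacle is that this abstract isomorphism must be compatible with both actions. I would prove compatibility by showing that the natural map is $W_d\times\classicalFixPointSubalg$-equivariant: the induced module is realised inside $\vecrep[\C]^{\otimes d}$ by acting with the permutations on the inducing piece.

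\textbf{Step 4 (conclude).} The multiplicity spaces are pairwise non-isomorphic simple modules on both sides. Indeed, distinct bipartitions give distinct simple $W_d$-modules, and distinct pairs $(\lambda,\mu)$ give distinct simple $\gl(W^+)\oplus\gl(W^-)$-modules. Therefore each algebra maps onto the full centraliser of the other, by Jacobson density or Artin--Wedderburn.

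For faithfulness: if $\dim\vecrep[\C]\ge d$, then every bipartition of $d$ satisfies $\ell(\lambda)+\ell(\mu)\le d$. I then need $\ell(\lambda)\le\dim W^+$ and $\ell(\mu)\le\dim W^-$. This is not automatic from the sum condition alone, so the bound must be handled with care. The $\mu$-side is the tight one: since $\dim W^-=\lfloor n/2\rfloor$, the bipartition $(\emptyset,(1^d))$ needs $\lfloor n/2\rfloor\ge d$. I would therefore verify the condition directly by exhibiting faithfulness on the basis of $\C[W_d]$, using the fact that $\C[W_d]$ acts faithfully on $\vecrep[\C]^{\otimes d}$ whenever every simple module occurs. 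If the stated bound turns out insufficient for some bipartition, I would instead argue faithfulness through the tensor-space basis: for $n\ge d$, choose pairwise distinct indices $p_1,\dots,p_d$ such that no two are $\kmatrix$-partners, so that $W_d$ acts freely on the orbit of $v_{p_1}\otimes\cdots\otimes v_{p_d}$ up to signs. A short count shows such indices exist when $n\ge d$, and this gives faithfulness directly.
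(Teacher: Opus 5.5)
Your proof of the double centraliser property is correct, and it takes a genuinely different route from the paper, which simply cites \cite[Thm.~9]{MaSt-complex-reflection-groups}. You split $\vecrep[\C]=W^+\oplus W^-$ into $\kmatrix$-eigenspaces, realise the $j$-th block inside $\vecrep[\C]^{\otimes d}$ as a module induced from $W_{d-j}\times W_j$, and apply classical Schur--Weyl duality to $\gl(W^+)$ and $\gl(W^-)$. This yields the explicit bimodule decomposition $\vecrep[\C]^{\otimes d}\cong\bigoplus(L_\lambda(W^+)\boxtimes L_\mu(W^-))\otimes\specht(\lambda,\mu)$, summed over bipartitions with $\ell(\lambda)\le\dim W^+$ and $\ell(\mu)\le\dim W^-$. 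This is more self-contained than the citation, and it tells you exactly which simple $\C[W_d]$-modules occur.

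That information is what breaks the faithfulness claim, and your first instinct was right. The obstruction you found is not a bound to be ``handled with care''; it is a counterexample. Since $\C[W_d]$ is semisimple, it acts faithfully if and only if every $\specht(\lambda,\mu)$ occurs. Hence the action is faithful if and only if $\dim W^-=\lfloor\dim\vecrep[\C]/2\rfloor\ge d$, and this fails whenever $\lfloor n/2\rfloor<d\le n$. Concretely, take $\linearGraph_0$ with $n=d=2$, so $\dim W^\pm=1$. The nonzero central element $(1+m_1)(1+m_2)(1-s_1)\in\C[W_2]$ acts by zero: $(1+m_1)(1+m_2)$ projects onto the line $W^+\otimes W^+$, and $s_1$ acts trivially there. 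So your fallback cannot succeed, and its ``short count'' is false. For the orbit of $v_{p_1}\otimes\cdots\otimes v_{p_d}$ to have $2^d d!$ elements, you need $d$ indices lying in distinct two-element $\kmatrix$-orbits, and there are only $\lfloor n/2\rfloor$ such orbits. The paper's own faithfulness argument is also invalid: in the same example $\C[S_2]$ and $\C[m_1,m_2]$ each act faithfully, yet $\C[W_2]$ does not. So faithfulness of the two factors of the vector-space decomposition $\C[W_d]\cong\C[S_d]\otimes\C[m_1,\ldots,m_d]$ says nothing about $\C[W_d]$. The correct statement, which your decomposition gives at once, is that $\C[W_d]$ acts faithfully on $\vecrep[\C]^{\otimes d}$ if and only if $\lfloor\dim\vecrep[\C]/2\rfloor\ge d$. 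Equivalently, $r+1\ge d$ for $\linearGraph_r$ and $r\ge d$ for $\linearGraph_r'$. Where the paper later uses faithfulness, it takes $r=\infty$ or $r$ sufficiently large, so those arguments survive with the corrected bound.
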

\begin{proof}
	See \cite[Thm. 9]{MaSt-complex-reflection-groups} for the double centraliser property. Recall that $W_d\cong S_d\ltimes (\Z/2\Z)^d$. Denote by $m_1,\ldots ,m_d\in W_d$ the standard generators of the subgroup $(\Z/2\Z)^d$.
	If $\dim \vecrep[\C]\geq d$, then the subalgebras $\C[m_1,\ldots,m_d]$ and $\C[S_d]\subset \C[W_d]$ acts faithfully on $\vecrep[\C]^{\otimes d}$. Since $\C[W_d]\cong \C[W(S_d)]\otimes\C[m_1,\ldots, m_d] $ as vector spaces, $\C[W_d]$ also acts faithfully.
\end{proof}

\begin{eg}
In the case of $\linearGraph_1$ we have the following generators of $\classicalFixPointSubalg$: 
\begin{align*}
	\matrixElmtFont{b}_{1}=\left(\begin{smallmatrix}
		0 & 1 \\
		& 0 \\
		&& 0 \\
		&&1& 0
	\end{smallmatrix}\right)
	,\;\;
	\matrixElmtFont{b}_{-1}=\left(\begin{smallmatrix}
		0 &  \\
		1 & 0 \\
		&& 0 &1 \\
		&&& 0
	\end{smallmatrix}\right)
	,\;\;
	\matrixElmtFont{b}_0=\left(\begin{smallmatrix}
		&&&0 \\
		&&1 \\
		&1 \\
		0
	\end{smallmatrix}\right)
	,\;\; 
	\hat{\matrixElmtFont{d}}_0 = \left(\begin{smallmatrix}
		0   \\
		& 1 & \\
		&& 1 \\
		&&&0
	\end{smallmatrix}\right)
	,\;\; 
	\hat{\matrixElmtFont{d}}_1= \left(\begin{smallmatrix}
		1 &  \\
		& 0 \\
		&& 0 \\
		&&&1
	\end{smallmatrix}\right)
	.
\end{align*}
In the case of $\linearGraph_1'$ we have the following generators of $\classicalFixPointSubalg$: \
\begin{align*}
	\matrixElmtFont{b}_{-\half}=\left(\begin{smallmatrix}
		0 & 1 \\
		 & 0 \\
		 & 1& 0 \\
	\end{smallmatrix}\right)
	,\;\;
	\matrixElmtFont{b}_{\half}=\left(\begin{smallmatrix}
		0 & & \\
		1 &0&1  \\
		& & 0
	\end{smallmatrix}\right)
	,\;\;
	\hat{\matrixElmtFont{d}}_0=\left(\begin{smallmatrix}
		0 \\
		&2 \\
		&&0 
	\end{smallmatrix}\right)
	,\;\; 
	\hat{\matrixElmtFont{d}}_1 = \left(\begin{smallmatrix}
		1& & \\
		&0\\
		&&1  \\
	\end{smallmatrix}\right)
	.
\end{align*}
\end{eg}

We next identify the structure of $\classicalFixPointSubalg$. First consider the case $\linearGraph=\linearGraph_r$.
There is a Lie algebra isomorphism $ \varphi: \gl_{r+1}\times \gl_{r+1}\cong\classicalFixPointSubalg$ given by 
\begin{equation}
	(\matrixElmtFont{x},0) \mapsto \frac{1}{2}\left( \begin{smallmatrix}
		\matrixElmtFont{JxJ} &  \matrixElmtFont{Jx}\\
		\matrixElmtFont{xJ} & \matrixElmtFont{x}
	\end{smallmatrix} \right)
	,\quad 
	(0,\matrixElmtFont{x}) \mapsto \frac{1}{2}\left( \begin{smallmatrix}
		\matrixElmtFont{JxJ} &  -\matrixElmtFont{Jx}\\
		-\matrixElmtFont{xJ} &  \matrixElmtFont{x}
	\end{smallmatrix} \right),
	\text{ where } \matrixElmtFont{J} =  \left(\begin{smallmatrix}
	&&1\\
	&\text{\rotatebox{90}{$\ddots$}}&\\
	1
\end{smallmatrix} \right).\label{iso gln times gln}
\end{equation}
Identify $\gl_{r+1}\cong \gl(\linearGraph_r^+)$.
Then the $\classicalFixPointSubalg$ generators correspond to the following elements via $\varphi$: \begin{align*}
	\matrixElmtFont{b}_0 = \varphi(\lgl_\half, -\lgl_\half),\quad 
	\matrixElmtFont{b}_{i} = \varphi(\matrixElmtFont{e}_i,\matrixElmtFont{e}_i),\quad 
	\matrixElmtFont{b}_{-i} = \varphi(\matrixElmtFont{f}_i,\matrixElmtFont{f}_i)\;(i>0) , \quad 
	\hat{\matrixElmtFont{d}}_p =\varphi (\lgl_p,\lgl_p)
	,\;(p\in \graphVertices_{\linearGraph_r^+})
	.
\end{align*}
Thus, $ \{\matrixElmtFont{b}_{\pm i},\hat{\matrixElmtFont{d}}_p\mid i\in \graphEdges_{\linearGraph_r^+}, p	\in \graphVertices_{\linearGraph_r^+}  \}$ generate the diagonally embedded copy of $\gl_{r+1}\subset \classicalFixPointSubalg$.
To describe the inverse of $\varphi$, define inductively 
\[
	\matrixElmtFont{y}_i \coloneqq [\matrixElmtFont{z}_{i-1},\matrixElmtFont{b}_{i}]
	,\qquad
	\matrixElmtFont{z}_i \coloneqq	\begin{cases}
		[\matrixElmtFont{b}_{-i}, \matrixElmtFont{y}_i] & i>0,\\
		\matrixElmtFont{b}_0 &  i =0.
	\end{cases}
	\quad 
	\text{And define } 
	\matrixElmtFont{z}_p' = \sum_{\target{j}\dottedArrowRight p} \matrixElmtFont{z}_j
	\;\text{ for } p\in \graphVertices_{\linearGraph_r^+}
	.
\]
Then the inverse of $\varphi$ is given by \begin{equation}
\begin{aligned}
    (\lgl_p,0) &\longmapsto \frac{1}{2} (\hat{\matrixElmtFont{d}}_p+ \matrixElmtFont{z}_p'), &
    (0,\lgl_p) &\longmapsto \frac{1}{2} ( \hat{\matrixElmtFont{d}}_p- \matrixElmtFont{z}_p') , &
    (\matrixElmtFont{e}_i,0) &\longmapsto \frac{1}{2} ( \matrixElmtFont{b}_{-i}- [\matrixElmtFont{z}_p',\matrixElmtFont{b}_{-i}]) , \\[1ex]
    (0,\matrixElmtFont{e}_i) &\longmapsto \frac{1}{2} ( \matrixElmtFont{b}_{-i}+ [\matrixElmtFont{z}_p',\matrixElmtFont{b}_{-i}]) , &
    (\matrixElmtFont{f}_i,0) &\longmapsto \frac{1}{2} ( \matrixElmtFont{b}_{i}+ [\matrixElmtFont{z}_p',\matrixElmtFont{b}_{i}]) , &
    (0,\matrixElmtFont{f}_i) &\longmapsto \frac{1}{2} ( \matrixElmtFont{b}_{i}- [\matrixElmtFont{z}_p',\matrixElmtFont{b}_{i}]) .
\end{aligned}
\end{equation}

Note that $\phi\inv (\matrixElmtFont{z}_p')=(\matrixElmtFont{d}_p,-\matrixElmtFont{d}_p)$ and $\phi\inv(\hat{\matrixElmtFont{d}}_p)=(\matrixElmtFont{d}_p,\matrixElmtFont{d}_p)$. It follows that  $\phi\inv (\matrixElmtFont{z}_i)=(\matrixElmtFont{h}_i,-\matrixElmtFont{h}_i)$ where $\matrixElmtFont{h}_i=\matrixElmtFont{d}_{\source{i}}-\matrixElmtFont{d}_{\target{i}}$.

\begin{lem}\label{classical CSA}
	The standard Cartan subalgebra $\mathfrak{h}$ of $\classicalFixPointSubalg$ has a basis given by the diagonal Cartan elements $\hat{\matrixElmtFont{d}}_p	$ and the anti-diagonal Cartan elements $\matrixElmtFont{z}_i \;\;(0\leq i\leq r)$. 
\end{lem}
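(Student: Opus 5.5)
The plan is to transport the question through the isomorphism $\varphi\colon \gl_{r+1}\times\gl_{r+1}\cong\classicalFixPointSubalg$ of \cref{iso gln times gln}. The standard Cartan subalgebra of $\gl_{r+1}\times\gl_{r+1}$ is the product of the two diagonal subalgebras. It has dimension $2(r+1)$ and basis $\{(\lgl_p,0),(0,\lgl_p)\mid p\in\graphVertices_{\linearGraph_r^+}\}$. I take $\mathfrak{h}$ to be its image under $\varphi$, so it suffices to exhibit $2(r+1)$ linearly independent elements of $\mathfrak h$ that pull back under $\varphi\inv$ to elements of this diagonal Cartan.

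First, the computation preceding the lemma gives $\varphi\inv(\hat{\lgl}_p)=(\lgl_p,\lgl_p)$ for $p\in\graphVertices_{\linearGraph_r^+}$. These are $r+1$ linearly independent elements of the diagonal Cartan, spanning its diagonally embedded copy $\{(x,x)\}$.

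Second, for the anti-diagonal elements I use $\varphi\inv(\matrixElmtFont{z}_i)=(\matrixElmtFont{h}_i,-\matrixElmtFont{h}_i)$ for $i>0$, with $\matrixElmtFont{h}_i=\lgl_{\source{i}}-\lgl_{\target{i}}$. For $i=0$ I use $\matrixElmtFont{z}_0=\matrixElmtFont{b}_0=\varphi(\lgl_\half,-\lgl_\half)$, where $\lgl_\half$ denotes the diagonal unit at the vertex $0$ of $\linearGraph_r^+$. The step I would double-check is the inductive claim $\varphi\inv(\matrixElmtFont{z}_i)=(\matrixElmtFont{h}_i,-\matrixElmtFont{h}_i)$. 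I would verify it directly from $\matrixElmtFont{y}_i=[\matrixElmtFont{z}_{i-1},\matrixElmtFont{b}_i]$ and $\matrixElmtFont{z}_i=[\matrixElmtFont{b}_{-i},\matrixElmtFont{y}_i]$, using $\varphi\inv(\matrixElmtFont{b}_{\pm i})=(\matrixElmtFont{e}_i,\matrixElmtFont{e}_i)$ or $(\matrixElmtFont{f}_i,\matrixElmtFont{f}_i)$ and the fact that brackets in a product of Lie algebras are computed componentwise.

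The induction runs as follows. From $\varphi\inv(\matrixElmtFont{z}_{i-1})=(h,-h)$ with $h$ diagonal, we get $\varphi\inv(\matrixElmtFont{y}_i)=(c\,\matrixElmtFont{x},-c\,\matrixElmtFont{x})$ with $\matrixElmtFont{x}\in\{\matrixElmtFont{e}_i,\matrixElmtFont{f}_i\}$ and $c=\pm1$ an eigenvalue of $\operatorname{ad}h$; this eigenvalue is $\pm1$ because $\source{i}$ is an endpoint of edge $i-1$. Bracketing with the opposite root vector then yields $(\pm\matrixElmtFont{h}_i,\mp\matrixElmtFont{h}_i)$. Up to a sign, which is harmless for a basis claim, this gives the asserted form. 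In particular every $\matrixElmtFont{z}_i$ lies in $\mathfrak{h}$.

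Finally, linear independence. The elements $\matrixElmtFont{z}_0,\dots,\matrixElmtFont{z}_r$ lie in the anti-diagonal subspace $\{(x,-x)\}$, and the elements $\hat{\lgl}_p$ lie in the diagonal subspace $\{(x,x)\}$. These two subspaces intersect trivially, so it suffices to check independence within each family. For the anti-diagonal family, the vectors $\lgl_0,\ \lgl_0-\lgl_1,\ \lgl_1-\lgl_2,\dots,\lgl_{r-1}-\lgl_r$ form a triangular system with respect to the basis $\lgl_0,\dots,\lgl_r$, so they form a basis of the diagonal matrices in $\gl_{r+1}$. Counting, we obtain $2(r+1)=\dim\mathfrak h$ independent elements of $\mathfrak h$, hence a basis.
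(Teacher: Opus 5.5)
Your proposal is correct and is essentially the paper's argument: transport through $\varphi$, under which $\hat{\matrixElmtFont{d}}_p$ pulls back to $(\lgl_p,\lgl_p)$ and $\matrixElmtFont{z}_i$ pulls back to $\pm(\matrixElmtFont{h}_i,-\matrixElmtFont{h}_i)$. These elements together form a basis of the product of the two diagonal Cartans. The only difference is how the anti-diagonal pullbacks are obtained: you verify them directly by induction from the recursive definition of $\matrixElmtFont{z}_i$, whereas the paper reads them off from $\varphi^{-1}(\matrixElmtFont{z}'_p)=(\lgl_p,-\lgl_p)$ by taking successive differences. Your sign caveat is harmless, and under a literal reading of the edge orientations it is actually needed.
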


In \cref{intro-prop}, the highest weight of the irreducible summand $\bigwedge^iW^+\boxtimes \bigwedge^{k-i}W^-$ 
with respect to the basis of $\mathfrak{h}$ from \cref{classical CSA} 
is given by 
\begin{align}\label{classical-hw}
	\matrixElmtFont{z}_j\text{-weight} = \begin{cases}
		1 , & \text{ if } i=j,\\
		-1 , & \text{ if } k-i=j,\\
		0 ,& \text{ otherwise},
	\end{cases}
	\qquad
	\matrixElmtFont{d}_p\text{-weight} = \begin{cases}
		2 , & \text{ if } j\leq \min(i,k-i),\\
		1 , & \text{ if }  k-i < j \leq  i,\\
		-1 , & \text{ if }  i < j \leq  k-i,\\
		0 ,& \text{ otherwise}.
	\end{cases}
\end{align}

In the case $\linearGraph=\linearGraph'_r$, there is a Lie algebra isomorphism $\varphi:\classicalFixPointSubalg\cong \gl_r\times \gl_{r+1}$ given by  \[
	(\matrixElmtFont{x},0)
	\mapsto \frac{1}{2}\left(\begin{smallmatrix}
		\matrixElmtFont{JxJ} &  & -\matrixElmtFont{Jx} \\
		&0& \\
		-\matrixElmtFont{xJ} & & \matrixElmtFont{x}
	\end{smallmatrix}\right)
	,\quad
	(0,\left(\begin{smallmatrix}
		c & \matrixElmtFont{v} \\
		\matrixElmtFont{u} & \matrixElmtFont{x}
	\end{smallmatrix}\right))
	\mapsto \frac{1}{2}\left(\begin{smallmatrix}
		\matrixElmtFont{JxJ} &  \matrixElmtFont{Ju} & \matrixElmtFont{Jx} \\
		 \matrixElmtFont{vJ} &c&  \matrixElmtFont{v}\\
		\matrixElmtFont{xJ} & \matrixElmtFont{u} & \matrixElmtFont{x}
	\end{smallmatrix}\right),
	\text{ where } \matrixElmtFont{x} \text{ is an } r\times r \text{ matrix}
	.
\]
Identify $\gl_{r+1}\cong \gl(\linearGraph^+)$ and $\gl_r\cong \gl(\tempGraph)$ where $\tempGraph$ is the linear graph obtained from  $\gl(\linearGraph^+)$ by removing the vertex $0$. Then we have \begin{align}
	\begin{gathered}
	\varphi\inv (\hat{\matrixElmtFont{d}}_p)= (\matrixElmtFont{d}_p,\matrixElmtFont{d}_p)\; (p\in \tempGraph)
	,\quad 
	\varphi\inv (\matrixElmtFont{b}_i) = (\matrixElmtFont{e}_i,\matrixElmtFont{e}_i)
	,\quad 
	\varphi\inv (\matrixElmtFont{b}_{-i}) = (\matrixElmtFont{f}_i,\matrixElmtFont{f}_i)
	\;(i\in\graphEdges_{\tempGraph})
	,\\
	\varphi\inv(\hat{\matrixElmtFont{d}}_0) = (0,\matrixElmtFont{{d}}_0)
	,\quad 
	\varphi\inv (\matrixElmtFont{b}_\half) = (0,\matrixElmtFont{e}_\half)
	,\quad 
	\varphi\inv (\matrixElmtFont{b}_{-\half}) = (0,\matrixElmtFont{f}_\half)
	.
	\end{gathered}\label{csa-odd-classical}
\end{align}
Let $\matrixElmtFont{w}_0= [\matrixElmtFont{b}_\half,\matrixElmtFont{b}_{-\half}]$ and  define inductively  $\matrixElmtFont{w}_n= [\matrixElmtFont{b}_{-n},[\matrixElmtFont{w}_{n-1},\matrixElmtFont{b}_{n}]]$. From \cref{csa-odd-classical}, these elements form a basis of the Cartan subalgebra of the factor $\gl(\linearGraph^+) $, whence we have:
\begin{lemma}
	Let $\linearGraph=\linearGraph_r'$, then
	the elements $\{\matrixElmtFont{w}_p, \hat{\matrixElmtFont{d}}_p\mid p\in \graphVertices(\linearGraph^+)\}$ form a basis of the standard Cartan subalgebra of $\classicalFixPointSubalg$.
\end{lemma}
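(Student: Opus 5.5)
The plan is to transport the statement through the isomorphism $\varphi:\classicalFixPointSubalg\cong \gl_r\times\gl_{r+1}$ and use \cref{csa-odd-classical}. The standard Cartan subalgebra of $\classicalFixPointSubalg$ is the image under $\varphi$ of the diagonal Cartan subalgebra $\mathfrak{h}_r\times\mathfrak{h}_{r+1}$ of $\gl(\tempGraph)\times\gl(\linearGraph^+)$, which has dimension $r+(r+1)=2r+1$. Since there are exactly $2(r+1)$ symbols $\matrixElmtFont{w}_p,\hat{\matrixElmtFont{d}}_p$ with $p\in\graphVertices(\linearGraph^+)$, I first have to pin down the indexing. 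The elements $\matrixElmtFont{w}_n$ are only defined for $0\le n\le r$ via the edges $\pm n$, and the element $\hat{\matrixElmtFont{d}}_0$ gives $\varphi\inv(\hat{\matrixElmtFont{d}}_0)=(0,\matrixElmtFont{d}_0)$. So I would verify that this $\varphi\inv(\hat{\matrixElmtFont{d}}_0)$ is already in the span of the $\matrixElmtFont{w}$'s, or else that the intended family is $\{\matrixElmtFont{w}_p\}_{p}\cup\{\hat{\matrixElmtFont{d}}_p\}_{p>0}$, in order to get a count of $2r+1$. I will prove that this latter family is a basis and remark on the redundancy of $\hat{\matrixElmtFont{d}}_0$.

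\textbf{Step 1: the diagonal part.} By \cref{csa-odd-classical}, $\varphi\inv(\hat{\matrixElmtFont{d}}_p)=(\matrixElmtFont{d}_p,\matrixElmtFont{d}_p)$ for $p\in\tempGraph$. These $r$ elements are linearly independent and lie in $\mathfrak{h}_r\times\mathfrak{h}_{r+1}$.

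\textbf{Step 2: compute $\varphi\inv(\matrixElmtFont{w}_n)$ inductively.} For the base case, $\varphi\inv(\matrixElmtFont{w}_0)=[(0,\matrixElmtFont{e}_\half),(0,\matrixElmtFont{f}_\half)]=(0,\matrixElmtFont{h}_\half)$ with $\matrixElmtFont{h}_\half=\matrixElmtFont{d}_0-\matrixElmtFont{d}_1$. For the inductive step, suppose $\varphi\inv(\matrixElmtFont{w}_{n-1})=(0,\matrixElmtFont{h})$ with $\matrixElmtFont{h}$ diagonal in the second factor. Then
\[
[\matrixElmtFont{w}_{n-1},\matrixElmtFont{b}_n]\mapsto(0,[\matrixElmtFont{h},\matrixElmtFont{e}_n])=(0,c\,\matrixElmtFont{e}_n),
\]
because the first component of $\matrixElmtFont{w}_{n-1}$ vanishes. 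Bracketing with $\matrixElmtFont{b}_{-n}\mapsto(\matrixElmtFont{f}_n,\matrixElmtFont{f}_n)$ gives $(0,-c\,\matrixElmtFont{h}_n)$. Tracking the scalar $c=\alpha_n(\matrixElmtFont{h})$ yields a recursion. I expect $\varphi\inv(\matrixElmtFont{w}_n)=(0,\pm \matrixElmtFont{h}_n)$ up to a nonzero constant, where the constant comes from $\alpha_n(\matrixElmtFont{h}_{n-1})=-1$, so it is nonzero at every step. Here the edge indexing must follow the conventions of $\linearGraph'_r$, where the edge $n$ connects $n$ and $n+1$ in $(\linearGraph'_r)^+$, and I need to check the sign conventions.

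\textbf{Step 3: assemble and count.} The elements $\varphi\inv(\matrixElmtFont{w}_n)$ for $n=0,\dots,r-1$ (one for each edge of $(\linearGraph_r')^+$) span the simple coroots $(0,\matrixElmtFont{h}_j)$ of $\gl_{r+1}$, which form an $r$-dimensional space. Together with Step 1 they span a $2r$-dimensional space. One more element is needed to recover the center of $\gl_{r+1}$, and $\varphi\inv(\hat{\matrixElmtFont{d}}_0)=(0,\matrixElmtFont{d}_0)$ supplies it: $\matrixElmtFont{d}_0$ together with the $\matrixElmtFont{h}_j$ spans $\mathfrak{h}_{r+1}$. This gives a basis of size $2r+1$.

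\textbf{Main obstacle.} The main difficulty is getting the indexing exactly right, that is, deciding which $\matrixElmtFont{w}_p$ are actually defined, so that the family has exactly $2r+1$ elements, and then checking the nonvanishing of the scalars in the recursion of Step 2. Linear independence itself is immediate once each $\varphi\inv(\matrixElmtFont{w}_n)$ is identified with a nonzero multiple of a distinct simple coroot of the second factor.
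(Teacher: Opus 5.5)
Steps 1--3 are correct and follow the paper's route. You pull everything back along $\varphi$ using \cref{csa-odd-classical}, note that each $\matrixElmtFont{w}_n$ lies in the $\gl(\linearGraph^+)$ factor, and let the $\hat{\matrixElmtFont{d}}_p$ supply the rest. Your Step 3 is actually more precise than the paper's one-line sketch, which claims the $\matrixElmtFont{w}$'s alone give a basis of the Cartan subalgebra of $\gl(\linearGraph^+)$. Each $\varphi\inv(\matrixElmtFont{w}_n)$ is a commutator, hence traceless in the second factor. So the $\matrixElmtFont{w}$'s only give the Cartan subalgebra of $\mathfrak{sl}_{r+1}$, and $\hat{\matrixElmtFont{d}}_0$ is genuinely needed for the centre, exactly as you say.

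What you must fix is your opening paragraph, which contradicts Step 3 and is false as stated.
\begin{enumerate}
\item $(\linearGraph'_r)^+$ has only $r$ edges, so only $\matrixElmtFont{w}_0,\dots,\matrixElmtFont{w}_{r-1}$ exist, not $\matrixElmtFont{w}_n$ for $0\le n\le r$.
\item $\hat{\matrixElmtFont{d}}_0$ is not redundant. Take any linear combination of the $\varphi\inv(\matrixElmtFont{w}_n)$ and the $\varphi\inv(\hat{\matrixElmtFont{d}}_p)=(\matrixElmtFont{d}_p,\matrixElmtFont{d}_p)$, $p>0$, whose first component vanishes. All coefficients of the $(\matrixElmtFont{d}_p,\matrixElmtFont{d}_p)$ must then be zero, so it is a combination of the $(0,\matrixElmtFont{h}_j)$ alone and is traceless in the second factor. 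By contrast, $(0,\matrixElmtFont{d}_0)$ has nonzero trace there.
\end{enumerate}
So the family $\{\matrixElmtFont{w}_p\}_p\cup\{\hat{\matrixElmtFont{d}}_p\}_{p>0}$ you announce, together with the promised remark on redundancy, should be deleted.

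The lemma should be read with $\matrixElmtFont{w}$ indexed by the edges of $\linearGraph^+$ and $\hat{\matrixElmtFont{d}}$ indexed by its vertices. That gives $r+(r+1)=2r+1$ elements, which is exactly the family your Step 3 uses. Your spanning argument plus this count then completes the proof, and the inductive scalar check in Step 2 ($c_n=c_{n-1}\neq 0$, since $\alpha_n(\matrixElmtFont{h}_{n-1})=-1$) is correct as written.
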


In \cref{intro-prop}, the highest weight of the irreducible summand $\bigwedge^iW^+\boxtimes \bigwedge^{k-i}W^-$ 
with respect to the basis of $\mathfrak{h}$ from \cref{classical CSA} 
is given by 
\begin{align}\label{classical-hw-odd}
	\matrixElmtFont{w}_j\text{-weight} = \begin{cases}
		2 , & \text{ if } i=j+1,\\
		0 ,& \text{ otherwise},
	\end{cases}
	\qquad
	\matrixElmtFont{d}_p\text{-weight} = \begin{cases}
		2 , & \text{ if } j\leq \min(i,k-i),\\
		1 , & \text{ if }  k-i < j \leq  i,\\
		-1 , & \text{ if }  i < j \leq  k-i,\\
		0 ,& \text{ otherwise}.
	\end{cases}
\end{align}

For any $x,y\in \Uq(\linearGraph) $ and $a\in \funcField$ denote $[x,y]_a= xy-ayx$.
\newcommand{\posrootSet}{\Phi}
\subsection{Root vectors}
Recall that the positive roots of $\Uq(\linearGraph)$ are $ \{\rtPos{i}{j}\mid i\geq j\in \graphEdges_{\linearGraph}\}$, where $\rtPos{i}{i}$ is the simple root $\alpha_i$ and $\rtPos{i}{j} = \alpha_i+\alpha_{i-1} +\cdots +\alpha_{j}$. 
Recall that the convex order $<$ on the set of positive roots is the lexicographical order on the pair $(j,i)$ ($i,j$ are intentionally swapped). 

We arrange the positive roots in the following pattern (for $\linearGraph=\linearGraph_r$):
\begin{align}\label{pos-roots-pic}
	\begin{tikzpicture}[xscale=1.7, yscale=0.9]
		\colorlet{redregion}{purple!40!pink}
		\colorlet{redborder}{purple!50!pink}
		\colorlet{yellowregion}{yellow!60!orange!40}
		\colorlet{yellowborder}{yellow!80!orange}
		\colorlet{blueregion}{cyan!50!blue!40}
		\colorlet{blueborder}{cyan!60!blue}
		\filldraw[fill=redregion, draw=redborder, thick, opacity=0.6, line join=round]
			(-4, 3.4)  -- (3, 3.4) -- (-0.524,-0.124)-- cycle;
		\filldraw[fill=blueregion, draw=blueborder, thick, opacity=0.6, line join=round]
			(3.4, 3) -- (3.4, -4) -- (-0.1, -0.5) -- cycle;
		\filldraw[fill=yellowregion, draw=yellowborder, thick, opacity=0.6, line join=round]
			(3, 3.4) -- (3.4, 3.4) -- (3.4,3) --(-0.1, -0.5) --  (-0.524,-0.124) -- cycle;
		\foreach \u in {3, 2, 1, 0, -1, -2, -3} {
			\foreach \v in {3, 2, 1, 0, -1, -2, -3} {
				\ifnum\u<\v
				\else
					\node at (-\v, \u) {$[\u, \v]$};
				\fi
			}
		}
		\foreach \u in {3, 2, 1, 0, -1, -2, -3} {
			\node at (-\u, \u) {$[\u, \u]$};
		}
	\end{tikzpicture}
\end{align}
Graphically, the order $>$ is given by scanning each column from top to bottom and proceeding from left to right across the columns in \cref{pos-roots-pic}.
For $\linearGraph_r'$ one has a completely analogous picture.

For both $\linearGraph_r$ and $\linearGraph_r'$ we subdivide the positive roots into the following three subsets: \begin{equation}\label{order-root-regions}
		\posrootSet_\succ =\{\rtPos{i}{j}\mid i> |j| \text{ or } i=j \},
		\posrootSet_\circ = \{\rtPos{i}{-i}\mid i> 0\},
		\posrootSet_\prec =\{\rtPos{-j}{-i}\mid \rtPos{i}{j}\in \posrootSet_\succ\}.
	\end{equation}
Note that in the picture \cref{pos-roots-pic}, $\posrootSet_\succ,\posrootSet_\circ, \posrootSet_\prec$ are just the red, yellow, blue regions respectively. This will be useful in the triangular decomposition of $\coideal(\linearGraph)$.

Associated to the convex order $<$ is the usual PBW root vectors of $\Uq(\linearGraph)$ are given by 
\begin{align}\label{root-vec-Uq}
	F_{[i,j]} = [F_j , [F_{j+1},\cdots [F_{i-1},F_i]_{q }\cdots ]_{q }]_{q } \qquad \text{ for } i>j.
\end{align}

\begin{prop}\label{rectangle-commutator-rootvec--Uq}
	Let $\rtPos{i}{j}< \rtPos{k}{l}$ be positive roots. 
	Then for some $a_1,a_2 \in \Z[q,q\inv]$ we have
	\begin{align}
		\label{levandoeskii-soilbelman-better}
		[F_{[i,j]}, F_{[k,l]} ]_{q^{-(\rtPos{i}{j}, \rtPos{k}{l})}} \in \begin{cases}
			a_1 F_{[i,l]} F_{[k,j]} , & \text{ if } k>i\geq l >j,\\
			a_2 F_{[k,j]} , & \text{ if } l=i+1 ,\\
			\{0\},  &  \text{ otherwise.}
		\end{cases}
	\end{align}
\end{prop}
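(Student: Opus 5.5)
This is the type A case of the Levendorskii–Soibelman convexity property. I will give a direct argument using a realisation of $F_{[i,j]}$ in which the $q$-commutator can be computed concretely.

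\emph{Reduction to matrix-unit-type root vectors.} The first step is to record that, by the definition \cref{root-vec-Uq}, the vector $F_{[i,j]}$ lies in the subalgebra generated by the $F_m$ with $j\le m\le i$. This subalgebra is isomorphic to $U_q^-(\gl_{i-j+2})$. Expanding the nested $q$-commutators shows that $F_{[i,j]}$ is the standard quantum root vector for the matrix unit $\matUnit_{\target{i},\source{j}}$. In particular, its weight is $-\rtPos{i}{j}$, and $D_p F_{[i,j]} D_p\inv$ is a scalar multiple of $F_{[i,j]}$.

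\emph{Case analysis by relative position.} We have $\rtPos{i}{j}<\rtPos{k}{l}$, meaning $(j,i)<(k,l)$ lexicographically on $(j,i)$ versus $(l,k)$. I split according to how the intervals $[j,i]$ and $[l,k]$ sit relative to each other.

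\begin{enumerate}
\item The intervals are disjoint and non-adjacent, i.e. $l>i+1$. All generators involved commute, and $(\rtPos{i}{j},\rtPos{k}{l})=0$, so the commutator vanishes.
\item The intervals are adjacent, i.e. $l=i+1$. Here $(\rtPos{i}{j},\rtPos{k}{l})=-1$. The identity $[F_{[i,j]},F_{[k,i+1]}]_{q}=a_2F_{[k,j]}$ follows by induction on $i-j$ from the Serre relations \cref{glrel2} and the $q$-Jacobi identity $[x,[y,z]_a]_b=[[x,y]_{c},z]_{ab/c}+\dots$. Equivalently, it follows from the known recursion $F_{[k,j]}=[F_{[m,j]},F_{[k,m+1]}]_q$ for any splitting point $m$, which is proved by rebracketing the nested commutators.
\item The intervals are nested with a common endpoint, or strictly nested. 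This is the case $j=l$, or $l\le j\le i\le k$ with $l=j$ forced by the ordering, or $k\le i$. Here the pairing is computed directly, and one shows the $q$-commutator vanishes. The base case is $[F_i,F_{[i,j]}]_{q^{\pm1}}=0$, which comes from the Serre relation. The general case follows by writing the larger root vector as an adjacent product using the previous step and applying $q$-Jacobi.
\item The intervals overlap, i.e. $k>i\ge l>j$. I decompose $F_{[k,l]}=[F_{[i,l]},F_{[k,i+1]}]_q$, where the first factor is vacuous if $i+1=l$. I then apply $q$-Jacobi. Using the nested-case vanishing for $(F_{[i,j]},F_{[i,l]})$ and the adjacent case for $(F_{[i,j]},F_{[k,i+1]})$, the result lands in $\Z[q,q\inv]F_{[i,l]}F_{[k,j]}$ once the weights are matched: $\rtPos{i}{j}+\rtPos{k}{l}=\rtPos{i}{l}+\rtPos{k}{j}$.
\end{enumerate}

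\emph{Alternative route.} A cleaner alternative for everything is to invoke Levendorskii–Soibelman directly: the commutator lies in the span of ordered PBW monomials in root vectors strictly between the two roots, with total weight $-\rtPos{i}{j}-\rtPos{k}{l}$. In type A one checks combinatorially that the only such monomials are $F_{[i,l]}F_{[k,j]}$, or $F_{[k,j]}$ in the adjacent case. These weights are sums of at most two positive roots strictly between the given ones, and weight considerations leave no other option. Integrality of the coefficients follows because everything is computed in Lusztig's integral form.

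\emph{Main obstacle.} I expect the overlapping case to be the hardest. The difficulty is tracking the $q$-powers through $q$-Jacobi so that exactly one monomial survives with a coefficient in $\Z[q,q\inv]$. I would first try the weight-counting argument via the Levendorskii–Soibelman theorem, which avoids tracking explicit coefficients.
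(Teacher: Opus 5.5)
Your Levendorskii--Soibelman fallback is exactly the paper's proof. However, the step you describe as ``weight considerations'' carries all the content, and weights alone do not settle it. For nested intervals $j<l\le k\le i$ one still has $\alpha_{[i,j]}+\alpha_{[k,l]}=\alpha_{[i,l]}+\alpha_{[k,j]}$, yet the commutator must vanish there.

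What the paper uses is the convex-order window: every factor $F_{[i_s,j_s]}$ of a monomial in the LS expansion satisfies $j\le j_s\le l$. The argument then runs as follows.
\begin{itemize}
\item A factor containing $\alpha_j$ must have $j_t=j$, and $\alpha_{[i_t,j]}>\alpha_{[i,j]}$ forces $i_t>i$.
\item This is impossible when $k\le i$, which kills all those configurations. Note that $\alpha_{[k,j]}\le\alpha_{[i,j]}$ there, so $F_{[i,l]}F_{[k,j]}$ is not admissible even though its weight is right.
\item When $k>i$, convexity forces $i_t=k$. For $j=l$ this root is $\alpha_{[k,l]}$ itself, which lies outside the window, so the commutator vanishes.
\item Otherwise the leftover weight ($0$ if $l=i+1$, $\alpha_{[i,l]}$ if $i\ge l>j$) must be a single root, because all remaining factors have $j_u=l$.
\end{itemize}
You should carry this out rather than appeal to weights.

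Your primary direct route is a genuinely different argument, and it would also produce $a_1=q^{-1}-q$ and $a_2=1$ explicitly. As written, though, it does not close.

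\emph{Case 4.} The $q$-Jacobi identity with $c_1=q^{-1}$, $c_2=q$, $b=c_1c_2=1$ yields $q^{-1}F_{[i,l]}F_{[k,j]}-qF_{[k,j]}F_{[i,l]}$. Matching weights does not collapse this to one monomial. You need $[F_{[k,j]},F_{[i,l]}]=0$, which is a strictly nested instance of your case 3.

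\emph{Case 3 (strictly nested).} This does not follow from splitting the larger root vector either. After peeling off commuting generators, writing $F_{[i,l-1]}=[F_{l-1},F_{[i,l]}]_q$ and pairing it with $F_{[k,l]}$ (where $l\le k<i$) gives
\[
[F_{[i,l-1]},F_{[k,l]}]=q\,[F_{[k,l-1]},F_{[i,l]}]+(q^2-1)\,F_{[k,l]}F_{[i,l-1]}.
\]
The right side contains an overlapping commutator. Your case 4 evaluates it only by assuming $F_{[i,l-1]}F_{[k,l]}=F_{[k,l]}F_{[i,l-1]}$, which is exactly the statement being proved.

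The circle can be broken in either of two ways:
\begin{itemize}
\item Feed case 4's Jacobi expansion back in; this gives $(1+q^2)[F_{[i,l-1]},F_{[k,l]}]=0$.
\item Peel commuting simple generators off both ends. This reduces to the $U_q^-(\mathfrak{sl}_4)$ identity $[[F_1,[F_2,F_3]_q]_q,F_2]=0$ with $(F_1,F_2,F_3)$ replaced by $(F_{l-1},F_{[k,l]},F_{k+1})$, which needs only the common-endpoint Serre-type relations.
\end{itemize}
Either way you must set up this joint induction explicitly.

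\emph{Common-endpoint cases.} These also need their own induction beyond the simple base case you state. For example, $[F_{[i,j]},F_{[i,l]}]_{q^{-1}}=0$ with $j<l<i$ reduces to a Serre relation between $F_{l-1}$ and the non-simple root vector $F_{[i,l]}$. Here ``apply $q$-Jacobi'' gives nothing directly, because the two inner entries coincide.
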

\begin{remark}
	In fact one can show that in \cref{levandoeskii-soilbelman-better} we have $a_1=q\inv-q, a_2=1$. Moreover,
	under the arrangement in \cref{pos-roots-pic}, \cref{rectangle-commutator-rootvec--Uq} gives a way to read off the suitable $q$-commutator of $F_{[i,j]},F_{[k,l]}$: draw a rectangle formed by the rows and columns of $i,j,k,l$, then the result is given by the other vertices of this rectangle above the diagonal.
\end{remark}
\begin{proof}
	First notice that $(\rtPos{i}{j}, \rtPos{k}{l})= \delta_{ik} +\delta_{jl}-\delta_{il}-\delta_{jk}$. 
	We claim that $[F_{[i,j]}, F_{[k,l]} ]_{q^{-(\rtPos{i}{j}, \rtPos{k}{l})}}\ne 0 $ only if $k>i$.
	To see this, by the Levendorskii--Soibelman formula \cite[Prop 5.5.2]{Leven-Soibelman-formula} we have \begin{align}\label{Levendorskii--Soibelman}
		[F_{[i,j]}, F_{[k,l]} ]_{q^{-(\rtPos{i}{j}, \rtPos{k}{l})}} \in \smashoperator{\bigoplus_{\rtPos{k}{l}>\rtPos{i_1}{j_1}\cdots>\rtPos{i_r}{j_r} > \rtPos{i}{j}}} \Z[q^{\pm1}]\cdot  F_{[i_1,j_1]}\cdots F_{[i_r,j_r]}
		.
	\end{align}
	Suppose $\rtPos{k}{l}>\rtPos{i_1}{j_1}\cdots>\rtPos{i_r}{j_r} > \rtPos{i}{j}$ appears on the right-hand side. We then have $\sum \rtPos{i_s}{j_s} = \rtPos{i}{j}+\rtPos{k}{l}$ in view of the weights of the root vectors. 
	By the definition of the convex order we also have $j\leq j_r\leq \cdots \leq j_1\leq l $.
	Since $\rtPos{i}{j} = \alpha_i+\alpha_{i-1} +\cdots +\alpha_{j}$, there exist $t$ such that $j_t=j$. The condition that $\rtPos{i_t}{j_t} > \rtPos{i}{j}$ implies that $i_t>i$. If $k\leq i$, this is not possible, and we must have $[F_{[i,j]}, F_{[k,l]} ]_{q^{-(\rtPos{i}{j}, \rtPos{k}{l})}} =0$.

	Now there are three cases to consider: $
		j\leq i<l-1<l\leq k ,\quad \text{ or } k\geq l = i+1 ,\quad \text{ or } k>i\geq l.
	$
	The first case is trivial, because in view of \cref{root-vec-Uq}, each simple root vector involved in the expression of $F_{[i,j]}$ commutes with each simple root vector involved in the expression of $F_{[k,l]}$, and we have $(\rtPos{i}{j}, \rtPos{k}{l})=0$. 
	For the last two cases, suppose $j_t=j$, we show that $i_t=k$. Suppose $i_t<k$, from the previous paragraph we know that $i_t\geq l$.
	Then any $\rtPos{i_u}{j_u}$ must be a sum of simple roots $\alpha_v$ with $v>l$. Since the order is convex, we must have $\rtPos{i_u}{j_u}\geq \alpha_{l+1}>\rtPos{k}{l}$, a contradiction. This forces $i_t=k$. 
	Consequently, in the second case the right-hand side of \cref{Levendorskii--Soibelman} can only be $\Z[q^{\pm1}]\cdot F_{[k,j]} $. 
	In the last case, the sum of the remaining $\rtPos{i_v}{j_v}$'s must be equal to $\sum_{l\leq v\leq i}\alpha_v$. Then there must be an $i_u=i$. Note that $\rtPos{i_u}{w}> \rtPos{k}{l}$ for all $w>l$. This forces $w=l$. In other words, the only combination of positive roots we can have is $\rtPos{k}{l}>\rtPos{i}{l}>\rtPos{k}{j}>\rtPos{i}{j}$, and the right-hand side of \cref{Levendorskii--Soibelman} can only be $ \Z[q^{\pm1}]\cdot F_{[i,l]} F_{[k,j]} $.
\end{proof}

\subsection{The Letzter map} \label{letztermap-section}
We next recall the Letzter map from \cite{Kolb-Yakimov-defining-rel-QSP}, \cite{kolb2026short-star-products} in our specific setting. 
Let $\kleinCSA \subset \Uq(\linearGraph)$ be the subalgebras generated by $\{ \dCoideal_p\mid  p\in\graphVertices_{\linearGraph^+}\}$.

\begin{definition}
Let $\Aalg \subset \Uq(\linearGraph)$ be the subalgebras generated by $ \kleinCSA\cup \{ F_i\mid  i\in \graphEdges_{\linearGraph}\}$. 
\end{definition}

The algebra $\Aalg$ is called the ``horospherical subalgebra'' in \cite{kolb2026short-star-products}.
Let also $\Upoly \subset  \Uq(\linearGraph) $ be the subalgebra generated by $\Aalg$ and $\langle E_i K_{i}\inv , K_i\inv \mid i\in \graphEdges_{\linearGraph}\rangle $. 
Note that $\coideal(\linearGraph)\subset \Upoly$.
The triangular decomposition of $\Uq(\linearGraph)$ (given by the PBW basis with respect to the convex order) implies $\Upoly\cong \Aalg \oplus \Upoly S $ as vector spaces, where $S= \operatorname{span}\{E_i K_{i}\inv , K_i\inv \}$.

\begin{definition}
The \emph{Letzter map} is the linear projection $\psi: \Upoly\to \Aalg$.
\end{definition}

\begin{definition}\label{grading-filtrations}
There is a graded algebra structure on  $\Aalg$ given by $\deg F_i =1 $ for all $i$ and $\deg \dCoideal_j =0$ for all $j$. 
Similarly, there is a filtration on $\coideal(\linearGraph)$ defined by $\deg B_i=1,\deg \dCoideal_j=0$. 
\end{definition}
The grading on $\Aalg $ is induced by the usual grading on the quantum group $\Uq(\linearGraph)$.

Recall Lusztig's left respectively right skew derivations $\delustig,\partial^R: \Uq^-\to \Uq^-$ defined by 
\begin{align}\label{skew-der-def}
	[E_j,u] = \frac{K_j\delustig_j( u) - \partial_j^R (u) K_j\inv}{q-q\inv} ,\qquad (u\in \Uq^-).
\end{align}
The following twisted Leibniz rules hold for $x,y\in \Uq^-$ with weights $-\mu$ and $-\nu$ respectively, we refer to \cite[3.1.3]{Lu-intro-quantum-groups} for details.
\begin{equation}
	\delL_s(xy)=\delL_s(x)\,y+q^{(\alpha_s,\mu)}x\,\delL_s(y),
	\qquad
	\delR_s(xy)=q^{(\alpha_s,\nu)}\delR_s(x)\,y+x\,\delR_s(y),
	\label{leibniz}
\end{equation}
The following results are the main tools to work with the Letzter map.

\begin{thm}[Properties of the Letzter map]\label{letzter-map}
	The Letzter map has the following properties:
	\begin{enumerate}
		\item $\psi $ is $\kleinCSA$-bilinear.
		\item $\psi(ab) = \psi(a\psi(b)),\; \forall a,b\in \Upoly$.
		\item The restriction $\letzterIso=\psi|_{\coideal(\linearGraph)}:\coideal(\linearGraph) \to \Aalg$ is an isomorphism of vector spaces.
		\item $\psi$ preserves the filtrations on $\coideal$ and $\Aalg$. Furthermore,  the associated graded map of $\letzterIso$ is an algebra isomorphism $\gr\letzterIso:\gr\coideal \cong \gr \Aalg$. 
		\item Let $u\in \Uq^-$ and $a=\letzterIso\inv(u)$, then we have \begin{align}
			\begin{gathered}
			\psi(B_i a) = F_i u + \frac{c_i}{q-q\inv} K_i\inv K_{-i} \delustig_{-i}(u),
			\\
			\psi(a B_i) = u F_i + \frac{c_i}{q-q\inv}  \partial^R_{-i}(u) K_i K_{-i}\inv
			\end{gathered},
			\quad \text{ where }
			c_i = \begin{cases}
				q\inv , & \text{ if } i=\half,\\
				q, & \text{ if } i=0,\\
				1, & \text{ otherwise},
			\end{cases}
			\label{letzter-lusztig-derivative}
		\end{align}
	\end{enumerate}
\end{thm}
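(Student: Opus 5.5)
Most of the statement is a specialisation of the general Letzter-map theory of \cite{Kolb-Yakimov-defining-rel-QSP} and \cite{kolb2026short-star-products}. I would therefore set up the decomposition $\Upoly = \Aalg\oplus \Upoly S$ carefully and read off (a)--(d) from it, and do a direct computation only for (e).

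For (a), $\kleinCSA$ is generated by the $\dCoideal_p = D_pD_{\negidx{p}}$, which are group-like. Conjugation by $\dCoideal_p$ rescales $F_i$, $E_iK_i^{-1}$ and $K_i^{-1}$ by powers of $q$. Hence $\Aalg$ and $\Upoly S$ are both stable under left and right multiplication by $\kleinCSA$, and $\psi$ is $\kleinCSA$-bilinear. For (b) I first claim that $\Upoly S$ is a left ideal of $\Upoly$; this holds by definition, since $\Upoly S$ is the span of products $uS$ with $u \in \Upoly$. Writing $b = \psi(b) + s$ with $s\in \Upoly S$, we get $ab = a\psi(b) + as$ with $as\in\Upoly S$. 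Applying $\psi$ gives $\psi(ab)=\psi(a\psi(b))$.

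For (d) and (c), I would filter $\Upoly$ by $F$-degree, giving $E_iK_i^{-1}$ and $K_i^{-1}$ degree $0$. Then $B_i = F_i + (\text{element of degree } 0 \text{ in } \Upoly S\text{-direction})$, where $E_{-i}K_i^{-1}$ is, up to a scalar and a factor in $\kleinCSA$, of the form $E_{-i}K_{-i}^{-1}\cdot\kleinCSA$-type. So $\psi$ does not raise degree, and the top-degree part of $\psi(B_{i_1}\cdots B_{i_m})$ is $F_{i_1}\cdots F_{i_m}$ times an element of $\kleinCSA$. The point to check is that the degree-zero correction terms commute past the $F$'s at the cost of lower-degree terms only, via $[E_j,F_i]$. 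From this, $\gr\letzterIso$ is an algebra map $\gr\coideal\to\gr\Aalg$ sending $B_i\mapsto F_i$ and $\dCoideal_p\mapsto\dCoideal_p$. It is surjective because $\Aalg$ is generated by these images. It is injective by comparing PBW-type spanning sets: the ordered monomials in the $B$-root vectors times $\kleinCSA$ span $\coideal$ (by \cite{Letzter-coideal-subalgebras}) and map to a basis of $\gr\Aalg$. A filtered map whose associated graded is bijective is itself bijective, which gives (c).

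For (e), take $u\in \Uq^-$ and $a=\letzterIso^{-1}(u)$. By (b), $\psi(B_ia)=\psi(B_i\psi(a))=\psi(B_iu)$. Now $B_iu = F_iu + E_{-i}K_i^{-1}u$ (with the scalar $c_i$ in the exceptional cases). I move $E_{-i}$ to the right using \eqref{skew-der-def}:
\[
E_{-i}u = uE_{-i} + \frac{K_{-i}\delustig_{-i}(u)-\delR_{-i}(u)K_{-i}^{-1}}{q-q^{-1}}.
\]
The terms $uE_{-i}$ and $\delR_{-i}(u)K_{-i}^{-1}$, after multiplying by $K_i^{-1}$ and commuting it through, lie in $\Upoly S$ and are killed by $\psi$. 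The surviving term is $K_i^{-1}K_{-i}\delustig_{-i}(u)$, giving the first formula. The right-hand version is symmetric: write $uB_i$ and commute $E_{-i}K_i^{-1}$ to the left, so that $\delR$ survives. The main obstacle is the bookkeeping here, namely verifying that the leftover terms with $K_{-i}^{-1}$, or with $E$ on the outside, really lie in $\Upoly S$ rather than only in $S\Upoly$, and tracking the constants $c_i$ (such as $q^{-1}$ for $B_0$ and $q$ for $B_{-\half}$) through the $q$-powers from commuting $K_i^{-1}$ past $u$. I would also double-check that the normalisation of the left ideal in the definition of $\psi$ matches the side used in each formula.
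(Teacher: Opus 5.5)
Your treatment of (a)--(d) is a reasonable sketch of what the paper simply cites from \cite{Kolb-Yakimov-defining-rel-QSP}. Your derivation of the first formula in (e) is the paper's own computation: use (b) to replace $a$ by $u=\psi(a)$, commute $E_{-i}$ past $u$ using the defining identity of the skew derivations, and discard the terms ending in $E_{-i}K_{-i}\inv$ or $K_{-i}\inv$.

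The gap is the second formula, which is \emph{not} the mirror image of the first. Property (b) lets you project the \emph{right} factor, $\psi(ab)=\psi(a\psi(b))$, because $\ker\psi=\Upoly S$ is a left ideal. But $\Upoly S$ is not a right ideal: for instance $E_iK_i\inv\in\Upoly S$, while $E_iK_i\inv F_i\equiv\frac{q^2}{q-q\inv}$ modulo $\Upoly S$. Hence $\psi(aB_i)\neq\psi(uB_i)$ in general. In fact $\psi(uB_i)=uF_i$ exactly, since $uE_{-i}K_i\inv$ already lies in $\Upoly S$, so no amount of commuting $E_{-i}$ to the left can produce a $\delR_{-i}$ term. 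The derivation term in $\psi(aB_i)$ comes entirely from $\psi\bigl((a-u)F_i\bigr)$, i.e.\ from the $\ker\psi$-part of $a=\letzterIso\inv(u)$ contracting with $F_i$. Your proposal gives no control over $a-u$.

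This is exactly why the paper does not argue by symmetry here: it cites \cite{kolb2026short-star-products} for the right-hand formula and later remarks that it is harder than the left one. A self-contained route is induction on $\deg u$. For $u=F_ju'$ with $a'=\letzterIso\inv(u')$, the left formula gives $\letzterIso\inv(u)=B_ja'-\frac{c_j}{q-q\inv}\letzterIso\inv\bigl(K_j\inv K_{-j}\delL_{-j}(u')\bigr)$. By (b), $\psi(B_ja'B_i)=\psi(B_j\psi(a'B_i))$. Apply the induction hypothesis to $\psi(a'B_i)$ and to the lower-degree correction term, then the left formula once more. All terms match by the twisted Leibniz rules and $\delL_{-j}\delR_{-i}=\delR_{-i}\delL_{-j}$, except the contributions present only when $j=-i$. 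Matching those forces the constant in front of $\delR_{-i}(u)$ to be $c_{-i}$, which differs from $c_i$ when $i=\pm\half$. Indeed, a direct computation gives $\psi(B_{-\half}B_{\half})=F_{-\half}F_{\half}+\frac{1}{q-q\inv}K_{\half}K_{-\half}\inv$, which disagrees with the displayed formula with $c_{\half}=q\inv$. Your constant bookkeeping should therefore aim at $c_{-i}$, not at the formula exactly as printed.
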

\begin{proof}
	See \cite[Lem. 2.10, 2.11]{Kolb-Yakimov-defining-rel-QSP} for (a)--(d). For the first identity of \cref{letzter-lusztig-derivative} we compute  \begin{align*}
		\psi(B_i u) &= \psi(F_i + c_i K_i\inv E_{-i} u)
		= \psi(F_i u) +  c_i K_i\inv\frac{K_{-i}\delustig_{-i}(u) - \partial_{-i}^R(u) K_{-i}\inv}{q-q\inv}
		,
	\end{align*}
	where in the second equality we used the defining property \cref{skew-der-def} of $\delustig_{j}$ and $\partial_j^R$.
	Now since $F_i \psi(u) \in \Aalg$ and  $\partial_j^R (u) K_j\inv \in \Upoly \operatorname{span}\{E_i K_{i}\inv , K_i\inv \}=\ker \psi, $ 
	by definition of the Letzter map we conclude that $
		\psi(B_i u) = F_i \psi(u) + c_i  K_i\inv \frac{K_{-i}\delustig_{-i} (u) }{q-q\inv}
		$, as desired. Finally, the second equality of \cref{letzter-lusztig-derivative} is the special case of \cite[Lem. 4.3, Thm. 4.5]{kolb2026short-star-products}.
\end{proof}

By \cref{letzter-map}, $\letzterIso$ preserves $\kleinCSA$-weights.
Note that in \cref{pos-roots-pic}, if $\rtPos{i}{j}\in \posrootSet_\succ$, then the $\kleinCSA$-weights of $F_{[i,j]} $ and $F_{[i,1-j]}$ are equal, and the $\kleinCSA$-weights of $F_{[-j,-i]} $ and $F_{[j-1,-i]}$ are also equal. Furthermore, these two $\kleinCSA$-weights are each other's negative. Consequently, $\posrootSet_\circ$ give exactly the root vectors of zero $\kleinCSA$-weight. The exact same statement holds in the $\linearGraph_r'$ case. 

\section{Cartan subalgebras}\label{csa-coideal-both}
Let $\linearGraph\in \{\linearGraph_r,\linearGraph_r'\}$. 
In this section we construct uniformly commutative Cartan subalgebras of $\coideal(\linearGraph)$. Various constructions of Cartan subalgebras existed already in \cite{Watanabe-crystal-basis-quantum-symmetric-pair} and \cite{Le-cartan-coideal}.
In the case $\linearGraph=\linearGraph_r'$, we have a different choice of generators of $\coideal(\linearGraph)$ than \cite{Watanabe-crystal-basis-quantum-symmetric-pair} and \cite{Le-cartan-coideal}, whence their formulas do not yield commuting subalgebras in our setup. Consequently, we recover the Cartan subalgebra of \cite{Le-cartan-coideal} in the $\linearGraph_r$ case and obtain a new commutative subalgebra in the $\linearGraph_r'$ case.

\subsection{The star product}\label{star-prod}
We derive an auxiliary commuting result which is cleaner when formulated with the associative star product on $\Aalg$  from \cite{kolb2026short-star-products} defined by \[
	u*v=\letzterIso \bigl(\letzterIso ^{-1}(u)\letzterIso ^{-1}(v)\bigr).
\] 
Then $\letzterIso :\coideal(\linearGraph)\to(\Aalg,*)$ is an algebra isomorphism.
For $u\in\Uq^-$, \cref{letzter-lusztig-derivative} is equivalent to
\begin{align}
F_s*u
&=F_su+\frac{c_s}{q-q\inv}\,\kCoideal_s\,\delustig_{-s}(u) ,
&
u*F_s
&=uF_s+\frac{c_s}{q-q\inv}\,\partial^R_{-s}(u)\kCoideal_s\inv.
\label{left-right-simple-star}
\end{align}
Here $c_s$ is as in \cref{letzter-lusztig-derivative}. The formulas extend from $\Uq^-$ to $\Aalg$ by $\kleinCSA$-bilinearity. For $s\in\graphEdges_{\linearGraph^+}$, denote $\kCoideal_{-s}=\kCoideal_s\inv$. 
The following auxiliary result is \cite[Lem. 3.5, Lem. 3.8]{Ber-Zel-string-bases-UqAr}.
\begin{lem}\label{del-i-Fab}
	Let $a>b\in \Z$, we have
	\begin{align}
	\delustig_s(F_{[a,b]}) & =
	\begin{cases}
		(q\inv-q)\,F_{[a-1,b]}, & \text{if } s=a \\
		0, & \text{otherwise}.
	\end{cases}
	&
	\partial_s^R(F_{[a,b]}) & =
	\begin{cases}
		(q\inv-q)\, F_{[a,b+1]}, & \text{if } s=b \\
		0, & \text{otherwise}.
	\end{cases}
	\label{root-vec-skew-der}
	\end{align}
	For $a=b$, we have $\delustig_a F_{a} = \partial^R_a F_{a}=1$.
\end{lem}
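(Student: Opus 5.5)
The plan is to argue by induction on the length $a-b$, using the twisted Leibniz rules \cref{leibniz} together with the recursive description \cref{root-vec-Uq} of the root vectors. For the base case $a=b$, the identities $\delustig_a F_a=\partial^R_a F_a=1$ follow directly from \cref{skew-der-def}: $[E_a,F_a]=\frac{K_a-K_a\inv}{q-q\inv}$, so both derivatives are $1$. For $s\ne a$ we have $[E_s,F_a]=0$, so both derivatives vanish. Since $\Uq^-$ is a domain with a well-defined weight grading, $\delustig_s$ and $\partial^R_s$ are characterised by \cref{skew-der-def}.

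For the inductive step I write $F_{[a,b]}=[F_b,F_{[a,b+1]}]_q=F_bF_{[a,b+1]}-qF_{[a,b+1]}F_b$; this is the reading of \cref{root-vec-Uq} with the innermost bracket taken last. Using \cref{leibniz} with $\mu=\alpha_b$ and $\nu=\rtPos{a}{b+1}$ gives
\[
\delL_s(F_{[a,b]})=\delL_s(F_b)F_{[a,b+1]}+q^{(\alpha_s,\alpha_b)}F_b\delL_s(F_{[a,b+1]})-q\,\delL_s(F_{[a,b+1]})F_b-q^{1+(\alpha_s,\rtPos{a}{b+1})}F_{[a,b+1]}\delL_s(F_b).
\]
I then split into cases on $s$.
\begin{enumerate}
\item If $s\notin[b,a]$, every term vanishes by induction.
\item If $s=b$, the surviving terms are $F_{[a,b+1]}-q^{1+(\alpha_b,\rtPos{a}{b+1})}F_{[a,b+1]}$. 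Since $(\alpha_b,\rtPos{a}{b+1})=-1$, this is $0$.
\item If $b<s<a$, the derivatives of $F_{[a,b+1]}$ vanish by induction, and $\delL_s(F_b)=0$.
\item If $s=a$, induction gives $\delL_a F_{[a,b+1]}=(q\inv-q)F_{[a-1,b+1]}$; in the subcase $a=b+1$ this is $1$ instead. The pairing is $(\alpha_a,\alpha_b)=-\delta_{a,b+1}$, so the result is $(q\inv-q)[F_b,F_{[a-1,b+1]}]_q=(q\inv-q)F_{[a-1,b]}$ when $a>b+1$.
\end{enumerate}
In the subcase $a=b+1$ we instead get $q\inv F_b-qF_b=(q\inv-q)F_{[a-1,b]}$ with $F_{[b,b]}=F_b$, as required.

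For $\partial^R$ I use the other bracketing. Since the root vectors are independent of how the nested $q$-commutator is associated, I write $F_{[a,b]}=[F_{[a-1,b]},F_a]_q$. This identity can also be checked by a separate short induction using the Serre relations. I then run the mirror argument: now $s=a$ gives cancellation and $s=b$ produces $(q\inv-q)F_{[a,b+1]}$.

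The main obstacle is the bookkeeping of the $q$-exponents in \cref{leibniz}, including the sign conventions in $(\alpha_s,\mu)$ with $\mu$ the negative of the weight. Since this is only a citation of \cite{Ber-Zel-string-bases-UqAr}, I would check the exponents against the rank-two case $a=b+1$ computed directly, and use that to fix the conventions before running the induction.
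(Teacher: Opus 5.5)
Your proof is correct, but it does not follow the paper, which gives no argument for this lemma and simply cites \cite[Lem.~3.5, Lem.~3.8]{Ber-Zel-string-bases-UqAr}.

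\textbf{What your route buys.} Your induction via the twisted Leibniz rules \cref{leibniz} and the recursion \cref{root-vec-Uq} is self-contained. It checks the constants $q\inv-q$ and the vanishing pattern in the paper's own normalisation \cref{skew-der-def} of $\delL_s,\delR_s$ and its bracket convention $[x,y]_q=xy-qyx$. The citation leaves this compatibility implicit, and skew derivations are normalised differently across the literature. The citation, in turn, buys brevity and a reference.

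\textbf{The left derivative.} Your four cases are right, including the exponents $(\alpha_b,\rtPos{a}{b+1})=-1$ and $(\alpha_a,\alpha_b)=-\delta_{a,b+1}$.

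\textbf{The right derivative.} The mirror argument with $F_{[a,b]}=[F_{[a-1,b]},F_a]_q$ also goes through. Three small remarks:
\begin{enumerate}
\item The reassociation needs only that non-adjacent $F$'s commute, not the cubic Serre relations. If $xz=zx$ then $[x,[y,z]_q]_q=[[x,y]_q,z]_q$ identically.
\item You can skip the reassociation entirely. Keep $F_{[a,b]}=[F_b,F_{[a,b+1]}]_q$ and apply the right Leibniz rule.
\begin{itemize}
\item For $s=b$ this gives directly $\delR_b F_{[a,b]}=q\inv F_{[a,b+1]}-qF_{[a,b+1]}$.
\item For $s=b+1$ it gives $F_bD-DF_b$, where $D=\delR_{b+1}F_{[a,b+1]}$. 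Here $D=1$ if $a=b+1$ and $D=(q\inv-q)F_{[a,b+2]}$ otherwise. This vanishes because $F_b$ commutes with $F_{b+2},\dots,F_a$.
\end{itemize}
\item Uniqueness of $\delL_s,\delR_s$ comes from the triangular decomposition of $\Uq$, i.e.\ the independence of $K_sU^-$ and $U^-K_s\inv$. It does not come from $U^-$ being a domain.
\end{enumerate}

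Finally, your argument only uses that consecutive edge labels are adjacent. So it also covers the half-integer labels of $\linearGraph_r'$, which is where the lemma is actually applied, e.g.\ in \cref{Fj*Ri=Ri*Fj}.
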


\begin{lem}\label{Fj*Ri=Ri*Fj}
	$F_{\pm j} * F_{[i,-i]} = F_{[i,-i]} * F_{\pm j}$ for all $ i> j \in \graphEdges_{\linearGraph^+}$.
\end{lem}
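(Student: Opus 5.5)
We compute both sides with the formulas \cref{left-right-simple-star} and compare them term by term. Write $u=F_{[i,-i]}\in \Uq^-$ and fix $s=\pm j$ with $i>j$, $j\in\graphEdges_{\linearGraph^+}$. By \cref{left-right-simple-star},
\[
F_s*u-u*F_s=[F_s,u]+\frac{c_s}{q-q\inv}\Bigl(\kCoideal_s\,\delustig_{-s}(u)-\partial^R_{-s}(u)\kCoideal_s\inv\Bigr),
\]
so it suffices to show two things: the ordinary commutator $[F_s,F_{[i,-i]}]$ vanishes, and both derivative terms vanish.

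For the derivative terms we use \cref{del-i-Fab}. Since $i>-i$, the only nonzero left derivative of $F_{[i,-i]}$ is $\delustig_{i}$, and the only nonzero right derivative is $\partial^R_{-i}$. Now $-s=\mp j$ with $|j|<i$. Hence $-s\neq i$, so $\delustig_{-s}(u)=0$, and $-s\ne -i$, so $\partial^R_{-s}(u)=0$.

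This leaves the commutator. The root $\alpha_s$ with $s=\pm j$ lies strictly inside the interval $[-i,i]$, so it is an interior simple root of $\rtPos{i}{-i}$ (for $\linearGraph_r'$ the edges are half-integers but the same containment holds). We check that $(\alpha_s,\rtPos{i}{-i})=0$, which holds because $s$ is neither endpoint. So the required statement is $[F_s,F_{[i,-i]}]=[F_s,F_{[i,-i]}]_{q^{0}}=0$. To prove it we apply \cref{rectangle-commutator-rootvec--Uq} to the pair $\rtPos{s}{s},\rtPos{i}{-i}$, ordering them by the convex order.

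\begin{itemize}
\item If $\rtPos{s}{s}<\rtPos{i}{-i}$, i.e.\ $s=-i$, which is impossible here, or more generally in the lexicographic order on $(j,i)$ with $s>-i$, we have $\rtPos{i}{-i}<\rtPos{s}{s}$. We then apply the proposition with $(i,j)\to(i,-i)$ and $(k,l)\to(s,s)$. The nonzero cases require either $s>i\ge s>-i$, which is contradictory, or $s=i+1$, which is false. So the $q^{0}$-commutator is $0$.
\end{itemize}

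The main obstacle is this bookkeeping: matching the hypotheses of \cref{rectangle-commutator-rootvec--Uq} precisely, including the orientation of the $q$-commutator and the sign of the exponent, and verifying that the edge indexing for $\linearGraph_r'$ (half-integers, with $c_s$ special at $s=\half$) fits the same argument. If the proposition's cases prove awkward, the fallback is a direct argument. Using \cref{root-vec-Uq}, write $F_{[i,-i]}$ as an iterated $q$-commutator, and use the Serre relations together with the standard fact that $F_s$ commutes with $[F_{s-1},[F_s,F_{s+1}]_q]_q$ (a rank-two $\mathfrak{sl}_3$-type check) to push $F_s$ through. In the $\linearGraph_r'$ case with $j=\half$, note that $c_s$ does not matter, since the derivative terms vanish anyway.
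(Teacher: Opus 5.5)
Your proposal is correct and follows the paper's proof. The skew-derivative terms vanish by \cref{del-i-Fab}, so both star products reduce to ordinary products, and since $\rtPos{i}{-i}<\alpha_{\pm j}$ in the convex order, \cref{rectangle-commutator-rootvec--Uq} gives $F_{\pm j}F_{[i,-i]}=F_{[i,-i]}F_{\pm j}$. The fallback is unnecessary, and the opening of your bullet should simply read: since $\pm j>-i$, we have $\rtPos{i}{-i}<\alpha_{\pm j}$.
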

\begin{proof}
	Denote $R_i= F_{[i,-i]}$.
	By \cref{del-i-Fab} we have $\delustig_{\pm j} R_i = \partial^R_{\pm j} R_i$. Then we have $
		F_{\pm j} * R_i = F_{\pm j} R_i  ,\; R_i * F_{\pm j} = R_i F_{\pm j}. 
	$
	Now note that in the convex order we have $\alpha_{\pm j} > \rtPos{i}{-i}$, hence by \cref{rectangle-commutator-rootvec--Uq}, we have $F_{\pm j} R_i = R_i F_{\pm j}$, which concludes the proof.
\end{proof}

\newcommand{\starCommutator}[1]{\{#1\}}
\begin{prop}\label{Ri-Rj-commute}
	Consider $\linearGraph=\linearGraph_r'$. 
	Let $ i> j \in \graphEdges_{\linearGraph^+}$. Then we have \[
		F_{[i,-i]} *F_{[j,-j]} = F_{[j,-j]} * F_{[i,-i]}.
	\]
\end{prop}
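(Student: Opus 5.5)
The plan is to deduce the claim from \cref{Fj*Ri=Ri*Fj} by showing that $F_{[j,-j]}$ is a $*$-polynomial in the generators $F_{\pm k}$ with $k\leq j$, with coefficients in $\kleinCSA$. By \cref{Fj*Ri=Ri*Fj}, $F_{\pm k}*F_{[i,-i]}=F_{[i,-i]}*F_{\pm k}$ for every $k\leq j<i$. The elements of $\kleinCSA$ also $*$-commute with $F_{[i,-i]}$, because the latter has zero $\kleinCSA$-weight and $\psi$ is $\kleinCSA$-bilinear. Associativity of $*$ then gives $F_{[j,-j]}*F_{[i,-i]}=F_{[i,-i]}*F_{[j,-j]}$.

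For $j\in\graphEdges_{\linearGraph^+}$, let $\Aalg_{\leq j}\subset\Aalg$ be the ordinary subalgebra generated by $\kleinCSA$ and $\{F_{\pm k}\mid k\leq j\}$. Since $F_{[j,-j]}$ is an iterated $q$-commutator of $F_{-j},\dots,F_j$ by \cref{root-vec-Uq}, it lies in $\Aalg_{\leq j}$. The key step is to prove that $\Aalg_{\leq j}$ equals the $*$-subalgebra $\Aalg^*_{\leq j}$ generated by the same elements.

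\emph{Inclusion $\Aalg^*_{\leq j}\subseteq\Aalg_{\leq j}$.} Use \cref{left-right-simple-star}: for $u\in\Aalg_{\leq j}$ and $k\leq j$,
\[
F_{\pm k}*u=F_{\pm k}u+\tfrac{c_{\pm k}}{q-q\inv}\,\kCoideal_{\pm k}\,\delustig_{\mp k}(u).
\]
Here $\kCoideal_{\pm k}\in\kleinCSA$. Moreover, $\delustig_{\mp k}$ preserves the subalgebra of $\Uq^-$ generated by $F_{\pm l}$, $l\leq j$, by the twisted Leibniz rule \cref{leibniz} and $\delustig_s F_t=\delta_{st}$. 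We extend by $\kleinCSA$-bilinearity, which keeps us inside $\Aalg_{\leq j}$ because $\kleinCSA$ normalises each $F_t$.

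\emph{Inclusion $\Aalg_{\leq j}\subseteq\Aalg^*_{\leq j}$.} Argue by induction on degree, using the grading of \cref{grading-filtrations}. The displayed formula shows $F_{\pm k}*u=F_{\pm k}u+(\text{lower degree})$. Hence a monomial $F_{t_1}\cdots F_{t_m}$ differs from $F_{t_1}*\cdots*F_{t_m}$ by an element of $\Aalg_{\leq j}$ of degree $<m$, by the first inclusion's argument, and that element lies in $\Aalg^*_{\leq j}$ by induction. This is just the $\gr\letzterIso$ isomorphism of \cref{letzter-map}(d) restricted to a subalgebra.

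The main obstacle I expect is the closure of the generated subalgebra of $\Uq^-$ under $\delustig_{\mp k}$. It needs some care with the $\kleinCSA$-twists and with the edges $\pm\half$, where $c_s\neq1$ and the relations \cref{SR-Gamma-prime} are nonstandard. These only affect scalars, though, so the argument above should go through verbatim for $\linearGraph_r'$.
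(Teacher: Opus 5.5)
Your argument is correct, and its skeleton is the paper's. Both reduce the claim to \cref{Fj*Ri=Ri*Fj}, plus the fact that $\kleinCSA$ $*$-commutes with $F_{[i,-i]}$, by writing $F_{[j,-j]}$ as a $*$-polynomial in $\kleinCSA$ and the $F_{\pm k}$ with $k\le j$.

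The difference is in how that $*$-polynomial is obtained. The paper writes it down explicitly via the recursion \cref{Ri-as-starcommutator}:
$R_\half=\{F_{-\half},F_\half\}_q+[\kCoideal_\half;0]$ and $R_j=\{\{F_{-j},R_{j-1}\}_q,F_j\}_q+R_{j-1}\kCoideal_j^{-1}$.
It then inducts on $j$, expanding $\{R_i,R_j\}_1$ term by term and using the induction hypothesis $R_i*R_{j-1}=R_{j-1}*R_i$.

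You instead show abstractly that the ordinary subalgebra and the $*$-subalgebra generated by $\kleinCSA\cup\{F_{\pm k}\}_{k\le j}$ coincide. Equivalently, $\letzterIso^{-1}(\Aalg_{\le j})$ is the subalgebra of $\coideal(\linearGraph)$ generated by $\kleinCSA$ and the $B_{\pm k}$, $k\le j$. Both of your inclusions are sound. Closure under left $*$-multiplication by the generators suffices by associativity, and your degree induction is the filtered argument behind \cref{letzter-map}(d).

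Your route needs no computation and applies verbatim to $\linearGraph_r$, which \cref{cartan-commute-both} also needs although the proposition is stated only for $\linearGraph_r'$. The paper's route produces the explicit star-commutator formulas \cref{Ri-as-starcommutator}. It reuses these later, both to compute $R_j*R_i=R_jR_i$ and in the appendix computation of $R_i*R_i$.

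The obstacle you anticipate does not arise. The relations \cref{SR-Gamma-prime} hold in $\coideal(\linearGraph_r')$, whereas $\delustig_s$ acts on $\Uq^-$, which keeps its standard presentation. The constants $c_{\pm k}$ only rescale the correction term.
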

\begin{proof}
	Let $\starCommutator{x,y}_q=x*y-qy*x $. Denote $R_i=F_{[i,-i]}$. By \cref{left-right-simple-star}, we have $R_i=\starCommutator{F_{[i-1,-i]} ,F_ i}_q + c_i R_{i-1}\kCoideal_i\inv $, expanding again $F_{[i-1,-i]}$, we get
	\begin{align}
		R_\half & = \starCommutator{F_{-\half},F_\half}_q
		+[ \kCoideal_\half ; 0 ]
		,
		& R_{i} = 
		\starCommutator{\starCommutator{F_{-i},R_{i-1}}_q,F_i}_q
		+ R_{i-1} \kCoideal_i\inv
		\qquad
		(i>\half)
		\label{Ri-as-starcommutator}
		.
	\end{align}
	Fix $i>\half$, we prove by induction on $j$. For the base case $j=\half$, by \cref{Ri-as-starcommutator} we have 
	\begin{align*}
		\starCommutator{R_{i},R_{\half}}_1&= R_i*Y* X -q R_i*Y*X -qR_i*X*Y   + q^2  R_i*X*Y
		\\
		&~~ -Y* X*R_i + q Y*X*R_i +q X*Y*R_i  - q^2 X*Y*R_i
		+  R_i[\kCoideal_\half;0] +[\kCoideal_\half;0] R_i,
	\end{align*}
	where $X=F_\half$, $Y=F_{-\half}$. Now by \cref{Fj*Ri=Ri*Fj}, $R_i$ commutes with $X,Y$ under the star product. Moreover, by definition we also have $[R_i ,\kCoideal_s]=0$ for all $i,s$. This implies that  $\starCommutator{R_{i},R_{\half}}_1=0$. 
	Now we do the induction step. Let $X=F_{j},Y=F_{-j}, R=R_{j-1}$, we have  \begin{align*}
		\starCommutator{R_{i},R_{j}}_1&= R_i*Y* R*X -q R_i*R*Y*X -qR_i*X*Y*R   + q^2  R_i*X*R*Y
		\nonumber\\
		&~~ -Y* R*X*R_i + q R*Y*X*R_i +q X*Y*R*R_i  - q^2 X*R*Y*R_i
		\nonumber\\
		&~~ +R_i*R*\kCoideal_j\inv - R*\kCoideal_j\inv*R_i
	\end{align*}
	By induction hypothesis, we have $R_i*R= R*R_i$, hence the last two terms cancel each other. By \cref{Fj*Ri=Ri*Fj} and the induction hypothesis, we have $R_i *X = X*R_i $ and $R_i*Y= Y*R_i$, which implies that all terms cancel and $\starCommutator{R_{i},R_{i-1}}_1=0$.
\end{proof}

\subsection{Uniform definition of Cartan subalgebras}

\begin{definition}\label{csa-def}
	Let $\linearGraph\in \{\linearGraph_r,\linearGraph_r'\}$. The Cartan subalgebra $\csaCoideal\subset \coideal(\linearGraph)$ is the subalgebra generated by $\kleinCSA$ together with $\{\letzterIso\inv (F_{[i,-i]})\mid i\geq 0\}$. 
\end{definition}

\begin{thm}\label{cartan-commute-both}
	The subalgebra $\csaCoideal$ is commutative.
	Moreover, there is an algebra isomorphism
	\[
	\csaCoideal \cong \begin{cases}
	\funcField [x_0^{\pm1},\ldots,x_{r}^{\pm1},y_1,\ldots,y_{r}] , & \text{ if } \linearGraph=\linearGraph_r',\\
	\funcField [x_0^{\pm1},\ldots,x_{r}^{\pm1},y_0,\ldots,y_{r}] , & \text{ if } \linearGraph=\linearGraph_r.
	\end{cases}
	\]
\end{thm}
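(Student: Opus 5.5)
Since $\letzterIso:\coideal(\linearGraph)\to(\Aalg,*)$ is an algebra isomorphism, I would prove both claims after transporting along $\letzterIso$. The image $\letzterIso(\csaCoideal)$ is the $*$-subalgebra of $\Aalg$ generated by $\kleinCSA$ and the elements $R_i=F_{[i,-i]}$ for $i\in\graphEdges_{\linearGraph^+}$. Here the index runs over $i\geq 0$ for $\linearGraph_r$, giving $r+1$ elements, and over $i\in\{\half,1+\half,\dots\}$ for $\linearGraph_r'$, giving $r$ elements.

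\textbf{Commutativity.} Three relations are needed.
\begin{enumerate}
\item $\kleinCSA$ is commutative.
\item $\kleinCSA$ commutes with each $R_i$. The $\kleinCSA$-weight of $F_{[i,-i]}$ is zero, since $\posrootSet_\circ$ consists exactly of the roots of weight zero. By $\kleinCSA$-bilinearity of $\psi$ (\cref{letzter-map}(a)), $h*R_i=hR_i=R_ih=R_i*h$.
\item $R_i*R_j=R_j*R_i$. For $\linearGraph_r'$ this is \cref{Ri-Rj-commute}. For $\linearGraph_r$ I would repeat that proof verbatim. The recursion \cref{Ri-as-starcommutator} now starts from $R_0=F_0$, which is a simple root vector. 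This makes the base case easier: for $i>0$, $R_i*F_0=F_0*R_i$ by \cref{Fj*Ri=Ri*Fj}. The induction step uses only \cref{Fj*Ri=Ri*Fj}, the zero weight of $R_i$, and the induction hypothesis, with the constant $c_0=q$ playing no role.
\end{enumerate}

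\textbf{Polynomial structure.} I would use the filtration together with \cref{letzter-map}(d). Set $x_p=\dCoideal_p$ and $y_i=\letzterIso\inv(R_i)$; by the previous step the evident map from the Laurent polynomial ring to $\csaCoideal$ is a surjective algebra homomorphism. For injectivity, pass to associated graded algebras. Give $y_i$ the degree $\deg R_i$, namely the height of $\rtPos{i}{-i}$. By \cref{letzter-map}(d), $\gr\letzterIso$ sends the image of $\prod_p x_p^{a_p}\prod_i y_i^{n_i}$ to $\prod_p \dCoideal_p^{a_p}\prod_i R_i^{n_i}$ in $\gr\Aalg=\Aalg$, up to ordering. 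These products are linearly independent in $\Aalg$, because $\Aalg\cong \kleinCSA\otimes \Uq^-$. Moreover, the $R_i$ are distinct PBW root vectors, so monomials in them are part of the PBW basis of $\Uq^-$ (after fixing the convex order). It also uses that $\kleinCSA$ is a Laurent polynomial ring in the $\dCoideal_p$. A standard leading-term argument then shows that a nontrivial relation among the monomials would have a nonzero top-degree part mapping to a nontrivial relation in $\gr\Aalg$, which is impossible.

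\textbf{Expected obstacle.} The main delicate point is the leading-term argument. In $\gr\Aalg$, products of $R_i$'s taken in different orders must agree, at least up to a scalar. This follows from \cref{rectangle-commutator-rootvec--Uq} applied to $\rtPos{i}{-i}$ and $\rtPos{j}{-j}$, since these roots are nested and the bilinear pairing gives $q$-commutation with only one extra term. The alternative is to fix an order and argue directly with PBW monomials. Either way, the relevant monomials are exactly the PBW monomials supported on $\posrootSet_\circ$.
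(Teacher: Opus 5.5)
Your proposal is correct and follows the paper's route: commutativity via \cref{Ri-Rj-commute}, and the polynomial structure via algebraic independence of the $\dCoideal_p$ and $F_{[i,-i]}$ transported through $\gr\letzterIso$. You supply details the paper leaves implicit: rerunning \cref{Ri-Rj-commute} for $\linearGraph_r$ (the paper only states it for $\linearGraph_r'$) with base case $R_0=F_0$ via \cref{Fj*Ri=Ri*Fj}, and spelling out the leading-term argument.

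One small correction to your obstacle paragraph: for the nested roots $\rtPos{i}{-i},\rtPos{j}{-j}$ the pairing is zero and \cref{rectangle-commutator-rootvec--Uq} lands in the ``otherwise'' case. So $R_iR_j=R_jR_i$ exactly, with no extra term, and the ordering issue disappears.
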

\begin{proof}
	The commutativity of $\csaCoideal$ follows immediately from \cref{Ri-Rj-commute}. The isomorphism follows from the algebraic independence of the $\dCoideal_i$'s and $F_{[i,-i]}$'s. 
\end{proof}

We next find explicit generators of $\csaCoideal$ in both cases.
\subsection{Explicit generators: the $\linearGraph_r$ case}
We show that the Cartan subalgebra of $\Uq'(\linearGraph_r)$ constructed in \cite{Le-cartan-coideal} coincides with  \cref{csa-def}. 

Define inductively the following elements in $\Uq'(\linearGraph_r)$:
\begin{align}
	\label{def-XYZ}
	Y_i \coloneqq [Z_{i-1},B_{-i}]_{q\inv } ,
	\;\;
	X_ i \coloneqq [Z_{i-1},B_{i}]_{q},
	\; (i>0)
	,\qquad
	Z_i \coloneqq	\begin{cases}
		[B_{i}, Y_i]_{q\inv } & \text{ if } i>0,\\
		B_0 &   \text{ if }  i =0,
	\end{cases}
	.
\end{align}
Let $H'\subset \Uq'(\linearGraph_r)$ be the subalgebra generated by $\{Z_i, \hat{D}_j\mid 0\leq i\leq r, j\in\graphVertices_{\linearGraph_r} \}$. We have the following observation about the Cartan generators.
\begin{prop}\label{key-observation-Zi}
	Let $ \Uq^-(\linearGraph)\otimes  \Uq^0(\linearGraph)\otimes \Uq^+(\linearGraph) \cong \Uq(\linearGraph)$ be the triangular decomposition of $\Uq(\linearGraph)$. Then for $
	i\geq 0$, the PBW expansion of $Z_i$ has no summands in $ \Uq^0(\linearGraph)$.
\end{prop}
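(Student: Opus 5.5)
We prove the statement by induction on $i$, using a weight bookkeeping that records only the $\Uq^0$-component of the PBW expansion. Write every element $u\in\Uq(\linearGraph)$ in the triangular decomposition as $u=\sum u_{-}u_0u_{+}$ with $u_-\in\Uq^-$, $u_0\in \Uq^0$, $u_+\in \Uq^+$. Let $\pi_0:\Uq(\linearGraph)\to\Uq^0(\linearGraph)$ be the projection onto the summand $1\otimes\Uq^0\otimes 1$, i.e. onto the terms with $u_-$ and $u_+$ both scalars. The claim is $\pi_0(Z_i)=0$ for all $i\ge 0$.

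\emph{Base case $i=0$.} Here $Z_0=B_0=F_0+q\inv E_0K_0\inv$. Each of its two terms has a nontrivial factor in $\Uq^-$ or in $\Uq^+$. Hence $\pi_0(B_0)=0$.

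\emph{Induction step.} Assume $\pi_0(Z_{i-1})=0$ and expand $Z_i=[B_i,[Z_{i-1},B_{-i}]_{q\inv}]_{q\inv}$. Since $B_{\pm i}=F_{\pm i}+E_{\mp i}K_{\pm i}\inv$, the expression $Z_i$ is a sum of terms $\pm q^{a}\, x\,Z_{i-1}\,y$, $\pm q^{a}\, x\,y\,Z_{i-1}$ and $\pm q^{a}\, Z_{i-1}\,x\,y$, where $x$ and $y$ each range over $\{F_{\pm i}, E_{\mp i}K_{\pm i}\inv\}$. The essential tool is the $Q$-grading of $\Uq(\linearGraph)$ by the root lattice: $\pi_0(u)$ can be nonzero only for the weight-$0$ component of $u$.
\begin{itemize}
\item[(1)] We first check that every summand of $Z_{i-1}$ carries a weight in the span of $\alpha_s-\alpha_{-s}$ and $\alpha_0$ for $|s|\le i-1$. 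This follows from the inductive shape of $Z_{i-1}$.
\item[(2)] To obtain total weight $0$, the outer factors must contribute opposite weights. So one factor is $F_{\pm i}$ and the other is $E_{\pm i}K\inv$ with the same index, or else the factors pair with components of $Z_{i-1}$ supported near $\alpha_{\pm(i-1)}$.
\item[(3)] Rewriting such weight-zero words in PBW order uses the commutation relation $[E_j,F_j]=(K_j-K_j\inv)/(q-q\inv)$. Producing a Cartan term from $Z_{i-1}$ requires a pure $\Uq^0$ part of $Z_{i-1}$, which vanishes by hypothesis, or a contraction of $Z_{i-1}$'s $F$/$E$ parts against $E_{\pm i},F_{\pm i}$. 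By weights, the latter contraction only involves roots $\alpha_{\pm(i-1)}$ adjacent to $\alpha_{\pm i}$.
\item[(4)] The terms where $F_{\pm i}$ and $E_{\pm i}$ contract with each other produce $(K_{\pm i}-K_{\pm i}\inv)\,Z_{i-1}$-type contributions. These appear in the $q\inv$-commutator with the same coefficient from both orderings and cancel. This cancellation is the content of choosing $q\inv$ twice in \cref{def-XYZ}.
\end{itemize}

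\emph{Simplification.} An alternative, and probably cleaner, route is available. Use the Letzter map and show $Z_i=\letzterIso\inv(F_{[i,-i]})$ up to an element of $\kleinCSA$-degree lower terms. The expansion of $F_{[i,-i]}$ has no $\Uq^0$ part, and by \cref{letzter-lusztig-derivative} the lower-order corrections are of the form $R_{i-1}\kCoideal$, whose $\Uq^0$ part vanishes inductively. Either way, it is convenient to work directly with $\pi_0$ applied to the four terms of each $q$-commutator.

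\emph{Main obstacle.} The hardest step is (3)–(4): showing that the mixed contractions between $Z_{i-1}$ and $B_{\pm i}$ produce no surviving Cartan term. I would first try to establish a sharper inductive statement that controls the PBW expansion beyond $\pi_0$. The candidate is that every PBW monomial of $Z_{i}$ has $\Uq^-$-part and $\Uq^+$-part of the same $\kleinCSA$-weight and is nonconstant in at least one of them. This form is stable under commutators with $B_{\pm(i+1)}$, because the only newly created Cartan factors come paired in $[\;,\;]_{q\inv}$ and cancel.
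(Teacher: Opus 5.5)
\paragraph{The gap.} Steps (3)--(4), which you yourself call the main obstacle, are left unresolved, and the reasoning you offer for them is wrong in two ways.

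\emph{There are no cross-contractions with $Z_{i-1}$.} In a PBW summand $u_-u_0u_+$ of $Z_{i-1}$, the factors $u_-$ and $u_+$ lie in the subalgebras generated by the $F_j$, resp.\ $E_j$, with $|j|\le i-1$. Since $E_jF_k=F_kE_j$ holds exactly for $j\neq k$, with no exception for adjacent roots, $u_+$ commutes with $F_{\pm i}$ and $u_-$ commutes with $E_{\pm i}$. So in each of the four terms $B_iZ_{i-1}B_{-i}$, $B_iB_{-i}Z_{i-1}$, $Z_{i-1}B_{-i}B_i$ and $B_{-i}Z_{i-1}B_i$, the only reordering that creates Cartan factors is moving $E_{\pm i}$ past $F_{\pm i}$, both coming from the two outer $B$'s. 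Such a reordering produces $u_-$ times a Cartan element times $u_+$, where $u_-$ or $u_+$ is nonconstant by the induction hypothesis. Every other term keeps a factor $F_{\pm i}$ in its $\Uq^-$-part or $E_{\pm i}$ in its $\Uq^+$-part. Hence there is no $\Uq^0$-summand, and no cancellation is needed. This is exactly the paper's proof.

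\emph{The cancellation in (4) is false.} For $i=1$ and the piece $F_0$ of $Z_0$, the contraction terms add up to $-q^{-2}F_0K_1K_{-1}\inv\neq 0$ (compare the term $-q^{-2}F_0\kCoideal_1\inv$ in \cref{psi(XYZ)}). Your proposed sharper inductive statement rests on this same cancellation, so it is unsupported.

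\emph{The ``simplification'' does not help.} $Z_i$ differs from $\letzterIso(Z_i)$ by an element of $\ker\psi=\Upoly S$, and this kernel contains Cartan elements such as $K_i\inv$. So the absence of a $\Uq^0$-part in $\letzterIso(Z_i)$ says nothing about $Z_i$. Moreover, the paper uses the present proposition to compute $\letzterIso(Z_i)$.

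\paragraph{Your grading argument already finishes the proof.} With the bookkeeping done correctly, it needs neither induction nor any reordering. Every word in the $B$'s making up $Z_i$ contains $B_0$ exactly once, and $B_s$ and $B_{-s}$ exactly once each for $1\le s\le i$. Each $B_{\pm s}$ with $s\geq 1$ has homogeneous pieces of weight in $\mathbb{Z}\alpha_s\oplus\mathbb{Z}\alpha_{-s}$; together, the pieces of $B_s$ and $B_{-s}$ contribute a multiple of $\alpha_s+\alpha_{-s}$. This corrects your (1), which has $\alpha_s-\alpha_{-s}$. Consequently every homogeneous component of $Z_i$ has weight $\pm\alpha_0+\sum_{s\geq1}c_s(\alpha_s+\alpha_{-s})$. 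Its $\alpha_0$-coefficient is $\pm1$, so the weight is nonzero. Since $\Uq^0$ sits in weight $0$ and the triangular decomposition respects weights, $\pi_0(Z_i)=0$. The same linear independence of simple roots also shows that the ``or else'' alternative in your (2) cannot occur.
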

\begin{proof}
	We do induction on $i$, the case $i=0$ being clear from the definition of $B_0$. Now assume $i>0$.
	By definition, we have 
	\begin{align}
		\label{Zi-expansion}
		Z_{i} = B_{i} Z_{i-1} B_{-i} -q\inv B_{i} B_{-i}Z_{i-1} - q\inv  Z_{i-1} B_{-i}B_{i} +q^{-2}  B_{-i} Z_{i-1} B_{i} 
		. 
	\end{align}
	By induction hypothesis the PBW expansion of $Z_{i-1}$ has no summands in the Cartan subalgebra of $\Uq(\linearGraph)$. For each of its summand $u_-u_0u_+$ with $u_-\in \Uq^-(\linearGraph), \, u_0\in \Uq^0(\linearGraph), \, u_+\in \Uq^+(\linearGraph)$, we just need to check that each of $ B_{i} u_-u_0u_+ B_{-i}$, $B_{i} B_{-i}u_-u_0u_+$, $ u_-u_0u_+ B_{-i}B_{i} $, $ B_{-i} u_-u_0u_+ B_{i} $ have no summands in  $\Uq^0(\linearGraph)$. Observe that $u_+$ involves only $E_{j} $ for $1-i\leq j\leq i-1$ and hence commutes with $F_{\pm i}$. Similarly, $u_-$ commutes with $E_{\pm i}$. Thus, in each case rewriting into PBW monomials does not produce a summand in $ \Uq^0(\linearGraph)$. Hence, $Z_i$ does not have a summand in $ \Uq^0(\linearGraph)$.
\end{proof}

\begin{thm}[Compatibility theorem I]\label{psi(XYZ)}
Writing $F_{[-i,i]}=0$ for all $i>0$, we have:
\begin{align*}
	\letzterIso (Y_i) = -q^{-2i-1} F_{[i-1,-i]},\quad
	\letzterIso (Z_i) = q^{-2i} F_{[i,-i]} - q^{-2i} F_{[i-1,1-i]} \kCoideal_{i}\inv ,\quad
	\letzterIso (X_ i) = q^{2-2i} F_{[i-1,-i]} 
	.
\end{align*} 
\end{thm}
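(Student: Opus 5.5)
We prove the three formulas simultaneously by induction on $i$, working in the star-product picture of \cref{star-prod}. Since $\letzterIso\colon \coideal(\linearGraph_r)\to(\Aalg,*)$ is an algebra isomorphism with $\letzterIso(B_s)=F_s$, the definitions \cref{def-XYZ} translate into
\[
\letzterIso(Y_i)=\starCommutator{\letzterIso(Z_{i-1}),F_{-i}}_{q\inv},\qquad
\letzterIso(X_i)=\starCommutator{\letzterIso(Z_{i-1}),F_{i}}_{q},\qquad
\letzterIso(Z_i)=\starCommutator{F_i,\letzterIso(Y_i)}_{q\inv},
\]
where $\starCommutator{x,y}_a=x*y-a\,y*x$. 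Each star product of a simple root vector with an element of $\Uq^-\kleinCSA$ is evaluated by \cref{left-right-simple-star}. The required skew derivatives of root vectors come from \cref{del-i-Fab}, and ordinary $q$-commutators of root vectors are handled by \cref{rectangle-commutator-rootvec--Uq}. The $\kleinCSA$-factors are moved past $F$'s via the weight relations, using $\kleinCSA$-bilinearity of $\psi$.

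The base case is $\letzterIso(Z_0)=F_0$, which agrees with the formula since $F_{[-1,1]}$ is read as $0$ (for $i=0$, interpret $F_{[i-1,1-i]}=F_{[-1,1]}=0$).

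For the inductive step, write $\letzterIso(Z_{i-1})=q^{2-2i}\bigl(F_{[i-1,1-i]}-F_{[i-2,2-i]}\kCoideal_{i-1}\inv\bigr)$. To compute $\letzterIso(Y_i)$, form
\[
u*F_{-i}-q\inv F_{-i}*u,\qquad u\in\{F_{[i-1,1-i]},\,F_{[i-2,2-i]}\kCoideal_{i-1}\inv\}.
\]
The derivative terms $\partial^R_{i}(u)$ and $\delustig_{-i}(u)$ vanish by \cref{del-i-Fab}, because $u$ involves no $F_{\pm i}$. Hence only ordinary products remain. The term $F_{[i-2,2-i]}$ commutes with $F_{-i}$, since the roots are not adjacent, up to the $q$-power coming from $\kCoideal_{i-1}\inv$. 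We expect this term to cancel exactly against the $q\inv$ twist. The remaining piece $[F_{[i-1,1-i]},F_{-i}]_{q^{\pm1}}$ is, by \cref{root-vec-Uq} and \cref{rectangle-commutator-rootvec--Uq}, a scalar multiple of $F_{[i-1,-i]}$. Tracking the scalar and the sign from reordering should give $-q^{-2i-1}F_{[i-1,-i]}$.

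The computation of $\letzterIso(X_i)$ is symmetric. Now $F_i$ is adjacent to $F_{i-1}$ at the top end, and we use the analogous identity at that end. We again expect the $F_{[i-2,2-i]}$ term to cancel, because the $q$-twist now matches the other side.

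For $\letzterIso(Z_i)=F_i*\letzterIso(Y_i)-q\inv\letzterIso(Y_i)*F_i$ with $\letzterIso(Y_i)=-q^{-2i-1}F_{[i-1,-i]}$, derivative terms do appear. By \cref{del-i-Fab}, $\delustig_{-i}F_{[i-1,-i]}=0$, since the top index is $i-1\neq -i$. On the other hand, $\partial^R_{-i}F_{[i-1,-i]}=(q\inv-q)F_{[i-1,1-i]}$. By \cref{left-right-simple-star}, this contributes
\[
-q\inv\cdot(-q^{-2i-1})\frac{c_i}{q-q\inv}(q\inv-q)F_{[i-1,1-i]}\kCoideal_i\inv
= -q^{-2i}F_{[i-1,1-i]}\kCoideal_i\inv\cdot q^{-2}\cdot(\pm\text{sign}),
\]
with $c_i=1$. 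The ordinary part $[F_i,F_{[i-1,-i]}]_{q^{\pm1}}$ is, by \cref{rectangle-commutator-rootvec--Uq} with $l=i+1$-type adjacency, a scalar multiple of $F_{[i,-i]}$. This yields the $q^{-2i}F_{[i,-i]}$ term.

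The main obstacle is bookkeeping the exact powers of $q$, which come from three sources. First, the $q$-commutator orientation in \cref{root-vec-Uq} versus the order used here (the recursive root vectors are built from the bottom index). Second, commuting $\kCoideal_{i-1}\inv$ and $\kCoideal_i\inv$ past $F_{\pm i}$. Third, the precise constant $a_2$ and its orientation in \cref{rectangle-commutator-rootvec--Uq}. I would first verify the case $i=1$ explicitly as a check on all conventions, including whether $F_{[0,-1]}$ versus $F_{[-1,0]}$ is meant. Then I would run the induction using the identities $[F_{[a,b]},F_{b-1}]_{q}=F_{[a,b-1]}$ and the analogous identity at the top end, derived from \cref{root-vec-Uq} by the usual $q$-Jacobi argument. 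If the cancellation of the $F_{[i-2,2-i]}\kCoideal_{i-1}\inv$ terms fails to be exact, \cref{key-observation-Zi} serves as a consistency check: since $Z_i$ has no $\Uq^0$-component, the final formula can contain no pure $\kleinCSA$ term.
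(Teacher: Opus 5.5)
Your route differs from the paper's. The paper expands $Z_i$ into the four terms of \cref{Zi-expansion} and uses \cref{key-observation-Zi} to replace $Z_{i-1}$ by $\letzterIso(Z_{i-1})$ inside $\psi$, so it only ever needs the \emph{left} formula in \cref{left-right-simple-star}. Your chain $Z_{i-1}\to Y_i\to Z_i$ in $(\Aalg,*)$ is the shortcut mentioned in the remark after the theorem: it is shorter, but it relies on Kolb's right-derivation formula.

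The qualitative parts of your sketch are correct. In the $Y_i$ and $X_i$ steps every derivative term vanishes. (For $s=-i$ the relevant derivations are $\partial^R_i$ and $\delustig_i$, not $\delustig_{-i}$, but all of them vanish.) The $F_{[i-2,2-i]}\kCoideal_{i-1}^{-1}$ terms also cancel exactly, because $\kCoideal_{i-1}^{-1}F_{-i}=q^{-1}F_{-i}\kCoideal_{i-1}^{-1}$ and $\kCoideal_{i-1}^{-1}F_{i}=qF_{i}\kCoideal_{i-1}^{-1}$ match the twists $q^{-1}$ and $q$.

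The gap is the scalar bookkeeping you deferred, and as set up the induction cannot close. You assert the $Y_i$ step ``should give'' $-q^{-2i-1}F_{[i-1,-i]}$. Your own $Z_i$ display then produces $-q^{-2i-2}F_{[i-1,1-i]}\kCoideal_i^{-1}$, a stray $q^{-2}$ that no sign choice removes. The computation in fact gives a different $Y_i$. By \cref{root-vec-Uq}, $F_{[i-1,-i]}=[F_{-i},F_{[i-1,1-i]}]_q$, so $[F_{[i-1,1-i]},F_{-i}]_{q^{-1}}=-q^{-1}F_{[i-1,-i]}$ and
\[
\letzterIso(Y_i)=q^{2-2i}\cdot(-q^{-1})F_{[i-1,-i]}=-q^{1-2i}F_{[i-1,-i]}.
\]
Already at $i=1$ one has $F_0*F_{-1}-q^{-1}F_{-1}*F_0=F_0F_{-1}-q^{-1}F_{-1}F_0=-q^{-1}F_{[0,-1]}$.

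So the printed coefficient is off by $q^{2}$; the last line of the paper's proof uses $q^{-2i}$ instead of $q^{2-2i}$ as the coefficient of $\letzterIso(Z_{i-1})$. Propagating the printed value gives $\letzterIso(Z_i)=q^{-2i-2}(F_{[i,-i]}-F_{[i-1,1-i]}\kCoideal_i^{-1})$, which contradicts the $Z_i$ formula you need for the next step.

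With the correct value your $Z_i$ step closes exactly. Use $F_{[i,-i]}=[F_{[i-1,-i]},F_i]_q$ (by $q$-Jacobi, since $F_i$ commutes with $F_{-i},\dots,F_{i-2}$), together with $\delustig_{-i}F_{[i-1,-i]}=0$ and $\partial^R_{-i}F_{[i-1,-i]}=(q^{-1}-q)F_{[i-1,1-i]}$. This gives $q^{-2i}F_{[i,-i]}-q^{-2i}F_{[i-1,1-i]}\kCoideal_i^{-1}$. Likewise your $X_i$ step yields $q^{2-2i}[F_{[i-1,1-i]},F_i]_q=q^{2-2i}F_{[i,1-i]}$; the printed $F_{[i-1,-i]}$ has the wrong weight.

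Three further cautions:
\begin{itemize}
\item Your helper identity $[F_{[a,b]},F_{b-1}]_q=F_{[a,b-1]}$ has the wrong orientation. The correct forms are $[F_{b-1},F_{[a,b]}]_q=F_{[a,b-1]}$ at the bottom end and $[F_{[a,b]},F_{a+1}]_q=F_{[a+1,b]}$ at the top end.
\item \cref{rectangle-commutator-rootvec--Uq} alone leaves $a_2$ undetermined, so you need these explicit identities.
\item Your proposed consistency check via \cref{key-observation-Zi} cannot detect an overall power of $q$, so it would not have caught this.
\end{itemize}
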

\begin{proof}
	We first prove the $\letzterIso(Z_{i})$ formula via induction on $i$. The base case $i=0$ follows immediately from the definition of $Z_0=B_0=F_0+q\inv E_0K_0\inv$. Now assume $i>0$, we want to compute $\letzterIso(Z_{i})$ using the expansion \cref{Zi-expansion}. For this we need to compute $\letzterIso(Z_{i-1} B_{-i}B_i) $ and $\letzterIso(Z_{i-1} B_{\pm i}) $.
	Write $Z_{i-1}=\letzterIso(Z_{i-1})+u$ with $u\in \ker\psi$, then $u$ has no summand in the Cartan subalgebra of $\Uq(\linearGraph)$ by \cref{key-observation-Zi}.  
	Consequently, neither does $ u \letzterIso(B_{-i}B_{i}) = u (F_{-i}F_{i} +\frac{1}{q-q\inv} \kCoideal_{i} \inv)$. This implies that 
	$ u \letzterIso(B_{-i}B_{i}) \in \ker \psi$. Similarly, we have  $u \letzterIso(B_{-i})\in \ker \psi$.
	Hence, by the second assertion of \cref{letzter-map} and the definition of $\letzterIso$,
	\begin{align*}
		\letzterIso(Z_{i-1} B_{-i}B_i) 
		&= \letzterIso(Z_{i-1} \letzterIso(B_{-i}B_i) ) 
		= \letzterIso((\letzterIso(Z_{i-1})+u) \letzterIso(B_{-i}B_{i}) ) 
		\\
		&= q^{2-2i}x \letzterIso(B_{-i}B_{i}) - q^{2-2i}y  \letzterIso(B_{-i}B_{i})
		,\\
		\letzterIso(Z_{i-1}B_{\pm i}) 
		&= \letzterIso(Z_{i-1} F_{\pm i}) 
		= \letzterIso((\letzterIso(Z_{i-1})+u)F_{\pm i})
		= q^{2-2i}xF_{\pm i} - q^{2-2i}y F_{\pm i}
		,
	\end{align*}
	where $x= F_{[i-1,1-i]}  ,  y=F_{[i-2,2-i]} \kCoideal_{i-1}\inv $, and we used the induction hypothesis that $\letzterIso(Z_{i-1}) = q^{2-2i}x - q^{2-2i}y$. 
	Note that $x\in U^-_{\rtPos{i-1}{1-i}}$, hence by the skew derivative property $\delustig_j (uv) =\delustig_j(u)v +  q^{-(\beta,\alpha_j)} u\delustig_j(v)$ (for $u\in U^-_{\beta}$), we compute by \cref{letzter-lusztig-derivative}
	\begin{align*}
		\letzterIso(B_i (F_{-i} x)) = F_iF_{-i}  x +\frac{1}{q-q\inv} \kCoideal_i \delustig_{-i} (F_{-i} x) 
		= F_iF_{-i}  x +\frac{1 }{q-q\inv} \kCoideal_i  x 
		,\\
		\letzterIso(B_i (x F_{-i} )) = F_i x F_{-i} +\frac{1}{q-q\inv} \kCoideal_i \delustig_{-i} (xF_{-i} ) 
		= F_i  x F_{-i}+\frac{q\inv }{q-q\inv} \kCoideal_i  x 
		,\\
		\letzterIso(B_{-i} (x F_{i} )) =  F_{-i} x F_i +\frac{1}{q-q\inv} \kCoideal_i\inv \delustig_{i} (xF_{i} ) 
		=F_{-i} x F_i +\frac{q\inv }{q-q\inv} \kCoideal_i \inv x 
		.
	\end{align*}
	For the $y$-terms, note that $\letzterIso(B_iyF_j )$ is a scalar multiple of $ \letzterIso(B_i F_{[i-2,2-i]} F_j \kCoideal_{i-1}\inv ) $, which by the second statement of \cref{letzter-map} is equal to $ \letzterIso(B_i F_{[i-2,2-i]} F_j) \kCoideal_{i-1}\inv  $. We can again use \cref{letzter-lusztig-derivative} to evaluate this.
	Now for $i>0$ we have $
		\letzterIso(B_{-i}B_i) = F_{-i}F_{i} +\frac{1}{q-q\inv} \kCoideal_{i} \inv \delustig_i(F_i) 
		=
		F_{-i}F_{i} +\frac{1}{q-q\inv} \kCoideal_{i} \inv
		.$
	We now can compute $\letzterIso$ applied to the terms in \cref{Zi-expansion} separately: \begin{align*}
		\letzterIso(B_{i} Z_{i-1} B_{-i}) 
		& = q^{2-2i} \letzterIso(B_ixF_{-i}) - q^{2-2i} \letzterIso(B_i yF_{-i})
		\\
		& =  q^{2-2i}(F_i x F_{-i}  +\frac{q\inv }{q-q\inv} \kCoideal_i  x  - F_iyF_{-i} -\frac{q\inv}{q-q\inv}\kCoideal_i  y)
		,\\
		\letzterIso(-q\inv B_{i} B_{-i}Z_{i-1} ) 
		& = q^{1-2i} \letzterIso(B_i  F_{-i}(y-x)) 
		=  q^{1-2i}( F_iF_{-i}y + \frac{1}{q-q\inv}\kCoideal_i  y - F_iF_{-i}  x - \frac{1}{q-q\inv} \kCoideal_i  x )
		,\\
		\letzterIso(- q\inv  Z_{i-1} B_{-i}B_{i} ) 
		&= -q\inv \letzterIso(Z_{i-1}) \letzterIso(B_{-i}B_{i} )
		= -q^{1-2i}(x-y) (F_{-i}F_{i} +\frac{1}{q-q\inv} \kCoideal_{i} \inv)
		,\\
		\letzterIso (q^{-2}  B_{-i} Z_{i-1} B_{i} )
		& = q^{-2i} \letzterIso (B_{-i}(x-y)F_i)
		= q^{-2i}(
			F_{-i} x F_i +\frac{q\inv}{q-q\inv} \kCoideal_i \inv  x  - F_{-i}yF_i -\frac{1}{q-q\inv}\kCoideal_i \inv y
		)
		.
	\end{align*}
	Now note that the $\kCoideal_i x$ terms in the first and second equalities cancel each other, and in view of \cref{root-vec-Uq}, the terms involving $x,F_{\pm i}$ assemble to exactly $q^{-2i}F_{[i,-i]}$. Using the fact that $yF_{-i}=q F_{-i}y $, it is easy to see that the $y$ terms cancel each other, hence we proved the desired formula for $\letzterIso(Z_{i-1})$. 
	Finally, using the definition \cref{def-XYZ}, one computes \begin{align*}
		\letzterIso(Y_i) &= \letzterIso(Z_{i-1} B_{-i} -q \inv B_{-i}Z_{i-1})  \\
		&= \letzterIso(Z_{i-1} )F_{-i} -q\inv F_{-i} \letzterIso(Z_{i-1} ) =  -q^{-2i-1} F_{[i-1,-i]},
	\end{align*}
	and the computation of $\letzterIso(X_i)$ is completely analogous.
\end{proof}

\begin{remark}
	One could use the right skew derivation formula from \cref{letzter-lusztig-derivative} to prove \cref{psi(XYZ)} more easily. However, it is more difficult to establish the right skew derivation formula compared with the left one, see \cite{kolb2026short-star-products}. Our proof using \cref{key-observation-Zi} is more elementary.
\end{remark}

The following statement follows immediately from \cref{psi(XYZ)}.
\begin{cor}
	We have $H' = \csaCoideal \subset \coideal(\linearGraph_r)$.
\end{cor}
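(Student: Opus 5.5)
We prove the two inclusions $H'\subseteq\csaCoideal$ and $\csaCoideal\subseteq H'$. Both subalgebras contain $\kleinCSA$. So it suffices to compare the generators $Z_i$ of $H'$ with the generators $\letzterIso\inv(F_{[i,-i]})$ of $\csaCoideal$, for $0\leq i\leq r$. The tools are \cref{psi(XYZ)} and the fact from \cref{letzter-map} that $\letzterIso$ is a $\kleinCSA$-bilinear linear isomorphism.

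First we show by induction on $i$ that $\letzterIso\inv(F_{[i,-i]})$ lies in $H'$. For $i=0$, \cref{psi(XYZ)} gives $\letzterIso(Z_0)=F_{[0,0]}$, since the term $F_{[-1,1]}$ vanishes by the stated convention. For $i>0$, \cref{psi(XYZ)} reads
\[
\letzterIso(Z_i)=q^{-2i}F_{[i,-i]}-q^{-2i}F_{[i-1,1-i]}\kCoideal_i\inv .
\]
By $\kleinCSA$-bilinearity of $\letzterIso$ we have $\letzterIso\inv\bigl(F_{[i-1,1-i]}\kCoideal_i\inv\bigr)=\letzterIso\inv(F_{[i-1,1-i]})\,\kCoideal_i\inv$. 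This requires only that $\kCoideal_i=K_i\inv K_{-i}$ is a Laurent monomial in the $\dCoideal_p$ and so lies in $\kleinCSA$. That membership holds because $\kCoideal_i$ is invariant under $p\leftrightarrow\foldVertex{p}$, and $\kleinCSA$ consists of exactly such $D$-monomials. Hence
\[
\letzterIso\inv(F_{[i,-i]})=q^{2i}Z_i+\letzterIso\inv(F_{[i-1,1-i]})\,\kCoideal_i\inv .
\]
By the induction hypothesis the last term lies in $H'$, so the left side does too. This gives $\csaCoideal\subseteq H'$.

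The same identity, read the other way, gives the reverse inclusion:
\[
Z_i=q^{-2i}\letzterIso\inv(F_{[i,-i]})-q^{-2i}\letzterIso\inv(F_{[i-1,1-i]})\,\kCoideal_i\inv\in\csaCoideal .
\]
So every generator $Z_i$ lies in $\csaCoideal$, and together with $\kleinCSA\subseteq\csaCoideal$ this gives $H'\subseteq\csaCoideal$.

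The main obstacle is the bookkeeping in the bilinearity step: we must check that $\letzterIso$ commutes with right multiplication by $\kCoideal_i\inv$. This follows from part (a) of \cref{letzter-map}, provided $\kCoideal_i$ is genuinely in $\kleinCSA$ (equivalently, in $\coideal(\linearGraph_r)\cap\Upoly$). We verify that membership directly from the definitions.
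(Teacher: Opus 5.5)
Your proof is correct and follows the paper's route: the paper only says the corollary follows immediately from \cref{psi(XYZ)}, and your triangular induction is the natural way to spell that out. The induction inverts $\letzterIso(Z_i)=q^{-2i}F_{[i,-i]}-q^{-2i}F_{[i-1,1-i]}\kCoideal_i\inv$ using $\kleinCSA$-bilinearity of $\letzterIso$ and $\kCoideal_i\in\kleinCSA$, which you verify correctly.
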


\subsection{Explicit generators: the $\linearGraph_r'$ case}
Define $W_\half= [B_{-\half},B_{\half}]_q $ and then inductively 
\begin{align}
	W_{\half+i+1} = [B_{-\half-i-1},[W_i,B_{\half+i+1}]_q]_q + (q^2-1) B_{\half+i+1}B_{-\half-i-1} \kCoideal_{\half}\kCoideal_{\half+1}\cdots \kCoideal_{\half+i}.
	\label{Wi-def}
\end{align}

\begin{thm}[Compatibility theorem II]\label{psi-wi}
	Denote $ t_i = \kCoideal_{\half}\kCoideal_1\cdots \kCoideal_{i-1}$, then \[
		\letzterIso (W_i) = \begin{cases}
			F_{[\half,-\half]} -[\kCoideal_\half;0] , & \text{if } \; i=0,\\
			F_{[i,-i]} - F_{[i-1,1-i]} \kCoideal_i\inv - [\kCoideal_i;0]t_i ,& \text{otherwise},\\
		\end{cases}
		\text{ where } [\kCoideal_i;0]  \coloneqq \frac{\kCoideal_i- \kCoideal_{i}\inv}{q-q\inv}.
	\]
\end{thm}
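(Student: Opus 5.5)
We argue by induction on $i$, mirroring the proof of \cref{psi(XYZ)}. The main tool will be the star product of \cref{star-prod}: since $\letzterIso\colon\coideal(\linearGraph_r')\to(\Aalg,*)$ is an algebra isomorphism, we have $\letzterIso([x,y]_q)=\starCommutator{\letzterIso(x),\letzterIso(y)}_q$. The right-hand sides can then be evaluated with \cref{left-right-simple-star} and \cref{del-i-Fab}, extended $\kleinCSA$-bilinearly. This avoids the ad hoc argument of \cref{key-observation-Zi}, at the cost of using the right skew derivation formula, which we take as given from \cref{letzter-map}.

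\emph{Base case.} We have $\letzterIso(B_{\pm\half})=F_{\pm\half}$, so $\letzterIso(W_\half)=F_{-\half}*F_\half-qF_\half*F_{-\half}$. Using \cref{left-right-simple-star} with $c_{\pm\half}$ from \cref{letzter-lusztig-derivative}, this equals $F_{-\half}F_\half-qF_\half F_{-\half}$ plus Cartan terms coming from $\delustig_{\mp\half}(F_{\pm\half})=1$. The quadratic part is, up to the normalisation in \cref{root-vec-Uq}, the root vector $F_{[\half,-\half]}$. The Cartan part should combine into $-[\kCoideal_\half;0]$; this is consistent with the formula $R_\half=\starCommutator{F_{-\half},F_\half}_q+[\kCoideal_\half;0]$ used in the proof of \cref{Ri-Rj-commute}.

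\emph{Induction step.} Set $i'=i+1$ and $T=\letzterIso(W_i)=F_{[i,-i]}-F_{[i-1,1-i]}\kCoideal_i\inv-[\kCoideal_i;0]t_i$. We compute $\starCommutator{F_{-i'},\starCommutator{T,F_{i'}}_q}_q$ term by term.
\begin{enumerate}
\item For $F_{[i,-i]}$: by \cref{Ri-as-starcommutator}, $\starCommutator{F_{-i'},\starCommutator{F_{[i,-i]},F_{i'}}_q}_q=F_{[i',-i']}-F_{[i,-i]}\kCoideal_{i'}\inv$. This produces the first two desired terms, and one must check that the orders of the nested commutators agree.
\item For $F_{[i-1,1-i]}\kCoideal_i\inv$: both $\delustig_{\pm i'}$ and $\partial^R_{\pm i'}$ kill it by \cref{del-i-Fab}, so the star products with $F_{\pm i'}$ are ordinary products. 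These commute with $F_{[i-1,1-i]}$ by \cref{rectangle-commutator-rootvec--Uq}, and only $q$-commute with $\kCoideal_i$. Hence this term reduces to an explicit multiple of $(F_{-i'}F_{i'}-q^aF_{i'}F_{-i'})F_{[i-1,1-i]}\kCoideal_i\inv$, with the exponent $a$ to be determined.
\item For $[\kCoideal_i;0]t_i$: this is purely Cartan, so the nested commutator yields quadratic terms $F_{\mp i'}F_{\pm i'}$ times Cartan elements, together with pure Cartan terms from $\delustig_{\mp i'}(F_{\pm i'})=1$.
\end{enumerate}

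Finally, the correction term contributes $(q^2-1)\letzterIso(B_{i'}B_{-i'})\,t_{i'}$, where $t_{i'}=t_i\kCoideal_i$ and $\letzterIso(B_{i'}B_{-i'})=F_{i'}F_{-i'}+\frac{1}{q-q\inv}\kCoideal_{i'}$. This formula holds because $\letzterIso$ is $\kleinCSA$-bilinear.

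\emph{Main obstacle.} The hard part is the bookkeeping in the last step. We must show three cancellations. First, all quadratic terms $F_{i'}F_{-i'}$ and $F_{-i'}F_{i'}$ cancel; the correction term $(q^2-1)B_{i'}B_{-i'}t_{i'}$ in \cref{Wi-def} is designed exactly to kill the leftover part of item (3). Second, the contributions of item (2) cancel entirely. Third, the pure Cartan terms combine into $-[\kCoideal_{i'};0]t_{i'}$. For this we will use the identities $\kCoideal_{i'}t_{i'}-\kCoideal_{i'}\inv t_{i'}=(q-q\inv)[\kCoideal_{i'};0]t_{i'}$ and track the $q$-powers arising from commuting $\kCoideal_s$ past $F_{\pm i'}$, which are governed by the relations in \cref{SR-Gamma-prime}. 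We first verify everything for $i=\half$, where $t_1=\kCoideal_\half$, to fix the signs and exponents, and then run the general case in the same pattern.
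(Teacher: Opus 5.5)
Your plan is correct and is essentially the paper's proof. Both argue by induction on $i$, evaluate $\letzterIso$ of $[B_{-i'},[W_i,B_{i'}]_q]_q$ through the left and right skew-derivation formulas of \cref{letzter-map} (your star-product formulas \cref{left-right-simple-star} are the same formulas repackaged), and then add $\letzterIso(B_{i'}B_{-i'})t_{i'}=F_{i'}F_{-i'}t_{i'}+\frac{1}{q-q\inv}\kCoideal_{i'}t_{i'}$; splitting $T$ into three pieces and reusing \cref{Ri-as-starcommutator} is only tidier bookkeeping.

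The points you leave open all work out:
\begin{itemize}
\item The two nestings in item (1) agree, because $F_{-i'}*F_{i'}-F_{i'}*F_{-i'}=-[\kCoideal_{i'};0]$ commutes with the weight-zero element $F_{[i,-i]}$.
\item Item (2) vanishes already at the inner commutator, since $\kCoideal_i\inv F_{i'}=qF_{i'}\kCoideal_i\inv$. This is a relation from \cref{glrel1}, not \cref{SR-Gamma-prime}.
\item Item (3) contributes $-(q^2-1)F_{i'}F_{-i'}t_{i'}-\frac{q^2\kCoideal_{i'}-\kCoideal_{i'}\inv}{q-q\inv}t_{i'}$, and adding the correction term leaves exactly $-[\kCoideal_{i'};0]t_{i'}$.
\end{itemize}
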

\begin{proof}
	Denote $R_i=F_{[i,-i]}$.
	By \cref{letzter-lusztig-derivative}, we have $
		\letzterIso(W_\half) = [F_{-\half}, F_{\half}]_q - [\kCoideal_\half;0] 
	$, which prove the $i=0$ case. 
	Now assume $i>0$. Denote $P=B_{-i},Q=B_i , w= \letzterIso( W_{i-1} )$,  by \cref{del-i-Fab}, we have
	\begin{align*}
		\letzterIso(W_{i-1} Q) = w F_i , \quad \letzterIso(W_{i-1} P) = w F_{i-1}
		,\quad 
		\letzterIso(Q W_{i-1}) = F_i w, \quad \letzterIso(P W_{i-1}) = F_{i-1} w.
	\end{align*}
	Denote $W_i' = [B_{-i},[W_{i-1},B_i]_q]_q,\eta =(q-q\inv)\inv$, we compute
	\begin{align*}
		\letzterIso (W_i') & = \letzterIso(
			P W_{i-1} Q -q P Q W_{i-1} -q W_{i-1} Q P +q^2 Q W_{i-1} P )\\
		&=
		F_{-i}w F_i +\eta \kCoideal_i \inv \delustig_i (w F_i) 
		-q F_{-i} F_i w -q \eta  \kCoideal_i \inv  \delustig_i (F_i w ) 
		\\ &~~
		-q w F_i F_{-i} -q \eta \partial^R_i(w F_i) \kCoideal_i 
		+ q^2 F_i w F_{-i} +q^2 \eta \partial^R_{i}( F_ i w ) \kCoideal_i   
		\\
		&= [F_{-i} ,[w,F_{i}]_q]_q +\frac{
			\kCoideal_i \inv \delustig_i (w F_i)
			-q \kCoideal_i \inv  \delustig_i (F_i w ) 
			-q  \partial^R_i(w F_i) \kCoideal_i 
			+q^2 \partial^R_{i}( F_ i w ) \kCoideal_i }{q-q\inv}.
	\end{align*}
	Using the induction hypothesis we can insert $w$ and evaluate the right-hand side, where the skew derivations should be computed by first pulling out Cartan factors.
	On the other hand, we have $\letzterIso(B_i B_{-i} t_i) = 
		F_i F_{-i} t_i + \frac{1}{q-q\inv} \kCoideal_i t_i$, and combining these two terms concludes the induction.
\end{proof}

Let $H'\subset \coideal(\linearGraph_r')$ be the subalgebra of $\coideal(\linearGraph_r')$ generated by $\{W_i\}_{0\leq i\leq r}$ and $\{\dCoideal_p\}_{p\in (\linearGraph_r')^+}$. 
\begin{cor}
	We have $H' = \csaCoideal\subset \coideal(\linearGraph_r')$.
\end{cor}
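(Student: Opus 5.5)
Both inclusions come from \cref{psi-wi} together with the fact that $\letzterIso$ is a $\kleinCSA$-bilinear linear isomorphism onto $\Aalg$, by \cref{letzter-map}. By definition, $\csaCoideal$ is generated by $\kleinCSA$ and the elements $\letzterIso\inv(F_{[i,-i]})$, while $H'$ is generated by $\kleinCSA$ and the $W_i$. So it suffices to show two things: each $W_i$ lies in $\csaCoideal$, and each $\letzterIso\inv(F_{[i,-i]})$ lies in $H'$.

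\textbf{Step 1: $H'\subseteq\csaCoideal$.} Put $R_i\coloneqq\letzterIso\inv(F_{[i,-i]})$. Since $\psi$ is $\kleinCSA$-bilinear, we have $\letzterIso\inv(a\,h)=\letzterIso\inv(a)\,h$ and $\letzterIso\inv(h)=h$ for $h\in\kleinCSA$. The elements $\kCoideal_s$, $t_i$ and $[\kCoideal_i;0]$ are Laurent polynomials in the $\dCoideal_p$, so they lie in $\kleinCSA$. Applying $\letzterIso\inv$ to \cref{psi-wi} therefore gives
\[
W_\half = R_\half - [\kCoideal_\half;0],\qquad W_i = R_i - R_{i-1}\kCoideal_i\inv - [\kCoideal_i;0]\,t_i \quad (i>\half).
\]
The right-hand sides lie in $\csaCoideal$, hence $W_i\in\csaCoideal$ for every $i$.

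\textbf{Step 2: $\csaCoideal\subseteq H'$.} The displayed identities are triangular in $i$, so we invert them by induction along the edges $\half<1+\half<\cdots$ of $(\linearGraph_r')^+$. For the base case, $R_\half=W_\half+[\kCoideal_\half;0]\in H'$. For the inductive step, $R_i = W_i + R_{i-1}\kCoideal_i\inv + [\kCoideal_i;0]t_i$, and every term on the right lies in $H'$ by the induction hypothesis. Hence every $R_i\in H'$, which gives equality.

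\textbf{Main obstacle: indexing and the $\kleinCSA$-membership checks.} The argument has no conceptual difficulty; the care needed is in the bookkeeping.
\begin{itemize}
\item \cref{psi-wi} indexes the $W_i$ both by edges ($\half,1+\half,\dots$) and by "$i=0$". These must be matched consistently with the generators $\letzterIso\inv(F_{[i,-i]})$ of \cref{csa-def}, whose index $i$ runs over $\graphEdges_{\linearGraph^+}$.
\item The Cartan factors $\kCoideal_i=K_i\inv K_{-i}$ and $t_i$ must genuinely lie in $\kleinCSA$, the algebra generated by the $\dCoideal_p$. This holds because $K_i\inv K_{-i}$ is a monomial in the products $D_pD_{\foldVertex p}$.
\item Right multiplication by elements of $\kleinCSA$ must commute with $\letzterIso\inv$. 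This is part (a) of \cref{letzter-map}.
\end{itemize}
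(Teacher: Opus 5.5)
Your proposal is correct and is the argument the paper leaves implicit: the corollary is stated without proof as an immediate consequence of \cref{psi-wi}. Concretely, you apply $\letzterIso\inv$ to \cref{psi-wi} using the $\kleinCSA$-bilinearity from \cref{letzter-map}, and then invert the unitriangular relations between the $W_i$ and the $\letzterIso\inv(F_{[i,-i]})$ by induction on $i$.
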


\begin{remark}\label{ai}
	The formula in definition \cref{Wi-def} is inspired by ChatGPT 5.6 Sol, which found the elements $W_{\half+1} $ and $W_{\half+2} $ in the case of $\linearGraph_3'$. It was also able to prove $[W_\half,W_{\half+1}]=0$ via a computation of certain Serre relations, but failed to find anything useful for the proof of \cref{cartan-commute-both}. Based on these elements, we originally defined Cartan subalgebras separately in both cases then realised that they can be unified. 
\end{remark}

\subsection{Star products of anti-diagonal root vectors}
In this subsection we compute the star products of $R_i =F_{[i,-i]}\in \letzterIso(\csaCoideal)$.

\begin{lemma}\label{R_i*Rj=RiRj}
	Let $i\ne j\geq 0$. Then we have
	$
		R_ j * R_i = R_ jR_i \; (j\ne i) 
	$.
\end{lemma}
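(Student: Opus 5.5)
The idea is to remove one ordinary factor at a time from the factor carrying the smaller index, using the one-sided formulas \cref{left-right-simple-star}, and to show that the correction terms cancel. The cancellation comes from \cref{del-i-Fab}: $R_m=F_{[m,-m]}$ has only one nonvanishing left skew derivative, $\delustig_m$, and only one nonvanishing right skew derivative, $\partial^R_{-m}$. We treat the cases $j<i$ and $j>i$ separately. Throughout we use that $*$ is associative and $\kleinCSA$-bilinear (\cref{letzter-map}), so that Cartan factors $\kCoideal_s^{\pm1}$ can be pulled through star products.

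\textbf{Case $j<i$.} By \cref{root-vec-Uq}, $R_j$ is an ordinary noncommutative polynomial in the $F_{\pm l}$ with $l\leq j<i$. So it suffices to prove the following claim: $u*R_i=uR_i$ for every $u$ in the subalgebra $U_{<i}\subset\Uq^-$ generated by $\{F_{\pm l}\mid l<i\}$. We argue by induction on the degree of a monomial $u$. Degree $0$ is trivial. For the inductive step write $u=F_s u'$ with $s=\pm l$, $l<i$. By \cref{left-right-simple-star},
\[
F_s u' = F_s*u' - \tfrac{c_s}{q-q\inv}\kCoideal_s\,\delustig_{-s}(u').
\]
Both $u'$ and $\delustig_{-s}(u')$ lie in $U_{<i}$ (by \cref{leibniz}), so the induction hypothesis gives
\[
(F_su')*R_i = F_s*(u'R_i) - \tfrac{c_s}{q-q\inv}\kCoideal_s\,\delustig_{-s}(u')R_i .
\]
Now apply \cref{left-right-simple-star} once more to $F_s*(u'R_i)$, and expand $\delustig_{-s}(u'R_i)$ with the Leibniz rule \cref{leibniz}. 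This gives $\delustig_{-s}(u')R_i$ plus a multiple of $u'\,\delustig_{-s}(R_i)$. The second term vanishes by \cref{del-i-Fab}, because $-s\neq i$. The first term cancels the correction above, leaving $(F_su')*R_i=F_su'R_i$. This is the same mechanism as in the proof of \cref{Fj*Ri=Ri*Fj}, which is the degree-one case.

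\textbf{Case $j>i$.} Here we argue symmetrically, peeling factors from the right of $R_i$. We show that $R_j*v=R_jv$ for every $v$ in the subalgebra generated by $\{F_{\pm l}\mid l<j\}$, again by induction on degree. Write $v=v'F_s$ and use the right-hand formula
\[
v'*F_s=v'F_s+\tfrac{c_s}{q-q\inv}\partial^R_{-s}(v')\kCoideal_s\inv .
\]
The right Leibniz rule expands $\partial^R_{-s}(R_jv')$ into $\partial^R_{-s}(R_j)\,v'$ times a power of $q$, plus $R_j\,\partial^R_{-s}(v')$. The first term is zero by \cref{del-i-Fab}, since $-s\neq -j$. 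The second term cancels the correction term coming from the induction hypothesis applied to $\partial^R_{-s}(v')$. Here we need that right multiplication by $\kCoideal_s\inv$ commutes with $R_j*(-)$, which is $\kleinCSA$-bilinearity. Applying this with $v=R_i$, which is a polynomial in $F_{\pm l}$ with $l\leq i<j$, gives the claim.

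The main obstacle is bookkeeping rather than an idea. One must check that the $q$-power in the Leibniz rule \cref{leibniz} sits on the term that vanishes, so that the surviving term cancels exactly. One must also check that the extra constants for the special edges do not spoil this. These constants are $c_\half$ and $c_0$ in \cref{letzter-lusztig-derivative}; they appear identically in both the correction and the cancelling term, so they are harmless. Finally, the index conventions need care, namely $i=0$ for $\linearGraph_r$, half-integer edges for $\linearGraph_r'$, and the convention $\kCoideal_{-s}=\kCoideal_s\inv$.
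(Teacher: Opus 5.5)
Your argument is correct, but it is organised differently from the paper's.

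The paper first uses the star-commutativity of \cref{Ri-Rj-commute} to reduce to computing $R_i*R_j$ with $j<i$. It then inducts on $j$:
\begin{itemize}
\item it checks the base cases $j=0$ and $j=\half$ by hand, the latter via the first identity of \cref{Ri-as-starcommutator};
\item in the induction step it expands $R_j$ through the star-commutator recursion \cref{Ri-as-starcommutator} and moves $R_i$ past $F_{\pm j}$ using \cref{Fj*Ri=Ri*Fj};
\item it inserts $R_i*R_{j-1}=R_iR_{j-1}$, evaluates the remaining star products with $F_{\pm j}$ via both formulas in \cref{left-right-simple-star}, and checks that the $\kCoideal_j^{\pm1}$-terms cancel.
\end{itemize}

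You instead prove a stronger statement: $u*R_i=uR_i$ for every $u$ in the subalgebra generated by the $F_{\pm l}$ with $l<i$, together with the mirror statement $R_j*v=R_jv$. You do this by peeling off one Chevalley generator at a time. The only inputs are that $R_m$ has a single nonvanishing left and a single nonvanishing right skew derivative (\cref{del-i-Fab}), and that the $q$-power in \cref{leibniz} sits on the vanishing term.

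What your route buys:
\begin{itemize}
\item It is uniform for $\linearGraph_r$ and $\linearGraph_r'$, and needs neither hand-checked base cases nor the recursion \cref{Ri-as-starcommutator}.
\item It does not depend on \cref{Ri-Rj-commute}. In fact, since $R_iR_j=R_jR_i$ by \cref{rectangle-commutator-rootvec--Uq}, your two cases give $R_i*R_j=R_j*R_i$ as a corollary for both families of graphs. This is worth noting, because the paper states \cref{Ri-Rj-commute} only for $\linearGraph_r'$.
\end{itemize}

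What the paper's route buys is consistency with the recursive framework it needs anyway for the appendix computation of $R_i*R_i$ in \cref{Ri*Ri-computation}. There your peeling mechanism breaks down, because $\delustig_i(R_i)\neq 0$.
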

\begin{proof}
	First consider the case $j\ne i$. By \cref{Ri-Rj-commute}, we may assume that $j<i$ and prove by induction on $j$. The case $j=0$ is clear from \cref{root-vec-skew-der}. For the case $j=\half$, we use the first identity of \cref{Ri-as-starcommutator} to compute 
	\begin{align*}
		{R_{i}*R_{\half}} 
		&= R_i*Y*X -q R_i*X*Y
		 -   R_i [\kCoideal_\half;0]
		 \\
		 & = R_i YX + \frac{1 }{q-q\inv} R_i \kCoideal_\half
		 	-q R_i XY - q\frac{q\inv }{q-q\inv } R_i \kCoideal_\half\inv -   R_i[\kCoideal_\half;0]
		= R_i [X,Y]_q = R_i R_\half
		 ,
	\end{align*}
	where $X=F_{\half} , Y=F_{-\half}$. This shows the base case in the $\linearGraph_r$ respectively $\linearGraph_r'$ case. 
	Now using the second identity of \cref{Ri-as-starcommutator}, we have
	\begin{align*}
		{R_{i}*R_{j}} &= R_i*Y* R*X -q R_i*R*Y*X -qR_i*X*Y*R   + q^2  R_i*X*R*Y
		+  R_i*R*\kCoideal_j\inv
		\\
		& = Y * (R_ i R) * X -q (R_iR)*Y*X -qX*Y*(R_iR)  + q^2  X*(R_iR)*Y
		+  R_iR\kCoideal_j\inv
		\\
		& = Y R_ i R X + \cancel{\frac{q\inv}{q-q\inv} R_iR \kCoideal_j\inv }
		-q R_iRYX - \cancel{\frac{q}{q-q\inv}  R_iR \kCoideal_j\inv }
		\\
		&~~
		-qXYR_iR -\frac{q}{q-q\inv} R_i R \kCoideal_j 
		+ q^2  XR_iRY + q^2 \frac{q\inv}{q-q\inv}R_iR  \kCoideal_j
		+ \cancel{ R_iR\kCoideal_j\inv}
		\\
		&= R_i R_j +\cancel{q^2 \frac{q\inv}{q-q\inv}R_iR  \kCoideal_j} - \cancel{\frac{q}{q-q\inv} R_i R \kCoideal_j }
		,
	\end{align*}
	where $X=F_ j , Y=F_{-j}$, and in the second equality we used \cref{Fj*Ri=Ri*Fj} and the induction hypothesis $R_i*R= R_iR$. This proves that $R_i*R_j = R_iR_j$ for $j\ne i$. 
\end{proof}
	
It is now tempting to believe that the subalgebra of $(\Aalg,*)$ generated by $R_i=F_{[i,-i]} $ and $\kleinCSA$ under the star product coincides with the one generated by the same elements under the usual product. However, this is not the case, as shown by the following:

\begin{thm}\label{Ri*Ri-computation}
	Let $i\geq 0$.
	We have \[
	R_i*R_i = R_i^2 + \sum_{i\geq j>-i}  F_{[i,j]}  k_j F_{[-j,-i ]}   +c_i,
	\text{ where }\quad 
		c_i = q^4c_{i-1} \in \funcField, \; k_j \in \funcField \dCoideal_{\target{i}} \dCoideal_{\source{j}} ^{-1}
		.
	\]
\end{thm}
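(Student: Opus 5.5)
I would prove the formula by induction on $i$, reusing the recursive expression \cref{Ri-as-starcommutator} of $R_i$ as an iterated star commutator. The base case is $R_0=F_0$ (respectively $R_\half$). For $R_0*R_0$ I would apply \cref{left-right-simple-star} directly: $F_0*F_0=F_0^2+\frac{c_0}{q-q\inv}\kCoideal_0\delustig_0(F_0)$, and the second term is a Cartan term of the allowed shape. For $R_\half$ I would expand $R_\half*R_\half$ via $R_\half=\starCommutator{F_{-\half},F_\half}_q+[\kCoideal_\half;0]$ and evaluate each triple star product with \cref{left-right-simple-star} and \cref{del-i-Fab}. The resulting scalar fixes the starting constant of the recursion $c_i=q^4c_{i-1}$; for $\linearGraph_r$ it is simply $0$ if no scalar appears.

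For the induction step, write $R_i=\starCommutator{\starCommutator{Y,R}_q,X}_q+R\kCoideal_i\inv$ with $X=F_i$, $Y=F_{-i}$ and $R=R_{i-1}$. Then
\[
R_i*R_i=R_i*\starCommutator{\starCommutator{Y,R}_q,X}_q+R_i*R\,\kCoideal_i\inv .
\]
The second term equals $R_iR\kCoideal_i\inv$ by \cref{R_i*Rj=RiRj}. For the first term I would expand it into the eight words in $X,Y,R$ and move $R_i$ through them. I cannot use \cref{Fj*Ri=Ri*Fj} here, since $j=i$. Instead I would compute $R_i*X$ and $Y*R_i$ directly from \cref{left-right-simple-star}. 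By \cref{del-i-Fab}, $\partial^R_{-i}(R_i)=(q\inv-q)F_{[i,1-i]}$ and $\delustig_i(R_i)=(q\inv-q)F_{[i-1,-i]}$, so each crossing of $R_i$ past $X$ or $Y$ produces a correction term of the form $F_{[i,1-i]}\kCoideal_i\inv$ or $\kCoideal_i F_{[i-1,-i]}$. The remaining products $F_{[i,1-i]}*R$ and $R*F_{[i-1,-i]}$ I would handle by the same two formulas, inserting the induction hypothesis for $R*R$ where it occurs.

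After collecting terms, the plan is to sort them into three groups.
\begin{itemize}
\item The terms with no derivative reassemble to $R_i^2$, using \cref{rectangle-commutator-rootvec--Uq} to reorder PBW monomials.
\item The terms with one or more derivatives reassemble to products $F_{[i,j]}\,k_j\,F_{[-j,-i]}$. The new terms $j=i$ and $j=1-i$ come from the fresh corrections above. The terms with $i-1\geq j>1-i$ come from the inductive sum $\sum F_{[i-1,j]}k_jF_{[-j,1-i]}$ after it is bracketed with $Y$ on the left and $X$ on the right. Since $\starCommutator{Y,F_{[i-1,j]}}$ and $\starCommutator{F_{[-j,1-i]},X}$ are, up to corrections, $F_{[i,j]}$ and $F_{[-j,-i]}$ by \cref{root-vec-Uq}, these extend the range of the sum as required.
\item The constant $c_{i-1}$ survives the double commutator. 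Star commutators with $X$ and $Y$ of a scalar, followed by the $\kCoideal$-twists, multiply it by $q^2\cdot q^2$, which gives $c_i=q^4c_{i-1}$.
\end{itemize}
The weight bookkeeping for $k_j$ follows from $\kleinCSA$-weight preservation. Both $F_{[i,j]}F_{[-j,-i]}$ and $R_i^2$ have the same weight, so $k_j$ must be a monomial of $\kleinCSA$-weight zero relative to that product, which one reads off as $\dCoideal_{\target{i}}\dCoideal_{\source{j}}\inv$ from the $D$-exponents produced by $\kCoideal_i^{\pm1}$.

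The main obstacle is the middle group. Pushing the inductive sum through two $q$-commutators produces many cross terms, and one must check that they telescope to products of the claimed form with no stray monomials. I would first do the computation in the smallest cases, $\linearGraph_1$ and $\linearGraph_1'$, to pin down the pattern, and then organise the general case by the position $j$ of the splitting root in the picture \cref{pos-roots-pic}.
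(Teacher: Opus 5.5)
Your overall architecture matches the paper's appendix proof. It is induction via \cref{Ri-as-starcommutator}, using $R_i*R=R_iR$ and the corrections $R_i*X=R_iX-F_{[i,1-i]}\kCoideal_i\inv$ and $Y*R_i=YR_i-\kCoideal_i\inv F_{[i-1,-i]}$. (The exponent in the second correction is $-1$, not $+1$.) It then inserts the induction hypothesis for $R*R$ and reorders PBW monomials with \cref{rectangle-commutator-rootvec--Uq}.

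However, the step you give for the middle group fails as stated. For $i-1\geq j>1-i$, $\alpha_{-i}$ is orthogonal to $\alpha_{[i-1,j]}$ and $\alpha_i$ is orthogonal to $\alpha_{[-j,1-i]}$, and every derivative correction vanishes. Hence $\{Y,F_{[i-1,j]}\}_q=(1-q)F_{-i}F_{[i-1,j]}$ and $\{F_{[-j,1-i]},X\}_q=(1-q)F_{[-j,1-i]}F_i$: you have paired the wrong factors. In fact $X$ and $Y$ must be moved across $u_j=F_{[i-1,j]}k_jF_{[-j,1-i]}$. This uses $[X,Y]=0$, which fails only for $i=\half$ in $\linearGraph_r'$, so that case must be a base case. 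One then applies $F_{[i-1,j]}F_i=qF_iF_{[i-1,j]}+F_{[i,j]}$ and $F_{-i}F_{[-j,1-i]}=qF_{[-j,1-i]}F_{-i}+F_{[-j,-i]}$. In $qYu_jX-q^3YXu_j-q^3u_jXY+q^5Xu_jY$ everything cancels except $q^3F_{[i,j]}k_jF_{[-j,-i]}$.

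Your account of the constant and of the new $j=i$ term is also wrong. In the principal part only $R_i*R=R_iR$ occurs. $R*R$, and with it $c_{i-1}$ and the old sum, enters only through the correction terms, via products such as $F_{[i,1-i]}*F_{-i}*R$ and $F_{[i-1,-i]}*R*F_i$. \cref{left-right-simple-star} cannot evaluate these, since neither $F_{[i,1-i]}$ nor $R$ is simple.

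The missing device is the rewriting $F_{[i,1-i]}=R*X-qX*R$ and $F_{[i-1,-i]}=Y*R-qR*Y$, together with $Y*F_{[i,1-i]}=F_{-i}F_{[i,1-i]}-\kCoideal_i\inv R$ and $F_{[i-1,-i]}*X=F_{[i-1,-i]}F_i-R\kCoideal_i\inv$. After this the $F_{[i-1,-i]}*F_{[i,1-i]}$ terms cancel. With $T=R*R$, what remains is $q^2T\kCoideal_i^{-2}-q^2R_iR\kCoideal_i\inv+(qY*T*X-q^3Y*X*T-q^3T*X*Y+q^5X*T*Y)\kCoideal_i\inv$.

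Feeding the scalar $c_{i-1}$ in $T$ through $X*Y=XY+\kCoideal_i/(q-q\inv)$ and $Y*X=YX+\kCoideal_i\inv/(q-q\inv)$ gives three pieces:
\begin{itemize}
\item $q(1-q^2)^2c_{i-1}F_iF_{-i}\kCoideal_i\inv$, which is the new $j=i$ term (it does not come from the fresh corrections);
\item $-q^2c_{i-1}\kCoideal_i^{-2}$, which cancels against the $q^2T\kCoideal_i^{-2}$ term;
\item $q^4c_{i-1}$.
\end{itemize}
A scalar does not simply pick up $q^2\cdot q^2$ through two star commutators; for instance $\{\{Y,c\}_q,X\}_q=(1-q)c\{Y,X\}_q$.

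Finally, $\kleinCSA$-weights cannot determine $k_j$, because $\kleinCSA$ itself has weight zero; $k_j$ must be tracked through the computation. Two minor points: for $\linearGraph_r$ the base constant is $c_0=q/(q-q\inv)$, since $\kCoideal_0=1$, and the double commutator has four words, not eight.
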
	

The proof of \cref{Ri*Ri-computation} is a lengthy calculation which we put in the appendix.

\begin{remark}
	\cref{Ri*Ri-computation} implies that $\psi\inv (\csaCoideal)$ is NOT equal to the subalgebra of $\Uq^-$ generated by $\kleinCSA$ and the $F_{[i,-i]}$'s under the usual multiplication, because the latter is NOT closed under the star product! We will see that this causes some subtlety in the construction of Verma modules. 
\end{remark}

\section{Verma modules for $\Uq'(\linearGraph)$}
Let $\linearGraph\in \{\linearGraph_r,\linearGraph_r'\}$.
We construct uniformly a PBW-type basis together with a triangular decomposition of $\coideal(\linearGraph)$. The positive part will not be a subalgebra, but we are still able to work around it and define Verma modules of $\coideal(\linearGraph)$ as induced modules. This allows us to mimic the classical highest weight theory more easily and thus obtain many felicitous properties.

\subsection{Triangular decomposition of $\Aalg$}
Recall from \cref{letztermap-section} the subalgebra $\Aalg\subset \Uq(\linearGraph)$ generated by the negative root vectors together with $\kleinCSA$.
We start with a non-standard triangular decomposition of the subalgebra $\Aalg\subset \Uq(\linearGraph)$ and ``pull it back'' via the Letzter map to get the desired triangular decomposition of $\coideal(\linearGraph)$.

For $\bullet \in \{\prec,\circ ,\succ\}$, let $U^-_\bullet \subset \Uq^-(\linearGraph)$ be the subspace spanned by PBW monomials in $\{F_{[i,j]}\mid\rtPos{i}{j} \in \posrootSet_\bullet\}$. 
Let $\PBWset_\prec$ be the set of $uhv$ where $u,h,v$ run through usual PBW monomials in $\UmMinus,\kleinCSA\UmZero,\UmPlus$ respectively under the usual multiplication.

\begin{lem}\label{triang-decompo-Uq-minus}
	$\PBWset_{\prec}$ forms a basis of $\Uq^-(\linearGraph)$, giving rise to vector space isomorphisms
	\[
		\UmMinus\otimes \UmZero\otimes \UmPlus\cong \Uq^-(\linearGraph)
		,\qquad
		\UmMinus\otimes \kleinCSA\UmZero\otimes \UmPlus\cong \Aalg
		.
	\]
\end{lem}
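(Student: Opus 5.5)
The plan is to compare $\PBWset_\prec$ with the usual PBW basis of $\Uq^-(\linearGraph)$ for the convex order $<$. The point is that the three regions $\posrootSet_\prec,\posrootSet_\circ,\posrootSet_\succ$ are not intervals for $<$. In \cref{pos-roots-pic} the order scans columns, while our regions cut across columns. So we cannot simply quote the standard fact that ordered PBW monomials form a basis. Instead I would run a triangularity (straightening) argument based on \cref{rectangle-commutator-rootvec--Uq}.

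First I would check that $\PBWset_\prec$ and the usual PBW basis have the same cardinality in each weight space $U^-_{-\mu}$. Both are indexed by functions $\posrootSet\to\N$ with $\sum n_\alpha\alpha=\mu$, since $\posrootSet=\posrootSet_\prec\sqcup\posrootSet_\circ\sqcup\posrootSet_\succ$. These weight spaces are finite dimensional, so it suffices to show that $\PBWset_\prec$ spans $\Uq^-(\linearGraph)$.

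For spanning I would introduce an auxiliary total order $\lhd$ on positive roots. It puts $\posrootSet_\prec$ first, then $\posrootSet_\circ$, then $\posrootSet_\succ$, and within each block it agrees with $<$ (the order used for the usual PBW monomials in each $U^-_\bullet$). The goal is then to show that any product of root vectors can be rewritten as a combination of $\lhd$-ordered monomials. The tool is \cref{rectangle-commutator-rootvec--Uq}: for any pair $\alpha<\beta$, swapping $F_\alpha F_\beta$ produces $q^{\pm(\alpha,\beta)}F_\beta F_\alpha$ plus either nothing, a single root vector $F_{\alpha+\beta}$, or a product $F_{[i,l]}F_{[k,j]}$ of two roots lying strictly between $\alpha$ and $\beta$ in the rectangle picture.

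To make this terminate, I would use induction on a filtration. The primary measure is the number of factors, which is non-increasing under each swap. The secondary measure is the multiset of roots ordered by a height-type statistic; for instance the lexicographic vector of $\max$/$\min$ indices, which strictly decreases when $\{[i,j],[k,l]\}$ is replaced by $\{[i,l],[k,j]\}$ with $k>i\geq l>j$ (the new roots are "inner" corners). I would combine this with the count of $\lhd$-inversions. This standard argument gives spanning. Together with the dimension count, it shows $\PBWset_\prec$ is a basis, and the multiplication map $\UmMinus\otimes\UmZero\otimes\UmPlus\to\Uq^-(\linearGraph)$ is an isomorphism.

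The second isomorphism follows by tensoring with $\kleinCSA$. We have $\Aalg\cong\kleinCSA\otimes\Uq^-(\linearGraph)$ by the triangular decomposition of $\Uq(\linearGraph)$. Moreover, $\kleinCSA$ normalises each root vector by a scalar, so $\kleinCSA$ can be moved into the middle slot at the cost of rescaling basis vectors.

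The main obstacle is the termination of straightening. The rectangle rule can create roots in a different region: for example, commuting an element of $\posrootSet_\succ$ past one of $\posrootSet_\prec$ may produce $\posrootSet_\circ$ roots. So I must verify that the chosen statistic genuinely decreases in every case. If this fails, I would fall back on a degeneration argument: pass to the associated graded for the filtration by $\deg$ refined with the Levendorskii--Soibelman ordering. The associated graded is a $q$-polynomial (quantum affine space) algebra, where monomials in any order form a basis, and lifting back gives the claim.
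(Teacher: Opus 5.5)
Your proposal is correct and follows essentially the paper's route: you straighten products of root vectors via \cref{rectangle-commutator-rootvec--Uq} to get spanning, then get linear independence by matching $\PBWset_\prec$ bijectively with the usual PBW monomials (you do this weight space by weight space, the paper per filtration piece). Your lexicographic termination measure is genuinely needed, because the paper's bare induction on region inversions does not strictly decrease: a correction term $F_{[i,l]}F_{[k,j]}$ can create new inversions with the other factors. Note, however, that $[k,j]$ is the outer (union) corner, not an inner one, so a statistic built only from the multisets of max and min indices would not change; take as secondary statistic, e.g., the sum of squared heights, which strictly increases when an overlapping pair is replaced by its intersection and union.
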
	
\begin{proof}
	We first show that $ \UmMinus\UmZero\UmPlus$ spans $\Uq^-$. 
	For root vectors $p_1,\ldots ,p_n$. Define the \emph{number of inversions} of the sequence $p_1,\ldots ,p_n$ by the total number of pairs $i<j$ such that $p_i\succ p_j$. 
	We show that the product $p_1\cdots p_n\in \UmMinus\UmZero\UmPlus$ via induction on the number of inversions. Let $i$ be such that $p_i\succ p_{i+1}$. 
	By \cref{rectangle-commutator-rootvec--Uq} we have either $p_ip_{i+1} \in q^\Z p_{i+1}p_{i} + \Z[q^{\pm1} ] p_i'$ or $p_ip_{i+1} \in q^\Z p_{i+1}p_{i} + \Z[q^{\pm1} ] p_i'p_{i+1}'$. If $p_i'\succ p_{i+1}'$, then by \cref{rectangle-commutator-rootvec--Uq} and the definition of $\prec$, they must $q$-commute. Hence, in either case, we can rewrite $p_ip_{i+1}$ to strictly decrease the number of inversions, and by induction hypothesis, we conclude that $p_1\cdots p_n\in \UmMinus\UmZero\UmPlus$.
	Since the usual PBW monomials under the convex order form a basis of $\Uq^-(\linearGraph)$, we have $\PBWset_\prec$ spans $\Uq^-$. Furthermore, within each PBW filtration piece $U_n$ we have a canonical bijection of $U_n\cap\PBWset_\prec$ with usual PBW monomials in $U_n$. This proves that $\PBWset_\prec$ is linear independent. The vector space isomorphisms then follow immediately from that $\PBWset_\prec$ form a basis.
\end{proof}

\subsection{The coideal PBW theorem}
Define for $\bullet\in \{\prec,\succ\}$ the subspaces $U'_\bullet = \letzterIso\inv(U^-_\bullet)$.

\begin{thm}[PBW theorem]\label{PBW-coideal}
	The following hold:
	\begin{enumerate}
		\item $\letzterIso\inv(\PBWset_{\prec})$ form an $\kleinCSA$-weight basis of $\Uq'(\linearGraph)$.
		\item Multiplication gives a vector space isomorphism $\negCoideal\otimes\csaCoideal \otimes \posCoideal\cong \Uq'(\linearGraph)$.
	\end{enumerate}
\end{thm}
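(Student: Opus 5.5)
The plan is to transport everything through the Letzter map $\letzterIso$ and then pass to associated graded objects, where the star product becomes the ordinary product.

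For part (a), \cref{letzter-map}(c) says that $\letzterIso\colon\coideal(\linearGraph)\to\Aalg$ is a linear isomorphism. By \cref{triang-decompo-Uq-minus}, $\PBWset_\prec$ is a basis of $\Uq^-(\linearGraph)$, and letting $\kleinCSA$ act on the middle factor gives the isomorphism $\UmMinus\otimes\kleinCSA\UmZero\otimes\UmPlus\cong\Aalg$. So the set $\PBWset_\prec$, with elements of a basis of $\kleinCSA$ inserted in the middle, is a basis of $\Aalg$. Pulling this basis back by $\letzterIso\inv$ gives a basis of $\coideal(\linearGraph)$. Each of its elements is a $\kleinCSA$-weight vector, because $\letzterIso$ is $\kleinCSA$-bilinear by \cref{letzter-map}(a) and therefore preserves $\kleinCSA$-weights. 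The elements $uhv$ of $\PBWset_\prec$ are weight vectors in $\Aalg$, so their preimages are weight vectors as well.

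For part (b), I would compare the multiplication map
\[
m\colon\negCoideal\otimes\csaCoideal\otimes\posCoideal\to\coideal(\linearGraph),\qquad a\otimes h\otimes b\mapsto ahb,
\]
with the decomposition of $\Aalg$ above, using the filtration from \cref{grading-filtrations}.
\begin{enumerate}
\item Write $a=\letzterIso\inv(u)$ and $b=\letzterIso\inv(v)$ with $u,v$ PBW monomials in $\UmMinus$ and $\UmPlus$. By \cref{letzter-map}(d), $\gr\letzterIso$ is an algebra isomorphism. Hence $\letzterIso(ahb)=u*\letzterIso(h)*v$ equals $u\,\letzterIso(h)\,v$ modulo terms of strictly lower filtration degree.
\item I need to identify $\gr\letzterIso(\csaCoideal)$ with $\kleinCSA\UmZero$, viewed in $\gr\Aalg=\Aalg$. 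The algebra $\csaCoideal$ is generated by $\kleinCSA$ and the $\letzterIso\inv(R_i)$, with $R_i=F_{[i,-i]}$, and it is commutative by \cref{cartan-commute-both}. So it is spanned by $\kleinCSA$ times star-monomials $R_{i_1}*\cdots*R_{i_s}$ with $i_1\le\cdots\le i_s$. By \cref{R_i*Rj=RiRj} and \cref{Ri*Ri-computation}, each such star-monomial equals the ordinary ordered monomial $R_{i_1}\cdots R_{i_s}$ plus lower-degree terms. The ordinary ordered monomials are exactly the PBW monomials spanning $\UmZero$: the roots in $\posrootSet_\circ$ are the $\rtPos{i}{-i}$, and their root vectors $q$-commute by \cref{rectangle-commutator-rootvec--Uq}. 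Thus the leading terms of the spanning set of $\csaCoideal$ are the $\kleinCSA$-multiples of PBW monomials in $\UmZero$, which are linearly independent. I conclude that a suitable basis of $\csaCoideal$ has leading terms forming a basis of $\kleinCSA\UmZero$.
\item Combining the first two steps, $m$ sends a basis of the source to elements whose leading terms are exactly the basis of $\Aalg$ from part (a). Filtration degrees agree, since the factors have degrees $\deg u$, $\deg h$ and $\deg v$. A map whose induced map on associated graded objects is bijective is itself bijective. So $m$ is an isomorphism, proved by induction on filtration degree.
\end{enumerate}

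The main obstacle is making the leading-term argument in steps 2 and 3 rigorous, because the filtration pieces are not finite dimensional over $\funcField$ (they are modules over $\kleinCSA$). I would handle this by working with free $\kleinCSA$-modules. Using $\kleinCSA$-bilinearity of $\letzterIso$, each filtration piece $\mathcal{F}_n$ of $\coideal(\linearGraph)$ is a free $\kleinCSA$-module with finitely many generators in each $\kleinCSA$-weight. The map $m$ is $\kleinCSA$-linear once $\kleinCSA$ is placed in the middle factor. Bijectivity then follows from a unitriangularity statement over $\kleinCSA$ on each $\mathcal{F}_n/\mathcal{F}_{n-1}$, where $\gr$ of $m$ is identified with the multiplication isomorphism of \cref{triang-decompo-Uq-minus}.

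A second delicate point is the correction term $\sum_j F_{[i,j]}k_jF_{[-j,-i]}+c_i$ in \cref{Ri*Ri-computation}. It involves roots outside $\posrootSet_\circ$, but it has strictly lower degree only if the sum $\sum_j F_{[i,j]}k_jF_{[-j,-i]}$ has degree below $2$. Since $\deg F_{[i,j]}=1$ for a root vector, each summand $F_{[i,j]}k_jF_{[-j,-i]}$ has degree $2$, the same as $R_i^2$. If so, the leading part of $R_i*R_i$ is not $R_i^2$. In that case I would instead use the filtration by root height, where $F_{[i,j]}$ has degree equal to the height of $\rtPos{i}{j}$. I would then check that \cref{letzter-map}(d) still holds for this grading, which is plausible from the skew-derivation formulas \cref{letzter-lusztig-derivative}, since these lower the height.
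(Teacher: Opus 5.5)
Your proposal is correct and is essentially the paper's argument: you pull the decomposition $\UmMinus\otimes\kleinCSA\UmZero\otimes\UmPlus\cong\Aalg$ back through $\letzterIso$, use commutativity of $\csaCoideal$ to reduce to ordered monomials, and use that $\gr\letzterIso$ is an algebra isomorphism to see that the multiplication map is bijective on associated graded pieces. Your closing worry comes from a misreading. In \cref{grading-filtrations} only the simple generators $F_i$ have degree $1$, so $F_{[i,j]}$ already has degree equal to the height of $\rtPos{i}{j}$ and the correction terms in \cref{Ri*Ri-computation} have strictly lower degree; more directly, \cref{letzter-map}(d) alone shows that the leading term of a star product is the ordinary product, so you need neither \cref{R_i*Rj=RiRj}, nor \cref{Ri*Ri-computation}, nor any finiteness of the filtration pieces.
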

\begin{proof}
	The first statement follows from \cref{triang-decompo-Uq-minus} and the fact that $\letzterIso$ is a linear isomorphism. 
	Next, by \cref{letzter-map}, the associated graded map of $\letzterIso$ is an algebra isomorphism, and hence brings the elements in $\PBWset'$ to $\PBWset_\succ$ in \cref{triang-decompo-Uq-minus}. This shows that $\PBWset'$ is a basis of $\csaCoideal$. Since $\letzterIso$ preserves $\kleinCSA$-weights, we obtain an  $\kleinCSA$-weight basis. This gives the desired triangular decomposition $\negCoideal\otimes\csaCoideal \otimes \posCoideal\cong \Uq'(\linearGraph)$. 
\end{proof}

Let $\posposCoideal\subset \posCoideal $ be the subspace of elements with strictly positive degrees. We have the following key observation that allows us to mimic the classical theory. 
\begin{lem}\label{pbw-hilfslemma}
	We have $\posposCoideal\csaCoideal \subset \negCoideal \csaCoideal\posposCoideal$. 
\end{lem}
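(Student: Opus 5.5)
The plan is to find a grading on $\coideal(\linearGraph)$ that is positive on $\posposCoideal$, zero on $\csaCoideal$ and non-positive on $\negCoideal$. Once such a grading is in place, the inclusion follows from the PBW theorem \cref{PBW-coideal} by bookkeeping, and no explicit commutation relations are needed.

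\textbf{Step 1 (the grading).} Define a linear functional $\phi$ on the root lattice by $\phi(\alpha_s)=1$ if $s>0$, $\phi(\alpha_s)=-1$ if $s<0$, and $\phi(\alpha_0)=0$ (the last case occurs only for $\linearGraph_r$). For $\rtPos{i}{j}$ we have $\phi(\rtPos{i}{j})=\sum_{j\le s\le i}\phi(\alpha_s)$. Checking this against \cref{order-root-regions}:
\begin{itemize}
\item for $\rtPos{i}{j}\in\posrootSet_\succ$ (i.e.\ $i>|j|$ or $i=j>0$) we get $\phi>0$;
\item for $\rtPos{i}{-i}\in\posrootSet_\circ$ we get $\phi=0$;
\item $\posrootSet_\prec$ is the mirror image of $\posrootSet_\succ$, so $\phi<0$ there.
\end{itemize}
Give $\Aalg$ the grading in which $F_{[i,j]}$ has degree $\phi(\rtPos{i}{j})$ and $\kleinCSA$ has degree $0$.

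Next check that the star product respects this grading. By \cref{left-right-simple-star}, $F_s*u$ differs from $F_su$ by a term $\kCoideal_s\delustig_{-s}(u)$. Here $\kCoideal_s$ has degree $0$ and $\delustig_{-s}$ removes $\alpha_{-s}$. Since $\phi(\alpha_{-s})=-\phi(\alpha_s)$, both terms have degree $\phi(\alpha_s)+\phi(u)$. The same holds for $u*F_s$, and by $\kleinCSA$-bilinearity $*$ is graded on generators. Since $\letzterIso$ is an algebra isomorphism onto $(\Aalg,*)$, transporting the grading gives an algebra grading on $\coideal(\linearGraph)$. Directly, $B_s$ is homogeneous of degree $\phi(\alpha_s)$: the summand $E_{-s}K_s\inv$ has weight $\alpha_{-s}$, whose $\phi$-value is $-\phi(\alpha_{-s})=\phi(\alpha_s)$.

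\textbf{Step 2 (degrees of the three pieces).} The PBW monomials are homogeneous, so $\letzterIso\inv(\PBWset_\prec)$ consists of homogeneous elements. Their degrees behave as follows:
\begin{itemize}
\item $\csaCoideal=\letzterIso\inv(\kleinCSA\UmZero)$ is concentrated in degree $0$;
\item $\negCoideal$ lives in degrees $\le 0$, with degree $0$ only for its constant part;
\item a PBW monomial in $\UmPlus$ of filtration degree $d$ has $\phi$-degree $\ge d$, so it has positive $\phi$-degree exactly when it lies in $\posposCoideal$.
\end{itemize}
Consequently every nonzero homogeneous element of $\posposCoideal\csaCoideal$ has strictly positive degree.

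\textbf{Step 3 (conclusion).} Take $x\in\posposCoideal\csaCoideal$, which we may assume homogeneous of degree $m>0$. Expand $x$ using \cref{PBW-coideal}(b) as a sum of products $u\,h\,v$, where $u$, $h$, $v$ are homogeneous PBW elements of $\negCoideal$, $\csaCoideal$, $\posCoideal$ respectively. Because the algebra is graded and the triangular decomposition is compatible with the grading, only terms with $\deg u+\deg h+\deg v=m$ survive. Since $\deg h=0$ and $\deg u\le 0$, each surviving term has $\deg v\ge m>0$, so $v\in\posposCoideal$. Hence $x\in\negCoideal\csaCoideal\posposCoideal$.

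The main point to verify carefully is Step 1, namely that $\phi$ really defines an algebra grading of $(\Aalg,*)$ and hence of $\coideal(\linearGraph)$. This requires checking the cases $s=0$ and $s=\pm\half$ with their modified constants $c_s$ and the Cartan factors $[\kCoideal_\half;0]$. These factors have weight $0$, so I expect no problem.
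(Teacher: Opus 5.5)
Your proof is correct and is essentially the paper's argument: the paper does the same PBW bookkeeping with the $\kleinCSA$-weight (preserved by $\letzterIso$ thanks to $\kleinCSA$-bilinearity), and your $\phi$-degree is just a linear functional of that weight, so Step~1 comes for free. There are two harmless slips: a PBW monomial in $\UmPlus$ has $\phi$-degree at least its number of root-vector factors, not its filtration degree (e.g.\ $F_{[1,0]}$ for $\linearGraph_r$ has filtration degree $2$ but $\phi$-degree $1$); and $\csaCoideal$ need not equal $\letzterIso\inv(\kleinCSA\UmZero)$ (see \cref{Ri*Ri-computation} and the remark after it), although it is still concentrated in degree $0$ because it is generated by elements of degree $0$.
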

\begin{proof}
	Let $u \in \posposCoideal$ and $h\in \csaCoideal$. 
	We may assume that $u$ is a weight vector for $\kleinCSA$. In view of the definition of $\posCoideal$, $u \in \posposCoideal$ implies that $u$ must have strictly positive $\kleinCSA$-weight.
	By \cref{PBW-coideal} we have $uh= \sum y_ih_ix_i $ for some $y_i \in \negCoideal, h_i \in \csaCoideal, x_i \in \posCoideal$. 
	Since $uh $ has the same $\kleinCSA$-weight as $u$, we must have each $x_i\in \posposCoideal$. The $\negnegCoideal$ part is completely analogous.
\end{proof}

\begin{remark}
	Using \cref{Ri*Ri-computation}, one can check that $F_{i+1} * R_i*R_i \notin \letzterIso (\csaCoideal) * \UmPlus $ and $F_{i+1} * R_i*R_i\notin \letzterIso (\csaCoideal)\UmPlus $ for $i= 1 $ in the $\linearGraph_r$ case 
	and $i=\half $ in the $\linearGraph_r'$ case. Consequently, one cannot improve \cref{pbw-hilfslemma}. In fact, $\csaCoideal\posCoideal$ is NOT a subalgebra except in the $\linearGraph_1$ and $\linearGraph_1'$ cases. Thus, \cref{PBW-coideal} is weaker than what one might desire.
\end{remark}

\subsection{A Harish-Chandra homomorphism}
Before proceeding, we propose a version of Harish-Chandra type homomorphism.
Let $\pr: \coideal \to \csaCoideal$ be the projection under the PBW basis from \cref{PBW-coideal}.

\begin{definition}
	The \emph{Harish-Chandra map} $\xi: Z(\coideal) \to \csaCoideal$ is the restriction of $\pr$ to $Z(\coideal)$.
\end{definition}

\begin{prop}
	The Harish-Chandra map is an algebra homomorphism. 
\end{prop}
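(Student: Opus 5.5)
The plan is to mimic the classical argument for the Harish-Chandra homomorphism. The key input is \cref{pbw-hilfslemma} together with its negative analogue $\csaCoideal\negnegCoideal \subset \negnegCoideal\csaCoideal\posCoideal$, where $\negnegCoideal\subset\negCoideal$ is the subspace of elements of strictly positive degree. The analogue follows by the same $\kleinCSA$-weight argument.

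By \cref{PBW-coideal}, write
\[
\coideal = \csaCoideal \oplus \bigl(\negnegCoideal\csaCoideal\posCoideal + \negCoideal\csaCoideal\posposCoideal\bigr).
\]
Then $\pr$ is the projection onto the first summand, using $\negCoideal=\funcField\oplus\negnegCoideal$ and $\posCoideal=\funcField\oplus\posposCoideal$. Call the second summand $N$.

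The first step is to replace PBW bookkeeping by weights. Every element of $\negnegCoideal$ has strictly negative $\kleinCSA$-weight and every element of $\posposCoideal$ has strictly positive $\kleinCSA$-weight, by the remark after \cref{letzter-map}: the only root vectors of zero $\kleinCSA$-weight are those in $\posrootSet_\circ$. A central element $z$ commutes with $\kleinCSA$, so $z$ has $\kleinCSA$-weight $0$. Hence in its PBW expansion only terms $y h x$ with $\operatorname{wt}(y)+\operatorname{wt}(x)=0$ occur. In particular, $z\in \csaCoideal \oplus \bigl(N\cap \coideal_0\bigr)$, where $\coideal_0$ denotes the weight-zero part. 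For weight-zero terms, $y\in\negnegCoideal$ forces $x\in\posposCoideal$, so
\[
N\cap\coideal_0\subset \negnegCoideal\csaCoideal\posposCoideal .
\]

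The second step, for central $z,z'$, is to write $z=\xi(z)+n$ and $z'=\xi(z')+n'$ with $n,n'\in \negnegCoideal\csaCoideal\posposCoideal$. Then
\[
zz'=\xi(z)\xi(z')+\xi(z)n'+nz'.
\]
The term $\xi(z)n'$ lies in $\csaCoideal\negnegCoideal\csaCoideal\posposCoideal$. I would apply the negative analogue of \cref{pbw-hilfslemma} to move $\csaCoideal$ past $\negnegCoideal$. Then I would re-expand the resulting product $\csaCoideal\csaCoideal\posCoideal$ using that $\csaCoideal$ is a subalgebra (\cref{cartan-commute-both}), and re-expand the product of $\posCoideal$ with $\posposCoideal$. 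The last part is the delicate point, since $\posCoideal$ is not a subalgebra. One only needs that products land in $N$, though, and this can be read off from weights. The product $\xi(z)n'$ is a sum of weight-zero PBW terms, and each involves a factor of $\negnegCoideal$ with strictly negative weight. The worry is that re-expansion could produce pure $\csaCoideal$ terms. To rule this out, I would use the filtration and $\gr\letzterIso$ from \cref{letzter-map}, or better the following cleaner argument.

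The cleaner argument avoids these products altogether. Since $z$ is central, $nz'=z'n$. Now $n$ is a sum of terms $yhx$ with $y\in\negnegCoideal$, so I would consider the left ideal-type subspace $\coideal\,\posposCoideal$ instead. Write $\pr$ as projection modulo $\negnegCoideal\coideal+\coideal\posposCoideal$. This is the main obstacle: I must show that
\[
\csaCoideal\cap\bigl(\negnegCoideal\coideal+\coideal\posposCoideal\bigr)=0.
\]
I would prove it using \cref{pbw-hilfslemma} and its negative analogue, which give
\[
\coideal\posposCoideal=\negCoideal\csaCoideal\posCoideal\posposCoideal,
\]
and then the $\kleinCSA$-weight argument restricted to weight-zero elements. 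Concretely, I would first show $\coideal\,\posposCoideal\subset N$ by decomposing into weights and using $\gr\letzterIso$ to control leading terms. With this in hand, $nz'=z'n$ with $n\in\negnegCoideal\coideal$ gives
\[
\pr(zz')=\pr(\xi(z)z')=\pr(\xi(z)\xi(z'))+\pr(\xi(z)n')=\xi(z)\xi(z'),
\]
since $\xi(z)n'\in\coideal\posposCoideal$. Unitality is clear because $\pr(1)=1$.
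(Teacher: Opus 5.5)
Your reduction is sound, and it is organised differently from the paper's. The paper shows that $I=\wtZero\cap\ker\pr$ is an ideal of $\wtZero$, using \cref{pbw-hilfslemma} to get $I\csaCoideal\subset I$. You instead write $z=\xi(z)+n$ and $z'=\xi(z')+n'$ with $n,n'\in\negnegCoideal\csaCoideal\posposCoideal$; your weight argument for this is right. Because $z'$ is central (not $z$, as you wrote), you have $nz'=z'n$. Then
\[
zz'-\xi(z)\xi(z')=\xi(z)n'+z'n\in\coideal\,\posposCoideal ,
\]
so everything reduces to the single inclusion $\coideal\,\posposCoideal\subset N=\ker\pr$.

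For this you need neither the negative analogue of \cref{pbw-hilfslemma} nor anything about $\negnegCoideal\coideal$; that is only needed if you do not use centrality. The paper's route needs the same inclusion in disguise. After applying \cref{pbw-hilfslemma}, its step $\negCoideal\csaCoideal\posposCoideal\csaCoideal\cap\wtZero\subset\negCoideal\csaCoideal\posposCoideal$ amounts to $\negCoideal\csaCoideal\negCoideal\csaCoideal\posposCoideal\subset\negCoideal\csaCoideal\posposCoideal$. Making $I$ an ideal also requires $I\cdot I\subset I$. Your route packages all of this into one statement.

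The gap is that you identify this inclusion as the main obstacle but do not prove it. The mechanism you name, controlling leading terms via $\gr\letzterIso$, is not enough by itself: a top-degree statement says nothing about the lower-order corrections, which is exactly where $\posCoideal$ fails to be a subalgebra. Two ingredients are missing.

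\emph{Closure of $\UmPlus$.} $\UmPlus$ is closed under the ordinary product of $\Aalg$. By \cref{rectangle-commutator-rootvec--Uq}, if $\rtPos{i}{j},\rtPos{k}{l}\in\Phi_\succ$, then the roots $\rtPos{i}{l}$ and $\rtPos{k}{j}$ occurring in the $q$-commutator again lie in $\Phi_\succ$. Now take $x_1,x_2\in\posposCoideal$ with $\letzterIso(x_1),\letzterIso(x_2)$ homogeneous of degrees $a,b$. Then $x_1x_2=x''+r$ with $x''\in\posposCoideal$ of degree $a+b$ and $\deg r<a+b$. Since $\Phi_\succ$-weights lie in a pointed cone, the weight of $r$ forces every PBW term $y'h'x'$ of $r$ to have $x'\in\posposCoideal$.

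\emph{Induction.} Prove $ux\in N$ for $u\in\coideal$, $x\in\posposCoideal$ by induction on $\deg u+\deg x$. Expand $u=\sum_k y_kh_kx_k$ in a weight PBW basis. This expansion is strictly compatible with the filtration, since its associated graded is the decomposition of \cref{triang-decompo-Uq-minus} via $\gr\letzterIso$. Then $ux=\sum_k y_kh_k(x_kx)$. The terms with $x_k$ scalar, and the terms $y_kh_kx''$, lie in $N$ outright. The remaining terms are $(y_kh_ky'h')\,x'$ with $x'\in\posposCoideal$ and total degree strictly less than $\deg u+\deg x$, so the induction hypothesis applies. \cref{pbw-hilfslemma} is not needed here.

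With this lemma written out, your argument is complete.
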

\begin{proof}
	Let $\wtZero\subset \coideal$ be the subalgebra of zero $\kleinCSA$-weight elements, then clearly we have $Z(\coideal)\subset \wtZero$ and let $I= \wtZero\cap \ker \pr$. We show that $I$ is an ideal in $\wtZero$. Since $\csaCoideal \subset \wtZero$, the restriction of $\pr$ to $\wtZero$ is surjective, and consequently $I+ \csaCoideal = \wtZero$. Then it suffices to show that $\csaCoideal I \csaCoideal \subset I$. 
	By the definition of $\pr$, we have $I \subset \negCoideal \csaCoideal\posposCoideal\cap \wtZero$.
	By \cref{pbw-hilfslemma}, we have $I \csaCoideal \subset \negCoideal \csaCoideal\posposCoideal \csaCoideal \cap  \wtZero  \subset  \negCoideal \csaCoideal\posposCoideal \cap  \wtZero \subset I$. That $\csaCoideal I \subset I$ follows from an analogous argument which we leave to the reader. This shows that $I$ is an ideal and concludes the proof.
\end{proof}

In \cite{Ko-braided-module}, the center of $\coideal=\coideal(\linearGraph)$ was shown to be a polynomial ring with generators indexed by fundamental weights of $\Uq=\Uq(\linearGraph)$. 
For example, the center of $\coideal(\linearGraph_1)$ was explicitly determined in \cite[\S 6]{StWweight-mod-qsp}. Furthermore, in this case we were able to show that the $\xi$ is injective.

\begin{eg}
	The center of $\coideal(\linearGraph_1')$ is generated by $\dCoideal_0\dCoideal_1^2$ and \[
		C_1 = ([B_{-\half},B_{\half}]_q+ [\kCoideal_\half;0]  - \frac{q^2}{q-q\inv}\kCoideal_\half\inv) \dCoideal_1\inv
		,\;
		C_2 =([B_{-\half},B_\half]_{q\inv}   + [ \kCoideal_\half;0 ]+\frac{q^{-2}}{q-q\inv} \kCoideal_\half )\dCoideal_1 
		.
	\]
	In terms of \cite[Thm. 4.10]{Ko-braided-module}, $C_1$ is the central element in $\operatorname{ad}^l(\Uq) D_{-1}^{-2}$, and $C_2$ is the central element in $\operatorname{ad}^l(\Uq) D_1^{2}$. Note that by \cref{psi-wi} we have \[
		\letzterIso\circ \xi (C_1) = F_{[\half,-\half]} \dCoideal_1\inv -\frac{q^2}{q-q\inv}\dCoideal_0\inv 
		,\qquad
		\letzterIso\circ \xi (C_2) = F_{[\half,-\half]} \dCoideal_1 -\frac{q^{-2}}{q-q\inv}\dCoideal_0. 
	\]
	In this case $\letzterIso\circ \xi (C_1) ,\letzterIso\circ \xi (C_2) $ are linearly independent under the usual multiplication. Since the associated graded map of $\letzterIso$ is an algebra isomorphism, $\xi(C_1),\xi(C_2)$ must also be algebraically independent. Consequently, $\xi $ is injective.
\end{eg}

\begin{conjecture}
	Let $\linearGraph \in \{\linearGraph_r,\linearGraph_r'\}$.
	The Harish-Chandra homomorphism of $\coideal(\linearGraph) $ is injective.
\end{conjecture}

\subsection{Verma modules} 
Now we are ready to define Verma modules that come with almost all desired properties.

\begin{definition}
	Let $M$ be a $\Uq'(\linearGraph)$-module. A vector $v\in M$ is called a \emph{highest weight vector} if $v\ne 0$ is a weight vector for $\csaCoideal$ and $\posCoideal v=0$.
\end{definition}

In fact, we have the following useful criterion for highest weight vectors.
\begin{lemma}\label{check-hwvec}
	Let $M$ be a $\Uq'(\linearGraph)$-module. Let $v\in M$ be a  $\csaCoideal$-vector  such that $B_i v =0$ for all $i>0$. Then $v$ is a {highest weight vector}.
\end{lemma}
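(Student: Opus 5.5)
The plan is to show that every element of $\posCoideal$ of positive degree lies in the left ideal $\sum_{i>0}\coideal(\linearGraph)B_i$, that is,
\[
	\posposCoideal\subset \sum_{i\in\graphEdges_{\linearGraph^+}} \coideal(\linearGraph)\, B_i .
\]
Granting this, $\posCoideal v=\posposCoideal v+\funcField\, v$. The first summand vanishes because $B_iv=0$ for all $i>0$. The degree zero part of $\posCoideal$ is just the scalars, so it contributes nothing to check. Hence $\posposCoideal v=0$, and since $v\ne0$ is a $\csaCoideal$-weight vector by assumption, $v$ is a highest weight vector. Everything therefore reduces to this inclusion, which I would prove on the Letzter map side using the right-multiplication formula in \cref{letzter-lusztig-derivative}:
\[
	\psi(aB_i)=uF_i+\tfrac{c_i}{q-q\inv}\,\partial^R_{-i}(u)\,\kCoideal_i^{\pm1},\qquad u=\letzterIso(a).
\]

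The first step is a vanishing statement for the correction term. For a root vector $F_{[s,t]}$ with $\rtPos{s}{t}\in\posrootSet_\succ$, \cref{del-i-Fab} says that $\partial^R_{-i}$ kills $F_{[s,t]}$ unless $t=-i$. If $t=-i$ and $s>|t|=i$, the root is not of the right shape for the factorisation below, and if $s=t$ the root vector is simple. I expect this to give the following: whenever $u$ is a product of $\posrootSet_\succ$-root vectors in which no factor ends at $-i$, we get $\partial^R_{-i}(u)=0$ by the twisted Leibniz rule \cref{leibniz}, and hence $\psi(\letzterIso\inv(u)B_i)=uF_i$ exactly. I would carry out the induction on the degree of a PBW monomial $m=m'F_{[s,t]}\in\UmPlus$, where $F_{[s,t]}$ is the rightmost factor and $s>0$.

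For the induction step, write $F_{[s,t]}$ as a $q$-commutator that ends in $F_s$, namely $F_{[s,t]}=F_{[s-1,t]}F_s-q^{\pm1}F_sF_{[s-1,t]}$; this is possible by \cref{rectangle-commutator-rootvec--Uq}, with $F_{[s-1,t]}=1$-type degenerate cases handled separately. The piece $m'F_{[s-1,t]}F_s$ equals $\psi\big(\letzterIso\inv(m'F_{[s-1,t]})B_s\big)$ by the vanishing step, so it lies in $\letzterIso(\coideal B_s)$. The piece $m'F_sF_{[s-1,t]}$ has lower PBW complexity. I would re-express it in $\PBWset_\prec$ via \cref{triang-decompo-Uq-minus} and apply the induction hypothesis to the resulting monomials, using that they all have the same strictly positive $\kleinCSA$-weight. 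That positivity keeps them inside $\letzterIso(\negCoideal\csaCoideal\posposCoideal)$, exactly as in the proof of \cref{pbw-hilfslemma}. Pulling back through $\letzterIso\inv$, which is a linear isomorphism, then gives the inclusion. For this to close, I would also need that elements of $\negCoideal\csaCoideal\posposCoideal$ kill $v$ once the $\posposCoideal$-factors of lower degree do. This holds because $\csaCoideal$ preserves the $\csaCoideal$-eigenline of $v$, and $\negCoideal$ acts afterwards.

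The main obstacle is this rewriting step. Reordering $m'F_sF_{[s-1,t]}$ may produce monomials whose rightmost factor is a root vector $F_{[s',-i]}$ ending at a negative index, for which the correction $\partial^R_{-i}$ does not vanish. It may also produce monomials lying in $\UmMinus$ or $\UmZero$ rather than $\UmPlus$. My first attempt would be to choose the factorisation of $F_{[s,t]}$ so that the simple root split off is always the one with the largest positive index, so that every correction term carries strictly positive $\kleinCSA$-weight of smaller degree. I would then use a double induction on degree and on the number of inversions, as in the proof of \cref{triang-decompo-Uq-minus}, to absorb these corrections.
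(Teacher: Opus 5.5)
\textbf{There is a genuine gap.} Your argument rests on the inclusion $\posposCoideal\subset\sum_{i>0}\coideal(\linearGraph)B_i$, and that inclusion is false. If it held, every $v$ with $B_iv=0$ for all $i>0$ would be killed by $\posposCoideal$. You would then need the hypothesis that $v$ is a $\csaCoideal$-weight vector only to meet the definition, but in fact that hypothesis is essential.

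Take $\linearGraph=\linearGraph_r$ with $r\geq 1$. Then $\letzterIso\inv(F_{[1,0]})=[B_0,B_1]_q=B_0B_1-qB_1B_0\in\posposCoideal$, so your inclusion would force $B_1B_0\in\sum_{i>0}\coideal(\linearGraph)B_i$.

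\emph{Classical limit.} At $q=1$ one has $\matrixElmtFont{b}_0=\varphi(\lgl_0,-\lgl_0)$ and $\matrixElmtFont{b}_1=\varphi(y,y)$, with $y$ the matrix unit linking $0$ and $1$. Hence $[\matrixElmtFont{b}_0,\matrixElmtFont{b}_1]=\pm\varphi(y,-y)$. By PBW this element is not in the left ideal of $U(\classicalFixPointSubalg)$ generated by the diagonal nilradical $\{\varphi(z,z)\mid z\in\mathfrak{n}\}$, which contains all $\matrixElmtFont{b}_i$ with $i>0$. Concretely, on $\C^2\boxtimes\C^2$ over $\gl_2\times\gl_2$, the vector $u_1\otimes u_2-u_2\otimes u_1$ is killed by $\varphi(e,e)$ but not by $\varphi(e,-e)$.

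\emph{Quantum case.} Take $\linearGraph_1$ and the summand $L_{(1),(1)}\subset\bigwedge^2\vecrep$.
\begin{enumerate}
\item By \cref{D-wts}, its middle $\kleinCSA$-weight space is two-dimensional and the extreme ones are one-dimensional.
\item So it contains a one-dimensional summand $\funcField w$ for the copy of $U_q(\gl_2)$ generated by $B_{\pm1}$ and $\kleinCSA$, and $w$ lies in the middle weight. In particular $B_1w=0$.
\item Your inclusion would then give $\posposCoideal w=0$, so by \cref{PBW-coideal} $\coideal(\linearGraph)w=\negCoideal\csaCoideal w$.
\item This space misses the top $\kleinCSA$-weight space, because $\csaCoideal$ commutes with $\kleinCSA$ and $\negCoideal$ only lowers weights.
\item That contradicts the irreducibility of $L_{(1),(1)}$ from \cref{liver-coideal-thm}.
\end{enumerate}

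\textbf{The missing idea.} The eigenvector hypothesis has to be used inside the root-vector recursion, not only at the end. Set $B_{[i,j]}=\letzterIso\inv(F_{[i,j]})$. The correction terms in \cref{left-right-simple-star} vanish, so $B_{[i+1,j]}=[B_{[i,j]},B_{i+1}]_q$. The paper proves $B_{[i,j]}v=0$ for $\rtPos{i}{j}\in\posrootSet_\succ$ by induction on $i-|j|$. The decisive case is $j=-i$: there $B_{[i,-i]}\in\csaCoideal$ acts on $v$ by a scalar $c$, so
\[
B_{[i+1,-i]}v=B_{[i,-i]}B_{i+1}v-qcB_{i+1}v=0 .
\]
The term $B_{i+1}B_{[i,-i]}$ is exactly what is not in your left ideal. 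Your degree induction cannot absorb it either. It has the same degree as $B_{[i+1,-i]}$, and its expansion in the basis of \cref{PBW-coideal} contains $B_{[i+1,-i]}$ itself, so the induction is circular there.

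Once all root vectors kill $v$, products are handled as in the last sentences of your sketch, but as a statement about $v$ rather than about the algebra. Write $\letzterIso\inv(p_1\cdots p_n)=\letzterIso\inv(p_1)\cdots\letzterIso\inv(p_n)+u$ with $u$ of lower degree. Expand $u=\sum y_lh_lx_l$, where $x_l\in\posposCoideal$ for weight reasons, and induct on degree.
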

\begin{proof}
	Denote $B_{[i,j]} \coloneqq \letzterIso \inv (F_{[i,j]})$ for $i\geq j$. Note that for $i>0$ we have \begin{align*}
		F_{i+1} *F_{[i,j]}  = F_{i+1} F_{[i,j]} 
		,\quad  
		F_{[i,j]}*  F_{i+1}  = F_{[i,j]} F_{i+1}  + \frac{1}{q-q\inv} \partial^R_{-i-1} (F_{[i,j]}) \kCoideal_{i+1} \inv 
		,\quad  
		F_{[i+1,j]} = [ F_{[i,j]} , F_{i+1}]_q 
		.
	\end{align*}
	We first show that $B_{i,j} v=0$ for $F_{[i,j]}\in \UmPlus$ by induction on $i-|j|$. Since $F_{[i,j]}\in \UmPlus$, we have $i>|j|$ or $i=j$. In the latter case we have $B_{[i,i]} = \letzterIso\inv(F_i) = B_i$, and we are done by the assumption of the lemma. From now on assume $i>|j|$, then we have \begin{align}
		\label{B-ij-ind}
		F_{[i+1,j]}  = F_{[i,j]}*  F_{i+1}  -q F_{i+1} *F_{[i,j]}
		, \quad \text{ or equivalently}
		, \quad
		B_{[i+1,j]}  = [B_{[i,j]} , B_{i+1} ]_q
		. 
	\end{align}
	If $j=i$, then both $B_{[i,i]}=B_i$ and $B_{i+1}$ kill $v$ implies $B_{[i+1,j]} =0$. If $j=-i$, then by definition, $B_{[i,i]}\in \csaCoideal$ and hence by assumption $B_{[i,i]} v= c v$ for some $c\in \funcField$. Then we have \[
		B_{[i+1,j]}  v = B_{[i,j]}  B_{i+1} v - q B_{i+1}B_{[i,j]} v= -qc B_{[i,j]}v=0 .
	\] 
	This proves the base case of the induction. The induction step is immediately done from \cref{B-ij-ind}. Now it remains to show that for (not necessarily distinct) root vectors $p_1,\ldots ,p_n \in \UmPlus$, we have $\letzterIso\inv(p_1\cdots p_n)v=0$. We show this via induction on $\deg (p_1\cdots p_n)$, the case $p\deg_1\cdots p_n=1$ is just the case $n=1$ and $p_1= F_i$ for some $i>0$ and hence clear.
	Since the associated graded map of $\letzterIso$ is an algebra isomorphism, we have $\letzterIso\inv(p_1\cdots p_n)= \letzterIso\inv(p_1)\cdots \letzterIso\inv(p_n) + u$ for some $u$ with strictly lower degree than $p_1\cdots p_n$. 
	By \cref{PBW-coideal}, we can write $u = \sum y_l h_l x_l $ for some $y_l\in \negCoideal, h_l \in \csaCoideal , x_l \in \posCoideal$. Furthermore, \cref{letzter-map}, $\letzterIso$ preserves the $\kleinCSA$-weight. Since $p_1\cdots p_n$ has positive $\kleinCSA$-weight, so does each $x_l$. We may assume that $x_l = \letzterIso\inv (p_1'\cdots p_m')$ for some root vectors $p_1',\ldots ,p_m' \in \UmPlus$. Since further $\deg x_l \leq \deg u <\deg (p_1\cdots p_n)$, by induction hypothesis, we have $x_l v=0$. Consequently, we must have 
	$\letzterIso\inv(p_1\cdots p_n)v =0$.
\end{proof}

We now construct Verma modules for $\coideal(\linearGraph)$. 
Let $\ersatzUoUplus \subset \coideal(\linearGraph)$ be the subalgebra generated by $\{B_i\}_{i>0} \cup \csaCoideal$. Let $J = \negCoideal \csaCoideal \posposCoideal \cap \ersatzUoUplus$. Note that by \cref{B-ij-ind}, we have $\posCoideal\subset \ersatzUoUplus$. 
Since $\csaCoideal\posCoideal$ is not a subalgebra, the inclusion $\csaCoideal\posCoideal\subset \ersatzUoUplus$ is strict.

\begin{lem}\label{ersatzUoUplus}
	The subspace $J$ is the two-sided ideal generated by $\{B_i\}_{i>0}$. Furthermore, we have an algebra isomorphism $\ersatzUoUplus/J \cong \csaCoideal$.
\end{lem}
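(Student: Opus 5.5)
We write $I$ for the two-sided ideal of $\ersatzUoUplus$ generated by $\{B_i\}_{i>0}$. The plan is to prove the two inclusions $I\subset J$ and $J\subset I$, and then to read off the quotient from the triangular decomposition of \cref{PBW-coideal}.

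\emph{Step 1: $J$ is a two-sided ideal of $\ersatzUoUplus$ containing all $B_i$ with $i>0$.} Each $B_i=\letzterIso\inv(F_i)$ with $i>0$ lies in $\posposCoideal$, since $F_i\in\UmPlus$ has degree one; hence $B_i\in J$. It suffices to show that $J$ is stable under left and right multiplication by the generators $B_i$ ($i>0$) and $\csaCoideal$.
\begin{itemize}
\item For right multiplication by $\csaCoideal$ we use \cref{pbw-hilfslemma}: $\negCoideal\csaCoideal\posposCoideal\csaCoideal\subset\negCoideal\csaCoideal\negCoideal\csaCoideal\posposCoideal$.
\item This still has to be brought back into $\negCoideal\csaCoideal\posposCoideal$. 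We use $\kleinCSA$-weights: elements of $\ersatzUoUplus$ have non-negative $\kleinCSA$-weight in the positive-root directions. We also use the fact that a PBW expansion of an element of $\ersatzUoUplus$ can only have $\negCoideal$-components of weight zero, hence scalar components.
\end{itemize}
Concretely, I would prove the following auxiliary claim. Every element of $\ersatzUoUplus$ has PBW expansion in $\csaCoideal\posCoideal$ modulo $\negCoideal_{>0}\csaCoideal\posposCoideal$, where $\negCoideal_{>0}$ denotes elements of positive degree. The argument is a weight and degree comparison, as in the proof of \cref{pbw-hilfslemma} and \cref{check-hwvec}. Left and right multiplication by $B_i$ are handled by the same weight argument: any PBW summand of $B_i x$ or $x B_i$ with $x\in J$ has strictly positive $\kleinCSA$-weight carried by its $\posCoideal$-factor. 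Hence $I\subset J$.

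\emph{Step 2: $J\subset I$.} Take $x\in J$ and write it in the PBW basis $\letzterIso\inv(\PBWset_\prec)$ as $\sum y_l h_l x_l$ with $x_l\in\posposCoideal$. By \cref{B-ij-ind}, every $\letzterIso\inv(F_{[i,j]})$ with $F_{[i,j]}\in\UmPlus$ is an iterated $q$-commutator of the $B_i$ ($i>0$) and of elements of $\csaCoideal$. It therefore lies in $I$ as soon as its degree is positive. For products $p_1\cdots p_n$ we argue by induction on the degree, exactly as in \cref{check-hwvec}: the lower-order correction $u$ decomposes into terms with $\posposCoideal$-factors of smaller degree. Hence $\posposCoideal\subset I$.

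The delicate point is that the $y_l$ need not lie in $\ersatzUoUplus$. To handle this, I would show that $x\in\ersatzUoUplus$ forces $\sum y_l h_l x_l$ to be rewritable inside $\ersatzUoUplus$. For this I use the same weight bookkeeping as in Step 1, or alternatively an induction on the word length of $x$ in the generators of $\ersatzUoUplus$.

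\emph{Step 3: the quotient.} The composite $\csaCoideal\hookrightarrow\ersatzUoUplus\to\ersatzUoUplus/J$ is injective, because $\csaCoideal\cap\negCoideal\csaCoideal\posposCoideal=0$ by \cref{PBW-coideal}. It is surjective by the auxiliary claim of Step 1: every element of $\ersatzUoUplus$ lies in $\csaCoideal+J$. Since it is a restriction of the algebra map $\ersatzUoUplus\to\ersatzUoUplus/J$, it is an algebra isomorphism.

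I expect the main obstacle to be the auxiliary claim, namely controlling the $\negCoideal$-components of products in $\ersatzUoUplus$. This is subtle because $\csaCoideal\posCoideal$ is not a subalgebra, and products such as $F_{i+1}*R_i*R_i$ genuinely leave it (\cref{Ri*Ri-computation}). The first thing I would try is a filtration argument: use $\gr\letzterIso$ together with the $\kleinCSA$-weight grading to show that any $\negCoideal$-component which appears is accompanied by a nonzero $\posposCoideal$-factor.
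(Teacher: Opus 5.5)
Your Step~2 has a genuine gap. To get $J\subset I$, you expand $x\in J$ as $\sum y_lh_lx_l$ and try to show $\posposCoideal\subset I$. But $I$ is only an ideal of $\ersatzUoUplus$, and the $y_l\in\negCoideal$ generally do not lie in $\ersatzUoUplus$. So $x_l\in I$ says nothing about $y_lh_lx_l$. The same defect already breaks your inductive proof of $\posposCoideal\subset I$. The correction term $u$ from the argument of \cref{check-hwvec} has non-trivial $\negCoideal$-factors, and that argument only worked there because $u$ was applied to a vector annihilated by the $x_l$.

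You leave the repair open, but the missing idea is that no rewriting is needed. Since $\ersatzUoUplus$ is generated by $\csaCoideal$ and the $B_i$ ($i>0$), a word in these generators either contains some $B_i$, and then lies in $I$, or lies in the subalgebra $\csaCoideal$. Hence $\ersatzUoUplus=\csaCoideal+I$. You also have $I\subset J$ and $\csaCoideal\cap J=0$ (from \cref{PBW-coideal}). For $x\in J$ write $x=h+y$ with $h\in\csaCoideal$ and $y\in I$. Then $h=x-y\in J\cap\csaCoideal=0$, so $x\in I$. This is the paper's argument, phrased there via the surjection $\csaCoideal\twoheadrightarrow\ersatzUoUplus/I$. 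It also gives your Step~3 at once, without the auxiliary claim.

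Step~1 rests on a false statement: elements of $\ersatzUoUplus$ can have non-scalar $\negCoideal$-components. By \cref{Ri*Ri-computation} and the remark after \cref{pbw-hilfslemma}, $\letzterIso\inv(F_{i+1}*R_i*R_i)\in\ersatzUoUplus$ does not lie in $\csaCoideal\posCoideal$. This is exactly why $J$ carries the factor $\negCoideal$.

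What you actually need is only the weight argument of \cref{pbw-hilfslemma}. A word containing some $B_i$ ($i>0$) has nonzero $\kleinCSA$-weight in the positive cone. Since $\negCoideal$ has weights in the negative cone, every PBW summand of such an element has a $\posposCoideal$-factor. Applied to the spanning words of $I$, this gives $I\subset J$ directly. You therefore never need to show separately that $J$ is an ideal, which would also require knowing that $J$ has no weight-zero part.
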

\begin{proof}
	Let $J'\subset \ersatzUoUplus$ be the two-sided ideal generated by $ \{B_i\}_{i>0}$, we need to show that $J=J'$. By \cref{pbw-hilfslemma}, we have $J' = \csaCoideal\{B_i\}_{i>0} \csaCoideal \subset \csaCoideal\posposCoideal \cap \ersatzUoUplus = J $. 
	Conversely, by definition we have a surjective algebra homomorphism $\csaCoideal \twoheadrightarrow \ersatzUoUplus /J'$. 
	On the other hand, by \cref{pbw-hilfslemma}, we have also $\ersatzUoUplus = \csaCoideal + J$. In view of the triangular decomposition in \cref{PBW-coideal}, we have $\ersatzUoUplus = \csaCoideal \oplus J$ and $\ersatzUoUplus/J \cong \csaCoideal $ as vector spaces. Combined with $\csaCoideal \twoheadrightarrow \ersatzUoUplus /J'$, we must have $J\subset J'$. 
\end{proof}

Let $\zeta_i \in \funcField$ and $l_{i}\in \Z$ for $0\leq i\leq r$. Denote $\lambda=(n_i,\,\mid \zeta_i)_{i\in \graphVertices_{\linearGraph^+}}$. Then by \cref{ersatzUoUplus}, there exists a unique $1$-dimensional $ \ersatzUoUplus $-module $\funcField_{\hwfont{\lambda}}$ such that \[
	\posCoideal \cdot 1 =0
	,\qquad
	\dCoideal_i \cdot 1 = q^{n_i}
	,\qquad
	\begin{cases}
		Z_i \cdot 1 = \zeta_i & \text{ if } \linearGraph = \linearGraph_r ,\\
		W_{i+\half} \cdot 1 = \zeta_ i  & \text{ if } \linearGraph = \linearGraph_r'.
	\end{cases}
\]

\begin{definition}
	Define the \emph{Verma module with highest weight $\hwfont{\lambda}$} to be $\verma(\hwfont{\lambda}) \coloneqq \coideal(\linearGraph)\otimes_{\ersatzUoUplus} \funcField_{\hwfont{\lambda}}$.
\end{definition}
Note that $1\otimes 1 \subset \verma(\hwfont{\lambda})$ is the generating highest weight vector of $\verma(\hwfont{\lambda})$.

\begin{thm}[Verma modules]\label{verma-properties}
	The following hold:
	\begin{enumerate}
	\item For any highest weight $\hwfont{\lambda}$, $\verma(\hwfont{\lambda})\cong \negCoideal$ as vector spaces.
	\item The PBW basis of any Verma module is a weight basis with respect to $\kleinCSA$.
	\item Let $V$ be any $\coideal(\linearGraph)$-module and $x\in V$ a highest weight vector with weight $\hwfont{\lambda}$. Then there is a unique $\coideal(\linearGraph)$-module homomorphism $\verma(\hwfont{\lambda})\to V$ with $1\otimes 1\mapsto v$.
	\item $\verma(\hwfont{\lambda}) \cong \verma(\hwfont{\mu})$ if and only if $\hwfont{\lambda} =\hwfont{\mu}$. 
	\item $\verma(\hwfont{\lambda})$ has a unique irreducible quotient $\irrquot(\hwfont{\lambda})$.
	\item $\irrquot(\hwfont{\lambda}) \cong \irrquot(\hwfont{\mu})$ if and only if $\hwfont{\lambda} =\hwfont{\mu}$. 
\end{enumerate}
\end{thm}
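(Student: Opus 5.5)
Throughout I work with the decomposition $\coideal(\linearGraph)=\negCoideal\otimes\csaCoideal\otimes\posCoideal$ of \cref{PBW-coideal} and with \cref{ersatzUoUplus}. The basic structural input is that $\coideal(\linearGraph)$ is free as a right $\ersatzUoUplus$-module with basis coming from $\negCoideal$. To see this, I use $\ersatzUoUplus=\csaCoideal\oplus J$, where $J=\negCoideal\csaCoideal\posposCoideal\cap\ersatzUoUplus$. Combining this with $\posCoideal\subset\ersatzUoUplus$ and the triangular decomposition, I would show that the multiplication map $\negCoideal\otimes\ersatzUoUplus\to\coideal(\linearGraph)$ is an isomorphism. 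Surjectivity is clear, since $\negCoideal\csaCoideal\posCoideal\subset\negCoideal\ersatzUoUplus$. Injectivity is where I expect the main difficulty: $\ersatzUoUplus$ is strictly bigger than $\csaCoideal\posCoideal$, so I cannot simply read it off from the PBW basis. I would first attempt a graded argument. Passing to $\gr$ via \cref{letzter-map}(d), $\gr\ersatzUoUplus$ should be identified with the subalgebra of $\gr\Aalg$ generated by $\gr\letzterIso(\csaCoideal)$ and the $F_i$ with $i>0$, and one then compares it with $U^-_\circ U^-_\succ$ through \cref{triang-decompo-Uq-minus}. If that fails, I would prove only the weaker statement that is actually needed, namely
\[
\verma(\hwfont{\lambda})=\coideal(\linearGraph)\otimes_{\ersatzUoUplus}\funcField_{\hwfont{\lambda}}\cong\coideal(\linearGraph)/\bigl(\coideal(\linearGraph)J+\coideal(\linearGraph)(h-\lambda(h))_{h\in\csaCoideal}\bigr).
\]
This left ideal equals $\negCoideal\csaCoideal\posposCoideal\oplus\negCoideal\cdot\ker\lambda|_{\csaCoideal}$. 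For this I use \cref{pbw-hilfslemma} to move elements of $\csaCoideal$ past $\posposCoideal$, and note that $\negCoideal\csaCoideal\posposCoideal$ is a left ideal. It follows that its complement is spanned by $\negCoideal\otimes1$, which gives (a).

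For (b), the elements $\letzterIso^{-1}$ of PBW monomials in $U^-_\prec$ are $\kleinCSA$-weight vectors by \cref{PBW-coideal}(a). Since $\kleinCSA$ acts on $1\otimes1$ by a character and $y\otimes1$ has weight $\mathrm{wt}(y)+\lambda|_{\kleinCSA}$, this gives a weight basis.

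For (c), a highest weight vector $x$ spans a one-dimensional $\ersatzUoUplus$-module isomorphic to $\funcField_{\hwfont{\lambda}}$. Indeed, $\posCoideal x=0$ forces $B_ix=0$ for $i>0$, and $\csaCoideal$ acts by $\lambda$, so the generators of $\ersatzUoUplus$ act as they do on $\funcField_{\hwfont{\lambda}}$. Frobenius reciprocity for induction then gives existence and uniqueness of the homomorphism.

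For (d)--(f) I follow the classical pattern, with $\kleinCSA$-weights replacing the full Cartan. By (b), $\verma(\hwfont{\lambda})$ decomposes into $\kleinCSA$-weight spaces. The top weight space, of weight $\lambda|_{\kleinCSA}$, is spanned by $\negCoideal_0\otimes1$, where $\negCoideal_0$ denotes the degree-zero part, i.e. scalars. This holds because every nonscalar PBW monomial in $U^-_\prec$ has strictly negative $\kleinCSA$-weight, as noted after \cref{letzter-map}. So this weight space is one-dimensional and spanned by $1\otimes1$, on which $\csaCoideal$ acts by $\lambda$. That recovers $\lambda$ from the module, which gives (d).

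Every proper submodule is a sum of weight spaces. This step needs the fact that $\kleinCSA$ consists of grouplike elements acting semisimply with distinct characters $q^{n}$. No proper submodule contains $1\otimes1$, which generates $\verma(\hwfont{\lambda})$. Hence the sum of all proper submodules is proper, and it is the unique maximal submodule. This gives (e).

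For (f), $\irrquot(\hwfont{\lambda})$ has a one-dimensional top $\kleinCSA$-weight space on which $\csaCoideal$ acts by $\lambda$. So $\lambda$ is determined by the isomorphism class of $\irrquot(\hwfont{\lambda})$.
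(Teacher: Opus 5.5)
Your primary plan cannot succeed. Injectivity of $\negCoideal\otimes\ersatzUoUplus\to\coideal$ is not just hard to prove, it is false. By \cref{PBW-coideal}, multiplication already gives a bijection $\negCoideal\otimes\csaCoideal\posCoideal\to\coideal$, and the paper notes $\csaCoideal\posCoideal\subsetneq\ersatzUoUplus$. So if $e\in\ersatzUoUplus\setminus\csaCoideal\posCoideal$ has PBW expansion $e=\sum_k y_kh_kx_k$, then $1\otimes e-\sum_k y_k\otimes h_kx_k$ is a nonzero element of the kernel. Only your fallback is viable, and it is essentially the paper's route. The paper splits $\coideal=\negCoideal\csaCoideal\oplus\coideal J$ and tensors with $\funcField_{\hwfont{\lambda}}$; your description $\verma(\hwfont{\lambda})\cong\coideal/\bigl(\coideal J+\coideal\ker(\lambda|_{\csaCoideal})\bigr)$ is a more precise form of the same idea. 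Your arguments for (b)--(f) agree with the paper's. For (d), your use of the one-dimensional top $\kleinCSA$-weight space is more explicit than the paper's appeal to the universal property.

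The genuine gap is the sentence ``note that $\negCoideal\csaCoideal\posposCoideal$ is a left ideal''. Put $I=\negCoideal\csaCoideal\posposCoideal$. Since $\posposCoideal\subset I$, the claim says exactly $\coideal\posposCoideal\subset I$, and this is not automatic.
\begin{itemize}
\item Classically $U(\mathfrak{b})\mathfrak{n}^+\subset U(\mathfrak{h})U(\mathfrak{n}^+)\mathfrak{n}^+$ because $U(\mathfrak{b})$ and $U(\mathfrak{n}^+)$ are subalgebras. Here $\csaCoideal\posCoideal$ is not a subalgebra.
\item \cref{pbw-hilfslemma} only treats $\posposCoideal\csaCoideal$. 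The weight argument behind it only shows that a weight vector lies in $I$ when its $\kleinCSA$-weight is not $\le 0$.
\item For a product $a\,yhx$ with $a$ of negative weight, nothing excludes a component in $\negCoideal\csaCoideal$. Moving $\csaCoideal$ or $\posposCoideal$ leftwards past $\negCoideal$ just reproduces the problem.
\end{itemize}

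This is not a side issue; it is equivalent to (a). Suppose some $w\in\coideal\posposCoideal$ had a nonzero component $\sum_k y_kh_k$ in $\negCoideal\csaCoideal$, with the $y_k$ linearly independent. Both $w$ and $w-\sum_k y_kh_k\in I$ lie in $\coideal J$, so $\sum_k\lambda(h_k)y_k\otimes 1=0$ in $\verma(\hwfont{\lambda})$. Hence (a) fails for every $\hwfont{\lambda}$ outside the common zero locus of the $h_k$. So (a) for all $\hwfont{\lambda}$ is equivalent to this left-ideal property, and as written you are assuming the result.

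The paper's proof leaves the same point implicit. It passes from $\negCoideal\csaCoideal\cap J=0$ to $\negCoideal\csaCoideal\cap\coideal J=0$ ``by PBW'', and that step needs exactly $\coideal\posposCoideal\subset I$. You have to isolate this as a lemma and prove it. It suffices to show $B_sI\subset I$ for each generator $B_s$, since the $\dCoideal_p^{\pm1}$ are harmless. Neither \cref{pbw-hilfslemma} nor weight considerations supply it.
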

\begin{proof}
	First we show that  $\coideal(\linearGraph) = \negCoideal\csaCoideal \oplus \coideal(\linearGraph)J$ as vector spaces. Since $\posposCoideal\subset J$, we have $\coideal(\linearGraph)\posposCoideal\subset \coideal(\linearGraph)J$. By \cref{PBW-coideal} again, we have $\coideal(\linearGraph)\posposCoideal+ \negCoideal\csaCoideal = \coideal(\linearGraph)$. This implies that $\coideal(\linearGraph) = \negCoideal\csaCoideal + \coideal(\linearGraph)J$. Now note that $J\subset \coideal(\linearGraph)\posposCoideal$, by \cref{PBW-coideal}, we have $\negCoideal\csaCoideal \cap J=0$ and the direct sum follows. Consequently, we have \[
		\verma (\underline{\lambda }) = (\negCoideal\csaCoideal 
		\oplus \coideal(\linearGraph)J) 
		\otimes _{\ersatzUoUplus} \C(q)_{\underline{\lambda}}
		= \negCoideal  \otimes  \C(q)_{\underline{\lambda}}\oplus 
		\cancel{\coideal(\linearGraph) \otimes J \C(q)_{\underline{\lambda}}}
		\cong \negCoideal 
	\]
	as left $ \negCoideal$-modules. This proves the first assertion.
	The second statement then follows immediately from \cref{PBW-coideal}.
	The third statement follows the universal property of tensor product in the definition of the Verma module. The fourth one then follows immediately from this universal property. 
	To see that $\verma(\hwfont{\lambda})$ has a unique irreducible quotient, we use the fact that the PBW basis of $\verma(\hwfont{\lambda}) $ is a weight basis for $\kleinCSA$. Moreover, amongst this basis, the element $1\otimes 1 $ is the unique weight basis vector with maximal $\kleinCSA$-weight. This shows that the sum of all proper submodules of $\verma(\hwfont{\lambda}) $ does not contain $1\otimes 1$ and hence it has a unique irreducible quotient. 
	Moreover, if $\irrquot(\hwfont{\lambda}) \cong \irrquot(\hwfont{\mu}) $, then they have the same maximal $\kleinCSA$-weight. Since the maximal $\kleinCSA$-weight spaces in $\irrquot(\hwfont{\lambda}), \irrquot(\hwfont{\mu})$ are both $1$ dimensional, it follows that $\hwfont{\lambda} =\hwfont{\mu} $. 
\end{proof}

\begin{remark}\label{MPIrelations}
	For the special case of $\coideal(\linearGraph_1)$, Verma modules were defined in \cite{StWweight-mod-qsp} with a detailed treatment of their irreducible quotients and finite dimensional representation theory. The Verma modules we construct in this section agree with the ones there. 
	Even for $\coideal(\linearGraph_1)$, the Verma modules do not necessarily have a weight basis with respect to $\csaCoideal$. Furthermore, some Verma modules, even without a $\csaCoideal$-weight basis, still have finite dimensional quotients that do have a $\csaCoideal$-weight basis, see \cite[\S 4]{StWweight-mod-qsp} for explicit examples.
\end{remark}

\begin{remark}
	Following the spirit of \cite{Watanabe2}, one may also define a Verma module as follows: given $\hwfont{\lambda}=(n_i,\,\mid \zeta_i)_{0\leq i\leq r}$, one can form the modules
	\begin{align*}
		\coideal(\linearGraph_r) /(\coideal(\linearGraph_r) (Z_i-\zeta_i ) +  \coideal(\linearGraph_r) (\dCoideal_i-q^{n_i}) +\coideal(\linearGraph_r) \posposCoideal)
		,\\
		\coideal(\linearGraph_r') /(\coideal(\linearGraph_r') (W_i-\zeta_i ) +  \coideal(\linearGraph_r') (\dCoideal_i-q^{n_i}) +\coideal(\linearGraph_r') \posposCoideal).
	\end{align*}
	Using \cref{PBW-coideal}, \cref{pbw-hilfslemma} and \cref{verma-properties} one can show that this module is isomorphic to our Verma module $\verma(\hwfont{\lambda}) $ in each case. It is however nontrivial to make the required choices from \cite[Conj. 3.3.3]{Watanabe2} to fit our construction into Watanabe's framework.
\end{remark}

\newcommand{\coeffField}{\mathbb{F}}
\begin{thm}\label{fd-quotients}
	Let $V$ be a finite dimensional $\coideal(\linearGraph)$-module. Then there exists a field extension $\funcField\subset \coeffField$ such that $V\otimes_{\funcField}\coeffField$ has a highest weight vector. In particular, every finite dimensional irreducible $\csaCoideal$-semisimple module is a quotient of a Verma module.
\end{thm}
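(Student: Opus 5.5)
We use the $\kleinCSA$-weight grading together with the finiteness of $V$, mimicking the classical argument that a finite dimensional module over a Lie algebra with a triangular decomposition has a vector killed by the raising operators.

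\textbf{Step 1: $\kleinCSA$-weights.} Since $V$ is finite dimensional and $\kleinCSA$ is a commutative Laurent polynomial algebra, $V_{\coeffField}=V\otimes_{\funcField}\coeffField$ decomposes into generalised $\kleinCSA$-eigenspaces once we pass to a splitting field $\coeffField\supset\funcField$ (e.g.\ an algebraic closure) of the characteristic polynomials of the $\dCoideal_p$. Each $B_i$ shifts $\kleinCSA$-eigenvalues by a fixed character, because $\dCoideal_p B_i\dCoideal_p\inv\in q^{\Z}B_i$ by the relations \cref{serre relations 1}, \cref{SR-Gamma-prime}. Let $\mathcal{S}$ be the finite set of generalised eigencharacters. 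We order characters by the partial order induced by the $\kleinCSA$-weights of the $F_i$, $i>0$, i.e.\ by the weights of $\UmPlus$; these are the positive weights used in \cref{pbw-hilfslemma}. We then pick $\chi\in\mathcal{S}$ maximal with respect to this order. For $i>0$, $B_i$ maps the generalised $\chi$-eigenspace $V_\chi$ to the generalised eigenspace of $\chi$ shifted by a strictly positive weight, which is zero by maximality. Hence $B_iV_\chi=0$ for all $i>0$.

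\textbf{Step 2: a common $\csaCoideal$-eigenvector.} The subalgebra $\csaCoideal$ has $\kleinCSA$-weight zero: its generators $\letzterIso\inv(F_{[i,-i]})$ have zero $\kleinCSA$-weight by the remark after \cref{letzter-map}, and $\letzterIso$ preserves weights. Hence $\csaCoideal$ preserves $V_\chi$. By \cref{cartan-commute-both}, $\csaCoideal$ is commutative, so after enlarging $\coeffField$ to contain the eigenvalues, the nonzero finite dimensional space $V_\chi$ contains a common eigenvector $v$ for $\csaCoideal$; in particular $v$ is a genuine $\kleinCSA$-eigenvector. Since $v\in V_\chi$ we also have $B_iv=0$ for $i>0$, so \cref{check-hwvec}, whose proof works verbatim over $\coeffField$, shows that $v$ is a highest weight vector.

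\textbf{Step 3: the consequence for irreducibles.} Let $V$ be irreducible and $\csaCoideal$-semisimple. Then $\csaCoideal$ is already diagonalisable over the ground field, so no field extension is needed: taking $V_\chi$ as in Step 1, a $\csaCoideal$-eigenvector exists in $V_\chi$ over $\funcField$. More precisely, the $\csaCoideal$-weights in $V$ determine the $\kleinCSA$-weights, which lie in $q^{\Z}$, so the argument stays rational. This eigenvector $v$ has some weight $\hwfont{\lambda}$. By \cref{verma-properties}(c) there is a nonzero map $\verma(\hwfont{\lambda})\to V$ sending $1\otimes1\mapsto v$, and it is surjective because $V$ is irreducible.

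\textbf{Main obstacle.} The delicate point is Step 1: making precise that $B_i$ for $i>0$ raises $\kleinCSA$-weight strictly within a partial order admitting maximal elements on the finite set $\mathcal{S}$. Concretely, one has to check that the weights of $F_i$, $i>0$, lie in an open half-space, i.e.\ that there is a linear functional positive on all of them. I would construct this functional explicitly from the exponents $\dCoideal_p\mapsto$ weight, using that $F_i$ for $i>0$ changes the $\dCoideal_{\source{i}}$ and $\dCoideal_{\target{i}}$ exponents oppositely along the ordered vertices of $\linearGraph^+$. In the $\linearGraph_r'$ case the same check must be done with the exceptional scalings by $\dCoideal_0$.
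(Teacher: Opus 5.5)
Your proof is correct, but it reaches the vector killed by the raising operators by a different route than the paper.

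\textbf{The paper's route.} It restricts $V$ to the diagonal subalgebra generated by $\kleinCSA$ and the $B_i$, $i\neq 0$, treated as a quantum $\gl$. From the finite dimensional theory of quantum groups it gets that $\kleinCSA$ acts diagonalisably with eigenvalues in $q^{\Z}$. It then lets $V_+$ be the nonzero space of $\kleinCSA$-weight vectors killed by all $B_i$, $i>0$. Finally it asserts a common $\csaCoideal$-eigenvector in $V_+\otimes\coeffField$, using only that $\csaCoideal$ commutes with $\kleinCSA$, and applies \cref{check-hwvec}.

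\textbf{Your route.} You take a maximal generalised $\kleinCSA$-weight space $V_\chi$ for the order given by the cone of weights of the $B_i$, $i>0$. Write $\epsilon_p$ for the exponent of $\dCoideal_p$. These weights are $\epsilon_{i-1}-\epsilon_i$, resp.\ $2\epsilon_0-\epsilon_1$ together with $\epsilon_{i-1}-\epsilon_i$ ($i\geq 2$) for $\linearGraph_r'$. In both cases they are linearly independent, so the cone is pointed and your ``main obstacle'' is resolved.

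\textbf{What your route buys.} It needs neither quantum group theory nor semisimplicity of the $\kleinCSA$-action. It treats $B_{\half}$ in the $\linearGraph_r'$ case like every other $B_i$. By contrast, $B_{\pm\half}$ are not part of the diagonal copy of $\Uq(\gl_r)$ there, so the paper's appeal to quantum group theory does not by itself give $B_{\half}v=0$. Most importantly, $V_\chi$ is $\csaCoideal$-stable, which is exactly what you need to find a common $\csaCoideal$-eigenvector inside it.

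The paper's $V_+$ is not obviously $\csaCoideal$-stable, and classically it is not. Take $\bigwedge^2\vecrep[\C]$ for $\linearGraph_1$, and let $u_0,u_1$ be the standard basis of $\C^2$. The element $\matrixElmtFont{b}_0=\varphi(\matrixElmtFont{d}_0,-\matrixElmtFont{d}_0)$ acts on the summand $W^+\otimes W^-\cong\C^2\boxtimes\C^2$. There it sends the diagonal-$\gl_2$ highest weight vector $u_0\otimes u_1-u_1\otimes u_0$, whose weight $\epsilon_0+\epsilon_1$ is not maximal, to $u_0\otimes u_1+u_1\otimes u_0$. The raising operator $\matrixElmtFont{b}_1$ does not kill this image. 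So commutation with $\kleinCSA$ alone does not justify that step of the paper, and restricting to the top weight space, as you do, is the natural repair.

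\textbf{What the paper's route supplies that yours does not.} It supplies integrality of the weights. Your aside in Step~3 that the $\kleinCSA$-weights lie in $q^{\Z}$ is not established by your argument. It is needed only to match the parametrisation $\dCoideal_i\cdot 1=q^{n_i}$ of $\verma(\hwfont{\lambda})$. It holds, for example, for modules in $\FundCoideal$. In general one can simply allow arbitrary characters of $\kleinCSA$ in the definition of $\verma(\hwfont{\lambda})$ without changing the arguments of \cref{verma-properties}.
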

\begin{proof}
	Recall that the diagonal subalgebra $M\subset \coideal(\linearGraph)$ generated by $\kleinCSA$ and $\langle B_i\mid i\ne 0\rangle$ is isomorphic to $\Uq(\linearGraph^+_r)$. By the theory of quantum groups, the $\kleinCSA$-action on $V$ is diagonalisable with eigenvalues in $q^\Z$, and there exists an $\kleinCSA$-weight vector such that $B_iv=0$ for all $i>0$. Let $V_+\subset V$ be the subspace of such $\kleinCSA$-weight vectors. By \cref{cartan-commute-both}, the $Z_i$'s commute with $\kleinCSA$, there exists a field extension $\funcField\subset \coeffField$ such that there is a $\csaCoideal$-weight vector inside $V_+\otimes \coeffField$. Let $V_+'\subset V_+\otimes \coeffField$ be the subspace of $\csaCoideal$-weight vectors. Since they are killed by all $B_i$ for $i>0$, we conclude with \cref{check-hwvec} that $V_+'$ killed by $\posCoideal$. This proves the existence of a highest weight vector. For the second statement, a $\csaCoideal$-semisimple module is the same as a weight module. Taking $\coeffField = \funcField$, the same argument gives a highest weight vector, and by irreduciblility, it must be the quotient of some Verma module.
\end{proof}

\part{Diagrammatical categories}

\section{Double coset representatives and type $A$ web category}
We next recall certain double coset combinatorics in $S_d$ and $W_d$, as well as the $\gl_n$-web category in \cite{Br-schur} which is a slight modification of the $\mathfrak{sl}_n$-web category defined in \cite{CaKaMo-webs-skew-howe}.
\subsection{Double coset representatives}
Let $S_d$ be the symmetric group of order $d!$.
Let $\underline{a}=(a_1,\ldots, a_r)$ be a sequence of positive integers. We denote by $|\underline{a}|=\sum a_i$ and $\len(\underline{a})=r$. $\underline{a}$ determines the following intervals of integers : \[
	\interval_j(\underline{a}) \coloneqq \{ a_1+\cdots + a_{j-1}+1, a_1+\cdots + a_{j-1}+2,\ldots , a_1 +\cdots + a_{j}\}
	,
	\qquad 
	1\leq j\leq r
	. 
\]
Denote $S_{\underline{a}}= S_{a_i}\times \cdots\times S_{a_r}\subset S_{|\underline{a}|}$ the parabolic subgroup of $S_{|\underline{a}|}$.

We view the symmetric group $S_d$ as bijections on the set of sequences $(l_1,\ldots ,l_d)$ that form a permutation of $\{1,\ldots, d\}$, where we let $S_d$ act from the right, and the simple reflection $s_j$ interchanges the $j$-th and the $(j+1)$-th positions of the sequences. Using \emph{string diagrams}, an \emph{expression} in the simple reflections can be visualised, where the composition given by vertical stacking. 
For instance the following diagram, read from bottom to top, corresponds to the expression $w=s_2s_6s_3s_5s_4s_7$ read from left to right. 
\begin{align}\label{pic-shuffle}
\begin{tikzpicture}[scale=0.5]
	\begin{scope}[shift={(-2,0)}]
		\node at (0, 3) {$s_7$};
		\node at (0, 2) {$s_3$};
		\node at (0, 1) {$s_2$};
	\end{scope}
	\begin{scope}[shift={(-1,-0.5)}]
		\node at (0, 4) {$s_4$};
		\node at (0, 3) {$s_5$};
		\node at (0, 2) {$s_6$};
	\end{scope}
    \draw[ fill=gray!15] (0.6, -0.4) rectangle (2.4, 0.4);
    \draw[ fill=gray!15] (2.6, -0.4) rectangle (6.4, 0.4);
    \draw[ fill=gray!15] (6.6, -0.4) rectangle (9.4, 0.4);
    \foreach \x/\y in {1/1,2/5,3/2/,4/3,5/6,6/8,7/4/,8/7,9/9} {
        \node (b\x) at (\x, 0) {};
        \node[] at (\x, -0.1) {\y};
    }
    \foreach \x in {1,...,9} {
        \node (t\x) at (\x, 4)  {};
        \node[above] at (\x, 4) {\x};
    }
    \draw[ blue] (b1) .. controls (1, 1.5) and (1, 2.5) .. (t1);
    \draw[ blue] (b2) .. controls (2, 1.5) and (5, 2.5) .. (t5);
    \draw[ red!80!black] (b3) .. controls (3, 1.5) and (2, 2.5) .. (t2);
    \draw[ red!80!black] (b4) .. controls (4, 1.5) and (3, 2.5) .. (t3);
    \draw[ red!80!black] (b5) .. controls (5, 1.5) and (6, 2.5) .. (t6);
    \draw[ red!80!black] (b6) .. controls (6, 1.5) and (8, 2.5) .. (t8);
    \draw[ green!60!black] (b7) .. controls (7, 1.5) and (4, 2.5) .. (t4);
    \draw[ green!60!black] (b8) .. controls (8, 1.5) and (7, 2.5) .. (t7);
    \draw[ green!60!black] (b9) .. controls (9, 1.5) and (9, 2.5) .. (t9);
\end{tikzpicture}
\qquad
w = \left(\begin{matrix}
	1 & 2 & 3 &4 & 5 & 6&7&8&9\\
	1&5&2&3&6&8&4&7&9
\end{matrix}\right)
\end{align}
Note that the element in $S_d$ determined by the diagram depends only on connections of the strands, and in particular, different string diagrams can give the same element in $S_d$.

Recall that $\elmntSn\in S_d$ is an $\underline{a}$-shuffle if, setting $ (l_1,\ldots, l_d) = (1,2,\ldots, d) \cdot \elmntSn\inv $, we have $l_{i_1}<l_{i_2}$ for any $1\leq j\leq \len(\underline{a})$ and any $i_1<i_2 \in \interval_j(\underline{a})$.

For example, $w$ from \cref{pic-shuffle} is a $(2,4,3)$-shuffle. Diagrammatically, this just means strands of the same colour do not intersect.

\begin{definition}\label{ab-shuffle-def}
	Let $\underline{a},\underline{b}$ be sequences of positive integers such that $|\underline{b}| =|\underline{a}|= d$. An element $\elmntSn\in S_d$ is called an \emph{$(\underline{a},\underline{b})$-shuffle} if $\elmntSn$ is an $\underline{a}$-shuffle and $\elmntSn\inv$ is a $\underline{b}$-shuffle. Let $\abshuffleSet(\underline{a},\underline{b})$ denote the set of all $(\underline{a},\underline{b})$-shuffles.
\end{definition}

For example, \cref{pic-shuffle} is a $((2,4,3),(4,3,2))$-shuffle. Any $\underline{a}$-shuffle is a $(\underline{a},(1,\ldots ,1))$-shuffle.

\begin{lem}\label{Sn-doublecoset-rep}
	Let $\underline{a},\underline{b}$ be sequences of positive integers such that $|\underline{b}| =|\underline{a}|= d$. 
	The set $\abshuffleSet(\underline{a},\underline{b})\subset S_d$ is precisely the set of double coset representatives of $S_{\underline{a}} \backslash S_d /S_{\underline{b}}$.
\end{lem}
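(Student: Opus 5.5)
The plan is to reduce the statement to the standard fact about minimal length double coset representatives in Coxeter groups. The argument has three steps.

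First, I would show that $\elmntSn$ is an $\underline{a}$-shuffle exactly when $\elmntSn$ is the unique element of minimal length in its coset with respect to $S_{\underline{a}}$ on the appropriate side. With the right action on sequences used in the paper, being an $\underline{a}$-shuffle means that the strands starting in each block $\interval_j(\underline{a})$ do not cross one another. Equivalently, $\elmntSn$ has no inversion between two positions in the same block, i.e. $\ell(s_t\elmntSn)>\ell(\elmntSn)$ (or $\ell(\elmntSn s_t)>\ell(\elmntSn)$, depending on the side) for every simple reflection $s_t\in S_{\underline{a}}$. The standard Coxeter-theoretic characterisation then says that the elements with this property are exactly the minimal length coset representatives. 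The same argument applied to $\elmntSn\inv$ and $\underline{b}$ handles the other side.

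Second, I would invoke the classical result (e.g.\ Geck--Pfeiffer, Prop.~2.1.1) for parabolic subgroups $W_I,W_J$ of a Coxeter group $W$. Each double coset $W_I w W_J$ contains a unique element of minimal length. Moreover, an element $d$ is this minimal element if and only if $d$ is minimal in both $W_I d$ and $dW_J$. Combined with the first step, this shows that $\abshuffleSet(\underline{a},\underline{b})$ is precisely the set of minimal length double coset representatives. In particular, every double coset meets $\abshuffleSet(\underline{a},\underline{b})$ exactly once.

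Third, to keep the argument self-contained and avoid conventions about sides, I would also give a direct combinatorial check. A double coset $S_{\underline{a}}\elmntSn S_{\underline{b}}$ is determined by the matrix $(|\interval_i(\underline{a})\cap \elmntSn(\interval_j(\underline{b}))|)_{i,j}$. Conversely, for each such matrix there is exactly one permutation with the prescribed block intersection sizes that is order preserving on every block, both on the source side and on the target side. Its strands from block $i$ to block $j$ are forced to be the lexicographically ordered bundle. Counting or this bijection then gives uniqueness.

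I expect the main obstacle to be only bookkeeping. One has to match the paper's right-action convention ($(1,\ldots,d)\cdot\elmntSn\inv$) with the correct side of cosets, so that ``$\elmntSn$ is an $\underline{a}$-shuffle'' corresponds to the $S_{\underline{a}}$ side and ``$\elmntSn\inv$ is a $\underline{b}$-shuffle'' to the $S_{\underline{b}}$ side. I would verify this on the example \cref{pic-shuffle} before fixing the convention.
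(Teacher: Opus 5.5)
The paper does not prove this lemma. It quotes it as the standard fact that the $(\underline{a},\underline{b})$-shuffles are the distinguished (minimal length) representatives of $S_{\underline a}\backslash S_d/S_{\underline b}$, and uses it in exactly that form in the proof of \cref{WB-double-coset-rep}. Your proposal is correct and gives the standard justification of that fact.

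The side bookkeeping you flag works out in favour of the statement as written. Identify $w$ with the bijection $\pi$ defined by $(x\cdot w)_i=x_{\pi(i)}$. This turns the paper's right action into ordinary composition, so $(1,\ldots,d)\cdot w^{-1}=(w^{-1}(1),\ldots,w^{-1}(d))$; in \cref{pic-shuffle} this sequence is $(1,5,2,3,6,8,4,7,9)$. Consequently:
\begin{itemize}
\item $w$ is an $\underline a$-shuffle if and only if $\ell(s_tw)>\ell(w)$ for all simple $s_t\in S_{\underline a}$;
\item $w^{-1}$ is a $\underline b$-shuffle if and only if $\ell(ws_t)>\ell(w)$ for all simple $s_t\in S_{\underline b}$.
\end{itemize}
Together these say $w$ is minimal in both $S_{\underline a}w$ and $wS_{\underline b}$, which is what the Coxeter-theoretic result needs.

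Your third step is a valid alternative that avoids Coxeter theory. The one point there that needs an actual argument is that two permutations with the same block-intersection matrix lie in the same double coset; this is what gives a shuffle in every double coset. It follows by first choosing $v\in S_{\underline b}$ that carries the preimage sets of one permutation onto those of the other, namely the sets $\interval_j(\underline b)\cap w^{-1}(\interval_i(\underline a))$. One then chooses $u\in S_{\underline a}$ that matches the images block by block.
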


Now consider the type $B$ Weyl group $W_d\supset S_d$ with additional Coxeter generator $s_0$.
Denote $m_1=s_0$, and $m_i= s_{i-1}m_{i-1} s_{i-1}$ for $2\leq i\leq d$. Then $m_1,\ldots ,m_d$ generate a normal subgroup of $W_d$ isomorphic to $(\Z/2\Z)^d$, yielding an isomorphism $W_d\cong S_d\ltimes (\Z/2\Z)^d$. Thus,
$W_d$ can be viewed as the group of signed permutations, where $m_i$ simply flips the sign of the $i$-th position. 
Diagrammatically, we represent $m_i $ as a dot on the $i$-th strand. For example, in the following picture we have the elements $s_0s_2s_6m_{5}s_3s_5s_4s_7$ and $s_0s_2s_6m_{5}s_3s_5m_{3}s_4s_7$, respectively. 
\begin{align}\label{pic-dotted-shuffle}
\begin{tikzpicture}[scale=0.5]
    \draw[ fill=gray!15] (0.6, -0.4) rectangle (2.4, 0.4);
    \draw[ fill=gray!15] (2.6, -0.4) rectangle (6.4, 0.4);
    \draw[ fill=gray!15] (6.6, -0.4) rectangle (9.4, 0.4);
    \draw[ fill=gray!15] (0.6, 4-0.4) rectangle (4.4, 4.4);
    \draw[ fill=gray!15] (4.6, 4-0.4) rectangle (7.4, 4.4);
    \draw[ fill=gray!15] (7.6, 4-0.4) rectangle (9.4, 4.4);
    \foreach \x/\y in {1/1,2/5,3/2/,4/3,5/6,6/8,7/4/,8/7,9/9} {
        \node (b\x) at (\x, 0) {};
    }
    \foreach \x in {1,...,9} {
        \node (t\x) at (\x, 4)  {};
    }
    \draw[ blue,in=-90,out=90] (b1) to node[black,pos=0.3]{$\bullet$} (t1);
    \draw[ blue,in=-90,out=90] (b2) to (t5);
    \draw[ red!80!black,in=-90,out=90] (b3) to (t2);
    \draw[ red!80!black,in=-90,out=90] (b4) to (t3);
    \draw[ red!80!black,in=-90,out=90] (b5) to node[black,pos=0.3]{$\bullet$} (t6);
    \draw[ red!80!black,in=-90,out=90] (b6) to (t8);
    \draw[ green!60!black,in=-90,out=90] (b7) to (t4);
    \draw[ green!60!black,in=-90,out=90] (b8) to (t7);
    \draw[ green!60!black,in=-90,out=90] (b9) to (t9);
\end{tikzpicture}
\qquad
\qquad
\begin{tikzpicture}[scale=0.5]
    \draw[ fill=gray!15] (0.6, -0.4) rectangle (2.4, 0.4);
    \draw[ fill=gray!15] (2.6, -0.4) rectangle (6.4, 0.4);
    \draw[ fill=gray!15] (6.6, -0.4) rectangle (9.4, 0.4);
    \draw[ fill=gray!15] (0.6, 4-0.4) rectangle (4.4, 4.4);
    \draw[ fill=gray!15] (4.6, 4-0.4) rectangle (7.4, 4.4);
    \draw[ fill=gray!15] (7.6, 4-0.4) rectangle (9.4, 4.4);
    \foreach \x/\y in {1/1,2/5,3/2/,4/3,5/6,6/8,7/4/,8/7,9/9} {
        \node (b\x) at (\x, 0) {};
    }
    \foreach \x in {1,...,9} {
        \node (t\x) at (\x, 4)  {};
    }
    \draw[ blue,in=-90,out=90] (b1) to node[black,pos=0.3]{$\bullet$} (t1);
    \draw[ blue,in=-90,out=90] (b2) to (t5);
    \draw[ red!80!black,in=-90,out=90] (b3) to (t2);
    \draw[ red!80!black,in=-90,out=90] (b4) to node[black,pos=0.3]{$\bullet$}(t3);
    \draw[ red!80!black,in=-90,out=90] (b5) to node[black,pos=0.3]{$\bullet$} (t6);
    \draw[ red!80!black,in=-90,out=90] (b6) to (t8);
    \draw[ green!60!black,in=-90,out=90] (b7) to (t4);
    \draw[ green!60!black,in=-90,out=90] (b8) to (t7);
    \draw[ green!60!black,in=-90,out=90] (b9) to (t9);
\end{tikzpicture}
\end{align}
Note that two diagrams that differ by sliding a dot along a string or by removing two dots on the same strand represent the same element in $W_d$.

Next we describe a convenient set of double coset representatives in $W_d$. In contrast to the $S_d$ case, they are generally not shortest in their double cosets. 
\begin{definition}\label{dotted-shuffle-def}
	Given $\underline{a},\underline{b}$ such that $|\underline{b}| =|\underline{a}|= d$, an element $\elmntSn \in W_d$ is a \emph{dotted $(\underline{a},\underline{b})$-shuffle} if the following hold: write $\elmntSn=\elmntSn' m $ with $\elmntSn'\in S_d, m\in (\Z/2\Z)^d$, then $\elmntSn' $ is an $(\underline{a},\underline{b})$-shuffle, and writing $(k_1,\ldots, k_d) = (1,\ldots, d) \cdot \elmntSn\inv $, for any $i<j \in \interval_l(\underline{a})$ such that $|k_i|,|k_j| \in \interval_{l'}(\underline{b})$ for some $l,l'$, we have $|k_i|<|k_j|$ and $k_j<0 \implies k_i<0$.  
Denote by $\dSh(\underline{a},\underline{b}) $ the set of dotted $(\underline{a},\underline{b})$-shuffles.
\end{definition}
In terms of string diagrams, the dotted shuffle condition is equivalent to taking the diagram of an $(\underline{a},\underline{b})$-shuffle and decorate some of the strands with a dot at the top, such that amongst strings ending at the top and the bottom in a common block, all dots are left adjusted.
For example, in \cref{pic-dotted-shuffle}, the element given by the left picture is a dotted $((2,4,3),(4,3,2))$-shuffle, while the right one is not because there is a dot on the second but not the first leftmost red strand.

\begin{prop}\label{WB-double-coset-rep}
	Let $\underline{a},\underline{b}$ be sequences of positive integers such that $|\underline{b}| =|\underline{a}|= d$.
	Then the set $\dSh(\underline{a},\underline{b})$ is a set of double coset representatives of $ S_{\underline{a}} \backslash W_d /S_{\underline{b}}$.
\end{prop}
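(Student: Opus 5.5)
The plan is to reduce to the type $A$ statement \cref{Sn-doublecoset-rep} via the semidirect product $W_d\cong S_d\ltimes(\Z/2\Z)^d$. Write any $x\in W_d$ as $x=\elmntSn' m$ with $\elmntSn'\in S_d$ and $m\in(\Z/2\Z)^d$. For $u\in S_{\underline{a}}$ and $v\in S_{\underline{b}}$ we have $uxv = (u\elmntSn' v)(v^{-1}mv)$. The underlying permutation of the product is $u\elmntSn' v$, and the sign pattern $v^{-1}mv$ is $m$ permuted by $v$. Hence the projection $W_d\to S_d$ induces a surjection of double coset spaces
\[
S_{\underline{a}}\backslash W_d/S_{\underline{b}}\twoheadrightarrow S_{\underline{a}}\backslash S_d/S_{\underline{b}}.
\]
By \cref{Sn-doublecoset-rep}, every fibre is represented by elements whose underlying permutation is an $(\underline{a},\underline{b})$-shuffle $\elmntSn'$. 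So it suffices to fix such an $\elmntSn'$ and classify the sign patterns $m$ up to the residual action.

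Fix an $(\underline{a},\underline{b})$-shuffle $\elmntSn'$. The stabiliser of $\elmntSn'$ under $(u,v)\mapsto u\elmntSn'v$ is the set of pairs with $u\elmntSn'v=\elmntSn'$, i.e.\ $v=\elmntSn'^{-1}u^{-1}\elmntSn'$. Such $u$ range over $S_{\underline{a}}\cap \elmntSn' S_{\underline{b}}\elmntSn'^{-1}$. By the standard Mackey/Kilmoyer fact for shortest double coset representatives, this group is the parabolic subgroup $\prod_{l,l'}S_{c_{l,l'}}$, where $c_{l,l'}$ counts the strands from block $\interval_l(\underline{a})$ to block $\interval_{l'}(\underline{b})$. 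Diagrammatically, it permutes the strands within each bundle joining a common top block and a common bottom block. I would derive this directly from the shuffle condition: strands in a bundle are order-preserving at both ends, so $u$ must permute each bundle.

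So the double cosets with underlying permutation $\elmntSn'$ correspond to orbits of $(\Z/2\Z)^d$ (the dot patterns) under $\prod S_{c_{l,l'}}$ acting by permuting dots within each bundle. An orbit is determined exactly by the number of dotted strands in each bundle. In each orbit there is a unique representative whose dots sit on the leftmost strands of each bundle; this is the condition $k_j<0\Rightarrow k_i<0$ in \cref{dotted-shuffle-def}. The remaining condition $|k_i|<|k_j|$ just says that $\elmntSn'$ is a shuffle on each bundle, which holds automatically.

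Combining the two steps, every double coset contains exactly one dotted $(\underline{a},\underline{b})$-shuffle. For existence, first move the underlying permutation to a shuffle, then left-adjust the dots within bundles. For uniqueness, two dotted shuffles in the same double coset have the same underlying shuffle by \cref{Sn-doublecoset-rep}. They then differ by the action of the stabiliser, so their dot counts per bundle agree, and left-adjustment forces equality.

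The main obstacle is bookkeeping: making sure that the residual action on $m$ is really the within-bundle permutation action, with left versus right actions and the convention $(1,\dots,d)\cdot \elmntSn^{-1}$ kept straight. This includes checking that "dots at the top" in the diagrams matches writing $\elmntSn=\elmntSn' m$ rather than $m\elmntSn'$. I would verify this on the example in \cref{pic-dotted-shuffle} before writing the general argument.
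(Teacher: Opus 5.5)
Your proposal is correct and follows essentially the same route as the paper. You write $x=\elmntSn' m$ and use $u\elmntSn' m v=(u\elmntSn' v)(v^{-1}mv)$ to reduce to \cref{Sn-doublecoset-rep}, then left-adjust the dots inside each bundle using pairs in $S_{\underline{a}}\times S_{\underline{b}}$ that fix the underlying shuffle; the paper does this with explicit pairs of transpositions. Your identification of the residual group $S_{\underline{b}}\cap \elmntSn'^{-1}S_{\underline{a}}\elmntSn'$ as the Young subgroup permuting strands within bundles is a genuine improvement in rigour: the paper's uniqueness step (that $m'=v^{-1}mv$ has the same number of dots in each bundle as $m$) tacitly relies on exactly this fact.
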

\begin{proof}
	For any $\elmntSn\in W_d\cong S_d\ltimes (\Z/2\Z)^d $, we can uniquely write $\elmntSn = \elmntSn' m $ for $\elmntSn\in S_d, m\in (\Z/2\Z)^d$. By \cref{Sn-doublecoset-rep} there exists $u\in S_{\underline{a}}, v\in S_{\underline{b}}$ such that $ \elmntSn''= u \elmntSn'v$ is an $(\underline{a},\underline{b})$-shuffle. Then we have $u \elmntSn v=  \elmntSn'' m'$, where $m' = v\inv m v \in (\Z/2\Z)^d $. Moreover, writing \[
		(k_1,\ldots, k_d) = (1,\ldots, d) \cdot (\elmntSn''m')\inv  
		,\quad \text{then}\quad 
		(|k_1|,\ldots, |k_d|) = (1,\ldots, d) \cdot (\elmntSn'')\inv.
	\] 
	Since $\elmntSn''$ is an $(\underline{a},\underline{b})$-shuffle,
	for any $i<j \in \interval_l(\underline{b})$ such that $|k_i|,|k_j| \in \interval_{l'}(\underline{a})$, we have $|k_i| < |k_j| $. 
	Suppose there exists $i<j \in \interval_l(\underline{a})$ with $|k_i|<|k_j| \in \interval_{l'}(\underline{b})$ and $k_j<0,  k_i>0$. 
	Let $x\in S_{\underline{a}}$ be the reflection that interchanges the $i,j$-th entries and $y\in S_{\underline{b}}$ the reflection that interchanges the $|k_i|$ and $|k_j|$-th entries. Then we have $x \elmntSn''m' y = \elmntSn'' y m' y $, where $ym'y\in (\Z/2\Z)^d$, and we have\begin{align*}
		&(1,\ldots ,d) \cdot ym'y
		=(1, \ldots , -|k_i|, \ldots , |k_j|,\ldots ,d)  
		,\\
		& (1,\ldots, d) \cdot (\elmntSn''ym'y)\inv 
		 =
		 (k_1,\ldots,-|k_i|,\ldots, |k_j|,\ldots, k_d)  
		.
	\end{align*}
	Repeating this operation we end up with a dotted $(\underline{a},\underline{b})$-shuffle in the double coset $S_{\underline{a}}w S_{\underline{b}}$. Now let $\elmntSn m $ and $\elmntSn' m '$ be  two dotted $(\underline{a},\underline{b})$-shuffles, where $\elmntSn,\elmntSn'\in S_d$ and $m,m'\in (\Z/2\Z)^d$. Suppose there exists $u\in S_{\underline{a}}, v\in S_{\underline{b}}$ such that $ u \elmntSn m v= \elmntSn' m'$. Then $u \elmntSn m v = u \elmntSn v (v\inv m v) = \elmntSn' m'$ and $(v\inv m v)\in (\Z/2\Z)^d$, hence $u \elmntSn v  = \elmntSn '$. This forces $\elmntSn=\elmntSn'$ and $ v\inv m v = m'$. Write \[
		(1,\ldots, d) \cdot (\elmntSn m)\inv  = (k_1,\ldots , k_d) ,
		\qquad (1,\ldots, d) \cdot (\elmntSn' m')\inv  = (k_1',\ldots , k_d'),
	\]
	then we have $ k_i = \pm k_i'$.
	Assume $m\ne m'$. Then there exists $ \interval_l(\underline{a})$ such that $(k_i)_{i\in \interval_l(\underline{a})} \ne (k_i')_{i\in \interval_l(\underline{a})} $. Similarly, there exists $\interval_{l'}(\underline{b})$ such that $(k_i)_{i\in \interval_l(\underline{a}), |k_i|\in \interval_{l'}(\underline{b})} \ne (k_i')_{i\in \interval_l(\underline{a}), |k_i|\in \interval_{l'}(\underline{b})} $. Since $m' = v\inv m v$, the number of negative entries in $(k_i)_{i\in \interval_l(\underline{a}), |k_i|\in \interval_{l'}(\underline{b})} \ne (k_i')_{i\in \interval_l(\underline{a}), |k_i|\in \interval_{l'}(\underline{b})} $ must be equal.
	Then there must exist $i<j \in \interval_{l}(\underline{a}),  |k_i|<|k_j|\in \interval_{l'}(\underline{b})$ such that either $k_i>0,k_j<0$ or $k_i'>0,k_j'<0$,
	contradicting to the assumption that both $\elmntSn m $ and $\elmntSn' m '$ are dotted shuffles.
\end{proof}

\subsection{The type A web category}
In principle one can define the web category over $\Z[q,q\inv]$, see \cite{Br-schur}. However, for simplicity we work over the local PID $\groundring$. It turns out that all coefficients involved lie in $\Z[q,q\inv]$. We take $\webQpar$ to be one of the two roots of the quadratic relation in the type $A$ Hecke algebra. These two theories are dual to each other, see \cite{StrICM} for details.
\begin{definition}
Let $\webQpar\in \{-q ,q\inv\}$. 
Let $\webCategoryA$ be the $\groundring$-linear strict monoidal category whose objects are sequences of strictly positive integers and morphisms generated by the labeled trivalent diagrams 
	$
		\begin{tikzpicture}[scale=0.25,anchorbase]
			\draw[] (0,0)node[below]{$a$} to (1,1);
			\draw[] (2,0)node[below]{$b$} to (1,1);
			\draw[,] (1,1) to (1,2)node[above]{$a{+}b$};
		\end{tikzpicture}
		,\;
		\begin{tikzpicture}[scale=-0.25,anchorbase]
			\draw[] (0,0)node[above]{$a$} to (1,1);
			\draw[] (2,0)node[above]{$b$} to (1,1);
			\draw[,] (1,1) to (1,2)node[below]{$a{+}b$};
		\end{tikzpicture}, 
		(a,b\in \N)
		$,
	subject to the following relations: 
	\begin{align}
		\text{associativity}:
		\begin{tikzpicture}[anchorbase,scale=0.5]
			\draw[] (1,0)node[below]{$b$} to (0.5,0.5);
			\draw[] (0,0)node[below]{$a$} to (1,1);
			\draw[] (2,0)node[below]{$c$} to (1,1);
			\draw[,] (1,1) to (1,1.5)node[above]{$a{+}b+c$};
		\end{tikzpicture}
		=
		\begin{tikzpicture}[anchorbase,scale=0.5]
		\draw[] (1,0)node[below]{$b$} to (1.5,0.5);
		\draw[] (0,0)node[below]{$a$} to (1,1);
		\draw[] (2,0)node[below]{$c$} to (1,1);
		\draw[,] (1,1) to (1,1.5)node[above]{$a+b+c$};
		\end{tikzpicture},
		&\quad 
		\text{coassociativity}:
		\begin{tikzpicture}[anchorbase,scale=0.5]
		\draw[,] (1,0)node[above]{$b$} to (0.5,-0.5);
		\draw[,] (0,0)node[above]{$a$} to (1,-1);
		\draw[,] (2,0)node[above]{$c$} to (1,-1);
		\draw[] (1,-1) to (1,-1.5)node[below]{$a+b+c$};
		\end{tikzpicture}
		=
		\begin{tikzpicture}[anchorbase,scale=0.5]
		\draw[,] (1,0)node[above]{$b$} to (1.5,-0.5);
		\draw[,] (0,0)node[above]{$a$} to (1,-1);
		\draw[,] (2,0)node[above]{$c$} to (1,-1);
		\draw[] (1,-1) to (1,-1.5)node[below]{$a+b+c$};
		\end{tikzpicture},
		\label{assoc-coassoc}
		\\
		\text{(thin) square switch}: &
		\begin{tikzpicture}[scale=0.2,anchorbase]
			\draw (0,0) node[below] {$a$} -- (0,4);
			\draw (3,0) node[below] {$b$} -- (3,4);
			\coordinate (L1) at (0,3);
			\coordinate (R1) at (3,2.5);
			\coordinate (L2) at (0,1);
			\coordinate (R2) at (3,1.5);
			\draw (L1) -- (R1) node[midway, above] {$1$};
			\draw (L2) -- (R2) node[midway, below] {$1$};
		\end{tikzpicture}
		-
		\begin{tikzpicture}[scale=0.2,anchorbase]
			\draw (0,0) node[below] {$a$} -- (0,4);
			\draw (3,0) node[below] {$b$} -- (3,4);
			\coordinate (L1) at (0,2.5);
			\coordinate (R1) at (3,3);
			\coordinate (L2) at (0,1.5);
			\coordinate (R2) at (3,1);
			\draw (L1) -- (R1) node[midway, above] {$1$};
			\draw (L2) -- (R2) node[midway, below] {$1$};
		\end{tikzpicture}
		= 
		\Qint{a-b}
		\begin{tikzpicture}[scale=0.2,anchorbase]
			\draw (0,0) node[below] {$a$} -- (0,4);
			\draw (3,0) node[below] {$b$} -- (3,4);
		\end{tikzpicture}
		\label{square-switch}
		.
	\end{align}
	We will refer to the natural number $a$ that labels the strand as the \emph{thickness} of the strand. Strands of thickness $1$ are called \emph{thin strands}.
\end{definition}
The following \emph{explosion} relation follows directly from the {thin square switch} relation.
\begin{align}\label{explosion}
	\begin{tikzpicture}[scale=0.5*0.5]
		\def\wingspan{0.5}
		\def\hoehe{2}
		\def\dist{2.4}
		\path[out=130,in=-90,] (0,\wingspan)  to  +(-1.5*\wingspan,2*\wingspan)  ;		
		\path[out=50,in=-90]	(0,\wingspan)	to  +(1.5*\wingspan,2*\wingspan);
		\path[very thick,out=-90,in=90, draw]	(0,\wingspan)  to (0,0) node[below] {$a$} ;
		\path[in=-90,out=90] (1.5*\wingspan,3*\wingspan) 	to  node[ right]{$a-b$} +(0,2*\wingspan)
		;
		\path[, in=-90,out=90] (-1.5*\wingspan,3*\wingspan) to  node[left]{$b$}+(0, 2*\wingspan);
		\path[, in=-90,out=90] 	(-1.5*\wingspan,5*\wingspan)	to +(1.5*\wingspan, 2*\wingspan)
		to +(1.5*\wingspan, 2*\wingspan)
		(1.5*\wingspan,5 *\wingspan) to +(-1.5*\wingspan, 2*\wingspan)	
		to +(-1.5*\wingspan, 3*\wingspan)	
		;
	\end{tikzpicture}
	=
	\Qbinom{a}{b}
	\begin{tikzpicture}[anchorbase]
		\def\height{1}
		\draw[] (0,0) -- +(0,-1*\height) node [below] {$a$};
	\end{tikzpicture} 
	.
\end{align}


\begin{definition}\label{def:overcrossing}
	For any $a,b\in \N$ define the over and under crossings of two strands of thickness $a$ and $b$ as follows:
	\begin{align}
		\begin{tikzpicture}[anchorbase,xscale=0.75,yscale=0.6]
			\def\hoehe{1.3}
			\def\dist{0.8}
			\coordinate (bleft) at (0,0);
			\coordinate (bright) at (\dist, 0);
			\coordinate (tleft) at (0,\hoehe);
			\coordinate (tright) at (\dist,\hoehe);
			\node[below] at (bleft) {$a$};
			\node[below] at (bright) {$b$};
			\path[,in=-90,out=90] (bright) to (tleft);
			\path[overline,in=-90,out=90] (bleft) to (tright); 
		\end{tikzpicture} 
		= \sum_{0\leq r\leq \min(a,b)} (-\webQpar )^{-r}
		\begin{tikzpicture}[anchorbase,xscale=0.75,yscale=0.6]
			\def\hoehe{1.3}
			\def\dist{0.8}
			\def\r{0.2}
			\path[draw] (0,0) node[below] {$b$} to node[left] {$r$}+(0,\hoehe )node[above] {$a$} ;
			\path[draw] (\dist,0) node[below] {$a$}  to +(0,\hoehe )node[above] {$b$};
			\path[draw] (0,\r*\hoehe) to +(\dist, \r*\hoehe);
			\path[draw] (\dist,\hoehe-2*\r*\hoehe) to +(-\dist, \r*\hoehe);
		\end{tikzpicture}
		\quad
		\begin{tikzpicture}[anchorbase,yscale=0.6,xscale=-0.75]
			\def\hoehe{1.3}
			\def\dist{0.8}
			\coordinate (bleft) at (0,0);
			\coordinate (bright) at (\dist, 0);
			\coordinate (tleft) at (0,\hoehe);
			\coordinate (tright) at (\dist,\hoehe);
			\node[below] at (bleft) {$a$};
			\node[below] at (bright) {$b$};
			\path[,in=-90,out=90] (bright) to (tleft);
			\path[overline,in=-90,out=90] (bleft) to (tright); 
		\end{tikzpicture} 
		= \sum_{0\leq r\leq \min(a,b)} (-\webQpar )^{r}
		\begin{tikzpicture}[anchorbase,xscale=0.75,yscale=0.6]
			\def\hoehe{1.3}
			\def\dist{0.8}
			\def\r{0.2}
			\path[draw] (0,0) node[below] {$a$} to node[left] {$r$}+(0,\hoehe )node[above] {$b$} ;
			\path[draw] (\dist,0) node[below] {$b$}  to +(0,\hoehe )node[above] {$a$};
			\path[draw] (0,\r*\hoehe) to +(\dist, \r*\hoehe);
			\path[draw] (\dist,\hoehe-2*\r*\hoehe) to +(-\dist, \r*\hoehe);
		\end{tikzpicture}
		.
	\end{align}
\end{definition}
We refer to \cite[Cor. 6.2.3]{CaKaMo-webs-skew-howe} and \cite[Thm 6.1]{Br-schur} for the following result.
\begin{thm}
	The over and under crossings are inverse to each other. Furthermore, the over-crossings yield a braiding on $\webCategoryA$.
\end{thm}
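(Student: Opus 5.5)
The plan is to reduce both assertions to the thin case, where the crossing is a single explicit combination of webs, and then bootstrap to arbitrary thicknesses via explosion and naturality. For $a=b=1$ the definition gives the over-crossing as $\id_{1,1} + (-\webQpar)^{-1} X$, where $X$ denotes the rung diagram (merge to thickness $2$ followed by split). The explosion relation \cref{explosion} gives $X^2 = \Qint{2} X$. A direct computation with the under-crossing $\id + (-\webQpar) X$ then yields
\[
(\id + (-\webQpar)^{-1}X)(\id + (-\webQpar)X) = \id + \bigl((-\webQpar)^{-1} + (-\webQpar) + \Qint{2}\bigr)X .
\]
The coefficient vanishes, since $\Qint{2} = \webQpar + \webQpar^{-1}$. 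So the thin crossings are mutually inverse.

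For general $a,b$ I would first show that an $a$-strand can be ``exploded'' into $a$ thin strands. Splitting fully to $1^{a}$ and merging back gives $\Qint{a}!\,\id_a$ by iterating \cref{explosion}. Since $\Qint{a}!$ is invertible in $\C(q)$, one works over $\funcField$, or alternatively checks that all identities are polynomial identities in $\Z[q^{\pm1}]$ that hold after base change. The key lemma is then that the thick crossing equals, up to this normalisation, the thin-strand braid word $\sigma_{a,b}$ sandwiched between full splits and merges. To prove it, I would show that the thick crossing slides through merge and split vertices (the ``fork slide'' relations), by induction on $a+b$ using associativity \cref{assoc-coassoc} and the square switch \cref{square-switch}.

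Given this, invertibility of the thick crossings follows from the thin case. The braid relation (Reidemeister III) for thick strands also reduces to the thin case, and thin Reidemeister III is a computation in the rank-two subalgebra generated by two rung operators $X_1, X_2$ on $1,1,1$. There one needs $X_1X_2X_1 - X_1 = X_2X_1X_2 - X_2$, which follows from the square switch relation with a thickness-$2$ strand next to a thin one. Naturality of the braiding with respect to the generating merges and splits is exactly the fork slide lemma. Together these give that the over-crossings satisfy the hexagon axioms (which for a strict monoidal category amount to crossing an $a$-strand with an $(a'+b)$-strand equaling successive crossings) and are natural, hence form a braiding.

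The main obstacle is the fork slide lemma for thick strands, that is, proving that the sum over $r$ in the definition of the crossing is compatible with merging. This requires an identity among rung ladders with quantum binomial coefficients. I would attempt it by induction on $b$, splitting off one thin strand and applying square switch to commute rungs, as in \cite{CaKaMo-webs-skew-howe}. If the direct computation becomes unwieldy, the fallback is to invoke the skew Howe functor to $\Uq(\gl_m)$-weight spaces for large $m$, which is faithful on the relevant Hom-spaces. There the crossing is the Lusztig braid group operator $T_i$, and its braid and inverse properties are classical.
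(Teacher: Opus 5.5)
Your thin-strand computations are right, but you have the thin crossings backwards. By \cref{def:overcrossing}, the thin over- and under-crossings are $X-\webQpar^{-1}\id$ and $X-\webQpar\id$, not $\id+(-\webQpar)^{\mp1}X$. Your ``over-crossing'' is $-\webQpar^{-1}$ times the paper's under-crossing, and your ``under-crossing'' is $-\webQpar$ times its over-crossing.

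This is harmless for the inverse computation and for Reidemeister III. Your reduction of Reidemeister III to $X_1X_2X_1-X_1=X_2X_1X_2-X_2$ is correct. That identity does follow from the square switch with $(a,b)=(2,1)$ and $(1,2)$: both differences equal the web that merges the three strands to thickness $3$ and splits them again. It is not harmless once thin crossings have to match the thick sum formula, with its specific powers of $-\webQpar$, in a fork slide.

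The paper gives no proof of its own; it cites \cite{CaKaMo-webs-skew-howe} and \cite{Br-schur}. In the former, the crossings are images of Lusztig's braid group elements under the skew Howe ladder functor $\dot{U}_q(\gl_m)\to\webCategoryA$, which sends $E_i^{(k)}1_\lambda$ to a rung of thickness $k$. So invertibility, the braid relations and naturality are all imported from $\dot{U}_q(\gl_m)$. For example, fork slides come from identities such as $T_iT_{i+1}(E_i^{(k)})=E_{i+1}^{(k)}$, evaluated at weights $\lambda$ with a zero entry.

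Your primary route is a self-contained diagrammatic proof, and it has a genuine gap exactly where you place the obstacle. For $a,b>1$ you establish neither invertibility nor naturality until the fork slide identity among ladders is proved, and ``induction on $b$ via square switch'' is only a plan.

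Your fallback closes the gap only if it runs in the ladder direction just described. As you phrase it, you deduce relations in $\webCategoryA$ from a functor out of $\webCategoryA$ that is faithful on Hom-spaces. That is circular here: the only dimension bound in this paper is \cref{webA-basis}, and its spanning argument uses \cref{lem:crossing absorption} and Matsumoto's theorem, i.e.\ the very braid relations in question. In \cite{CaKaMo-webs-skew-howe} that faithfulness is itself obtained via the ladder functor.

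The same objection applies to proving identities over $\funcField$ and then descending to $\groundring$, since that would need torsion-freeness of the Hom-spaces. The detour is also unnecessary: $\Qint{a}!$ specialises to $\pm a!$ at $q=1$, so it is already a unit in $\groundring$.
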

In particular, the over and under crossing of two thin strands are
\begin{equation}
	\label{eqn: Hecke generators}
	\begin{tikzpicture}[anchorbase,scale=0.7]
	\def\height{1.3}
	\def\dist{0.8}
	\coordinate (bleft) at (0,0);
	\coordinate (bright) at (\dist, 0);
	\coordinate (tleft) at (0,\height);
	\coordinate (tright) at (\dist,\height);
	\node[below] at (bleft) {1};
	\node[below] at (bright) {1};
	\overcrossing{bleft}{bright}{tright}{tleft}{}
	\end{tikzpicture} 
	= 
	\begin{tikzpicture}[anchorbase,scale=0.7]
	\def\height{1.3}
	\def\dist{0.8}
	\coordinate (bleft) at (0,0);
	\coordinate (bright) at (\dist, 0);
	\coordinate (tleft) at (0,\height);
	\coordinate (tright) at (\dist,\height);
	\coordinate (center) at (0.5*\dist,0.5*\height);
	\node[below] at (bleft) {1};
	\node[below] at (bright) {1};
	\webCenter{center}{bleft}{bright}
	\path (WebCenter) edge (bleft)
	edge (bright)
	edge (center);
	\webCenter{center}{tleft}{tright}
	\path (WebCenter) edge (center)
	[] edge (tright)
	edge (tleft);
	\end{tikzpicture} 
	-\webQpar\inv \qquad\qquad
	\begin{tikzpicture}[anchorbase,scale=0.7	]
	\def\height{1.3}
	\def\dist{0.6}
	\coordinate (bleft) at (0,0);
	\coordinate (bright) at (\dist, 0);
	\coordinate (tleft) at (0,\height);
	\coordinate (tright) at (\dist,\height);
	\node[below] at (bleft) {1};
	\node[below] at (bright) {1};
	\path[] (bleft) edge (tleft) (bright) edge (tright);
	\end{tikzpicture}
	,\quad
	\begin{tikzpicture}[anchorbase,scale=0.7]
	\def\height{1.3}
	\def\dist{0.8}
	\coordinate (bleft) at (0,0);
	\coordinate (bright) at (\dist, 0);
	\coordinate (tleft) at (0,\height);
	\coordinate (tright) at (\dist,\height);
	\node[below] at (bleft) {1};
	\node[below] at (bright) {1};
	\undercrossing{bleft}{bright}{tright}{tleft}{}
	\end{tikzpicture} 
	=  
	\begin{tikzpicture}[anchorbase,scale=0.7]
	\def\height{1.3}
	\def\dist{0.8}
	\coordinate (bleft) at (0,0);
	\coordinate (bright) at (\dist, 0);
	\coordinate (tleft) at (0,\height);
	\coordinate (tright) at (\dist,\height);
	\coordinate (center) at (0.5*\dist,0.5*\height);
	\node[below] at (bleft) {1};
	\node[below] at (bright) {1};
	\webCenter{center}{bleft}{bright}
	\path (WebCenter) edge (bleft)
	edge (bright)
	edge (center);
	\webCenter{center}{tleft}{tright}
	\path (WebCenter) edge (center)
	[]edge (tright)
	edge (tleft);
	\end{tikzpicture} 
	-\webQpar \quad
	\begin{tikzpicture}[anchorbase,scale=0.7]
	\def\height{1.3}
	\def\dist{0.6}
	\coordinate (bleft) at (0,0);
	\coordinate (bright) at (\dist, 0);
	\coordinate (tleft) at (0,\height);
	\coordinate (tright) at (\dist,\height);
	\node[below] at (bleft) {1};
	\node[below] at (bright) {1};
	\path[] (bleft) edge (tleft) (bright) edge (tright);
	\end{tikzpicture}.
\end{equation}

We end this subsection by listing some useful relations.

\begin{lem}\label{lem:crossing absorption}
	The following \emph{local} relations hold in $\webCategoryA$:
	\begin{align}
		\begin{tikzpicture}[anchorbase,scale=0.7]
			\def\height{1.3}
			\def\dist{0.8}
			\coordinate (bleft) at (0,0);
			\coordinate (bright) at (\dist, 0);
			\coordinate (tleft) at (0,\height);
			\coordinate (tright) at (\dist,\height);
			\coordinate (center) at (0.5*\dist,0.5*\height);
			\node[above] at (tleft) {$a$};
			\node[below] at (bleft) {$a$};
			\node[below] at (bright) {$1$};
			\node[above] at (tright) {$1$};
			\webCenter{center}{bleft}{bright}
			\path (WebCenter) edge (bleft)
			edge (bright)
			edge (center);
			\webCenter{center}{tleft}{tright}
			\path (WebCenter) edge (center)
			[] edge (tright)
			edge (tleft);
		\end{tikzpicture}  
		\;=\; 
		\begin{tikzpicture}[scale=0.6]
			\def\height{1.3}
			\def\dist{0.8}
			\def\shape{0.2}
			\path[] (\dist,0) node[below]{$1$} to (0,\height-\shape*\height);
			\path[overline](\dist,\height) node[above]{$1$} to (0, \shape*\height);
			\path[] (0,0) node[below]{$a$}to  (0,\height) node [above]{$a$};
		\end{tikzpicture}
		+\webQpar^{-a} 
		\begin{tikzpicture}[scale=0.6]
			\def\height{1.3}
			\def\dist{0.8}
			\def\shape{0.3}
			\path[draw] (0,0) node[below]{$a$}to (0,\height)
				(\dist, 0) node[below] {$1$} to (\dist,\height);
		\end{tikzpicture}
		\hspace{2cm}
		\begin{tikzpicture}[anchorbase,scale=0.7]
			\def\height{1.3}
			\def\dist{0.8}
			\coordinate (bleft) at (0,0);
			\coordinate (bright) at (\dist, 0);
			\coordinate (tleft) at (0,\height);
			\coordinate (tright) at (\dist,\height);
			\coordinate (center) at (0.5*\dist,0.5*\height);
			\node[above] at (tleft) {$1$};
			\node[below] at (bleft) {$1$};
			\node[below] at (bright) {$a$};
			\node[above] at (tright) {$a$};
			\webCenter{center}{bleft}{bright}
			\path (WebCenter) edge (bleft)
			edge (bright)
			edge (center);
			\webCenter{center}{tleft}{tright}
			\path (WebCenter) edge (center)
			[] edge (tright)
			edge (tleft);
		\end{tikzpicture}  
		\;=\; 
		\begin{tikzpicture}[xscale=-0.6,yscale=0.6]
			\def\height{1.3}
			\def\dist{0.8}
			\def\shape{0.2}
			\path[](\dist,\height) node[above]{$1$} to (0, \shape*\height);\path[overline] (\dist,0) node[below]{$1$} to (0,\height-\shape*\height);
			\path[] (0,0) node[below]{$a$}to  (0,\height) node [above]{$a$};
		\end{tikzpicture}
		+\webQpar^{-a} 
		\begin{tikzpicture}[scale=0.6]
			\def\height{1.3}
			\def\dist{0.8}
			\def\shape{0.3}
			\path[draw] (0,0) node[below]{$1$}to (0,\height)
			(\dist, 0) node[below] {$a$} to (\dist,\height);
		\end{tikzpicture}
		\label{useful}
		\\
		\begin{gathered}
			\begin{tikzpicture}[yscale=0.25,xscale=0.35,anchorbase,font=\small]
			\node (ab) at (0,-2) {$a$}; 
			\node (a) at (-1,1) {};
			\node (b) at (1,1) {};
			\webCenter{ab}{a}{b}
			\path[very thick,out=-90,in=90,]	(WebCenter)	   edge  (ab)
			[out=130,in=-90,]		edge (a)
			[out=50,in=-90]		edge  (b); 
			\path[ out=90,in=-90](b.south) to ($(a)+(0,2)$) node[above]{$a-1$};
			\path[ out=90,in=-90,preaction={white , line width =8pt}](a.south) to ($(b)+(0,2)$)node[above]{$1$};
		\end{tikzpicture}
		=
		\webQpar^{a-1} \splitWebTextStyle{a}{a-1}{1}
		\hspace{2cm}
		\begin{tikzpicture}[yscale=0.25,xscale=0.35,anchorbase,font=\small]
			\node (ab) at (0,2) {$a$}; 
			\node (a) at (-1,-1) {};
			\node (b) at (1,-1) {};
			\webCenter{ab}{a}{b}
			\path[very thick,out=90,in=-90,]	(WebCenter)	   edge  (ab)
			[out=-130,in=90,]		edge node {} (a)
			[out=-50,in=90]		edge  (b); 
			\path[ out=-90,in=90](a.north) to ($(b)+(0,-2)$) node[below]{$1$};
			\path[ out=-90,in=90,preaction={white , line width =8pt}](b.north) to ($(a)+(0,-2)$)node[below]{$a-1$};
		\end{tikzpicture}
		=
		\webQpar^{a-1} \mergeWebTextStyle{a}{a-1}{1}\\
		\begin{tikzpicture}[yscale=0.25,xscale=0.35	,anchorbase,font=\small]
			\node (ab) at (0,-2) {$a$}; 
			\node (a) at (-1,1) {};
			\node (b) at (1,1) {};
			\webCenter{ab}{a}{b}
			\path[very thick,out=-90,in=90,]	(WebCenter)	   edge  (ab)
			[out=130,in=-90,]		edge (a)
			[out=50,in=-90]		edge  (b); 
			\path[ out=90,in=-90](b.south) to ($(a)+(0,2)$) node[above]{$1$};
			\path[ out=90,in=-90,preaction={white , line width =8pt}](a.south) to ($(b)+(0,2)$)node[above]{$a-1$};
		\end{tikzpicture}
		=
		\webQpar^{a-1} \splitWebTextStyle{a}{1}{a-1}
		\hspace{2cm}
		\begin{tikzpicture}[yscale=0.25,xscale=0.35	,anchorbase,font=\small]
			\node (ab) at (0,2) {$a$}; 
			\node (a) at (-1,-1) {};
			\node (b) at (1,-1) {};
			\webCenter{ab}{a}{b}
			\path[very thick,out=90,in=-90,]	(WebCenter)	   edge  (ab)
			[out=-130,in=90,]		edge node {} (a)
			[out=-50,in=90]		edge  (b); 
			\path[ out=-90,in=90](a.north) to ($(b)+(0,-2)$) node[below]{$a-1$};
			\path[ out=-90,in=90,preaction={white , line width =8pt}](b.north) to ($(a)+(0,-2)$)node[below]{$1$};
		\end{tikzpicture}
		=
		\webQpar^{a-1} \mergeWebTextStyle{a}{1}{a-1}
		\end{gathered}
		\label{crossing-absorption}
	\end{align}
\end{lem}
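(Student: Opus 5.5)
The plan is to derive all four relations from the explicit form of the thin crossings and from the square switch and explosion relations. The second family, crossing absorption, will then follow from the first.

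For \eqref{useful} I would start from \cref{def:overcrossing} with one strand of thickness $a$ and one thin strand. Then $\min(a,1)=1$, so the crossing is a sum of only two terms. The $r=0$ term is the ``ladder'' with a single rung of thickness $1$. For the appropriate orientation, that rung is precisely the merge--split (square) web on the left-hand side of \eqref{useful}, after using associativity and coassociativity \eqref{assoc-coassoc} to slide trivalent vertices. The $r=1$ term is the two-rung ladder. Applying the thin square switch \eqref{square-switch} to it, I would rewrite it as a multiple of the identity plus a diagram that collapses to the $H$-shaped web via the explosion relation \eqref{explosion}, with $\Qbinom{1}{1}=1$ and $\Qbinom{a}{1}=\Qint{a}$. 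Solving the resulting linear relation for the $H$-web gives crossing $+\,\webQpar^{-a}\id$. The key bookkeeping is that the coefficient $\pm\webQpar^{\pm1}$ from the crossing, combined with $\Qint{a-1}$ from the square switch, produces exactly $\webQpar^{-a}$. For this I would use the identity $\Qint{a}-\webQpar\Qint{a-1}=\webQpar^{-(a-1)}$, or its mirror version. The second relation in \eqref{useful} is the mirror image, obtained by applying the same argument after reflecting diagrams left-to-right, which swaps the two crossing formulas.

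For \eqref{crossing-absorption} I would argue as follows. Take the split of $a$ into $(a-1,1)$ and compose it with a crossing of the two outgoing strands. Then expand the crossing using \eqref{useful}, rewritten as crossing $=H\text{-web}-\webQpar^{-a'}\id$, where $a'=a-1$. The $H$-web placed atop a split becomes, by coassociativity, a split of $a$ into $a-1$ and $1$, then a further split of $a-1$ into $a-2$ and $1$, and then a merge of two thin strands into a $2$-strand. The explosion-type relation (a bigon of two thin strands yields $\Qint{2}$), used together with associativity, reduces this to $\Qint{a}$ times the split with the other ordering. Alternatively, a cleaner route is to induct on $a$ using naturality of the braiding with respect to trivalent vertices. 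The combined coefficient $\Qint{a}-\webQpar^{-(a-1)}=\webQpar\,\Qint{a-1}\cdot(\dots)$ must be checked to equal $\webQpar^{a-1}$. I expect this to work for $\webQpar\in\{-q,q^{-1}\}$ by direct $q$-integer arithmetic. The remaining three identities follow by the vertical and horizontal symmetries of $\webCategoryA$, which interchange splits and merges and interchange over- and under-crossings.

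The main obstacle is the sign and power bookkeeping. This means matching which crossing, over or under, corresponds to which ladder convention in \cref{def:overcrossing}, and confirming that the square switch is applied with $\Qint{a-b}$ in the right orientation (the value is $\pm\Qint{a-1}$ depending on orientation). I would fix conventions by first checking the case $a=1$ against \eqref{eqn: Hecke generators}, and then the case $a=2$ by hand, before writing the general computation.
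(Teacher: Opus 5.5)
\textbf{There is a genuine gap in how you read the right-hand side of \eqref{useful}.}

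Both sides of \eqref{useful} are endomorphisms of the object $(a,1)$. The first diagram on the right is \emph{not} the crossing of an $a$-strand with a thin strand from \cref{def:overcrossing}, which is a morphism $(a,1)\to(1,a)$. It is the following web: a thin strand splits off the $a$-strand and passes over the incoming thin strand by a \emph{thin} crossing, and the incoming thin strand is then merged into the $a$-strand.

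Your expansion therefore does not type-check. The two terms of the thick crossing are the merge--split $(a,1)\to(a+1)\to(1,a)$ and the single-rung ladder with a rung of thickness $a-1$. Neither of these is the left-hand side of \eqref{useful}. The thin square switch does not apply to either of them. Finally, ``crossing $+\webQpar^{-a}\id$'' adds morphisms with different targets. Your planned check at $a=1$ would not catch this, since there both readings reduce to \eqref{eqn: Hecke generators}.

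\textbf{The missing ingredient is \eqref{eqn: Hecke generators} applied \emph{inside} the web.} The argument goes as follows.
\begin{enumerate}
\item View the left-hand side of \eqref{useful} as the two-rung ladder on $(a,1)$ whose right strand has thickness $0$ in the middle.
\item By \eqref{square-switch} with $b=1$, it equals the opposite ladder, which passes through $(a-1,2)$, minus $\Qint{a-1}\id$.
\item In that ladder the two thin strands meet along a thickness-$2$ edge, so they form a thin merge--split. By \eqref{eqn: Hecke generators} this is the thin crossing plus $\webQpar^{-1}\id$.
\item The crossing term is exactly the first diagram on the right of \eqref{useful}. The other term is a thin bigon on the $a$-strand, equal to $\Qint{a}$ by \eqref{explosion}.
\item The identity $\webQpar^{-1}\Qint{a}-\Qint{a-1}=\webQpar^{-a}$ (your identity multiplied by $\webQpar^{-1}$) finishes the proof.
\end{enumerate}

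\textbf{The same misreading breaks your argument for \eqref{crossing-absorption}.} For $a\ge 3$, replacing the crossing of the $1$- and $(a-1)$-strands by ``$H$-web $-\webQpar^{-(a-1)}\id$'' is again ill-typed. The scalar it would produce is $\Qint{a}-\webQpar^{-(a-1)}=\webQpar\Qint{a-1}$, which is not $\webQpar^{a-1}$, so the arithmetic you expect to work does not.

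If you expand that crossing honestly via \cref{def:overcrossing}, you get the merge--split through $a$ minus $\webQpar^{-1}$ times the ladder with a rung of thickness $a-2$. Precompose with the split of $a$. By (co)associativity and \eqref{explosion}, these two terms become $\Qint{a}$ and $\Qint{a-1}$ times the split. The result then follows from $\Qint{a}-\webQpar^{-1}\Qint{a-1}=\webQpar^{a-1}$.
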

\begin{proof}
	We only prove the first equality of \cref{useful}, the other is completely analogous.
	Use the thin square switch we obtain
	\begin{align*}
		\begin{tikzpicture}[anchorbase,scale=0.7]
			\def\height{1.3}
			\def\dist{0.8}
			\coordinate (bleft) at (0,0);
			\coordinate (bright) at (\dist, 0);
			\coordinate (tleft) at (0,\height);
			\coordinate (tright) at (\dist,\height);
			\coordinate (center) at (0.5*\dist,0.5*\height);
			\node[above] at (tleft) {$a$};
			\node[below] at (bleft) {$a$};
			\node[below] at (bright) {$1$};
			\node[above] at (tright) {$1$};
			\webCenter{center}{bleft}{bright}
			\path (WebCenter) edge (bleft)
			edge (bright)
			edge (center);
			\webCenter{center}{tleft}{tright}
			\path (WebCenter) edge (center)
			[] edge (tright)
			edge (tleft);
		\end{tikzpicture}  
			& \;=\; 	
		\begin{tikzpicture}[yscale=0.35,xscale=0.5]
			\draw[] (0,3) --(0,0) node [below,] {$a$};
							\draw (1,3) --(1,0) node [below,] {$1$};
			\path (0,0.7) edge node [below] {$1$}  (1,1.4)
				(1,1.6) edge node [above] {$1$} (0,2.3);
			\end{tikzpicture}
		-\Qint{a-1}
		\begin{tikzpicture}[scale=0.6]
			\def\height{1.5}
			\def\dist{0.8}
			\def\shape{0.2}
			\path[draw] (\dist,0) node[below]{$1$}to (\dist,\height);
			\path[](0,\height)  to (0,0)node[below]{$a$};
		\end{tikzpicture}
		\;=\;
		\begin{tikzpicture}[]
			\def\height{1}
			\def\dist{0.7}
			\def\shape{0.2}
			\path[] (0,0) node[below]{$a$}to  (0,\height) node [above]{$a$}
			(\dist,0) node[below]{$1$}to (0,\height-\shape*\height);
			\path[overline](\dist,\height) node[above]{$1$} to (0, \shape*\height);
			\path[] (0,0) node[below]{$a$}to  (0,\height) node [above]{$a$};
		\end{tikzpicture}
		+ \webQpar\inv 
		\begin{tikzpicture}
			\def\height{1}
			\def\dist{0.5}
			\def\shape{0.2}
			\path[draw] (\dist,0) node[below]{$1$}to (\dist,\height);
			\path[](0,\height)  to (0,0)node[below]{$a$};
			\path[in=0,out=0] (0,\shape*\height) to node [right] {$1$} (0,\height-\shape *\height);
		\end{tikzpicture}
		-\Qint{a-1}
		\begin{tikzpicture}[scale=0.6]
			\def\height{1.5}
			\def\dist{0.8}
			\def\shape{0.2}
			\path[draw] (\dist,0) node[below]{$1$}to (\dist,\height);
			\path[](0,\height)  to (0,0)node[below]{$a$};
		\end{tikzpicture}
	\end{align*}
	Removing the bigon by the explosion relation gives the desired result. The rest of the equalities follow easily from \cref{useful}, and we omit the proof.
\end{proof}

\subsection{Webs and fundamental representations}
\begin{thm}\label{webA-functor}
	Set $\webQpar=-q$. 
	For any $\linearGraph$, there is a full $\C(q)$-linear monoidal functor \[
		\funcA: \webCategoryA_{\funcField}\coloneqq\funcField\otimes_{\groundring}\webCategoryA \to \Fund
	\]
	which sends each generating object $a $ to $ \bigwedge^a \vecrep$, and the generating webs to the following maps:
	\begin{align}
		&
		\funcA\left(
		\begin{tikzpicture}[scale=0.25,anchorbase]
			\draw[] (0,0)node[below]{$1$} to (1,1);
			\draw[] (2,0)node[below]{$a-1$} to (1,1);
			\draw[,] (1,1) to (1,2)node[above]{$a$};
		\end{tikzpicture}
		\right)
		\colon\left(  \basis_p\otimes \monWedge_\beta  
		\mapsto
		\webQpar ^{|\{j\in \supp(\beta) \mid j> p\}|} \monWedge_{\beta\cup p}
		\right)
		\label{merge1nFormula}
		\,, \\
		&
		\funcA\left(
		\begin{tikzpicture}[scale=-0.25,anchorbase]
			\draw[] (0,0)node[above]{$a-1$} to (1,1);
			\draw[] (2,0)node[above]{$1$} to (1,1);
			\draw[,] (1,1) to (1,2)node[below]{$a$};
		\end{tikzpicture}
		\right) \colon 
		\left( \monWedge_\beta
		\mapsto
		\sum_s  \webQpar ^{|\{j\in \supp(\beta) \mid i> j\}|} \basis_{i} \otimes \monWedge_{\beta \setminus i}\right)  
		,  \\
		&
		\funcA\left(
		\begin{tikzpicture}[scale=-0.25,anchorbase]
			\draw[] (0,0)node[above]{$1$} to (1,1);
			\draw[] (2,0)node[above]{$a-1$} to (1,1);
			\draw[,] (1,1) to (1,2)node[below]{$a$};
		\end{tikzpicture}
		\right)
		\colon
		\left( \monWedge_\beta 
		\mapsto 
		\sum_s  \webQpar^{|\{j\in \supp(\beta)\mid  j> i\}|} \monWedge_{\beta \setminus i} \otimes \basis_{i}
		\right) \,,\\ 
		&
		\funcA\left(
		\begin{tikzpicture}[scale=0.25,anchorbase]
			\draw[] (0,0)node[below]{$a-1$} to (1,1);
			\draw[] (2,0)node[below]{$a$} to (1,1);
			\draw[,] (1,1) to (1,2)node[above]{$a$};
		\end{tikzpicture}
		\right)
		\colon
		\left(  \monWedge_\beta \otimes \basis_p 
		\mapsto \webQpar^{|\{j\in \supp(\beta)\mid i > j \}|}\monWedge_{\beta\cup p} \right)  
		. \label{merge-n1-formula-last}
	\end{align}
	The over-crossings of two thin strands are sent to the braiding on $\vecrep\otimes \vecrep$.
	Furthermore, $\funcA$ induces a $\funcField$-linear braided monoidal equivalence $\bar\funcA : \webCategoryA_{\funcField} / \id_{\dim \vecrep+1} \cong \Fund $.
\end{thm}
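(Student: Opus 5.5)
This result is essentially the quantum skew Howe duality of Cautis--Kamnitzer--Morrison, in the $\gl_n$ version of Brundan. I would deduce it from those sources rather than reprove it from scratch. The argument has three parts: well-definedness of $\funcA$, fullness, and the identification of the kernel.

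\textbf{Well-definedness.} First I define $\funcA$ on generators by the formulas \cref{merge1nFormula}--\cref{merge-n1-formula-last}, together with their general $(a,b)$ analogues (merge is the quantum wedge product and split is the dual shuffle coproduct). Then I check the following.

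(i) Each such map is $\Uq(\linearGraph)$-linear. It suffices to check this on the generators $E_i,F_i,D_p$, using the coproduct $\Delta$. The cases $(1,a-1)$ and $(a-1,1)$ suffice, because by associativity every merge and split factors through thin ones (up to explosion scalars \cref{explosion}).

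(ii) The relations hold: associativity and coassociativity \cref{assoc-coassoc}, and the thin square switch \cref{square-switch} with $\webQpar=-q$. Associativity is just associativity of the quantum exterior algebra. For the square switch I compute both composites on a standard wedge $\monWedge_\beta\otimes\monWedge_\gamma$. The off-diagonal terms, which move one index from one side to the other, cancel. The diagonal terms give $\sum \webQpar^{\pm(\cdots)}$ exponent sums, and these telescope to $\Qint{a-b}$. I expect this to be the main bookkeeping step, but it is a finite check.

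(iii) The thin crossing \cref{eqn: Hecke generators} maps to the braiding on $\vecrep\otimes\vecrep$. Both sides are compared on $\basis_p\otimes\basis_{p'}$ for $p>p'$, $p<p'$ and $p=p'$, against the $R$-matrix \cref{eqn:R-matrix generic form} (up to the normalisation of $H$). Monoidality then holds by construction.

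\textbf{Fullness.} This is the substantive representation-theoretic input. I use Lusztig/Beilinson--Lusztig--MacPherson skew Howe duality: $\Uq(\gl_m)$ and $\Uq(\gl_n)$ act on $\bigwedge^\bullet(\C(q)^m\otimes\vecrep)$ as mutual centralisers. Weight spaces for $\gl_m$ are exactly the tensor products $\bigwedge^{a_1}\vecrep\otimes\cdots\otimes\bigwedge^{a_m}\vecrep$, so every $\Uq(\gl_n)$-map between such objects comes from the idempotented $\dot U_q(\gl_m)$. The divided powers $E_i^{(r)}$ and $F_i^{(r)}$ act by ladder webs, namely a split followed by a merge. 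Hence every morphism lies in the image of $\funcA$. Here I cite \cite{CaKaMo-webs-skew-howe}*{Thm. 6.2.1} and \cite{Br-schur}. Since $\funcA$ is full and monoidal, it sends crossings (which are ladder sums) to the braiding, so the induced functor is braided.

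\textbf{Kernel.} The main obstacle is showing that the kernel is exactly the tensor ideal generated by $\id_{n+1}$, where $n=\dim\vecrep$. One inclusion is immediate: $\bigwedge^{n+1}\vecrep=0$. For the reverse inclusion I follow CKM/Brundan. The web category $\webCategoryA$ is itself a quotient of $\dot U_q(\gl_m)$ by the relations killing all weights with an entry outside $\{0,\dots,n\}$. The kernel of $\dot U_q(\gl_m)\to\End(\bigwedge^\bullet(\C(q)^m\otimes\vecrep))$ is spanned by idempotents of highest weights not of the form occurring in $\bigwedge(\C^m\otimes\C^n)$. This is the semisimplicity and Peter--Weyl-type description of $\dot U$ together with the generalised Schur algebra. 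Such weights factor through an object containing a label $\geq n+1$, which yields the claimed kernel. I would quote this step rather than reprove it.
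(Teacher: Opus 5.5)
Your proposal takes essentially the paper's route: the paper gives no separate proof of this theorem and takes it from the quantum skew Howe duality of \cite{CaKaMo-webs-skew-howe} together with Brundan's presentation of the $q$-Schur category \cite{Br-schur}, just as you do. One caution for the direct checks in your steps (i)--(ii): the two split formulas as printed have the wrong sign in the exponent (with them $\mathrm{merge}\circ\mathrm{split}$ on $\bigwedge^a\vecrep$ equals $a\webQpar^{a-1}$ rather than $\Qint{a}$, and already for $a=2$ the split fails to be $F_i$-equivariant), so carry out those checks with $\webQpar^{-|\{j\in\supp(\beta)\mid i>j\}|}$ and $\webQpar^{-|\{j\in\supp(\beta)\mid j>i\}|}$ instead.
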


\begin{remark}\label{hecke=overcrossing}
Denoting $\underline{1^d} = (\overset{d}{\overbrace{1,\ldots, 1}})$, we have an algebra isomorphism  $\End_{\webCategoryA}(\underline{1^d})\cong \hecke[d]$ mapping the over-crossing in \cref{def:overcrossing} between the $i$-th and $(i+1)$-th strands to the standard generator $H_i$ for $1\leq i<d$. Via the functor $\funcA$, both give the braiding morphism in \cref{eqn:R-matrix generic form}.
\end{remark}

Denote $\webCategoryA_\C\coloneqq \C\otimes_{\groundring}\webCategoryA $.
\begin{prop}
	For any $\linearGraph$,
	there is a full monoidal functor $\webCategoryA_\C \to \polyRepCat[\mathfrak{gl}(\linearGraph)]$, such that $\funcA_\C(a)= \bigwedge^a \vecrep[\C]$, and the generating morphisms to \cref{merge1nFormula}--\cref{merge-n1-formula-last} with $\webQpar=-1$. 
\end{prop}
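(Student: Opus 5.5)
The plan is to derive the classical statement from the quantum one by specialisation, and to check fullness through classical skew Howe duality. Concretely, I will tensor $\webCategoryA$ with $\C$ over $\groundring$ via $q\mapsto 1$, so that $\webQpar=-q$ becomes $-1$.

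The first step is to show that the assignments \cref{merge1nFormula}--\cref{merge-n1-formula-last} with $\webQpar=-1$ define a monoidal functor $\webCategoryA_\C\to\polyRepCat[\mathfrak{gl}(\linearGraph)]$. Two things must be verified.

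First, each image map is $\mathfrak{gl}(\linearGraph)$-linear. Up to the sign conventions, these maps are the classical exterior multiplication and comultiplication on $\bigwedge^\bullet\vecrep[\C]$, and both commute with the derivation action of $\mathfrak{gl}(\linearGraph)$.

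Second, the images satisfy the defining relations \cref{assoc-coassoc} and \cref{square-switch}. Here I will avoid a direct check. The formulas in \cref{webA-functor} have coefficients in $\Z[q^{\pm1}]$ and preserve the integral lattices $\bigwedge^a\vecrep[\groundring]$. So $\funcA$ restricts to an $\groundring$-linear functor from $\webCategoryA$ to $\groundring$-lattices, and the relations hold there already. Applying $\C\otimes_\groundring-$ then gives the relations with $q=1$, where $\Qint{a-b}$ specialises to the ordinary integer $\pm(a-b)$ because $Q=-1$.

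Linearity can also be obtained from this specialisation. The divided powers $E_i,F_i$ and the elements $(D_p-1)/(q-1)$ act on the lattice, and at $q=1$ they specialise to $\matrixElmtFont{e}_i$, $\matrixElmtFont{f}_i$ and $\lgl_p$. Since the web maps commute with these operators integrally, they commute with $\mathfrak{gl}(\linearGraph)$ after specialisation.

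The second step, and the main obstacle, is fullness. The plan is to invoke classical skew Howe duality: $\bigwedge(\C^N\otimes\C^m)$ with commuting actions of $\gl_N=\mathfrak{gl}(\linearGraph)$ and $\gl_m$. By this duality, $\gl_N$-homomorphisms between $\bigwedge^{\underline a}\C^N$ and $\bigwedge^{\underline b}\C^N$ are spanned by products of the $\gl_m$ divided powers $E_i^{(r)},F_i^{(r)}$ acting on weight spaces, that is, by the image of the Schur algebra / idempotented $\dot U(\gl_m)$.

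It then remains to identify these divided powers with ladder webs. This is the content of Cautis--Kamnitzer--Morrison in the classical limit: $E_i^{(r)}$ acts as the merge $(a_i,a_{i+1})\to(a_i+r,a_{i+1}-r)$ composed with the split, up to sign.

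The delicate point is that the integral form of $\dot U(\gl_m)$ surjects onto the endomorphism algebra at $q=1$. Over $\C$ in characteristic zero this follows directly, since the classical skew Howe duality theorem is stated over $\C$. So fullness reduces to showing that every ladder diagram is in the image of the functor, which holds because ladders are composites of generators.

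Finally, I will check that the sign conventions ($\webQpar=-1$ rather than $+1$) are compatible with the standard classical wedge product. Any discrepancy is a rescaling of basis vectors by signs, which preserves fullness.
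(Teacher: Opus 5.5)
The paper states this proposition without proof, as the $q=1$ shadow of \cref{webA-functor}, so there is nothing to match line by line. Your argument is correct. Your specialisation step, which gets the relations and $\gl(\linearGraph)$-linearity from the $\groundring$-lattices, is sound. Your fullness step differs from how the paper proves every analogous fullness statement (\cref{webA-basis}, \cref{rep-webB}, \cref{classical-webB-action-ss}).

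\textbf{The paper's route to fullness.} The paper uses Schur--Weyl duality together with the explosion relation \cref{explosion}. Splitting every strand of $\underline a$ into thin strands and merging back gives $\Qint{a_1}!\cdots\Qint{a_l}!$ times the identity, which at $\webQpar=-1$ is $\pm a_1!\cdots a_l!\neq 0$. Hence every $\gl(\linearGraph)$-map $\bigwedge^{\underline a}\vecrep[\C]\to\bigwedge^{\underline b}\vecrep[\C]$ factors through an endomorphism of $\vecrep[\C]^{\otimes d}$. Classical Schur--Weyl duality writes that endomorphism as a combination of permutations, i.e. of images of thin crossings. At $\webQpar=-1$ the thin crossing (merge--split plus the identity) goes to the flip. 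This route needs only Schur--Weyl duality and explosion, and it is the one that carries over verbatim to $\webCategoryB$.

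\textbf{Your route.} The skew Howe argument needs the $(\gl_N,\gl_m)$ duality on $\bigwedge(\C^N\otimes\C^m)$. It also needs the identification of $E_i1_{\underline a}$ with a ladder rung up to Koszul signs. In exchange, it gives you the ladder spanning set directly and never inverts explosion scalars. In characteristic zero $r=1$ already suffices, so only the thin merges and splits of \cref{merge1nFormula}--\cref{merge-n1-formula-last} occur. Your detour through integral forms of $\dot U(\gl_m)$ is therefore unnecessary.

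\textbf{A case you still need to cover.} The statement includes $r=\infty$. There $\gl(\linearGraph)$ acts on an infinite-dimensional space, and neither skew Howe nor Schur--Weyl duality applies as stated; this affects the paper's route equally. A short reduction fixes it:
\begin{enumerate}
\item A $\gl(\linearGraph)$-map $\phi$ preserves weights. So for each finite set $S$ of vertices, $\phi$ restricts to a $\gl_S$-map between the spans of standard tensors with indices in $S$.
\item The web formulas restrict compatibly, because their coefficients depend only on the relative order of indices. By the finite case, the set of webs $w$ with $\funcA_\C(w)|_S=\phi|_S$ is therefore a nonempty affine subspace of $\hom_{\webCategoryA_\C}(\underline a,\underline b)$. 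This space is finite-dimensional by \cref{webA-basis}.
\item Along an exhausting chain $S_1\subset S_2\subset\cdots$ these affine subspaces decrease, so the chain stabilises. Any $w$ in the stable term satisfies $\funcA_\C(w)=\phi$.
\end{enumerate}
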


\begin{notation}\label{merge-split-web-notation}
Given object $\underline{a}=(a_1,a_2,\ldots,a_r)\in\webCategoryA$, denote
$	\splitNotation(\underline a)\coloneqq
	\begin{tikzpicture}[anchorbase,scale=0.7]
	\def\dist{0.8}
	\def\hoehe{0.8}
	\def\splitdist{0.1}
	\node[, ] (a) {};
	\draw($(a.south) +(0,-0.5*\hoehe)$) --+(0,-0.5*\hoehe) node [below]{$a_{1}$};
	\foreach \j in {0,1,...,4}{
	\draw ($(a.south) +(0,-0.5*\hoehe)$) --+(-0.2+\j*\splitdist,0.5*\hoehe);
	}
	\begin{scope}[xshift=1cm]
	\node[, ] (a) {};
	\draw($(a.south) +(0,-0.5*\hoehe)$) --+(0,-0.5*\hoehe) node [below]{$a_{2}$};
	\foreach \j in {0,1,...,4}{
	\draw ($(a.south) +(0,-0.5*\hoehe)$) --+(-0.2+\j*\splitdist,0.5*\hoehe);
	}
	\end{scope}
	\begin{scope}[xshift=2cm,yshift=-0.741cm]
	\node {$\cdots$};
	\end{scope}
	\begin{scope}[xshift=3cm]
	\node[, ] (a) {};
	\draw($(a.south) +(0,-0.5*\hoehe)$) --+(0,-0.5*\hoehe) node 					[below]{$a_{r}$};
	\foreach \j in {0,1,...,4}{
	\draw ($(a.south) +(0,-0.5*\hoehe)$) --+(-0.2+\j*\splitdist,0.5*\hoehe);
	}	
	\end{scope}	
	\end{tikzpicture}
$, $
	\mergeNotation(\underline a)\coloneqq
	\begin{tikzpicture}[anchorbase,yscale=-0.7,xscale=0.7]
	\def\dist{0.8}
	\def\hoehe{0.8}
	\def\splitdist{0.1}
	\node[, ] (a) {};
	\draw($(a.north) +(0,-0.5*\hoehe)$) --+(0,-0.5*\hoehe) node [above]{$a_{1}$};
	\foreach \j in {0,1,...,4}{
	\draw ($(a.north) +(0,-0.5*\hoehe)$) --+(-0.2+\j*\splitdist,0.5*\hoehe);
	}
	\begin{scope}[xshift=1cm]
	\node[, ] (a) {};
	\draw($(a.north) +(0,-0.5*\hoehe)$) --+(0,-0.5*\hoehe) node [above]{$a_{2}$};
	\foreach \j in {0,1,...,4}{
	\draw ($(a.north) +(0,-0.5*\hoehe)$) --+(-0.2+\j*\splitdist,0.5*\hoehe);
	}
	\end{scope}
	\begin{scope}[xshift=2cm,yshift=-0.541cm]
	\node {$\cdots$};
	\end{scope}
	\begin{scope}[xshift=3cm]
	\node[, ] (a) {};
	\draw($(a.north) +(0,-0.5*\hoehe)$) --+(0,-0.5*\hoehe) node 					[above]{$a_{r}$};
	\foreach \j in {0,1,...,4}{
	\draw ($(a.north) +(0,-0.5*\hoehe)$) --+(-0.2+\j*\splitdist,0.5*\hoehe);
	}
	\end{scope}
	\end{tikzpicture}
$, where all unlabeled strands are thin strands. 
For each ${\elmntSn} \in \abshuffleSet(\underline{a},\underline{b})$ we fix a reduced expression $\underline{\elmntSn}$. Let $\webnotation(\underline{\elmntSn}) $ be the web associated to $\underline{\elmntSn}$ obtained by replacing the crossings in the string diagram for $\underline{\elmntSn}$ by the thin over-crossing \cref{eqn: Hecke generators}. 
\end{notation}

\begin{remark}
	By Matsumoto's theorem, any two reduced expressions for $\elmntSn$ are related by a sequence of braid moves. Since the over-crossings in $\webCategoryA$ satisfy the braid relations, it follows that $\webnotation(\underline{\elmntSn})$ is independent of the choice of reduced expression $\underline{\elmntSn}$ for $\elmntSn$.
\end{remark}

We construct a basis for the morphisms spaces in $\webCategoryA$ out of the webs $\webnotation(\underline{\elmntSn})$.
\begin{thm}\label{webA-basis}
	The morphism space $\hom_{\webCategoryA}(\underline{a},\underline{b})$ is a free $\groundring$-module with a basis given by $\{\splitNotation(\underline{a})  \circ \webnotation(\underline{\elmntSn}) \circ  \mergeNotation(\underline{b}) \mid \elmntSn \in \abshuffleSet(\underline{a},\underline{b})\}$. 
\end{thm}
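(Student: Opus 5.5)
The plan has two halves: first show that the proposed webs span $\hom_{\webCategoryA}(\underline{a},\underline{b})$ over $\groundring$, then show they are linearly independent by comparing ranks after base change. Linear independence will not be proved diagrammatically. Instead it comes from counting: the number of proposed basis elements is $|\abshuffleSet(\underline{a},\underline{b})| = |S_{\underline{a}}\backslash S_d/S_{\underline{b}}|$ by \cref{Sn-doublecoset-rep}. This number equals the dimension of $\hom_{\gl_N}(\bigwedge^{\underline{a}}\C^N,\bigwedge^{\underline{b}}\C^N)$ for $N\geq d$, and also of its $q$-analogue over $\funcField$. Both dimensions are multiplicities computed classically: $\bigwedge^{\underline a}$ has a weight-space decomposition indexed by $0/1$ matrices, and the Hom space dimension is the number of $0/1$ matrices with row sums $\underline a$ and column sums $\underline b$, which is in bijection with the double cosets.

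\textbf{Spanning.} First I would reduce every web to one with thin strands in the middle. Using the explosion relation \cref{explosion}, each thick strand of thickness $c$ can be replaced by $\Qbinom{c}{1}\!\cdots$ inverse multiples of a split into thin strands followed by a merge. This inverts over $\groundring$ only after rescaling, which causes a problem at $q=1$, so I would argue differently: by (co)associativity \cref{assoc-coassoc} and the square switch \cref{square-switch}, any web $\underline a\to\underline b$ can be written as an $\groundring$-combination of ``ladder'' webs. I would then push all splits to the bottom and all merges to the top, as in the standard CKM argument. An alternative is to reduce to an identity of the form $\splitNotation(\underline a)\circ x\circ\mergeNotation(\underline b)$ with $x\in\End(\underline{1^d})\cong\hecke[d]$ by \cref{hecke=overcrossing}. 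The Hecke algebra has the $\groundring$-basis of $H_w$, $w\in S_d$. Writing $w=uw'v$ with $u\in S_{\underline a}$, $w'\in\abshuffleSet$, $v\in S_{\underline b}$ and lengths adding, the crossing absorption relations \cref{crossing-absorption} absorb $H_u$ and $H_v$ into the merges and splits, up to a power of $\webQpar$. Hence every element $\splitNotation\circ x\circ\mergeNotation$ lies in the span of the claimed elements.

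\textbf{Independence and freeness.} Base change to $\funcField$ and apply $\funcA$ with $\dim\vecrep=N\geq d$. \cref{webA-functor} gives fullness, and $\bar\funcA$ is an equivalence modulo $\id_{N+1}$, which does not affect objects of total thickness $d\le N$. So the image of $\hom(\underline a,\underline b)$ is the full Hom space, of dimension $|\abshuffleSet(\underline a,\underline b)|$. A spanning set of that cardinality must therefore map to a basis, so it is $\funcField$-linearly independent. Because $\groundring\subset\funcField$ is a domain, the set is also $\groundring$-linearly independent, and together with spanning the module is free.

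\textbf{Main obstacle.} The hardest step is the spanning reduction to the form $\splitNotation\circ x\circ\mergeNotation$ over $\groundring$ rather than over $\funcField$. The issue is that inverting quantum binomials is forbidden at $q=1$, so explosion \cref{explosion} cannot be used to insert thin strands. I would first try to prove, by induction on the number of trivalent vertices and using only associativity and square switches with integer coefficients, that any web factors through $\underline{1^d}$ as claimed. If this fails, I would invoke the known integral spanning result of \cite{Br-schur}.
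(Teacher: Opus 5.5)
\paragraph{The gap: spanning over $\groundring$.}
Your independence argument is fine. The gap is spanning over $\groundring$, which is what turns independence into freeness with this particular basis. You leave that step unproved, and the reason you give for abandoning explosion is false. Here $\groundring$ is the ring of rational functions regular at $q=1$, i.e.\ the localisation of $\C[q]$ at $(q-1)$. At $q=1$ the quantum binomial $\Qbinom{c}{b}$ specialises to $\pm\binom{c}{b}$ (the sign occurs only for $\webQpar=-q$), and this is nonzero in $\C$. So every quantum binomial is a unit in $\groundring$, and the explosion relation \cref{explosion} can be inverted: $\id_c$ is a unit multiple of ``split $c$ into thin strands, then merge back''. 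Inserting this on every strand of $\underline a$ and $\underline b$ shows directly that every morphism $\underline a\to\underline b$ has the form $\splitNotation(\underline a)\circ x\circ\mergeNotation(\underline b)$ with $x\in\End_{\webCategoryA}(\underline{1^d})$.

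This is exactly the paper's first step (``since quantum integers are invertible in $\groundring$''). Your worry would be legitimate over $\Z[q,q^{-1}]$, which is precisely why the paper works over the local ring $\groundring$. As written, the ladder/CKM ``push splits down, merges up'' sketch and the fallback citation are not a proof. Once explosion is allowed, your second alternative is literally the paper's spanning argument: use that the $H_w$ span $\End(\underline{1^d})$, write $w=uw'v$ with additive lengths, and absorb $H_u,H_v$ into the splits and merges via \cref{crossing-absorption}.

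\paragraph{Independence: a different route, but a wrong count.}
Your route here genuinely differs from the paper's. You use fullness of $\funcA$, faithfulness in total thickness $d\le N$ (the ideal generated by $\id_{N+1}$ is zero there), and a dimension count. The paper instead maps $\End(\underline{1^d})\otimes_{\groundring}\funcField$ isomorphically onto $\hecke[d]$ and checks independence of the images there. Your route works, but your justification of the count is wrong.

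Double cosets $S_{\underline a}\backslash S_d/S_{\underline b}$ correspond to matrices with entries in $\N$, not in $\{0,1\}$, with row sums $\underline a$ and column sums $\underline b$. For example, $\underline a=\underline b=(2)$ gives one double coset and $\dim\hom_{\gl_N}(\bigwedge^2\C^N,\bigwedge^2\C^N)=1$, yet there is no $0/1$ matrix with these sums. The $0/1$ matrices count a weight space of $\bigwedge^{\underline a}$, not this Hom space.

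The correct justification is Schur--Weyl duality. For $N\ge d$, the Hom space is $e\,\C[S_d]\,e'$ with $e,e'$ the sign idempotents of the two parabolic subgroups. Because the sign character is a class function, each double coset contributes exactly one dimension. The quantum Hom space has the same dimension, since the $\bigwedge^{\underline a}\vecrep$ are type~1 modules with classical characters.
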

\begin{proof}
	Fix $\underline{a},\underline{b}\in \webCategoryA$ with $|\underline{a}|=|\underline{b}|= d $. Since quantum integers are invertible in $\groundring$, by explosion we have an embedding $\hom_{\webCategoryA}(\underline{a},\underline{b}) \hookrightarrow \hom_{\webCategoryA}(\underline{1^d},\underline{1^d}) $, hence we have $\hom_{\webCategoryA}(\underline{a},\underline{b}) =\{ \splitNotation(\underline{a})\circ f \circ \mergeNotation(\underline{b})\mid f\in \hom_{\webCategoryA}(\underline{1^d},\underline{1^d}) \}$. By \cref{lem:crossing absorption}, $\{\splitNotation(\underline{a})  \circ \webnotation(\underline{\elmntSn}) \circ  \mergeNotation(\underline{b}) \mid \elmntSn \in \abshuffleSet(\underline{a},\underline{b})\}$ is a spanning set of $\hom_{\webCategoryA}(\underline{a},\underline{b})$. 
	To see that they are linearly independent, by \cref{webA-functor}, there exists $\linearGraph$ large enough such that the map $\funcA:  \End_{\webCategoryA}(\underline{1^d})\otimes_{\groundring}\funcField \to \End_{\Uq(\linearGraph)}(\vecrep^{\otimes d})$ is bijective. By Schur--Weyl duality we may assume also $\End_{\Uq(\linearGraph)}(\vecrep^{\otimes d}) \cong \hecke[d]$ is the type $A$ Hecke algebra. Then the image of $\{\splitNotation(\underline{a})  \circ \webnotation(\underline{\elmntSn}) \circ  \mergeNotation(\underline{b}) \mid \elmntSn \in \abshuffleSet(\underline{a},\underline{b})\}$ in the Hecke algebra is linearly independent, hence they are linearly independent. 
\end{proof}

\section{The type $B$ web category} 
Let $\linearGraph\in\{\linearGraph_r,\linearGraph_r'\}$. We extend $\webCategoryA$ into $\webCategoryB$ to get all $\coideal(\linearGraph)$-module homomorphisms amongst tensor powers of fundamental representations. Since $\coideal(\linearGraph)$ is a right coideal, $\webCategoryB$ should be a right module category over $\webCategoryA$. In view of \cref{Schur-Weyl-quantum}, it is enough to add into the picture the $K$-matrix, which satisfies the defining relations \cref{DefHecke} of $\heckeSpecialized{d}$.
 
\begin{definition}
Let $\webQpar\in \{-q ,q\inv\}$. 
	The \emph{type $B$ web category} $\webCategoryB$ is the strict $\groundring$-linear right module category over $\webCategoryA$ generated by a single morphism $\dotNotationText[1]$ on the object $1$ satisfying: 
	\begin{align}\label{reflection-equation}
		\begin{tikzpicture}[scale=0.8]
			\draw[webdotted=0.3,webdotted=0.7]  (0,0) node[below] {$1$} -- +(0,1.5);
		\end{tikzpicture}
		=
		\begin{tikzpicture}[scale=0.8]
			\draw  (0,0) node[below] {$1$}-- +(0,1.5);
		\end{tikzpicture}
		\text{ and the \emph{reflection equation}}
		\quad
		\begin{tikzpicture}[scale=0.5]
			\def\dist{1}
			\def\hoehe{1.5}
			\path[in =-90,out=90,] (0,0) node[below]{$1$} to+ (\dist,\hoehe) ;
			\path[in =-90,out=90,]	(\dist,0)node[below]{$1$} to +(-\dist,\hoehe);
			\foreach \i in {0,1}{
			\path[in =-90,out=90,webdotted=0.85] (\dist,\i*\hoehe )   to + (-\dist,\hoehe)  ;
			\path[in =-90,out=90,overline] (0,\i*\hoehe )   to  + (\dist,\hoehe)  ;
			}
			\end{tikzpicture} 
			=
			\begin{tikzpicture}[scale=0.5]
			\def\dist{1}
			\def\hoehe{1.5}
			\path[in =-90,out=90,] (0,0) node[below]{$1$} to+ (\dist,\hoehe) ;
			\path[in =-90,out=90,]	(\dist,0)node[below]{$1$} to +(-\dist,\hoehe);
			\foreach \i in {0,1}{
			\path[in =-90,out=90,] (\dist,\i*\hoehe )   to  + (-\dist,\hoehe)  ;
			\draw[,in =-90,out=90,overline,webdotted=0.15] (0,\i*\hoehe )   to + (\dist,\hoehe)  ;
			}	
			\end{tikzpicture} 
	\end{align}
	Define $\webCategoryB_\C = \C\otimes_\groundring\webCategoryB $.
\end{definition}

Recall the identification $\End_{\webCategoryA}(\underline{1^d})\cong \hecke[d]$ in \cref{hecke=overcrossing}. By comparing the presentations \cref{reflection-equation} and \cref{DefHecke}, we get an isomorphism $\End_{\webCategoryB}(\underline{1^d}) \cong \heckeSpecialized{d}$ with $\dotNotationText[1 ]\mapsto H_0$.

\begin{thm}\label{rep-webB}
	Let $\webQpar=-q$.
	The functor $\funcA:\webCategoryA\to \Fund $ extends to a full functor \[
		\functorWebB \colon \webCategoryB_{\funcField}\to \FundCoideal
	\] 
	with $\functorWebB(\dotNotationText[1])= \kmatrix\in \End_{\funcField}(\vecrep)$, such that the following diagram commutes:
	\begin{align}
		\begin{tikzpicture}[scale=0.75,every path/.style={thin},font=\normalsize]
			\node (A) at (0,2) {$\webCategoryB_{\funcField} \times \webCategoryA_{\funcField}$};
			\node (B) at (6,2) {$\FundCoideal\times \Fund$};
			\node (C) at (0,0) {$\webCategoryB_{\funcField}$};
			\node (D) at (6,0) {$\FundCoideal$};
			\draw[->] (A) -- node[above]{$\funcA\times \functorWebB$} (B);
			\draw[->] (A) -- node[left]{$\otimes$} (C);
			\draw[->] (B) -- node[left]{$\otimes$} (D);
			\draw[->] (C) -- node[above]{$\functorWebB$} (D);
		\end{tikzpicture}
		\label{module-functor}
	\end{align}
\end{thm}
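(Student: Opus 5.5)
To define $\functorWebB$ we would use the universal property of $\webCategoryB$ as the right $\webCategoryA$-module category generated by the dot. Concretely, $\webCategoryB$ is obtained from $\webCategoryA$ by adjoining the dot on the object $1$, subject only to the relations \cref{reflection-equation}. A functor out of $\webCategoryB_{\funcField}$ compatible with the action of $\webCategoryA_{\funcField}$ is therefore determined by three pieces of data: $\funcA$ on webs, the tensor structure, and the image $\kmatrix$ of the dot. So we set $\functorWebB(\underline a)=\bigwedge^{a_1}\vecrep\otimes\cdots\otimes\bigwedge^{a_r}\vecrep$, viewed as a $\coideal(\linearGraph)$-module. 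We define $\functorWebB$ to agree with $\funcA$ on webs, to send the dot to $\kmatrix$, and to send a dot on the leftmost strand composed with a web $x$ acting on the right to $\kmatrix\otimes\funcA(x)$. With this definition, commutativity of \cref{module-functor} holds by construction. The remaining work is to check well-definedness and fullness.

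For well-definedness, the relations of $\webCategoryA$ are already respected by \cref{webA-functor}, so only \cref{reflection-equation} remains. The relation $\kmatrix^2=\id$ is clear from \cref{Kmatrix}. The reflection equation becomes $H_0H_1H_0H_1=H_1H_0H_1H_0$ on $\vecrep^{\otimes2}$, with $H_1$ given by \cref{eqn:R-matrix generic form} (by \cref{hecke=overcrossing}). This is exactly the statement that the assignment of \cref{Schur-Weyl-quantum} defines a $\heckeSpecialized{2}$-action, which we cite; alternatively one can verify it directly on basis vectors $\basis_p\otimes\basis_{p'}$ by case distinction on the relative order of $p,p',\foldVertex p,\foldVertex{p'}$.

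We must also check that the images of morphisms are $\coideal(\linearGraph)$-linear. Images of webs are $\Uq(\linearGraph)$-linear, hence $\coideal$-linear. For $\kmatrix$, it suffices to check that it commutes with the generators $B_i$ and $\dCoideal_p$ of \cref{Bi-def} acting on $\vecrep$; this is a direct computation, including the twisted cases $B_0$ and $B_{-\half}$. Since $\coideal(\linearGraph)$ is a right coideal, $\Delta(\coideal)\subset \coideal\otimes\Uq$. Hence $f\otimes g$ is $\coideal$-linear whenever $f$ is $\coideal$-linear and $g$ is $\Uq$-linear, so $\kmatrix\otimes\funcA(x)$ is $\coideal$-linear. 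This shows that $\functorWebB$ lands in $\FundCoideal$.

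Fullness is the main step. Take objects $\underline a,\underline b$ with $|\underline a|=|\underline b|=d$; morphisms between objects of different total size vanish by the $\dCoideal$-weight, since $\prod_p \dCoideal_p$ acts by $q^d$. Split and merge webs give a $\coideal$-module retraction of $\functorWebB(\underline a)$ onto a summand of $\vecrep^{\otimes d}$: by \cref{explosion}, merge$\circ$split is an invertible scalar. Hence every $\coideal$-map $\functorWebB(\underline a)\to\functorWebB(\underline b)$ has the form $\Phi(\mergeNotation(\underline a))$-type conjugate $\Phi(\splitNotation(\underline b))\circ g\circ\Phi(\mergeNotation(\underline a))$, up to scalars, for some $g\in\End_{\coideal}(\vecrep^{\otimes d})$. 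By \cref{Schur-Weyl-quantum}, $g$ lies in the image of $\heckeSpecialized{d}\cong\End_{\webCategoryB}(\underline{1^d})$. Every morphism is therefore in the image of $\functorWebB$.

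The main obstacle we expect is the verification that $\kmatrix$ intertwines the twisted generators. For $\linearGraph_r'$ the element $B_{-\half}$ carries the extra factor $q$, and for $\linearGraph_r$ the element $B_0$ carries $q\inv$. One has to confirm that with the paper's parameter choices these signs and powers of $q$ match on $\vecrep$, and that the braiding normalization of \cref{eqn: Hecke generators} with $\webQpar=-q$ agrees with $H$ in \cref{eqn:R-matrix generic form}.
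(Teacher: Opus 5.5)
Your proposal is correct and follows essentially the paper's argument. Well-definedness, including $\coideal$-linearity of $\kmatrix$, comes from the $\heckeSpecialized{d}$-action on $\vecrep^{\otimes d}$; the commutant statement of \cref{Schur-Weyl-quantum} already gives $\kmatrix\in\End_{\coideal}(\vecrep)$, so your anticipated obstacle needs no separate computation. Fullness comes from realising each object as a split/merge retract of $\vecrep^{\otimes d}$ and applying Schur--Weyl duality, where the relevant composite is $\functorWebB(\mergeNotation(\underline b))\circ g\circ\functorWebB(\splitNotation(\underline a))$ (right-to-left), not the version with $\underline a,\underline b$ swapped that you wrote.

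One small slip: for $\linearGraph_r'$ one has $\dCoideal_0=D_0^2$, so $\prod_p\dCoideal_p$ does not act by $q^d$. Use instead $\dCoideal_0\prod_{p>0}\dCoideal_p^2$, which acts on $\bigwedge^{\underline a}\vecrep$ by $q^{2d}$, to see that Hom-spaces between objects of different total size vanish.
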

\begin{proof}
	The identification $\End_{\webCategoryB}(\underline{1^d}) \cong \heckeSpecialized{d}$ together with \cref{Schur-Weyl-quantum} implies that $\functorWebB$ is well-defined and $\functorWebB:\End_{\webCategoryB}(\underline{1^d}) \to \End_{\coideal(\linearGraph)}(\vecrep^{\otimes d})$ is surjective. Since any object in $\FundCoideal$ is a direct summand of $\vecrep^{\otimes d}$, we conclude that $\functorWebB$ is full. That the diagram \cref{module-functor} commutes is clear.
\end{proof}

\begin{definition}
	Let $a_1,a_2,\ldots,a_r,a\in \N$. Define the following shorthand notations:
\begin{align}\label{def-thick-dots}
	\begin{tikzpicture}[anchorbase,scale=0.25]
		\def\hoehe{2.5}
		\def\dist{1.2}
		\draw[]  (0*\dist,\hoehe) to +(0,-\hoehe) node[below]{$a_1$}; 
		\draw[]  (1*\dist,\hoehe) to +(0,-\hoehe) node[below]{$a_2$}; 
		\draw[]  (2.5*\dist,\hoehe) to +(0,-\hoehe) node[below]{$a_r$}; 
		\draw[webdotted=0.5]  (3.5*\dist,\hoehe) to +(0,-\hoehe) node[below]{$1$}; 
		\node[] at (1.75*\dist,0.5*\hoehe) {$\cdots$};
	\end{tikzpicture}
	\coloneqq  
	\begin{tikzpicture}[anchorbase,scale=0.3]
		\def\hoehe{2.5}
		\def\dist{1.2}
		\path[in=-90,out=90] (4*\dist,0) node[below]{$1$ }to (\dist,0.5*\hoehe) ;
		\foreach \i in {1,3}{
		\ifthenelse{\equal{\i}{1}}{
		\path[in=-90,out=90,preaction={white,line width=4pt}]  (\i*\dist,0)node[below]{$a_1$} to (\i*\dist+\dist, 0.5*\hoehe) to (\i*\dist,\hoehe); 
		}{
		\path[in=-90,out=90,preaction={white,line width=4pt}]  (\i*\dist,0)node[below]{$a_r$} to (\i*\dist+\dist, 0.5*\hoehe) to (\i*\dist,\hoehe); 
		}
		}
		\node[] at (3*\dist,0.5*\hoehe) {$\cdots$};
		\path[in=-90,out=90,preaction={white,line width=4pt},webdotted=0] (\dist,0.5*\hoehe) to (4*\dist,\hoehe);
	\end{tikzpicture},
	\quad
	\begin{tikzpicture}[anchorbase]
		\def\height{1}
		\draw[webdotted=0.5] (0,0) -- +(0,-1*\height) node [below] {$  a$};
	\end{tikzpicture} 
	\coloneqq
	\begin{tikzpicture}[scale=0.5*0.5]
		\def\wingspan{0.5}
		\def\hoehe{2}
		\def\dist{2.4}
		\path[out=130,in=-90,] (0,\wingspan)  to  +(-1.5*\wingspan,2*\wingspan)  ;		
		\path[out=50,in=-90]	(0,\wingspan)	to  +(1.5*\wingspan,2*\wingspan);
		\path[very thick,out=-90,in=90, draw]	(0,\wingspan)  to (0,0) node[below] {$a$} ;
		\path[in=-90,out=90] (1.5*\wingspan,3*\wingspan) 	to  node[ right]{$a-1$} +(0,2*\wingspan)
		;
		\path[, in=-90,out=90,webdotted=0.25] (-1.5*\wingspan,3*\wingspan) to  node[left]{$1$}+(0, 2*\wingspan);
		\path[, in=-90,out=90] 	(-1.5*\wingspan,5*\wingspan)	to +(1.5*\wingspan, 2*\wingspan)
		to +(1.5*\wingspan, 2*\wingspan)
		(1.5*\wingspan,5 *\wingspan) to +(-1.5*\wingspan, 2*\wingspan)	
		to +(-1.5*\wingspan, 3*\wingspan)	
		;
	\end{tikzpicture}
	,\quad
	\begin{tikzpicture}[scale=0.5]
		\def\hoehe{2}
		\def\dist{1.2}
		\foreach \i in {-1}{
		\path[draw]  (\i*\dist,0) node[below] {$a_1$} to +(0,\hoehe) ;}
		\foreach \i in {1}{
		\path[draw]  (\i*\dist,0) node[below] {$a_r$} to +(0,\hoehe) ;}
		\path[webdotted=0.5] (2*\dist,0) node[below]{$a$} to +(0,\hoehe);
		\node[] at  (0,0.5*\hoehe) {$\cdots$};
	\end{tikzpicture}
	\coloneqq 
	\begin{tikzpicture}[scale=0.5*0.5]
		\def\wingspan{0.5}
		\def\hoehe{2}
		\def\dist{2.4}
		\path[out=130,in=-90,] (0,\wingspan)  to  +(-1.5*\wingspan,2*\wingspan)  ;		
		\path[out=50,in=-90]	(0,\wingspan)	to  +(1.5*\wingspan,2*\wingspan);
		\path[very thick,out=-90,in=90, draw]	(0,\wingspan)  to (0,0) node[below] {$a$} ;
		\path[in=-90,out=90] (1.5*\wingspan,3*\wingspan) 	to  node[ right]{$a-1$} +(0,2*\wingspan)
		;
		\path[, in=-90,out=90,webdotted=0.25] (-1.5*\wingspan,3*\wingspan) to  node[left]{$1$}+(0, 2*\wingspan);
		\path[, in=-90,out=90] 	(-1.5*\wingspan,5*\wingspan)	to +(1.5*\wingspan, 2*\wingspan)
		to +(1.5*\wingspan, 2*\wingspan)
		(1.5*\wingspan,5 *\wingspan) to +(-1.5*\wingspan, 2*\wingspan)	
		to +(-1.5*\wingspan, 3*\wingspan)	
		;
		\begin{scope}[xshift=-1.5cm]
		\foreach \i in {-1}{
		\path[draw]  (-4-\i*\dist,-0*\hoehe) node[below] {$a_r$} to +(0,2*\hoehe) ;}
		\foreach \i in {1}{
		\path[draw]  (-4-\i*\dist,-0*\hoehe) node[below] {$a_1$} to +(0,2*\hoehe) ;}
		\node[] at  (-4,1*\hoehe) {$\cdots$};
		\end{scope}
	\end{tikzpicture}
	.
\end{align}
\end{definition}
The definition \cref{def-thick-dots} is inspired by the Jucys--Murphy elements of $\heckeSpecialized{d}$, see \cref{specht-module}. Under the identification $\End_{\webCategoryB}(\underline{1^d}) \cong \heckeSpecialized{d}$, the dot on the $i$-th strand corresponds precisely to the $i$-th Jucys--Murphy element $J_i\in \heckeSpecialized{d}$.
 
\begin{remark}	
	Dots do not slide through crossings. One can see that the following pairs of elements are distinct since they have distinct images under $\functorWebB$ (we leave the check to the readers):
\begin{align}
	\begin{tikzpicture}[anchorbase,scale=0.7]
		\def\height{1.3}
		\def\dist{0.8}
		\coordinate (bleft) at (0,0);
		\coordinate (bright) at (\dist, 0);
		\coordinate (tleft) at (0,\height);
		\coordinate (tright) at (\dist,\height);
		\node[below] at (bleft) {1};
		\node[below] at (bright) {1};
		\overcrossing{bleft}{bright}{tright}{tleft}{}
		\path[ webdotted=0.2,in=-90,out=90] (bleft) to (tright);
	\end{tikzpicture} 
	\ne 
	\begin{tikzpicture}[anchorbase,scale=0.7]
		\def\height{1.3}
		\def\dist{0.8}
		\coordinate (bleft) at (0,0);
		\coordinate (bright) at (\dist, 0);
		\coordinate (tleft) at (0,\height);
		\coordinate (tright) at (\dist,\height);
		\node[below] at (bleft) {1};
		\node[below] at (bright) {1};
		\overcrossing{bleft}{bright}{tright}{tleft}{}
		\path[ webdotted=0.8,in=-90,out=90] (bleft) to (tright);
	\end{tikzpicture} 
		,\qquad
	\begin{tikzpicture}[anchorbase,scale=0.7]
		\def\height{1.3}
		\def\dist{0.8}
		\coordinate (bleft) at (0,0);
		\coordinate (bright) at (\dist, 0);
		\coordinate (tleft) at (0,\height);
		\coordinate (tright) at (\dist,\height);
		\node[below] at (bleft) {1};
		\node[below] at (bright) {1};
		\path[ webdotted=0.2,in=90,out=-90] (tleft) to (bright);
		\overcrossing{bleft}{bright}{tright}{tleft}{}
	\end{tikzpicture} 
	\ne 
	\begin{tikzpicture}[anchorbase,scale=0.7]
		\def\height{1.3}
		\def\dist{0.8}
		\coordinate (bleft) at (0,0);
		\coordinate (bright) at (\dist, 0);
		\coordinate (tleft) at (0,\height);
		\coordinate (tright) at (\dist,\height);
		\node[below] at (bleft) {1};
		\node[below] at (bright) {1};
		\path[ webdotted=0.8,in=90,out=-90] (tleft) to (bright);
		\overcrossing{bleft}{bright}{tright}{tleft}{}
	\end{tikzpicture} 
	,\qquad
	\begin{tikzpicture}[anchorbase,scale=0.4]
		\def\hoehe{2.5}
		\def\dist{1.2}
		\draw[]  (0*\dist,\hoehe) to +(0,-\hoehe) node[below]{$a_1$}; 
		\draw[]  (1*\dist,\hoehe) to +(0,-\hoehe) node[below]{$a_2$}; 
		\draw[]  (2.5*\dist,\hoehe) to +(0,-\hoehe) node[below]{$a_r$}; 
		\draw[webdotted=0.5]  (3.5*\dist,\hoehe) to  +(0,-\hoehe) node[below]{$a$}; 
		\node[] at (1.75*\dist,0.5*\hoehe) {$\cdots$};
	\end{tikzpicture}
	\;\ne \; 
	\begin{tikzpicture}[anchorbase,scale=0.4]
		\def\hoehe{2.5}
		\def\dist{1.2}
		\path[in=-90,out=90] (4*\dist,0) node[below]{$a$ }to (\dist,0.5*\hoehe) ;
		\foreach \i in {1,3}{
		\ifthenelse{\equal{\i}{1}}{
		\path[in=-90,out=90,preaction={white,line width=4pt}]  (\i*\dist,0)node[below]{$a_1$} to (\i*\dist+\dist, 0.5*\hoehe) to (\i*\dist,\hoehe); 
		}{
		\path[in=-90,out=90,preaction={white,line width=4pt}]  (\i*\dist,0)node[below]{$a_r$} to (\i*\dist+\dist, 0.5*\hoehe) to (\i*\dist,\hoehe); 
		}
		}
		\node[] at (3*\dist,0.5*\hoehe) {$\cdots$};
		\path[in=-90,out=90,preaction={white,line width=4pt},webdotted=0.01] (\dist,0.5*\hoehe) to (4*\dist,\hoehe);
	\end{tikzpicture}
	\quad
	\text{ for $a>1$}
	.
\end{align}
\end{remark}

\begin{lem}\label{dot-commute-with-far-away-web}
	Let $\webnotation\in \webCategoryB$ be any web, then the following relations hold:
	\begin{align}
	\begin{tikzpicture}[scale=0.5]
		\def\hoehe{2}
		\def\dist{1.2}
		\foreach \i in {-1}{
		\path[draw]  (\i*\dist,0) node[below] {$a_1$} to +(0,\hoehe) ;}
		\foreach \i in {1}{
		\path[webdotted=0.3]  (\i*\dist,0) node[below] {$a_r$} to +(0,\hoehe) ;}
		\node[] at (2*\dist,0.5*\hoehe) {$\cdots$};
		\path[webdotted=0.7] (3*\dist,0) node[below]{$a_s$} to +(0,\hoehe);
		\node[] at  (0,0.5*\hoehe) {$\cdots$};
	\end{tikzpicture}
	=
	\begin{tikzpicture}[scale=0.5]
		\def\hoehe{2}
		\def\dist{1.2}
		\foreach \i in {-1}{
		\path[draw]  (\i*\dist,0) node[below] {$a_1$} to +(0,\hoehe) ;}
		\foreach \i in {1}{
		\path[webdotted=0.7]  (\i*\dist,0) node[below] {$a_r$} to +(0,\hoehe) ;}
		\node[] at (2*\dist,0.5*\hoehe) {$\cdots$};
		\path[webdotted=0.3] (3*\dist,0) node[below]{$a_s$} to +(0,\hoehe);
		\node[] at  (0,0.5*\hoehe) {$\cdots$};
	\end{tikzpicture}
	,\qquad\qquad
	\begin{tikzpicture}[scale=0.3]
	\draw (0,2.5) -- (0,1.2);
	\draw (0,0.2) -- (0,-1.2);
	\draw (3,2.5) -- (3,1.2);
	\draw (3,0.2) -- (3,-1.2);
	\node at (0,3) {$b_1$};
	\node at (1.5,2.5) {$\cdots$};
	\node at (3,3) {$b_r$};
	\draw (-0.1,0.2) rectangle (3.1,1.2);
	\node at (1.5,0.7) {$\webnotation$};
	\node at (1.5,-0.4) {$\cdots$};
	\node at (0,-1.8) {$a_1$};
	\node at (3,-1.8) {$a_r$};
	\draw[webdotted=0.2] (5,2.5) -- (5,-1.2);
	\node at (5,-1.8) {$a_{r+1}$};
	\end{tikzpicture}
	=
	\begin{tikzpicture}[scale=0.3]
	\draw (0,2.5) -- (0,1.2);
	\draw (0,0.2) -- (0,-1.2);
	\draw (3,2.5) -- (3,1.2);
	\draw (3,0.2) -- (3,-1.2);
	\node at (0,3) {$b_1$};
	\node at (1.5,2.5) {$\cdots$};
	\node at (3,3) {$b_r$};
	\draw (-0.1,0.2) rectangle (3.1,1.2);
	\node at (1.5,0.7) {$\webnotation$};
	\node at (1.5,-0.4) {$\cdots$};
	\node at (0,-1.8) {$a_1$};
	\node at (3,-1.8) {$a_r$};
	\draw[webdotted=0.8] (5,2.5) -- (5,-1.2);
	\node at (5,-1.8) {$a_{r+1}$};
	\end{tikzpicture}
	\end{align}
\end{lem}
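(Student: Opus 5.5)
The first relation says that dots on two different strands commute. The second says that a dot on a strand to the right of a web $\webnotation$ commutes with $\webnotation$. The plan is to reduce both to the Hecke algebra $\heckeSpecialized{d}$ via explosion, and then use the Jucys--Murphy interpretation of dots.

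First, thick dots are defined in \cref{def-thick-dots} by splitting off a thin strand, dotting it, and merging back. By associativity and coassociativity \cref{assoc-coassoc}, every thick strand of thickness $a$ can be written as a split into $a$ thin strands followed by a merge. The explosion relation \cref{explosion} allows this, since quantum binomials are invertible in $\groundring$.

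In this form a thick dot on a strand $a$ at the right becomes a splitting $\splitNotation$, followed by the thin-strand picture of the first definition in \cref{def-thick-dots}, followed by a merge. That picture is the last thin strand crossing over to the far left, being dotted, and crossing back. The first step is to check, using \cref{crossing-absorption} to absorb crossings into the trivalent vertices, that after inserting splits and merges the thick dot equals (up to the scalar from explosion) $\splitNotation\circ J\circ\mergeNotation$. Here $J$ is the Jucys--Murphy element $J_m=H_{m-1}\cdots H_1H_0H_1\cdots H_{m-1}$ on the appropriate thin strand. I expect this step to be the main bookkeeping obstacle: the crossing of a thick strand must be decomposed into thin crossings and matched with the product of $H_i$'s.

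Second, for the first relation, both sides become $\splitNotation\circ(J_mJ_{m'})\circ\mergeNotation$ and $\splitNotation\circ(J_{m'}J_m)\circ\mergeNotation$. Equality then follows because the Jucys--Murphy elements pairwise commute (\cref{specht-module}), via the isomorphism $\End_{\webCategoryB}(\underline{1^d})\cong\heckeSpecialized{d}$.

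Third, for the second relation, $\webnotation$ lives on the first $r$ strands. After exploding, $\webnotation$ is a split–merge composite of an element of $\End(\underline{1^{d'}})$, built from $H_0,\dots,H_{d'-1}$, and the dot on $a_{r+1}$ becomes a product of $J_m$ with $m>d'$. It therefore suffices to show that $J_m$ commutes with $H_0,\dots,H_{m-2}$. For $H_i$ with $0<i<m-1$, one writes $J_m=H_{m-1}\cdots H_{i+1}(H_iJ_iH_i)H_{i+1}\cdots H_{m-1}$ and uses the braid relations together with $H_iJ_iH_iJ_i=J_iH_iJ_iH_i$. Cleaner, in my view, is the standard fact that $J_m$ commutes with $\heckeSpecialized{m-1}$, which follows inductively from the type $B$ braid relation in \cref{DefHecke}; I would cite \cite{ArKo-hecke-algebra} for it.

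For the commutation with $H_0$ when $m\ge 2$, use the relation $H_0H_1H_0H_1=H_1H_0H_1H_0$ in the base case and far commutativity afterwards. Trivalent vertices on the left strands commute with the right-hand dot by the interchange law of the module category structure, once everything is expressed through thin strands.
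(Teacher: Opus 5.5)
Your route is correct in outline but differs from the paper's, and one step needs an extra argument.

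\textbf{Comparison with the paper.} The paper stays inside the diagram calculus. Unwinding \cref{def-thick-dots}, the first relation reduces to the reflection equation \cref{reflection-equation}. The second is deduced from the first by sliding the crossings and trivalent vertices of $\webnotation$ through the loop of the right-hand strand (naturality of the type $A$ crossings), using the reflection equation once more.

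You instead explode every strand into thin ones and move the question into $\heckeSpecialized{d}$. There it becomes two standard facts: the $J_i$ commute, and $J_m$ centralises $H_0,\dots,H_{m-2}$. This works over $\groundring$, because quantum factorials are units there. You only need the homomorphism $\heckeSpecialized{d}\to\End_{\webCategoryB}(\underline{1^d})$, which exists since the defining relations hold (far commutation of $H_0$ with $H_i$, $i\ge 2$, is the interchange law). The gain is that you prove both relations independently and replace ``unwinding'' by citable Hecke-algebra facts. The cost is bookkeeping that the local argument never needs.

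\textbf{Minor corrections.}
\begin{enumerate}
\item Turning a crossing of a thin strand with a thick strand into thin crossings is naturality of the braiding, not \cref{crossing-absorption}.
\item The trivalent vertices of $\webnotation$ do not commute with the right-hand dot by the interchange law, since that dot is not of the form $\id\otimes g$. They are absorbed into the splits and merges once everything is thin.
\item The relation $[J_m,H_i]=0$ for $0<i<m-1$ uses $H_iH_{i+1}H_i=H_{i+1}H_iH_{i+1}$ and $[J_i,H_{i+1}]=0$, not $H_iJ_iH_iJ_i=J_iH_iJ_iH_i$.
\end{enumerate}

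\textbf{The gap: Step 2 does not follow as written.} Let $\mathrm{Sp}\colon\underline a\to\underline{1^d}$ and $\mathrm{Mg}\colon\underline{1^d}\to\underline a$ be the full split and merge, and let $m\le m'$ be the first thin positions of blocks $r<s$. Step 1 correctly gives $D_r=\kappa_r\,\mathrm{Mg}\circ J_m\circ\mathrm{Sp}$ with $\kappa_r$ a unit. Concatenating two such expressions gives $\kappa_r\kappa_s\,\mathrm{Mg}\circ J_{m'}\circ e\circ J_m\circ\mathrm{Sp}$. Here $e=\mathrm{Sp}\circ\mathrm{Mg}=e_1\cdots e_n$ is a product of block quasi-idempotents, not a scalar. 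So the claim that both sides become $\mathrm{Mg}\circ J_{m'}J_m\circ\mathrm{Sp}$ needs justification.

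It is nevertheless true, for the following reasons.
\begin{enumerate}
\item Every $e_i$ with $i\neq r,s$ commutes with both $J_m$ and $J_{m'}$.
\item $e_r$ involves only $H_j$ with $j\le m'-2$, so it commutes with $J_{m'}$.
\item $e_s$ involves only $H_j$ with $j\ge m'\ge m+1$, so it commutes with $J_m$.
\end{enumerate}
In $\mathrm{Mg}\circ J_{m'}\circ e\circ J_m\circ\mathrm{Sp}$, push $e_s$ to the right and every other $e_i$ to the left. There $\mathrm{Mg}\circ e_i=\Qint{a_i}!\,\mathrm{Mg}$ and $e_i\circ\mathrm{Sp}=\Qint{a_i}!\,\mathrm{Sp}$ absorb them. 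The mirror manipulation on $\mathrm{Mg}\circ J_m\circ e\circ J_{m'}\circ\mathrm{Sp}$ (push $e_r$ right, the rest left) gives the same scalar times $\mathrm{Mg}\circ J_mJ_{m'}\circ\mathrm{Sp}$.

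In Step 3 the analogous middle terms are block idempotents of $\underline a$ or $\underline b$. These lie in the span of $H_1,\dots,H_{m-2}$, so your centraliser statement does cover them. You should say so explicitly: this point, rather than the crossing bookkeeping you flagged, is what makes the reduction to Jucys--Murphy elements work.
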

\begin{proof}
	Unwinding the definition, the first relation follows from the reflection equation \cref{reflection-equation}. Then the second equation follows from the first one and the naturality of the crossings for type $A$ webs together with the reflection equation \cref{reflection-equation}. 
\end{proof}

The following somewhat surprising result is a shadow of the fact that the type $B$ braid group is isomorphic to the affine type $A$ braid group.
\begin{lemma}\label{\webQpar=-1WebB-is-Monoidal} 
	Horizontal juxtaposition of diagrams defines a strict monoidal structure on $\webCategoryB_\C$. 
\end{lemma}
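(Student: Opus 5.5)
The plan is to define the tensor product of two morphisms $f\colon\underline{a}\to\underline{b}$ and $g\colon\underline{c}\to\underline{d}$ in $\webCategoryB_\C$ by horizontal juxtaposition, $f\otimes g\coloneqq (f\otimes \id_{\underline{d}})\circ(\id_{\underline{a}}\otimes g)$. For this to give a strict monoidal structure we need two things. First, juxtaposition with an identity on the \emph{left} must be well defined on morphisms; juxtaposition on the right is already given by the right $\webCategoryA$-module structure. Second, we need the interchange law.

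The key observation concerns the specialisation $\webQpar=-1$. There the thin square switch and the crossing formula show that the over- and under-crossings coincide, and their square is the identity. So $\webCategoryA_\C$ is symmetric monoidal, and its thin endomorphism algebras are $\C[S_d]$. Correspondingly $\End_{\webCategoryB_\C}(\underline{1^d})\cong \heckeSpecialized{d}\otimes_\groundring \C\cong \C[W_d]$, with the dot on the $i$-th strand mapping to the sign change $m_i$. Under this identification the Jucys--Murphy dots $J_i$ become exactly the $m_i$, which pairwise commute and are conjugated to each other by the crossings. So at $\webQpar=-1$ a dot can slide through crossings, in contrast to the remark above.

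The steps are as follows.

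\begin{enumerate}
\item Define, for any object $\underline{c}$, the functor $\underline{c}\otimes -$ on generators. On type $A$ webs it is the usual left juxtaposition. On the generating dot on a strand at position $i$, it sends the dot to the thick-dot/Jucys--Murphy-type morphism obtained by dragging the dot strand across $\underline{c}$ to the far left and back, as in \cref{def-thick-dots}. At $\webQpar=-1$ this equals "a dot placed directly on that strand", because crossings are symmetric and dots slide through them.
\item Check that the relations are preserved. The relation $\text{dot}^2=\id$ is clear. The reflection equation reduces, after conjugating by crossings, to the original reflection equation together with the commutativity of $m_i,m_j$ in $W_d$.
\item To avoid a direct relation check, one can alternatively argue on endomorphism algebras. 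By explosion (the quantum binomials are invertible at $q=1$), every hom space $\hom_{\webCategoryB_\C}(\underline{a},\underline{b})$ is cut out of $\End(\underline{1^d})\cong \C[W_d]$ by split/merge idempotents. The inclusion $W_c\times W_d\hookrightarrow W_{c+d}$, given by $m_i\mapsto m_i$ and $m_j\mapsto m_{c+j}$, is a group homomorphism. This yields the left juxtaposition as a well-defined map compatible with the idempotents.
\item Establish the interchange law $(f\otimes\id)\circ(\id\otimes g)=(\id\otimes g)\circ(f\otimes\id)$. Reduced to the thin case, this is the statement that elements of $W_c$ and $W_d$ commute inside $W_{c+d}$. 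That is immediate in the semidirect product $S_{c+d}\ltimes(\Z/2)^{c+d}$, and equivalently follows from \cref{dot-commute-with-far-away-web} together with the sliding of dots.
\end{enumerate}

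Associativity and unitality are inherited from the strictness of concatenation of objects.

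The main obstacle is justifying that dots slide through crossings at $\webQpar=-1$. Concretely, we must show that the image of $J_i=H_{i-1}J_{i-1}H_{i-1}$ is the signed reflection $m_i$, and that this is compatible with thick strands (thick dots). I would first verify the thin case directly from \eqref{DefHecke} at $q=1$, where $H_i^2=1$ makes $J_i=s_{i-1}J_{i-1}s_{i-1}=m_i$. I would then deduce the thick case by exploding thick strands into thin ones and using that the resulting idempotents are $S_a$-symmetrisers, which commute with the diagonal sign subgroup.
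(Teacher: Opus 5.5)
Your proposal is correct and follows the same route as the paper. You interpret a dot on a non-leftmost strand as the dragged, Jucys--Murphy type dot of \cref{def-thick-dots}, and you obtain the interchange law from the commutation of far-apart dots and webs in \cref{dot-commute-with-far-away-web}, or equivalently from $W_c$ and $W_d$ commuting inside $W_{c+d}$.

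Your extra check that left juxtaposition respects the relation that a dot squares to the identity is left implicit in the paper's two-line proof, and it is exactly where $q=1$ is needed. At generic $q$ the dragged dot $H_1H_0H_1=J_2$ is not an involution. At $q=1$ the over- and under-crossings agree, so the dragged dot is the genuine conjugate $s_1m_1s_1=m_2$.
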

\begin{proof}
	Tensoring with $ \dotNotationText[1]$ from the right is interpreted as \cref{def-thick-dots} and \cref{def-thick-dots}. Then by \cref{dot-commute-with-far-away-web}, this indeed defines a monoidal structure on $\webCategoryB_\C $.
\end{proof}

\begin{remark}
	In the proof of \cref{\webQpar=-1WebB-is-Monoidal}, we did not use the condition $q=1$. In fact, one could also view $\webCategoryB$ as a monoidal category with tensor product given by horizontal juxtaposition of diagrams. Moreover, this tensor product coincides with the one obtained from the affinization of $\webCategoryA$ in the sense of \cite{MoSa-affinization}, if one interprets the $\dotNotationText[1]$ on the leftmost thin strand as one of the generating morphisms of the affinization. 
\end{remark}

We can understand indecomposable direct summands in $\webCategoryB_\C$.  
Let $\polyRepCat$ be the category of polynomial representations of $\classicalFixPointSubalg$, i.e. the full subcategory of $\classicalFixPointSubalg$-modules whose objects are direct summands of finite direct sums of tensor powers of the natural representation $\vecrep[\C]$.
\begin{prop}\label{classical-webB-action-ss}
	Let $\linearGraph\in\{\linearGraph_r,\linearGraph_r'\}$. Then there is a $\C$-linear full \emph{monoidal} functor $\functorWebB_\C \colon \webCategoryB_\C \to \polyRepCat$ which sends $\dotNotationText$ to the $K$-matrix.
\end{prop}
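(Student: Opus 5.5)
We construct $\functorWebB_\C$ by specialising $q\mapsto 1$, i.e. $\webQpar=-1$, in the construction of \cref{rep-webB}. We then verify fullness via classical Schur--Weyl duality and monoidality via \cref{\webQpar=-1WebB-is-Monoidal}. On objects, send $\underline{a}=(a_1,\ldots,a_s)$ to $\bigwedge^{a_1}\vecrep[\C]\otimes\cdots\otimes\bigwedge^{a_s}\vecrep[\C]$. On the generating type $A$ webs, use the classical functor $\webCategoryA_\C\to\polyRepCat[\gl(\linearGraph)]$ from the proposition above, with the formulas \cref{merge1nFormula}--\cref{merge-n1-formula-last} at $\webQpar=-1$. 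On the dot, use $\kmatrix$. These maps are $\classicalFixPointSubalg$-linear: the type $A$ maps are $\gl(\linearGraph)$-linear, and $\kmatrix$ commutes with $\classicalFixPointSubalg$ because $\theta$ is conjugation by $\kmatrix$ (rotation by $180^\circ$). For well-definedness we check the relations \cref{reflection-equation}. First, $\kmatrix^2=\id$ is clear. Second, at $q=1$ the thin over-crossing becomes the flip $P$ of $\vecrep[\C]^{\otimes 2}$, by \cref{eqn: Hecke generators} with $\webQpar=-1$. The reflection equation then reads $P(\kmatrix\otimes 1)P(\kmatrix\otimes1)=(\kmatrix\otimes1)P(\kmatrix\otimes1)P$, and both sides equal $\kmatrix\otimes\kmatrix$. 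Equivalently, $\vecrep[\C]^{\otimes d}$ is a $\C[W_d]$-module, and $\End_{\webCategoryB_\C}(\underline{1^d})\cong\heckeSpecialized{d}\otimes_\groundring\C\cong\C[W_d]$ (at $q=1$ the relations \cref{DefHecke} become those of $W_d$). Hence $\functorWebB_\C$ is a well-defined functor compatible with the right $\webCategoryA_\C$-action.

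For fullness we imitate the proof of \cref{rep-webB}. By \cref{SW-duality-q=1}, $\End_{\classicalFixPointSubalg}(\vecrep[\C]^{\otimes d})$ is the image of $\C[W_d]=\End_{\webCategoryB_\C}(\underline{1^d})$. Each $\bigwedge^{a}\vecrep[\C]$ is the image of the idempotent $\frac1{a!}\,\mathrm{split}\circ\mathrm{merge}$ on $\vecrep[\C]^{\otimes a}$, using explosion \cref{explosion}, where $\Qint{a}!$ specialises to $\pm a!\neq0$. So any $\classicalFixPointSubalg$-map $\functorWebB_\C(\underline a)\to\functorWebB_\C(\underline b)$ extends by these idempotents to an endomorphism of $\vecrep[\C]^{\otimes d}$, which lifts to $\C[W_d]$. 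We then sandwich it between $\splitNotation(\underline a)$ and $\mergeNotation(\underline b)$, rescaled. Hom spaces between objects of different total size $d\ne d'$ vanish on both sides, since the element $\sum_p\hat{\matrixElmtFont d}_p$, the identity matrix, acts by the scalar $d$. Since $\polyRepCat$ consists of summands of sums of $\vecrep[\C]^{\otimes d}$, fullness onto its full image suffices. Should ``full'' be intended to include idempotent completion, one passes to $\Kar$.

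The main point is monoidality. By \cref{\webQpar=-1WebB-is-Monoidal}, horizontal juxtaposition is a strict monoidal structure on $\webCategoryB_\C$, with a dot on the strand in position $j$ interpreted via \cref{def-thick-dots}, that is, as the Jucys--Murphy element $J_j$. We must show $\functorWebB_\C(f\otimes g)=\functorWebB_\C(f)\otimes\functorWebB_\C(g)$. On type $A$ webs this holds by the type $A$ functor, so it reduces to the case where $g$ is a dot on a strand. For a thin strand in position $j$, the image of $J_j=s_{j-1}\cdots s_1 m_1 s_1\cdots s_{j-1}$ is $P\cdots P(\kmatrix\otimes1\cdots)P\cdots P=1^{\otimes j-1}\otimes\kmatrix\otimes\cdots$, namely $\kmatrix$ on the $j$-th factor. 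Here it is essential that the crossings are honest flips at $q=1$; this is where the argument fails for generic $q$. For a thick dot of thickness $a$, defined via explosion with a dot on one thin strand, the image is $\frac{1}{a!}\,\mathrm{merge}\circ(\kmatrix\otimes\id)\circ\mathrm{split}$ up to the normalisation, applied to that tensor factor alone. This is exactly $\functorWebB_\C$ of the dot tensored with identities. The obstacle I expect is bookkeeping the signs and scalars from $\webQpar=-1$ in the split/merge formulas and in the thick dot's definition, to confirm that the tensor-factor actions match on the nose. I would check this on the basis $\monWedge_\beta$ using \cref{merge1nFormula}.
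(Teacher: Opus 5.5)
Your proposal is correct and follows essentially the paper's proof. Fullness comes from the same argument as in \cref{rep-webB} using the classical Schur--Weyl duality \cref{SW-duality-q=1}, and monoidality comes from reducing to a dot on a thin strand and noting that at $q=1$ all crossings become honest flips, so a dot in position $j$ acts by $\kmatrix$ on the $j$-th tensor factor. Two small corrections: first, the thick dot in \cref{def-thick-dots} is just $\mathrm{merge}\circ(\text{thin dot}\otimes\id)\circ\mathrm{split}$ through $a\to(1,a-1)\to a$ with no $\frac{1}{a!}$ factor, so once crossings are flips both sides of the thick case are images of the same web and the sign/scalar bookkeeping you anticipate is unnecessary; second, for $\linearGraph_r'$ the sum $\sum_{p\geq 0}\hat{\matrixElmtFont{d}}_p$ is not the identity (since $\hat{\matrixElmtFont{d}}_0=2\lgl_0$), but the identity matrix itself lies in $\classicalFixPointSubalg$, so your degree argument stands.
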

\begin{proof}
	That $\functorWebB_\C$ is a full functor follows from the same argument as \cref{rep-webB}, where one uses the Schur-Weyl duality \cref{SW-duality-q=1}. 
	To check that $\functorWebB_\C$ is indeed monoidal, we need to show that for any $\webnotation\in\webCategoryB_\C$, we have $\functorWebB_\C (\webnotation \otimes \dotNotationText ) = \functorWebB_\C(\webnotation) \otimes \functorWebB_\C(\dotNotationText)$. By \cref{def-thick-dots}, we may assume that $\dotNotationText$ is on a thin strand.
	Since the crossing in $\webCategoryA_\C $ is sent to swapping tensor factors, $\functorWebB_\C (\webnotation \otimes \dotNotationText )$ is just applying $\functorWebB_\C (\webnotation)$ and the $\kmatrix_k$ from \cref{K-mat-on-k} on the corresponding tensor factors respectively, which is equal to $\functorWebB_\C(\webnotation) \otimes \functorWebB_\C(\dotNotationText)$. This shows that $\functorWebB_\C$ is monoidal.
\end{proof}

\begin{lem}\label{kar-env-WebB}
	Let $\Kar(\webCategoryB_\C)$ be the Karoubian envelope of~ $\webCategoryB_\C$. The following hold:
	\begin{enumerate}
		\item Let $\linearGraph\in\{\linearGraph_r,\linearGraph_r'\}$. The functor $\functorWebB_\C$  from \cref{classical-webB-action-ss} extends to a full $\C$-linear additive monoidal functor $\widehat{\functorWebB_\C}:\Kar(\webCategoryB_\C)\to  \polyRepCat$.
		\item Let $\linearGraph=\linearGraph_\infty$. Then $\widehat{\functorWebB_\C}$ is a $\C$-linear additive monoidal equivalence $\Kar(\webCategoryB_\C)\cong \polyRepCat$. In particular, $\Kar(\webCategoryB_\C)$ is a semisimple monoidal Abelian category with simple objects indexed by bipartitions.
	\end{enumerate}  
\end{lem}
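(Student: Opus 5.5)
For (a), I would use the universal property of the Karoubian (idempotent completion) envelope. The target $\polyRepCat$ is additive and idempotent complete: it is defined as the full subcategory of $\classicalFixPointSubalg$-modules closed under direct summands of finite direct sums of tensor powers of $\vecrep[\C]$. Hence the full monoidal functor $\functorWebB_\C$ of \cref{classical-webB-action-ss} extends, uniquely up to isomorphism, to an additive functor $\widehat{\functorWebB_\C}$ on $\Kar(\webCategoryB_\C)$. This extension sends a pair $(\underline a,e)$ to the image $\functorWebB_\C(e)\functorWebB_\C(\underline a)$.

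Fullness of the extension follows from fullness of $\functorWebB_\C$. A morphism between $(\underline a,e)$ and $(\underline b,f)$ in the target is a module map $g$ with $g=\functorWebB_\C(f)\, g\, \functorWebB_\C(e)$. Lift $g$ to some web $w$; then $f w e$ is a lift in the envelope. The monoidal structure also extends, by $(\underline a,e)\otimes(\underline b,f)=(\underline a\underline b,e\otimes f)$, since $e\otimes f$ is again idempotent. Monoidality of $\widehat{\functorWebB_\C}$ is inherited from that of $\functorWebB_\C$. Finally, essential surjectivity onto $\polyRepCat$ holds because every object is a summand of some $\vecrep[\C]^{\otimes d}=\functorWebB_\C(\underline{1^d})$, cut out by an idempotent in $\End_{\classicalFixPointSubalg}(\vecrep[\C]^{\otimes d})$, which lifts to an idempotent web by fullness.

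For (b), with $\linearGraph=\linearGraph_\infty$, it remains to prove faithfulness. I would reduce to the endomorphism algebras of thin objects. By explosion \cref{explosion}, with quantum binomials specialised to ordinary binomials (invertible in $\C$), every object $\underline a$ is a direct summand of $\underline{1^d}$ in $\webCategoryB_\C$. So it suffices that $\End_{\webCategoryB_\C}(\underline{1^d})\to\End(\vecrep[\C]^{\otimes d})$ is injective for every $d$. The source is $\C\otimes_\groundring\End_{\webCategoryB}(\underline{1^d})$, which is a quotient of $\C\otimes\heckeSpecialized{d}\cong\C[W_d]$ via the dot/$H_0$ identification. By \cref{SW-duality-q=1}, $\C[W_d]$ acts faithfully because $\dim\vecrep[\C]=\infty\ge d$. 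Therefore the map is injective, and the source is in fact exactly $\C[W_d]$.

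Semisimplicity then follows. Each $\End(\underline{1^d})\cong\C[W_d]$ is semisimple, and the envelope is equivalent to $\polyRepCat$, which is semisimple (the classical analogue of \cref{Cor:ss}, or the double centraliser theorem). The simple objects correspond to the simple summands of the $\vecrep[\C]^{\otimes d}$ over all $d$. By double centraliser these are in bijection with the irreducible $W_d$-modules occurring, which for infinite rank is all of them, i.e.\ bipartitions of $d$ (\cref{specht-module}). Distinct $d$ give non-isomorphic simples, by considering the grading and the action of the central element $\sum_p\hat{\lgl}_p$.

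I expect the main obstacle to be a subtlety in the base change $\C\otimes_\groundring$. Freeness of $\End_{\webCategoryB}(\underline{1^d})$ over $\groundring$ is not yet established, so I would argue directly with the surjection $\C[W_d]\twoheadrightarrow \End_{\webCategoryB_\C}(\underline{1^d})$ followed by the faithful action, as above. This avoids needing a basis theorem for $\webCategoryB$.
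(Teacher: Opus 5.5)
Your argument is correct. Part (a) uses the same universal-property extension as the paper, and you also check fullness and monoidality, which the paper leaves implicit. Part (b) takes a different route.

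\textbf{The paper's route for (b).} It gets faithfulness in one line from \cref{SW-duality-q=1}. It then proves semisimplicity and the bipartition labelling on the module side. Every simple polynomial module is already simple over some finite-rank $\classicalFixPointSubalg[\linearGraph_N]\cong\gl_{N+1}\times\gl_{N+1}$, so its highest weight is a bipartition. Every module then splits, using a Jordan--H\"older series and complete reducibility in finite rank.

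\textbf{Your route for (b).} You stay on the web side throughout. Explosion makes each $\underline a$ a retract of $\underline{1^{|\underline a|}}$. Composing the surjection $\C[W_d]\twoheadrightarrow\End_{\webCategoryB_\C}(\underline{1^d})$ with the faithful $W_d$-action gives faithfulness and $\End_{\webCategoryB_\C}(\underline{1^d})\cong\C[W_d]$. Semisimplicity and the labelling then follow intrinsically from the irreducibles of $\C[W_d]$.

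\textbf{Comparison.} Your route is more self-contained and sidesteps the infinite-dimensionality of $\vecrep[\C]$ for $\linearGraph_\infty$. The paper's description as ``finite dimensional'' modules and its Jordan--H\"older step need care there. Your route also yields $\End_{\webCategoryB}(\underline{1^d})\otimes_\groundring\C=\C[W_d]$ without the later basis theorem, and the proof of \cref{basis-typeB-Web} in fact relies on this identification. The paper's route gives more in one respect: the simple objects are genuinely simple $\classicalFixPointSubalg$-modules with explicit highest weights. Yours gives semisimplicity of the category intrinsically, which is all the statement asks for.

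\textbf{Small slips.}
\begin{itemize}
\item $\sum_p\hat{\lgl}_p$ is an infinite sum and is not an element of $\classicalFixPointSubalg[\linearGraph_\infty]$. You do not need it: webs preserve total thickness, so Hom-spaces between different $d$ vanish.
\item Lifting an idempotent ``by fullness'' needs more than fullness in general. It works here because the endomorphism algebras are finite-dimensional, and in (b) it is automatic from faithfulness.
\item The appeal to the classical analogue of \cref{Cor:ss} does not transfer verbatim to infinite rank. What actually carries that step is the semisimplicity of $\C[W_d]$, together with every object being a retract of a sum of the $\underline{1^d}$.
\end{itemize}
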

\begin{proof}
	Since  $\polyRepCat$ is Abelian and monoidal with $\Fund|_{\classicalFixPointSubalg}$ as an additive monoidal subcategory, there exists a unique extension $\Kar(\webCategoryB_\C)\to \polyRepCat$.
	Now let $\linearGraph=\linearGraph_\infty$.
	By \cref{SW-duality-q=1}, in this case the functor $\functorWebB_\C $ is also faithful, hence so is $\widehat{\functorWebB_\C}$. Then $\Kar(\webCategoryB)$ is equivalent to its essential image, which is the Karoubian envelope of $\Fund|_{\classicalFixPointSubalg}$, i.e. the category of finite dimensional $\classicalFixPointSubalg$-modules. 
	Let $L$ be a finite dimensional simple $\classicalFixPointSubalg$-module,
	which is equivalent to the action map $U(\classicalFixPointSubalg)\to \End_\C(L)$ being surjective. Then there exists $N\in \N$ such that $U(\classicalFixPointSubalg[N])\to \End_\C(L)$ is surjective and hence $L$ is already irreducible over $\classicalFixPointSubalg[\linearGraph_N]$, and hence its highest weight is a bipartition of at most $N$ rows. Now Let $V$ be a finite dimensional $\classicalFixPointSubalg$-module, and take a Jordan--H\"older series of $V$. There is $N\in \N$ such that each simple factor is already simple over $\classicalFixPointSubalg[\linearGraph_N]$, hence $V$ splits into simple summands. 
	Pulling back via the equivalence $\widehat{\functorWebB_\C}$ concludes the proof.
\end{proof}

\subsection{Some relations in $\webCategoryB$}
We deduce now some, probably non-obvious, relations from the defining relations. We will later apply these relations to decompose anti-spherical Hecke modules.
\begin{lem}\label{local-relations}
	The following \emph{local} relations hold in $\webCategoryB$:
	\begin{align}\label{commuting-relations-web}

	\end{align}
	\begin{align}\label{eqn:crossing absoption dot left thick left}
		%
	\end{align}	
\end{lem}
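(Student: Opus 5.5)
Since the displayed relations \cref{commuting-relations-web} and \cref{eqn:crossing absoption dot left thick left} involve dots on strands of arbitrary thickness, I would first reduce everything to thin strands. Every thick dot is by \cref{def-thick-dots} a composite of a split $a\to(1,a-1)$, a thin dot on the leftmost thin strand, and a merge back to $a$. The explosion relation \cref{explosion} expresses $\id_a$ as $\Qbinom{a}{b}^{-1}$ times split followed by merge, and quantum binomials are invertible in $\groundring$. So each side of each relation can be written as $\mergeNotation(\underline a)\circ f\circ \splitNotation(\underline b)$ for some $f\in\End_{\webCategoryB}(\underline{1^d})$. This is exactly the reduction used in the proof of \cref{webA-basis}. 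It therefore suffices to check that the two corresponding thin-strand morphisms agree after precomposing with merges and postcomposing with splits.

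For the commuting relations I would proceed in three steps.
\begin{enumerate}
\item Move the thin dot defining a thick dot past neighbouring trivalent vertices and crossings. For this I use the reflection equation \cref{reflection-equation}, naturality of the type $A$ braiding, and the already established \cref{dot-commute-with-far-away-web}.
\item Whenever a thin strand is absorbed into, or emitted from, a thick strand through a crossing, replace the crossing by a power of $\webQpar$ via the crossing absorption relations \cref{crossing-absorption}.
\item Use associativity and coassociativity \cref{assoc-coassoc} to regroup the thin strands. The resulting webs then visibly coincide on both sides.
\end{enumerate}
For the dot-on-the-left crossing absorption, I would start from \cref{useful}. This rewrites the ``$a$ and $1$ merge-then-split'' web as a crossing plus $\webQpar^{-a}$ times the identity. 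I then place the dot on the left thick strand, slide it using the first step above, and read off the coefficient.

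As a safeguard, I would also verify each identity inside $\End_{\webCategoryB}(\underline{1^d})\cong\heckeSpecialized{d}$. Under this isomorphism thin dots become Jucys--Murphy type elements $J_i$, since the thick dot corresponds to $J_1$ conjugated by split and merge. The relations then reduce to the standard Hecke identities $H_iJ_iH_i=J_{i+1}$, $[H_i,J_j]=0$ for $j\ne i,i+1$, and $[J_i,J_j]=0$. Since the thin-strand reduction lands in $\End(\underline{1^d})$, this is a legitimate proof and not just a consistency check.

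The main obstacle I expect is bookkeeping. In step~2 one must track the exact power of $\webQpar$ contributed by each crossing absorption, and decide whether the over- or under-crossing appears once the dot has been slid across. Getting a sign or exponent wrong there breaks the identity. To control this, I would first treat the case $a=2$ explicitly in $\heckeSpecialized{2}$, where everything is a $2\times 2$ computation, and then induct on the thickness using coassociativity.
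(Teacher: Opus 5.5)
Your plan matches the paper, which leaves this lemma to the reader as a direct unwinding of the definitions in \cref{def-thick-dots} (split--dot--merge, and pulling a dot to the leftmost strand through crossings) using the reflection equation, naturality of crossings and crossing absorption. Your Hecke-algebra check in $\End_{\webCategoryB}(\underline{1^d})\cong\heckeSpecialized{d}$ is also sound, since by explosion merge after split is a unit multiple of the identity over $\groundring$. One caution: because the relations are claimed to be \emph{local}, a dot with further strands to its left becomes some $J_{m+i}$. You may then use only the index-independent identities you list, and never $J^2=1$, which holds only on the leftmost strand.
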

The proof is just unwinding definitions and left to the reader. We next use these relations to derive the following $\webQpar$-(co)derivation formulas:
\begin{prop}
	The following $\webQpar$-(co)derivation relations hold \emph{locally} in $\webCategoryB$:
	\begin{align}\label{q-derivation}
		%
		\label{many-strand-derivation}
	\end{align}
	where the unlabeled strands in the last two equalities are thin.
\end{prop}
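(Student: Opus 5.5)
The approach is induction on the thickness of the strands meeting at the merge (dually, at the split). First I would reduce the thick dot to the thin dot, using its definition \cref{def-thick-dots}: a dot on a strand of thickness $a$ is the thin dot on a split-off thin strand, followed by the merge back to $a$. Since the exact shape of the identities in \cref{q-derivation} is not fixed here, I describe the plan for the expected form: a dot on the thick strand $a+b$ above a merge of $a$ and $b$ equals
\begin{itemize}
\item the dot pulled onto the strand $a$ before merging, plus
\item a power of $\webQpar$ times the dot pulled onto the strand $b$, dressed by the appropriate crossings.
\end{itemize}
This is the $\webQpar$-analogue of the fact that $\kmatrix_k$ in \cref{K-mat-on-k} acts as a derivation on $\bigwedge^k\C^n$.

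The key steps, in order, are as follows.
\begin{enumerate}
\item Rewrite the dot on $a+b$ as a split $(a+b)\to(1,a+b-1)$, a thin dot, and a merge.
\item Use coassociativity \cref{assoc-coassoc} to insert this split through the merge of $a$ and $b$. By the $a+b=(1+(a-1))+b$ decomposition, the split-then-merge of $1$ and $a+b-1$ over a merge of $a,b$ expands via the square-switch \cref{square-switch}, or equivalently \cref{useful}, into two kinds of terms:
\begin{itemize}
\item one where the thin strand is taken from $a$;
\item one where it is taken from $b$ and crosses over the remaining $a$-strand.
\end{itemize}
\item In the second kind of term, use the crossing absorption \cref{crossing-absorption} to absorb crossings into trivalent vertices, producing the factor $\webQpar^{a}$ (or $\webQpar^{a-1}$). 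Use the local relations of \cref{local-relations} to move the thin dot past the thick strand it must pass. \cref{dot-commute-with-far-away-web} ensures that dots on other strands and far-away webs do not interfere.
\item Identify each resulting term with a thick dot on $a$, respectively on $b$, by reading \cref{def-thick-dots} backwards. The explosion relation \cref{explosion} removes the leftover bigons; this is legitimate because the binomials $\Qbinom{a}{b}$ are invertible in $\groundring$.
\end{enumerate}

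The many-strand formula \cref{many-strand-derivation} then follows by iterating the two-strand formula along a full merge $\mergeNotation(\underline{1^k})$, using associativity and induction on $k$. Each summand is a dot on the $j$-th thin strand, that is, the Jucys--Murphy element $J_j$ under $\End_{\webCategoryB}(\underline{1^d})\cong\heckeSpecialized{d}$, with an accumulated power of $\webQpar$.

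As an independent check, I would also verify the identities on $\underline{1^d}$ inside $\heckeSpecialized{d}$. Both sides are elements of the Hecke algebra sandwiched between splits and merges, and the explosion embedding $\hom_{\webCategoryB}(\underline a,\underline b)\hookrightarrow\End_{\webCategoryB}(\underline{1^d})$ (argued as in \cref{webA-basis}) reduces the claim to an identity in $\heckeSpecialized{d}$. That identity can be checked using $J_{i+1}=H_iJ_iH_i$ and the symmetrizer absorption $H_i e = \webQpar^{\pm1}e$.

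The main obstacle I expect is step 3: keeping track of the exact power of $\webQpar$ and of over- versus under-crossings when the thin dotted strand is moved across the thick $a$-strand. Dots do not slide through crossings, so the crossing convention matters. I would first settle the case $a=b=1$ directly from the reflection equation \cref{reflection-equation} and \cref{eqn: Hecke generators}, and then induct on $a$ using \cref{useful}.
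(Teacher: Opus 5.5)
Your plan is essentially the paper's proof: induction on the thickness $a$, with the thin case $a=1$ obtained in one step from the dot--crossing absorption relation \cref{eqn:crossing absoption dot left thick left} of \cref{local-relations} (producing a factor $\webQpar^{-1}$), and the step $a>1$ obtained by multiplying by $\Qint{a}$ via explosion, rewriting with the induction hypothesis, and recombining coefficients through $\webQpar^{1-a}+\webQpar\Qint{a-1}=\Qint{a}$ before cancelling the invertible $\Qint{a}$. The many-strand version then follows by iteration as you describe, so your Hecke-algebra sandwich check is a valid but unnecessary safeguard.
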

\begin{proof}
	We only prove \cref{q-derivation}, then \cref{many-strand-derivation} follows easily.
	First we show the $a=1$ case of \cref{q-derivation}:
	\begin{align*}
		\hspace{-0.5cm}
		%
		\;\overset{\cref{eqn:crossing absoption dot left thick left}}{=}\;
		\webQpar\inv
		%
	\end{align*}
	as desired. Next we prove the case $a>1$ inductively (i.h.= induction hypothesis):
	\begin{align*}
		\Qint{a}
		%
	\end{align*}
	Now the equality $\webQpar^{1-a}+\webQpar\Qint{a-1}=\Qint{a}$ concludes the induction. This proves the first identity of \cref{q-derivation}, the second is similar.
\end{proof}

In what follows we write $\eta = \webQpar-\webQpar^{-1}$.
\begin{lem}
The following \emph{non-local} identity holds, where the top strands are all thin.
\begin{align}\label{mpi-formula}
	%
	\;(i\geq 2).
\end{align}
\end{lem}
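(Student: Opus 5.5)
The plan is to argue by induction on $i\geq 2$, working throughout in $\End_{\webCategoryB}(\underline{1^d})\cong\heckeSpecialized{d}$, where the top strands are thin. Under this identification the dot on the $i$-th thin strand is the Jucys--Murphy element $J_i$ (see the discussion after \cref{def-thick-dots}). The first step is to rewrite the left-hand side of \cref{mpi-formula} as the composite $H_{i-1}J_{i-1}H_{i-1}$, by unwinding \cref{def-thick-dots}: the dot on strand $i$ is the dot on strand $1$ conjugated by over-crossings. Since all top strands are thin, any thick strands at the bottom can be dealt with afterwards. One attaches the merges $\mergeNotation(\underline{b})$, and then uses the explosion relation \cref{explosion} (quantum integers are invertible in $\groundring$) together with the absorption relations of \cref{lem:crossing absorption} to pass from thin strands to the merged picture.

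The second step handles the non-locality, which comes from the crossings that carry the dot across the strands $2,\dots,i-1$. I would split one over-crossing $H_{i-1}$ using the skein relation \cref{eqn: Hecke generators}. Equivalently, use the quadratic relation $H_{i-1}=H_{i-1}^{-1}+(q\inv-q)$, rewritten with $\eta=\webQpar-\webQpar\inv$ for the chosen value of $\webQpar$. This gives
\[
H_{i-1}J_{i-1}H_{i-1}=H_{i-1}J_{i-1}H_{i-1}^{-1}+(\text{sign})\,\eta\, J_{i-1}H_{i-1}\quad\text{or}\quad J_{i-1}\text{ times a merge--split web}.
\]
The induction hypothesis is then applied to $J_{i-1}$, expanding it into the terms of \cref{mpi-formula} for $i-1$. \cref{dot-commute-with-far-away-web} lets dots commute with webs acting on other strands, and the $\webQpar$-(co)derivation formulas \cref{q-derivation}, \cref{many-strand-derivation} move the dot across the merge vertex. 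The base case $i=2$ is the case $a=1$ of \cref{q-derivation}, together with one application of the reflection equation \cref{reflection-equation}.

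The third step is bookkeeping. One collects the resulting sum of webs of the form $\splitNotation\circ\webnotation(\underline{\elmntSn})\circ(\text{dot})\circ\mergeNotation$, and compares coefficients using the identity $\webQpar^{1-a}+\webQpar\Qint{a-1}=\Qint{a}$ and its companions.

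I expect the main obstacle to be this bookkeeping in the induction step. The dots do not slide through crossings, so every move of a dot past a crossing produces an $\eta$-correction term, and these must telescope into the closed form of \cref{mpi-formula}. If the diagrammatic matching becomes unwieldy, I would fall back on a faithfulness argument. For $\linearGraph$ large, \cref{Schur-Weyl-quantum} gives that $\heckeSpecialized{d}$ acts faithfully, so it suffices to check the identity after applying $\functorWebB$, on the standard basis vectors $\basis_{p_1}\otimes\cdots\otimes\basis_{p_d}$. There $\kmatrix$ and the $R$-matrix \cref{eqn:R-matrix generic form} are explicit, and the verification becomes a finite case analysis on the relative order of the indices $p_1,\dots,p_d$ and of their reflections $\foldVertex{p_j}$.
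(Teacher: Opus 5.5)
There is a genuine gap. The proposal never says what the two sides of \cref{mpi-formula} are. As a result, the induction step and the ``bookkeeping'' of your third paragraph are placeholders rather than an argument, and that is where the whole content of the lemma lies: showing how the correction terms assemble into the sum $\webQpar\eta\sum_{1\leq j<i}(\cdots)$.

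The one explicit computation is also wrong. From $H_{i-1}=H_{i-1}^{-1}+\eta$ one gets $H_{i-1}J_{i-1}H_{i-1}=H_{i-1}J_{i-1}H_{i-1}^{-1}+\eta\,H_{i-1}J_{i-1}$, or $H_{i-1}^{-1}J_{i-1}H_{i-1}+\eta\,J_{i-1}H_{i-1}$ if you expand the other factor. Moreover $H_{i-1}J_{i-1}\neq\pm J_{i-1}H_{i-1}$; already $H_1H_0\neq H_0H_1$.

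More importantly, the skein expansion targets the wrong crossing. The lower crossing of $J_i=H_{i-1}J_{i-1}H_{i-1}$ is simply absorbed, with a power of $\webQpar$, into the split from which the thin strands emerge (\cref{crossing-absorption}). What remains is the dot on strand $i-1$ with the crossing $H_{i-1}$ above it. That crossing can neither be absorbed, because the dot is in the way, nor slid through the dot. This is exactly where the non-locality lives, and none of the tools you list for the inductive step addresses it:
\begin{itemize}
\item \cref{dot-commute-with-far-away-web} moves dots past webs on other strands;
\item \cref{q-derivation}, as you use it, moves a dot across a merge vertex;
\item \cref{lem:crossing absorption} contains no dots.
\end{itemize}
So the induction on $i$ has no mechanism for its inductive step.

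The paper inducts on $k$ instead of $i$. The base case $k=2$ is the direct computation \cref{base-case-mpi}, which is reused for \cref{symfunc-relation}. The passage from $k-1$ to $k$ uses the crossing-absorption relation for a dotted thick left strand, \cref{eqn:crossing absoption dot left thick left}, which is precisely a rule for a crossing next to a dotted strand. Combined with \cref{crossing-absorption} and the induction hypothesis, this produces the terms $\webQpar\eta\sum_{1\leq j<i}(\cdots)$.

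Your fallback is a sound method in principle; it is how the paper proves \cref{SURPRISE-SURPRISE}. The hom-spaces are $\groundring$-free by \cref{basis-typeB-Web}, they embed into $\End(\underline{1^d})$ over $\funcField$ by explosion, and $\functorWebB$ is injective there once $\dim\vecrep\geq d$ by \cref{Schur-Weyl-quantum}. However, it is not a ``finite case analysis'': $k$ and $i$ are arbitrary. You would need a closed formula for the non-local dot acting on $\vecrep^{\otimes k}$ through the $R$-matrices \cref{eqn:R-matrix generic form}, and that needs an induction of its own. As it stands, neither route is actually carried out.
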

\begin{proof}
We prove by induction on $k$. For the base case $k=2$, one calculates
\begin{align}\label{base-case-mpi}
	%
\end{align}
Now we assume the claim holds for $k-1$ and prove it for $k$. We have
\begin{align*}
	%
	\\
	&\overset{\cref{eqn:crossing absoption dot left thick left},\cref{crossing-absorption}}{=}
	\webQpar\eta\;
	\sum_{1\leq j<i}
	%
\end{align*}
where i.h. stands for induction hypothesis. This finishes the proof.
\end{proof}

Recall \cref{commuting-relations-web} that $\dotNotationText[1]\dotNotationText[1]\ldots \dotNotationText[1]$ commutes with the crossings and hence lie in the center of $\heckeSpecialized{d}$. We compute the product of this central element with $\dotNotationText$ composed with $\splitNotation(k)$. 
\begin{thm}\label{symfunc-relation}
The following \emph{non-local} relation holds in $\webCategoryB$:
\begin{align}
	%
	,
\end{align}
where all unlabeled strands are thin.
\end{thm}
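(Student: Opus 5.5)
The plan is to prove the relation by induction on $k$, the number of thin strands emanating from $\splitNotation(k)$. Throughout, I work in $\End_{\webCategoryB}(\underline{1^k})\cong\heckeSpecialized{k}$ precomposed with $\splitNotation(k)$. Under this identification the dot on the $i$-th thin strand is the Jucys--Murphy element $J_i$. The central element $\dotNotationText[1]\cdots\dotNotationText[1]$ is then the product $J_1\cdots J_k$, and the thick dot on the $k$-strand, pulled through $\splitNotation(k)$, becomes by \cref{def-thick-dots} a sum over the positions of $J_i$ with the appropriate powers of $\webQpar$ from the crossing absorption relations \cref{crossing-absorption}.

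For $k=1$ the relation reduces to the defining relation that two dots on a thin strand cancel, so the base case is immediate. For the induction step, I would first split $\splitNotation(k)$ as $\splitNotation(k-1)$ on the left followed by a split off of one thin strand on the right, using coassociativity \cref{assoc-coassoc}. Next I would write the central element as $(J_1\cdots J_{k-1})\cdot J_k$. By \cref{dot-commute-with-far-away-web}, the factor $J_1\cdots J_{k-1}$ commutes with everything supported on the first $k-1$ strands, which lets me invoke the induction hypothesis on the left block. What remains is the factor $J_k$ on the last strand interacting with the thick dot. I would expand this term by term using the non-local identity \cref{mpi-formula}, together with the $\webQpar$-derivation formulas \cref{q-derivation} and \cref{many-strand-derivation}, which express a dot passing through a thick merge or split as the dot on the thin part plus $\eta$ times correction terms.

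After this expansion, the correction terms carry factors $\webQpar\eta$ summed over $1\le j<i$ as in \cref{mpi-formula}. They should telescope against the terms coming from the induction hypothesis, leaving coefficients that combine via quantum integer identities such as $\webQpar^{1-a}+\webQpar\Qint{a-1}=\Qint{a}$. I expect this bookkeeping of the non-local sums to be the main obstacle: one must track exactly which strands carry dots and collect the $\webQpar$-powers correctly.

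As a safeguard, I would also prepare an independent verification of the final coefficients. Hom spaces embed into $\End_{\webCategoryB}(\underline{1^k})$ by explosion \cref{explosion}. Moreover, by \cref{Schur-Weyl-quantum}, $\functorWebB$ is faithful on $\heckeSpecialized{k}$ once $\dim\vecrep\ge k$. It therefore suffices to check the identity after applying $\functorWebB$ for $\linearGraph=\linearGraph_r$ with $r$ large, evaluating both sides on standard wedges $\monWedge_\beta$ via the formulas of \cref{webA-functor} and $\functorWebB(\dotNotationText[1])=\kmatrix$. In that setting the central element $J_1\cdots J_k$ acts by explicit scalars on Specht modules (\cref{specht-module}), which pins down the coefficients.
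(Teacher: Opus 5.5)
Your outline is essentially the paper's proof: an induction on $k$ inside $\End_{\webCategoryB}(\underline{1^k})\cong\heckeSpecialized{k}$, using the definition of the thick dot, the commuting relations for dots, \cref{mpi-formula} (with \cref{base-case-mpi} as the $k=2$ base), and a telescoping $\webQpar$-power identity, which in the paper is $\sum_{i<j\le k}\webQpar^{2j-3}\eta+\webQpar^{2i-2}=\webQpar^{2k-2}$. In your one-strand-at-a-time form the bookkeeping does close: the thick dot on $k$ followed by splitting off the last thin strand equals $\webQpar^{-1}$ times the thick dot on $k-1$ plus $\webQpar^{1-k}$ times the dot on the last strand; the induction hypothesis absorbs the first piece; and the second piece $\webQpar^{1-k}\,\splitNotation(k)\cdot J_1\cdots J_{k-1}J_k^2$ becomes exactly the missing terms via $\splitNotation(k)\cdot J_k^2=\webQpar^{2k-2}\splitNotation(k)+\webQpar^{2k-3}\eta\sum_{j<k}\splitNotation(k)\cdot J_kJ_j^{-1}$ and $\webQpar^{k-3}+\webQpar^{k-2}\eta=\webQpar^{k-1}$, whereas your Specht-module ``safeguard'' should be dropped, since it presupposes knowing which Specht modules occur in the image of $\splitNotation(k)$, and the paper derives exactly that (\cref{liver-coideal-thm}, \cref{beef-liver-thm}) from this relation.
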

\begin{proof}
We do induction on $k$. The first equation is clear for $k=1$ and holds by \cref{base-case-mpi} for $k=2$ which is the base of induction. By definition, \cref{commuting-relations-web}, and induction we then have
\begin{align*}
	%
\end{align*}
This proves the first equation. 
To show the second one, we use the first one to compute
\begin{align*}
	%
\end{align*}
Finally, one checks $\sum_{i<j\leq k}\webQpar^{2j-3}\eta + \webQpar^{2i-2} = \webQpar^{2k-2}$ ($\forall i<k$) and finishes the proof.
\end{proof}

\begin{cor}\label{ippencafe}
\tikzset{
    dot/.style={circle, draw=black, fill=black!80, inner sep=1.2pt}
}
	The following relations hold \emph{on the leftmost strand} in $\webCategoryB$:
	\begin{align}
%
\label{bells}
\end{align}
where all stands in the middle are thin.
\end{cor}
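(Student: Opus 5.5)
The diagrams of \cref{bells} did not survive in the source. I therefore read \cref{ippencafe} as the specialisation of \cref{symfunc-relation} to the leftmost strand. Concretely, it expresses the product of the central element $\dotNotationText[1]\cdots\dotNotationText[1]$, or the thick dot $\dotNotationText$ on the leftmost thick strand composed with $\splitNotation(k)$, as an explicit $\webQpar$-linear combination of thin diagrams with a single dot on one thin strand.

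The plan is to derive both equalities of \cref{bells} directly from the two equations of \cref{symfunc-relation}, without a new induction. On the leftmost strand, the dot on a thick strand of thickness $a$ is by \cref{def-thick-dots} simply the generator $\dotNotationText[1]$ on the first thin strand after exploding. No crossing with strands further to the left is needed, so the "wrapping" part of the definition of the thick dot disappears. Thus the first step is to rewrite the left-hand side of each relation in \cref{bells} using \cref{def-thick-dots}. Here I take the leftmost position, so the strand sequence $a_1,\dots,a_r$ is empty.

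Second, I apply \cref{symfunc-relation}. Its non-local terms involve dots that are dragged around the strands to the left; in the leftmost case they become honest dots on the leftmost thin strands. The terms can then be slid freely using \cref{dot-commute-with-far-away-web}, and recombined with the merges by \cref{crossing-absorption} and the explosion relation \cref{explosion}. The coefficient bookkeeping uses the identity $\sum_{i<j\leq k}\webQpar^{2j-3}\eta+\webQpar^{2i-2}=\webQpar^{2k-2}$ already checked in that proof, together with $\webQpar^{1-a}+\webQpar\Qint{a-1}=\Qint{a}$ from the derivation formula \cref{q-derivation}.

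Third, for the relation involving the thick dot composed with a merge, I would precompose with $\mergeNotation(k)$ and use that the thin square switch makes $\mergeNotation\circ\splitNotation=\Qint{k}!\,\id$. The quantum factorial is invertible in $\groundring$, which lets me cancel and transport the identity from thin to thick strands.

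The main obstacle is the second step: correctly matching the non-local summands of \cref{symfunc-relation}, where crossings go over or under the thin strands, with local diagrams on the leftmost strand. A wrong over/under choice changes the powers of $\webQpar$ via \cref{useful}. If the diagram chase becomes unwieldy, my fallback is to verify each identity after applying $\functorWebB$ with $\linearGraph=\linearGraph_\infty$, i.e.\ inside $\heckeSpecialized{d}$. There the dots are Jucys--Murphy elements and the equalities reduce to the recursion $J_i=H_{i-1}J_{i-1}H_{i-1}$. This suffices because $\End_{\webCategoryB}(\underline{1^d})\cong\heckeSpecialized{d}$ is faithful on thin strands, and explosion embeds the thick hom spaces into the thin ones.
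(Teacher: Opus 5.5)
\emph{Verdict: there is a gap.} Your starting point matches the paper, which proves \cref{ippencafe} by composing the two identities of \cref{symfunc-relation} with $\mergeNotation(k)$ on top. What is missing is the step that actually produces \cref{bells}. After capping, the right-hand side is a sum of diagrams consisting (composition written bottom to top) of $\splitNotation(k)$, dots on all but one of the thin strands, and $\mergeNotation(k)$. For the second identity these are the summands of $\webQpar^{k-1}\symFuncElmtry_{k-1}(J_1,\dots,J_k)$; cf.\ the use of \cref{symfunc-relation} in the proof of \cref{liver-coideal-thm}. The paper uses the first identity of \cref{commuting-relations-web} to see that these summands are all multiples of one diagram, and then simply adds the coefficients.

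In Hecke-algebra terms, write $\widehat{J}_j=\prod_{l\neq j}J_l$. Three facts are needed: $J_j=H_j^{-1}J_{j+1}H_j^{-1}$; $H_j$ commutes with $J_l$ for $l\notin\{j,j+1\}$; and crossings are absorbed by split and merge, $\splitNotation(k)H_j=\webQpar\,\splitNotation(k)$ and $H_j\mergeNotation(k)=\webQpar\,\mergeNotation(k)$. Together they give $\splitNotation(k)\,\widehat{J}_j\,\mergeNotation(k)=\webQpar^{2}\,\splitNotation(k)\,\widehat{J}_{j+1}\,\mergeNotation(k)$. Hence $\sum_j\splitNotation(k)\,\widehat{J}_j\,\mergeNotation(k)=\webQpar^{k-1}\Qint{k}\,\splitNotation(k)\,\widehat{J}_k\,\mergeNotation(k)$. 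Your plan mentions \cref{crossing-absorption}, but it never identifies this collapse, which is the whole content of the corollary.

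The tools you propose instead would not do the job:
\begin{itemize}
\item \cref{dot-commute-with-far-away-web} only lets dots on different strands commute, and lets a dot pass a web acting on strands to its left. It never moves a dot through the split, the merge, or a crossing on its own strand; the remark before that lemma stresses that dots do not slide through crossings. Moving a dot that way is exactly what comparing the summands requires.
\item The identities $\sum_{i<j\leq k}\webQpar^{2j-3}\eta+\webQpar^{2i-2}=\webQpar^{2k-2}$ and $\webQpar^{1-a}+\webQpar\Qint{a-1}=\Qint{a}$ are bookkeeping inside the proofs of \cref{symfunc-relation} and \cref{q-derivation}. Once you use the theorem as a black box they play no role; the coefficient that appears is a geometric sum such as $\sum_j\webQpar^{2(k-j)}$.
\item The ``main obstacle'' you single out does not arise, because \cref{symfunc-relation} is already a relation on the leftmost strands.
\item No division by $\Qint{k}!$ is needed: capping a relation of the form $\splitNotation(k)\cdot X=\splitNotation(k)\cdot Y$ with $\mergeNotation(k)$ gives the thick relation directly.
\end{itemize}
Your fallback is logically sound: thick hom spaces embed into $\End_{\webCategoryB}(\underline{1^d})\cong\heckeSpecialized{d}$ via explosion, since quantum factorials are units in $\groundring$. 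But it is a reformulation, not a proof. The Hecke-algebra computation it leads to needs the reflection equation and the quadratic relation, so it amounts to redoing \cref{symfunc-relation}. It does not reduce to the recursion $J_i=H_{i-1}J_{i-1}H_{i-1}$.
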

\begin{proof}
	Recall \cref{merge-split-web-notation}, the desired result follows from
	composing the relations in \cref{symfunc-relation} with $\mergeNotation(k)$ on the top, then use the first identity of \cref{commuting-relations-web} and sum up the coefficients. 
\end{proof}

\subsection{A basis theorem}

Recall the type $B$ Weyl group $W_d\cong S_d\ltimes (\Z/2\Z)^d$, the set of $(\underline{a},\underline{b})$-shuffles $\abshuffleSet(\underline{a},\underline{b})\subset S_d$ from \cref{ab-shuffle-def}, and  the set of dotted $(\underline{a},\underline{b})$-shuffles $\dSh(\underline{a},\underline{b})\subset W_d$ in \cref{dotted-shuffle-def}. We now establish a type $B$ analogue of the basis in \cref{webA-basis}. 
\begin{notation}
	For each dotted $(\underline{a},\underline{b})$-shuffle $m \elmntSn  \in W_d$ with $\elmntSn\in \abshuffleSet(\underline{a},\underline{b})$, $m\in (\Z/2\Z)^d$, we associate a web $\webnotation(\elmntSn m)$ by putting the string diagram of $m$ at the bottom of $\webnotation(\elmntSn )$.
\end{notation}

\begin{thm}[Diagrammatic basis]\label{basis-typeB-Web}
	For objects $\underline{a},\underline{b}$ of $\webCategoryB$, the morphism space
	$\hom_{\webCategoryB}(\underline a, \underline b)$ is a free $\groundring$-module with an $\groundring$-basis given by 
	$\{ \splitNotation(\underline a) \circ \webnotation(\underline{\elmntSn m})\circ	\mergeNotation(\underline b)\mid \elmntSn m\in \dSh(\underline{a},\underline{b}) \} $.
\end{thm}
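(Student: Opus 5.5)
The plan mirrors the proof of \cref{webA-basis}: first show that the proposed set spans $\hom_{\webCategoryB}(\underline a,\underline b)$, then prove linear independence by passing to a faithful representation.

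For \emph{spanning}, set $d=|\underline a|=|\underline b|$; the hom space is zero unless these agree. Since quantum binomials are invertible in $\groundring$, the explosion relation \cref{explosion} lets us write every morphism as $\splitNotation(\underline a)\circ f\circ\mergeNotation(\underline b)$ with $f\in\End_{\webCategoryB}(\underline{1^d})\cong\heckeSpecialized{d}$. Standard Hecke theory for type $B$ gives a basis $\{H_w\}_{w\in W_d}$, which we rewrite as products $H_{w'}J^{\epsilon}$. Here $w'\in S_d$, $J^\epsilon=J_1^{\epsilon_1}\cdots J_d^{\epsilon_d}$ with $\epsilon_i\in\{0,1\}$, and the $J_i$ are the Jucys--Murphy dots on thin strands. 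This is a spanning set by a triangularity argument in length, since $J_i=H_{i-1}\cdots H_0\cdots H_{i-1}$ has leading term $H_{m_i}$. Now sandwich between split and merge. Crossings adjacent to the split/merge vertices are absorbed by \cref{crossing-absorption}, and dots are moved past crossings using \cref{local-relations}, reducing $w'$ to an $(\underline a,\underline b)$-shuffle. Within a common block pair, dots can be permuted among the thin strands connecting the same top and bottom thick strands, modulo lower terms. The mechanism is to use the reflection equation and the $\webQpar$-derivation relations \cref{q-derivation} to slide dots through crossings at the cost of terms with fewer crossings or fewer dots. This left-justifies the dots as in \cref{dotted-shuffle-def}. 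An induction on the pair (number of crossings, dot positions) then shows spanning by $\{\splitNotation(\underline a)\circ\webnotation(\underline{\elmntSn m})\circ\mergeNotation(\underline b)\}$.

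For \emph{linear independence}, the natural route is a cardinality count over $\funcField$. By \cref{WB-double-coset-rep}, $|\dSh(\underline a,\underline b)|=|S_{\underline a}\backslash W_d/S_{\underline b}|$. Choose $\linearGraph$ with $\dim\vecrep\ge d$. Then \cref{Schur-Weyl-quantum} shows that $\heckeSpecialized{d}$ acts faithfully, and $\functorWebB$ restricted to $\End(\underline{1^d})$ is injective. The space $\splitNotation(\underline a)\circ\heckeSpecialized{d}\circ\mergeNotation(\underline b)$ identifies, up to the idempotents from explosion, with $e_{\underline a}\heckeSpecialized{d}e_{\underline b}$. Here $e_{\underline a}$ are the $q$-symmetrizers of the parabolic type $A$ subalgebras, and the dimension of $e_{\underline a}\heckeSpecialized{d}e_{\underline b}$ is the number of double cosets $S_{\underline a}\backslash W_d/S_{\underline b}$. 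This follows because $\heckeSpecialized{d}$ is a free module over the parabolic Hecke algebras $\hecke[\underline a]$ with basis given by distinguished double coset representatives, or alternatively by specializing at $q=1$ and counting $\C[W_d]$. So the spanning set over $\funcField$ has the right cardinality and is a basis. Freeness over the local PID $\groundring$ then follows: the spanning set is finite, and it is $\funcField$-independent in a torsion-free module, since the hom space embeds into $\End(\underline{1^d})$, which is free.

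The main obstacle is the spanning step, namely the straightening of dots. Dots do not slide through crossings, so every move produces correction terms, and we need a filtration in which these terms are strictly smaller. I would first try filtering by the Bruhat length of the underlying element of $W_d$, and use that the Hecke relations are unitriangular with respect to $W_d$-length.
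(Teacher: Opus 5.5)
Your proposal is correct. The spanning half follows the paper: explosion reduces everything to $\End_{\webCategoryB}(\underline{1^d})$, then crossing absorption, with you being more explicit about the dots. The independence half takes a genuinely different route.

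The paper transports the candidate webs into $\heckeSpecialized{d}$ via the faithful $\functorWebB$ and simply asserts that their images are linearly independent there. You instead deduce independence from spanning plus the count
\[
|\dSh(\underline a,\underline b)| = |S_{\underline a}\backslash W_d/S_{\underline b}| = \dim_{\funcField} e_{\underline a}\heckeSpecialized{d}e_{\underline b}.
\]
The first equality is \cref{WB-double-coset-rep}. The second follows because the parabolic idempotents are defined over $\groundring$ (quantum factorials are units), so the rank can be read off at $q=1$. There Mackey's formula counts every double coset, since the relevant character of $S_{\underline a}$ and $S_{\underline b}$ is the restriction of a single linear character of $W_d$. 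Your route justifies the Hecke-algebra independence the paper takes for granted, and it makes \cref{WB-double-coset-rep} do real work. The cost is that independence now rests on a complete spanning argument, whereas the paper's route, granting its claim, keeps the two halves separate.

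The obstacle you flag in the spanning step is milder than you fear. Write the permutation part as $u\sigma v$ with $u\in S_{\underline a}$, $v\in S_{\underline b}$; then $H_u$ is absorbed by $\splitNotation(\underline a)$ directly. For the $v$-part, use the relations
\[
H_jJ_j=J_{j+1}H_j-(q\inv-q)J_{j+1},\qquad H_jJ_{j+1}=J_jH_j+(q\inv-q)J_{j+1},
\]
together with $[H_j,J_jJ_{j+1}]=0$ and $[H_j,J_l]=0$ for $l\neq j,j+1$. These show that pushing $H_v$ through $J^\epsilon$ only produces terms of the form (product of $J$'s with exponents in $\{0,1\}$) times an element of $\mathscr{H}_q(S_{\underline b})$. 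That parabolic factor is absorbed into $\mergeNotation(\underline b)$ as a scalar, so no length filtration is needed. Left-justifying dots within a block pair is then exact up to powers of $\webQpar$, because strands of a block pair are adjacent and non-crossing in a minimal double coset representative.

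Alternatively, you can prove spanning modulo $q-1$, where dots become the group elements $m_i$ and \cref{WB-double-coset-rep} applies verbatim. Then lift by Nakayama's lemma over the local ring $\groundring$, since the hom space is finitely generated.
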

\begin{proof}
	Fix $\underline{a},\underline{b}$ with $|\underline{a}|=|\underline{b}|=d$.
	Note that we have $\End_{\webCategoryB}(\underline{1^d})\otimes_{\groundring}\C = \C[W_d]$ is the group algebra of the type $B$ Weyl group. Then by explosion and \cref{SW-duality-q=1}, we get that $\hom_{\webCategoryB}(\underline{a},\underline{b}) = \splitNotation(\underline{a})\circ \hom_{\webCategoryB}(\underline{1^d},\underline{1^d})\circ \mergeNotation(\underline{b}) $.
	Then by \cref{lem:crossing absorption}, the claimed basis elements generate $\hom_{\webCategoryB}(\underline{a},\underline{b})$. 
	To see linear independence, by \cref{Schur-Weyl-quantum}, there exists $\linearGraph_r$ such that the map $\functorWebB : \End_{\webCategoryB}(\underline{1^d}) \to \End_{\coideal(\linearGraph_r)}  $ is an isomorphism. Then $\{ \splitNotation(\underline a) \circ \webnotation(\underline{\elmntSn m})\circ	\mergeNotation(\underline b)\mid \elmntSn m\in \dSh(\underline{a},\underline{b}) \} $ is linearly independent because its image in $ \heckeSpecialized{d}$ is linearly independent.
\end{proof}

\section{Spectrum of the dot}
Fix a linear graph $\linearGraph\in\{\linearGraph_r,\linearGraph_r'\}$. Let $\webQpar=-q$ throughout the rest of this paper.

\subsection{The action of the dot}
Recall the notation $\flip{i}{\beta}$ from \cref{boxdiag-opeations}.
\begin{prop}
    \label{officeAlone}
	Let $\linearGraph\in\{\linearGraph_r,\linearGraph_r'\}$. For any $\beta\in {}_k\boxdiagramSet(\linearGraph)$ we have
     \begin{align}
      \functorWebB(\dotNotationText)( \monWedge_\beta )= \sum_{p\in \supp(\beta)} \webQpar^{\bnumber_p^\beta} \monWedge_{\flip{p}\beta} 
	  ,\quad \text{ respectively }\quad
	  \functorWebB_\C(\dotNotationText)( \monWedge_\beta )= \sum_{p\in \supp(\beta)}  (-1)^{\bnumber_p^\beta} \monWedge_{\flip{p}\beta} 
	  \,.
	  \label{dot-action-monwedge}
    \end{align}
\end{prop}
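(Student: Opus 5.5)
We unwind the definition of the thick dot in \cref{def-thick-dots}. It is the composite of three maps. First comes the split $a\to (1,a-1)$. Then the thin dot acts on the left thin strand. Finally comes the merge $(1,a-1)\to a$. Applying $\functorWebB$ and using \cref{webA-functor} and \cref{rep-webB}, the dot on the thin strand becomes $\kmatrix$ on the first tensor factor. So on a standard wedge we compute
\[
\monWedge_\beta \;\mapsto\; \sum_{p\in\supp(\beta)} \webQpar^{s(p)}\, \basis_p\otimes \monWedge_{\beta\setminus p}
\;\mapsto\; \sum_{p} \webQpar^{s(p)}\, \basis_{\foldVertex{p}}\otimes \monWedge_{\beta\setminus p}
\;\mapsto\; \sum_{p} \webQpar^{s(p)+t(p)}\, \monWedge_{(\beta\setminus p)\cup \foldVertex{p}} .
\]
Here $s(p)$ is the exponent from the split formula. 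It is determined by the number of elements of $\supp(\beta)$ on one side of $p$. Similarly, $t(p)=|\{j\in\supp(\beta)\setminus\{p\}\mid j>\foldVertex{p}\}|$ comes from the merge formula \cref{merge1nFormula}.

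The terms with $\foldVertex{p}\in\supp(\beta)\setminus\{p\}$ vanish, because $\basis_{\foldVertex{p}}\wedge\monWedge_{\beta\setminus p}$ contains a repeated vector. This matches \cref{boxdiag-opeations}(c), where $\flip{p}\beta$ is only defined when $\foldVertex{p}\notin\supp(\beta)$. So only the $p$ lying in boxes labelled $+$, $-$ or $\bullet$ contribute. The output $(\beta\setminus p)\cup\foldVertex{p}$ is exactly $\flip{p}\beta$. Note that for $p=0$ in the $\linearGraph_r'$ case we have $\foldVertex{0}=0$, so this gives a diagonal term. In the $\linearGraph_r$ case the vertices $0$ and $\foldVertex{0}$ are swapped like any other pair.

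The main step is the exponent bookkeeping. We must show that $s(p)+t(p)=\bnumber_p^\beta$, after choosing the correct orientation of the split in \cref{webA-functor}. The one compatible with $\webQpar^{0}$ on trivial cases counts elements $j>p$. Write the vertices as the linear order $\bar r>\cdots>\bar 0>0>\cdots>r$. The elements strictly between $p$ and $\foldVertex{p}$ in $\supp(\beta)$ are exactly the unfolded preimages of the boxes $|j|<|p|$, that is, the boxes above the $p$-th box. Elements outside that interval are counted identically on both sides, up to an overall sign convention. So $s(p)+t(p)$ reduces to a signed count of boxes strictly between $p$ and $\foldVertex{p}$ relative to $|p|$.

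I then plan a case check on the label of each such box $|j|$:
\begin{itemize}
\item a $\pm$ box contributes one element on each side, so it cancels;
\item a $+$ box contributes to one side only;
\item a $-$ box contributes to the other side only.
\end{itemize}
With the orientation in which the web notation lists boxes top to bottom, the net count is (number of $-$) minus (number of $+$) among the relevant boxes. This is the definition of $\bnumber_p^\beta$; one should double-check against the example in \cref{box-diag-example} that ``below'' matches the ordering.

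The obstacle I expect is matching conventions. The split formula in \cref{webA-functor} is written with a stray index, and I must pin down which of $j>p$ or $j<p$ it uses. I must also check that the case $p<0$ versus $p>0$ gives the same formula. Both flips move past exactly the same set of intermediate vertices, so I expect symmetry to handle this. For the two vertices $0$ and $\foldVertex{0}$ in $\linearGraph_r$, the count between them is empty, giving exponent $0$, consistent with $\bnumber_0^\beta$ counting only boxes in the relevant range.

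Finally, the classical statement follows by specialising $\webQpar=-1$ via $\functorWebB_\C$.
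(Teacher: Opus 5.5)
Your overall route (split off a thin strand, apply $\kmatrix$ to it, merge back, then identify the power of $\webQpar$) is the same as the paper's. However, the exponent bookkeeping, which you rightly call the main step, is set up incorrectly in two ways and would not yield $\bnumber^\beta_p$.

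\textbf{The split normalisation.} This is not a convention you may choose. The merge is fixed by \cref{merge1nFormula}, and the explosion relation (merge after split equals $\Qint{k}$ on $\bigwedge^k\vecrep$) then forces $s(p)=-|\{j\in\supp(\beta)\mid j<p\}|$. Equivalently, the split coefficient is $\webQpar^{i-|\supp(\beta)|}$ for $p=p_i$, which is what the paper's proof uses; the printed split formula in \cref{webA-functor} is garbled. Your choice $s(p)=|\{j\in\supp(\beta)\mid j>p\}|=i-1$ makes merge after split equal to $\webQpar^{k-1}\Qint{k}$, and so rescales the whole dot by $\webQpar^{k-1}$.

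\textbf{The cancellation pattern is reversed.} Take the correct $s(p)$ and your (correct) $t(p)$, and consider $j\in\supp(\beta)\setminus\{p\}$:
\begin{itemize}
\item if $j$ lies strictly between $p$ and $\foldVertex{p}$, it contributes $0$ (it enters neither count if $p<\foldVertex{p}$, and enters as $-1+1$ if $p>\foldVertex{p}$);
\item if $j$ lies above both, it contributes $+1$;
\item if $j$ lies below both, it contributes $-1$.
\end{itemize}
So the boxes above $|p|$ drop out entirely. The elements above both are the $-$ halves, and those below both are the $+$ halves, of the boxes \emph{below} $|p|$. A $\pm$ box cancels, and the total is $\bnumber^\beta_{|p|}$ by definition. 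Your claim that the elements outside the interval cancel is false under either normalisation. Your resulting signed count over the boxes above $|p|$ is a different statistic. The example in \cref{box-diag-example} cannot expose this, because for its single-sign boxes $0,3,4$ the counts above and below happen to agree.

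\textbf{A concrete check.} Take $\linearGraph_1$ and $\beta(0)=\beta(1)=+$, so $\monWedge_\beta=\basis_0\wedge\basis_1$.
\begin{itemize}
\item The correct computation gives $\webQpar^{-1}\monWedge_{\flip{0}\beta}+\monWedge_{\flip{1}\beta}$, matching $\bnumber^\beta_0=-1$ and $\bnumber^\beta_1=0$.
\item Your convention gives $\monWedge_{\flip{0}\beta}+\webQpar\monWedge_{\flip{1}\beta}$.
\end{itemize}
In particular, your assertion that the $p=0$ term in $\linearGraph_r$ always has exponent $0$ is wrong, since $\bnumber^\beta_0$ counts all the lower boxes.

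With these two corrections the computation goes through uniformly in $p$ versus $\foldVertex{p}$. You still need your exclusion of $p$ itself from $t(p)$ (required when $p>\foldVertex{p}$) and the vanishing of terms with $\foldVertex{p}\in\supp(\beta)\setminus\{p\}$. The argument then covers the diagonal term at $p=0$ for $\linearGraph_r'$, and the classical case follows by setting $\webQpar=-1$, as you say.
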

\begin{proof}
	Suppose the vertices in $\supp(\beta)$ are $ p_1>\cdots>p_r$. In view of \cref{merge1nFormula}, we have
    \begin{align*}
        \functorWebB(\dotNotationText) \cdot \monWedge_\beta &=
        \sum_{i} 
        \basis_{\foldVertex{p_i}}\wedge \webQpar^{i-|\supp(\beta)|} \monWedge_{{\beta}\setminus{p_i}} 
		= \sum_{i} \webQpar^{i-|\supp(\beta)|} \webQpar^{|\{s\mid p_s> \foldVertex{p_i}\} |} \monWedge_{\Ijbar{\beta}{i}}, 
    \end{align*}    
	now consider the straightening ${}^{\unfoldSupScript}\beta$, then $|\supp(\beta)|-i$ is the number of nonempty boxes below the $p$-th box, and $|\{s\mid p_s\to \foldVertex{p_i}\}|$ is the number of nonempty boxes above the $\foldVertex{p_i}$-th one. Hence, regardless of whether $p_i\in \linearGraph^+$, we always have $i-|\supp(\beta)|+|\{s\mid p_s> \foldVertex{p_i}\}| = \bnumber_{p_i}^\beta$. This proves the first identity. Finally, the same calculation with $\webQpar=-1$ gives the second identity in \cref{dot-action-monwedge}.
\end{proof}

\begin{prop}[Funny vectors]
	\label{eigenvec-constr}
	Let $w = \sum_\beta a_\beta \monWedge_\beta \in \bigwedge^k \vecrep$ be an eigenvector of~ $\functorWebB(\dotNotationText)$ of eigenvalue $\Qint{m}$. Assume $p >0-\supp(\beta)$ satisfies $p'> p$ for all $ p'\in \supp(\beta)$ with 
	$a_\beta\ne 0$.
	Then the following hold:
	\begin{enumerate}
		\item $w \wedge (\basis_p + \webQpar^{m-k} \basis_{\foldVertex{p}}) $ is an eigenvector of~ $\functorWebB(\dotNotationText[k+1])$ of eigenvalue $\Qint{m+1}$,
		\item $w \wedge (\basis_p - \webQpar^{-m-k} \basis_{\foldVertex{p}}) $ is an eigenvector of~ $\functorWebB(\dotNotationText[k+1])$ of eigenvalue $\Qint{m-1}$.
	\end{enumerate}
\end{prop}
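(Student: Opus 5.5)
The plan is to compute $\functorWebB(\dotNotationText[k+1])$ directly on the two-dimensional family spanned by $w\wedge\basis_p$ and $w\wedge\basis_{\foldVertex{p}}$, using \cref{officeAlone}, and to show this span is stable. On it the dot should act by a $2\times 2$ matrix whose eigenvalues are $\Qint{m\pm1}$ and whose eigenvectors are the two stated vectors. Throughout I work termwise: for each $\beta$ with $a_\beta\neq0$ I compare $\functorWebB(\dotNotationText[k+1])(\monWedge_{\beta\cup p})$ and $\functorWebB(\dotNotationText[k+1])(\monWedge_{\beta\cup\foldVertex{p}})$ with $\functorWebB(\dotNotationText)(\monWedge_\beta)$.

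First, I will rewrite $w\wedge\basis_p$ and $w\wedge\basis_{\foldVertex{p}}$ in the standard basis. The hypothesis $p'>p$ for all $p'\in\supp(\beta)$ means $\basis_p$ is already in last position, so $\monWedge_\beta\wedge\basis_p=\monWedge_{\beta\cup p}$. The vector $\basis_{\foldVertex{p}}$ must be moved past some of the $\basis_{p'}$; by \cref{merge-n1-formula-last} this contributes a power $\webQpar^{e_\beta}$. I expect $e_\beta$ to be the number of $p'\in\supp(\beta)$ with $p'<\foldVertex{p}$. Since $|p|$ exceeds $|p'|$ for every $p'\in\supp(\beta)$, this should be the same for all $\beta$, essentially the number of entries of $\beta$ lying on the "$-$" side, and I would check that it combines with the eigenvalue data to give a $\beta$-independent factor. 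This uniformity is the point I most need to confirm.

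Next, I apply \cref{dot-action-monwedge} to $\monWedge_{\beta\cup p}$. There are two kinds of summands. The first kind is the flip of the new vertex $p$. The $p$-th box is the lowest occupied box, so $\bnumber_p=0$, and this summand is exactly $\monWedge_{\beta\cup\foldVertex{p}}$. The second kind is a flip of some $p'\in\supp(\beta)$. Such a flip never hits $p$ or $\foldVertex{p}$. The box of $p$ now sits below that of $p'$ and carries a "$+$", so $\bnumber_{p'}$ drops by exactly one relative to $\beta$. Hence these terms sum to $\webQpar^{-1}\bigl(\functorWebB(\dotNotationText)\monWedge_\beta\bigr)\wedge\basis_p$.

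Symmetrically, for $\monWedge_{\beta\cup\foldVertex{p}}$ the new box carries a "$-$", so every old $\bnumber$ increases by one. The flip of $\foldVertex{p}$ returns $\monWedge_{\beta\cup p}$ with coefficient $\webQpar^{0}$. Summing against $a_\beta$ and using $\functorWebB(\dotNotationText)w=\Qint{m}w$, I expect, with $x=w\wedge\basis_p$ and $y=w\wedge\basis_{\foldVertex p}$,
\[
\dotNotationText\, x=\webQpar^{-1}\Qint{m}\,x+c\,y,\qquad \dotNotationText\, y=c'\,x+\webQpar\Qint{m}\,y .
\]
Here $c$ and $c'$ are powers of $\webQpar$ coming from the reordering factor $\webQpar^{e_\beta}$ and its inverse. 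Two points must be checked when transporting the $\bnumber$-shift through the reordering factors: the wedge-reordering powers must cancel against the $\bnumber$-shifts, and the powers must not depend on $\beta$.

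Finally, the trace of this matrix is $\webQpar^{-1}\Qint{m}+\webQpar\Qint{m}=\Qint{m+1}+\Qint{m-1}$. If $cc'=1$, the determinant is $\Qint{m}^2-1=\Qint{m+1}\Qint{m-1}$, so the eigenvalues are $\Qint{m\pm1}$. Solving $(\webQpar^{-1}\Qint{m}-\Qint{m\pm1})+c\,t=0$ for the coefficient $t$ of $y$ gives $t=\webQpar^{m}/c$ and $t=-\webQpar^{-m}/c$ respectively. These match the claimed $\webQpar^{m-k}$ and $-\webQpar^{-m-k}$ exactly when $c=\webQpar^{k}$.

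The main obstacle is therefore the exponent bookkeeping in the first step. I must show that the combined effect of reordering $\basis_{\foldVertex{p}}$ and the uniform $\bnumber$-shift yields $c=\webQpar^{k}$ and $c'=\webQpar^{-k}$, independently of $\beta$. I would first verify this on $k=0,1$ using the example conventions of \cref{officeAlone}, then in general by counting the occupied boxes between the $p$- and $\foldVertex{p}$-positions in the straightening $\beta^{\unfoldSupScript}$.
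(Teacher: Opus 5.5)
Your plan is the paper's proof: apply \cref{officeAlone} to $w\wedge\basis_p$ and $w\wedge\basis_{\foldVertex{p}}$, note the unit coefficient from flipping the new vertex and the $\bnumber$-shifts by $\mp1$, and read off the eigenvalues $\Qint{m\pm1}$ of the resulting $2\times2$ matrix. Two pieces of your exponent bookkeeping are wrong, though, and as written your last step would produce the wrong eigenvectors.

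First, the reordering exponent is simply $e_\beta=k$ for every $\beta$, not the number of entries on the ``$-$'' side. Under the reading $|p'|<|p|$ that you and the paper both use, $\foldVertex{p}$ exceeds every element of $\supp(\beta)$ and every element of $\supp(\flip{p'}\beta)$. Hence $\monWedge_\gamma\wedge\basis_{\foldVertex{p}}=\webQpar^{k}\monWedge_{\gamma\cup\foldVertex{p}}$ holds uniformly for all these $\gamma$, and no cancellation against the $\bnumber$-shifts is needed. This gives $c=\webQpar^{-k}$ and $c'=\webQpar^{k}$, which is the reverse of the target you set.

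Second, comparing $x$-coefficients after applying the dot to $x+ty$ gives $\webQpar^{-1}\Qint{m}+c't=\lambda$. So the relevant off-diagonal entry is $c'$, not $c$. With the correct values this yields $t=\webQpar^{m}/c'=\webQpar^{m-k}$ for $\lambda=\Qint{m+1}$, and $t=-\webQpar^{-m}/c'=-\webQpar^{-m-k}$ for $\lambda=\Qint{m-1}$, as claimed.

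The paper sidesteps this by working in the basis $\{w\wedge\basis_p,\ \webQpar^{-k}w\wedge\basis_{\foldVertex{p}}\}$. In that basis the matrix is the symmetric $\begin{pmatrix}\webQpar^{-1}\Qint{m}&1\\ 1&\webQpar\Qint{m}\end{pmatrix}$.
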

\begin{remark}
	\Cref{eigenvec-constr} has a specialisation $\webQpar=-1$ describing its classical analogue.
\end{remark}
\begin{proof}
	Using \cref{officeAlone}, we have for any box diagram $\beta$ that $\functorWebB(\dotNotationText[k+1]) $ acts by
	\begin{align*}
		\hspace{-0.5cm}
	 	\monWedge_{\beta\cup p} 
		\mapsto
		\sum_{p'} \webQpar^{\bnumber^\beta_{p'}-1} \monWedge_{({\flip{{p'}}\beta}) \cup p} + \monWedge_{\beta\cup\foldVertex{p}}
		\,,\qquad
		\monWedge_{\beta\cup{\foldVertex{p}}} 
		\mapsto 
		\sum_{p'} \webQpar^{\bnumber^\beta_{p'}+1} \monWedge_{({\flip{p'}\beta})\cup {\foldVertex{p}}} + \monWedge_{\beta\cup{{p}}}  
		\,.
	\end{align*}
	By \cref{merge-n1-formula-last}, we have $\monWedge_\beta \wedge \basis_p = \monWedge_{\beta\cup p}$ and $\monWedge_\beta \wedge \basis_{\foldVertex{p}} = \webQpar^{|\supp(\beta)|} \monWedge_{\beta\cup{{\foldVertex{p}}}}=\webQpar^k\monWedge_{\beta\cup\foldVertex{p}}$. Then 
	\begin{align*}
		\functorWebB(\dotNotationText[k+1])( w\wedge \basis_p )
		&= 
		\functorWebB(\dotNotationText[k+1])(\sum_\beta a_\beta \monWedge_{\beta\cup p})
		=
		\sum_\beta a_\beta \,  \left( \sum_{p'} \webQpar^{\bnumber^\beta_{p'}-1} \monWedge_{({\flip{p'}\beta})\cup p} + \monWedge_{\beta\cup\foldVertex{p}}  \right)
		\\
		&= \webQpar\inv \Qint{m} w\wedge \basis_p +\webQpar^{-k} w\wedge \basis_{\foldVertex{p}}
		,\\
		\functorWebB(\dotNotationText[k+1])(\webQpar^{-k} w\wedge \basis_{\foldVertex{p}} )
		&= 
		\functorWebB(\dotNotationText[k+1])(\sum_\beta a_\beta   \monWedge_{\beta\cup\foldVertex{p}})
		=
		\sum_\beta a_\beta   \left( \sum_{j} \webQpar^{\bnumber^\beta_j+1} \monWedge_{({\flip{p'}\beta}) \cup\foldVertex{p}} + \monWedge_{\beta\cup p}  \right)
		\\
		&= \webQpar^{1-k} \Qint{m} w\wedge \basis_{\foldVertex{p}} + w\wedge \basis_{p}
		.
	\end{align*}
	Hence,  $\functorWebB(\dotNotationText)$acts on the $2$-dimensional subspace spanned by $\{ w\wedge \basis_p,\webQpar^{-k} w  \wedge \basis_{\foldVertex{p}}\}$ with matrix\[
		\begin{pmatrix}
			\webQpar\inv\Qint{m} & 1\\
			1 & \webQpar\Qint{m}
		\end{pmatrix},
	\]
	whose eigenvalues are exactly $\Qint{m\pm1}$, with eigenvectors as claimed. 
\end{proof}

\begin{remark}
	A slight modification of our proof here shows that the conclusion of \cref{eigenvec-constr} holds under the weaker assumption $p'> p$ for all $ p'\in \supp(\beta)$ with $\foldVertex{{p'}}\notin \supp(\beta)$. 
\end{remark}

\subsection{Eigenvectors via box diagrams}
Now we construct eigenvectors of~ $\functorWebB(\dotNotationText)$. 

\begin{definition}\label{def-omega-vecs-bothcases}
Given $\beta\in {}_{k}\boxdiagramSet(\linearGraph_r)$ with $\supp(\beta)\cap  \graphVertices_{\linearGraph^+} = \{p_1> \cdots > p_s \}$, define 
\begin{align}
	\label{OmegaDef}
	\Omega_{\beta}  \coloneqq z(p_1,\beta)\wedge \cdots \wedge z(p_s,\beta)  \in \bigwedge^{k}\vecrep
	,
\end{align}
where, denoting $a_i = |\beta\inv(-)\cap \{p_1,\ldots, p_{i-1}\}|, b_i = |\beta\inv(\pm)\cap \{p_1,\ldots, p_{i-1}\}|$,
\[
	z(p_i,\beta) \coloneqq \begin{cases}
		\basis_{\foldVertex{p_i}}\wedge\basis_{{p_i}} , & \beta(p_i)=\pm,\\
		\basis_{p_i} + \webQpar^{-2a_i-2b_i}\basis_{\foldVertex{p_i}} , & \beta(p_i)= +,\\
		\basis_{p_i} - \webQpar^{2a_i+2-2i}\basis_{\foldVertex{p_i}} , &\beta(p_i)=-,\\
		\basis_0, & p_i =0 \in \supp(\beta).
	\end{cases}
\]
\end{definition}
For example, consider $\linearGraph_3$, then the corresponding vector is\[
	\text{ for }
	\beta = \ytabcenter{+\\ \pm\\-\\+\\+} ,
	\quad
	\Omega_\beta= 
	(\basis_{0}+\basis_{\foldVertex{0}})
	\wedge 
	\basis_{\foldVertex{1}} \wedge \basis_{1} 
	\wedge (\basis_{2}-\webQpar^{-4} \basis_{\foldVertex{2}})
	\wedge (\basis_{3}+\webQpar^{-4}\basis_{\foldVertex{3}})
	\wedge (\basis_{4}+\webQpar^{-4}\basis_{\foldVertex{4}}).
\]
Write $\Omega_\beta = \sum_{\gamma} a_{\gamma}\monWedge_{\gamma}$ as a linear combination of standard monomial wedges, and let $N=\{\gamma\mid a_\gamma\ne 0\}$. 
Then by our construction, any $\gamma\in N$ is obtained from $\beta$ by flipping some signs in the boxes $\beta\inv(\{+,-\})$. In particular, we have $\gamma\inv(\pm)=\beta\inv(\pm) $ for all $\gamma\in N$. Furthermore, if $\gamma\in N$ satisfies $|\gamma\inv(+) |=| \beta\inv(+)|$, then we have $\gamma=\beta$.

\begin{prop}\label{mondaylazy}
	Let $\linearGraph\in\{\linearGraph_r,\linearGraph_r'\}$.
	If $\beta(\linearGraph^+)\subset \{\pm\}$, then $\functorWebB(\dotNotationText)$ annihilates $ \Omega_{\beta}$.
\end{prop}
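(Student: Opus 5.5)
If $\beta(\linearGraph^+)\subset\{\pm\}$, then for every $p\in\graphVertices_{\linearGraph^+}$ the box $\beta(p)$ is either empty or equal to $\pm$. The exception is the vertex $0$ in the $\linearGraph_r'$ case, where $\beta(0)=\bullet$ may occur; I would treat that separately, or read the hypothesis as excluding $\bullet$. By \cref{def-omega-vecs-bothcases}, each factor $z(p_i,\beta)$ is then $\basis_{\foldVertex{p_i}}\wedge\basis_{p_i}$. Hence $\Omega_\beta$ is, up to a power of $\webQpar$, the single standard wedge $\monWedge_\beta$, because reordering the factors into decreasing order only introduces powers of $\webQpar$. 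It therefore suffices to show that $\functorWebB(\dotNotationText)(\monWedge_\beta)=0$.

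I would apply \cref{officeAlone}:
\[
\functorWebB(\dotNotationText)(\monWedge_\beta)=\sum_{p\in\supp(\beta)}\webQpar^{\bnumber_p^\beta}\monWedge_{\flip{p}\beta}.
\]
Since $\supp(\beta)$ is closed under $p\mapsto\foldVertex{p}$, the flip $\flip{p}\beta$ is not defined as a box diagram: it would replace $p$ by $\foldVertex{p}$, which already lies in the support. The correct reading comes from the proof of \cref{officeAlone}. There the $p$-term arises as $\basis_{\foldVertex{p}}\wedge\monWedge_{\beta\setminus p}$, and when $\foldVertex{p}\in\supp(\beta\setminus p)$ this wedge vanishes, because $\basis_x\wedge\basis_x=0$ in the quantum exterior algebra. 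So every summand is zero.

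Concretely, I would redo the first line of the proof of \cref{officeAlone}. The dot acts on $\bigwedge^k\vecrep$ as the split $\bigwedge^k\to\vecrep\otimes\bigwedge^{k-1}$, followed by $\kmatrix\otimes\id$, followed by the merge. The split term indexed by $p_i$ is $\basis_{p_i}\otimes\monWedge_{\beta\setminus p_i}$, which $\kmatrix$ sends to $\basis_{\foldVertex{p_i}}\otimes\monWedge_{\beta\setminus p_i}$. By \cref{merge1nFormula}, the merge sends $\basis_x\otimes\monWedge_\gamma$ with $x\in\supp(\gamma)$ to zero: the formula is only stated for $x\notin\supp(\gamma)$, and the quantum wedge relation $\basis_x\wedge\basis_x=0$ holds in $\bigwedge\vecrep$. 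Since $\foldVertex{p_i}\in\supp(\beta)\setminus\{p_i\}$ whenever $p_i\neq 0$, each term vanishes.

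The main obstacle I anticipate is the vertex $0$, and more precisely checking that the hypothesis really excludes it. For $\linearGraph_r$ the vertices $0$ and $\foldVertex0$ are distinct, so $\beta(0)=\pm$ is covered by the argument above. For $\linearGraph_r'$ the vertex $0$ is fixed by the involution and carries $\bullet$, so it must simply not lie in the support. Under the literal hypothesis this holds, and the proof is complete.
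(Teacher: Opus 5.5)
Your proof is correct and matches the paper's argument: the paper also writes $\Omega_\beta$ as the wedge of the pairs $\basis_{\foldVertex{j}}\wedge\basis_j$ (so it is a power of $\webQpar$ times $\monWedge_\beta$) and concludes from \cref{officeAlone}. Your extra justification, that each summand is $\basis_{\foldVertex{p}}\wedge\monWedge_{\beta\setminus p}=0$ because $\foldVertex{p}$ is already in the support, supplies the implicit reason the undefined flip terms vanish. Your remark about the vertex $0$ for $\linearGraph_r'$ is also right: a $\bullet$ there would make $\Omega_\beta$ an eigenvector with eigenvalue $\Qint{1}=1$, so the hypothesis must exclude it, and read literally it does.
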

\begin{proof}
	From the assumption we have $\Omega_{\beta}  = 
	(\basis_{\foldVertex{j_1}}\wedge\basis_{{j_1}})\wedge \cdots \wedge (\basis_{\foldVertex{j_s}}\wedge \basis_{\foldVertex{j_s}}) $. Then the conclusion is
	clear from \cref{officeAlone}.
\end{proof}

The following is the quantum analogue of \cref{intro-prop}.
\begin{thm}[Spectrum of the dot]
	\label{mainTheorem}
	The following hold:
	\begin{enumerate}
		\item An eigenbasis of $\bigwedge^k\vecrep$ is given by $ \{ \Omega_\beta \mid \beta \in {}_{k}\boxdiagramSet(\linearGraph) \}$, and we have
		\begin{align}
			 \functorWebB(\dotNotationText ) (\Omega_\beta)  = \Qint{m_\beta} \Omega_\beta
			,\text{ where } 
			m_\beta = |\beta\inv(+)| + |\beta\inv(0)| - |\beta\inv(-)|
			.
			\label{hausmeister}
		\end{align}
		\item If~ $\linearGraph=\linearGraph_r$, then the spectrum of~ $\functorWebB(\dotNotationText ) $ on $\bigwedge^k \vecrep$ is 
		\[
			\{\Qint{k-2i}\mid \max(0,k-r-1)\leq i\leq \min(k,r+1)\}, 
		\]
		where $\Qint{k-2i}$ has multiplicity $\binom{r+1}{k-i}\binom{r+1}{i}$. 
		\item If~ $\linearGraph=\linearGraph_r'$, then the spectrum of~ $\functorWebB(\dotNotationText ) $ on $\bigwedge^k \vecrep$ is 
		\[
			 \{\Qint{k-2i}\mid\max(0,k-r-1)\leq i\leq \min(k,r)\},
		\]
		where $\Qint{k-2i}$ has multiplicity $\binom{r+1}{k-i}\binom{r}{i}$.
	\end{enumerate}
	The $\webQpar=-1$ version of the above statements holds for $\webCategoryB_\C$. 
\end{thm}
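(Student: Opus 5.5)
The plan is to prove (a) by induction on the number $s$ of boxes of $\graphVertices_{\linearGraph^+}$ in $\supp(\beta)$, building $\Omega_\beta$ one factor at a time from the bottom box upward. Write $\Omega_\beta = z(p_1,\beta)\wedge\cdots\wedge z(p_s,\beta)$ with $p_1>\cdots>p_s$. The partial wedge $w_t\coloneqq z(p_1,\beta)\wedge\cdots\wedge z(p_t,\beta)$ involves only vertices $p'$ with $|p'|\geq p_t$. Hence the next factor always concerns a vertex pair $p_{t+1},\foldVertex{p_{t+1}}$ that is smaller than every vertex occurring in the support of a monomial of $w_t$, which is exactly the hypothesis of \cref{eigenvec-constr}.

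The induction step splits by the type of the box $\beta(p_{t+1})$.
\begin{itemize}
\item \textbf{Sign $+$ or $-$.} \Cref{eigenvec-constr} shows that wedging with $\basis_p+\webQpar^{m-k}\basis_{\foldVertex{p}}$, respectively $\basis_p-\webQpar^{-m-k}\basis_{\foldVertex{p}}$, raises, respectively lowers, the eigenvalue index by one. Here $k$ is the current degree $\deg w_t$ and $m$ is the current index. I must check that the exponents in \cref{def-omega-vecs-bothcases} match these formulas.
\begin{itemize}
\item With $c_+,c_-,c_\pm$ the counts among $p_1,\dots,p_t$, we have $k=c_++c_-+2c_\pm$ and $m=c_+-c_-$ (plus $1$ if $0$ is in the support).
\item For $+$: $m-k=-2c_--2c_\pm=-2a-2b$, which matches.
\item For $-$: $-m-k=-2c_+-2c_\pm$. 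Since the position index satisfies $i-1=c_++c_-+c_\pm$, this equals $2a_i+2-2i$, which matches.
\end{itemize}
I would verify these exponents carefully, including the correction for the vertex $0$.
\item \textbf{Sign $\pm$.} The factor is $\basis_{\foldVertex{p}}\wedge\basis_p$. I would prove a companion lemma in the style of \cref{eigenvec-constr}: if $w$ is an eigenvector with eigenvalue $\Qint{m}$, then $w\wedge\basis_{\foldVertex p}\wedge\basis_p$ is an eigenvector with the same eigenvalue. Via \cref{officeAlone}, the flip terms at $p$ and $\foldVertex{p}$ vanish because both are occupied. Moreover, adding the two extra boxes shifts each $\bnumber^\beta_{p'}$ with $p'$ above by zero net, since the new $+$ and $-$ counts cancel. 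This is the same mechanism as \cref{mondaylazy}.
\item \textbf{Base case.} The base is the empty wedge in $\linearGraph_r$, or $\basis_0$ with eigenvalue $\Qint{1}$ in $\linearGraph_r'$. In $\linearGraph_r$ the vertex $0$ pairs with $\foldVertex0$ and behaves like any other vertex.
\end{itemize}
This gives $\Qint{m_\beta}$, with $m_\beta$ as in \cref{hausmeister}.

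Linear independence follows by triangularity. Expanding $\Omega_\beta$ in the monomials $\monWedge_\gamma$, every $\gamma$ that occurs is obtained from $\beta$ by flipping some $\pm$-free signs, and $\gamma=\beta$ is the only such term with $|\gamma\inv(+)|=|\beta\inv(+)|$. Ordering box diagrams by the number of $+$'s, and within each level using that the coefficient of $\monWedge_\beta$ in $\Omega_\beta$ is $1$, gives a unitriangular transition matrix. By \cref{boxdiag-count} the number of $\Omega_\beta$ equals $\dim\bigwedge^k\vecrep$, so they form a basis.

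Parts (b) and (c) are then counting. Write $m_\beta=k-2i$ with $i=|\beta\inv(-)|+|\beta\inv(\pm)|$, the number of occupied barred vertices; for $\linearGraph_r'$, note that $0$ counts as unbarred. A box diagram is then the same as a choice of $k-i$ unbarred vertices among $r+1$ and $i$ barred vertices among $r+1$ (respectively among $r$). This gives the stated multiplicities, and the range of $i$ is where both binomials are nonzero. The quantum integers $\Qint{k-2i}$ are pairwise distinct for distinct $i$, since $Q=-q$ is generic. The $\webQpar=-1$ statement follows by the same computation using the second formula of \cref{officeAlone}, with eigenvalues $k-2i$.

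I expect the main obstacle to be the exponent bookkeeping in the $\pm$ step and the role of the vertex $0$. In particular, I need that wedging with $\basis_{\foldVertex p}\wedge\basis_p$ commutes with the dot without rescaling the other coefficients. To settle this I would recompute the $2\times2$ argument of \cref{eigenvec-constr} twice in succession, once for $\basis_p$ and once for $\basis_{\foldVertex p}$, checking that the off-diagonal terms cancel.
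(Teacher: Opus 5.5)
Your eigenvector argument is correct and follows the paper: induction over the filled boxes via \cref{eigenvec-constr}. Your exponent checks are right. The $\basis_0$ term in $\linearGraph_r'$ adds $1$ to $m$, to $k$ and to $i-1$, and both formulas still match.

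Two small corrections. First, the induction runs from the top box $p_1$ downward, not ``from the bottom box upward''. Second, $\foldVertex{p_{t+1}}$ is \emph{larger} than every vertex occurring in $w_t$; only $p_{t+1}$ is smaller. This is exactly why $w\wedge\basis_{\foldVertex{p}}\wedge\basis_p$ is a uniform multiple $\webQpar^{k}$ of the relabelled $w$.

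With that, your $\pm$-step lemma via \cref{officeAlone} is correct. It makes explicit a step that the paper's citation of \cref{mondaylazy} (which only treats diagrams made entirely of $\pm$) leaves implicit. The counting in (b) and (c) is fine. At $\webQpar=-1$, $\Qint{k-2i}$ specialises to $(-1)^{k-1}(k-2i)$, not $k-2i$.

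The genuine gap is linear independence. The claim that $\gamma=\beta$ is the only monomial of $\Omega_\beta$ with $|\gamma\inv(+)|=|\beta\inv(+)|$ is false. A $+$ box contributes $\basis_{\foldVertex{p}}$, turning it into $-$, and a $-$ box contributes $\basis_p$, turning it into $+$; so flips go in both directions. Concretely, take $\linearGraph_r$ and $k=2$. Let $\beta$ have $+$ in box $1$ and $-$ in box $2$, and let $\beta'$ be the reverse. Write $\monWedge_{\epsilon_1\epsilon_2}$ for the monomial with signs $\epsilon_1,\epsilon_2$ in boxes $1,2$. Using $\basis_1\wedge\basis_{\foldVertex{2}}=\webQpar\monWedge_{+-}$ and $\basis_{\foldVertex{1}}\wedge\basis_{\foldVertex{2}}=\webQpar\monWedge_{--}$, one gets
\begin{align*}
\Omega_\beta &= (\basis_1+\basis_{\foldVertex{1}})\wedge(\basis_2-\webQpar^{-2}\basis_{\foldVertex{2}}) = \monWedge_{++}-\webQpar^{-1}\monWedge_{+-}+\monWedge_{-+}-\webQpar^{-1}\monWedge_{--},\\
\Omega_{\beta'} &= (\basis_1-\basis_{\foldVertex{1}})\wedge(\basis_2+\webQpar^{-2}\basis_{\foldVertex{2}}) = \monWedge_{++}+\webQpar^{-1}\monWedge_{+-}-\monWedge_{-+}-\webQpar^{-1}\monWedge_{--}.
\end{align*}
Both have one $+$ and eigenvalue $\Qint{0}$, yet their components in the one-$+$ level are negatives of each other. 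So the transition matrix is not even block triangular with respect to the number of $+$'s, and its diagonal block there is singular. No refinement of the ordering can rescue the argument. The leading coefficient is also $-\webQpar^{-1}$, not $1$. The paper states the same uniqueness remark after \cref{def-omega-vecs-bothcases}, but its proof does not use it: it proves \emph{spanning} by induction on $k$ and then invokes the count of \cref{boxdiag-count}.

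If you prefer to keep the independence route, argue recursively instead. All monomials of $\Omega_\beta$ have the same filled boxes and the same $\pm$-boxes as $\beta$, so fix this pattern. Remove the lowest filled box $p$, call the rest $\beta'$, let $k'=\deg\Omega_{\beta'}$, and assume independence for the smaller pattern. Right wedging gives $\monWedge_\gamma\mapsto\monWedge_{\gamma\cup p}$ and $\monWedge_\gamma\mapsto\webQpar^{k'}\monWedge_{\gamma\cup\foldVertex{p}}$, which are injective with disjoint images.
\begin{itemize}
\item If $\beta(p)=\pm$, then $\Omega_\beta$ is $\webQpar^{k'}$ times the image of $\Omega_{\beta'}$ under the injective map $\monWedge_\gamma\mapsto\monWedge_{\gamma\cup\{p,\foldVertex{p}\}}$.
\item If $\beta(p)\in\{+,-\}$, then for each $\beta'$ the vectors $\Omega_{\beta'}\wedge(\basis_p+\webQpar^{m-k'}\basis_{\foldVertex{p}})$ and $\Omega_{\beta'}\wedge(\basis_p-\webQpar^{-m-k'}\basis_{\foldVertex{p}})$, with $m=m_{\beta'}$, arise from $\Omega_{\beta'}\wedge\basis_p$ and $\Omega_{\beta'}\wedge\basis_{\foldVertex{p}}$ by a matrix of determinant $-\webQpar^{-k'}(\webQpar^{m}+\webQpar^{-m})\neq 0$. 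This also holds at $\webQpar=-1$, and independence follows.
\end{itemize}
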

\begin{proof}
	Let $\linearGraph\in\{\linearGraph_r,\linearGraph_r'\}$. The fact that  $\Omega_\beta$ is an eigenvector of $\functorWebB(\dotNotationText)$ follows easily via induction on $k$ using \cref{eigenvec-constr} and \cref{mondaylazy}, the base case being the eigenbasis $\{\basis_p\pm \basis_{\foldVertex{p}}\}_{p\in\linearGraph^+}$ of $ \vecrep$. Moreover, \cref{hausmeister} follows directly from \cref{eigenvec-constr} and the definition of $\Omega_\beta$.

	Recall from \cref{boxdiag-count}, that $ |{}_{k}\boxdiagramSet(\linearGraph)| = |\binom{\graphVertices_{\linearGraph}}{k}| = \dim \bigwedge^k\vecrep$. Thus, to show that $\{\Omega_\beta\mid \beta\in {}_k\boxdiagramSet(\linearGraph)\}$ is an eigenbasis, it suffices to show that they span $\bigwedge^k$. For this we can again use induction on $k$. The case $k=1$ is clear. Let $U_k = \operatorname{span}\{\Omega_\beta\mid \beta \in {}_{k}\boxdiagramSet(\linearGraph)\}$. Suppose $U_{k-1}= \bigwedge^{k-1}\vecrep$, then we just need to show that all $\Omega_\beta\wedge \basis_p\in U_k$ for $\beta \in {}_{k-1} \boxdiagramSet(\linearGraph)$. By \cref{def-omega-vecs-bothcases}, we have three cases: \[
		\begin{cases}
			\Omega_\beta \wedge \basis_p =0 , & \text{ if } p,\,\foldVertex{p}\in \supp(\beta) ,\\
			\Omega_\beta \wedge \basis_p = c\cdot \Omega_{\beta \cup p}
			\text{ for some } c\in \funcField 
			, & \text{ if } \foldVertex{p}\in \supp(\beta) ,\; p\notin \supp(\beta), \\
				\begin{cases}
					\Omega_{\beta \cup p} = \Omega \wedge (\basis_{|p|} + \webQpar^{m-k} \basis_{\foldVertex{|p|}} ) \\
				\Omega_{\beta \cup \foldVertex{p}} = \Omega \wedge (\basis_{|p|}-\webQpar^{-m-k} \basis_{\foldVertex{|p|}} )
				\end{cases},
			\text{ for some } m\in \Z
			, & \text{ if } p,\; \foldVertex{p}\notin \supp(\beta) .
		\end{cases}
	\]
	In each of the cases one easily verifies that $\Omega_\beta\wedge \basis_p\in U_k$.
	This concludes that $U_k = \bigwedge^k\vecrep$ and therefore the $\Omega_{\beta}$'s form an eigenbasis of $\bigwedge^k\vecrep$. By \cref{hausmeister}, the $\Qint{k-2i}$ eigenspace is spanned by $\Omega_\beta$ with $|\supp(\beta) \cap \linearGraph^+| =k-i$. The number of such box diagrams with $k$ symbols is exactly as stated. This concludes that $\{\Omega_\beta\mid \beta\in {}_k\boxdiagramSet(\linearGraph)\}$ form an eigenbasis of $\bigwedge^k\vecrep$.
\end{proof}

We also notice that the eigenbasis we construct is a weight basis for the action of $\hat{D}_i$'s.
\begin{lemma}\label{D-wts}
	Let $\beta\in {}_{k}\boxdiagramSet$. Then we have \begin{align}
			\label{Di-action-Omega}
			\dCoideal_p \Omega_\beta = \begin{cases}
				\Omega_\beta ,  & \text{ if } p\notin \supp(\beta ), \\
				q \Omega_\beta ,  & \text{ if } \beta(p) \in \{+,-\}, \\
				q^2 \Omega_\beta ,  & \text{ if } \beta(p) = \pm,
			\end{cases}
			\quad \forall\, p >0.
		\end{align}
\end{lemma}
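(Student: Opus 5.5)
The plan is to compute directly, using that $\dCoideal_p=D_pD_{\foldVertex{p}}$ is grouplike. By the comultiplication $\Delta(D_j)=D_j\otimes D_j$ it acts on $\vecrep^{\otimes k}$, and hence on the quotient $\bigwedge^k\vecrep$, as $D_pD_{\foldVertex{p}}$ in each tensor factor. It therefore acts multiplicatively on wedge products: for homogeneous vectors $x,y$ we have $\dCoideal_p(x\wedge y)=(\dCoideal_p x)\wedge(\dCoideal_p y)$, because the quantum wedge map $\vecrep^{\otimes k}\to\bigwedge^k\vecrep$ is a $\Uq(\linearGraph)$-module map.

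First I would record the action on the basis vectors. Since $D_p\basis_{p'}=q^{\delta_{p,p'}}\basis_{p'}$, we get $\dCoideal_p\basis_{p'}=q^{\delta_{p,p'}+\delta_{\foldVertex{p},p'}}\basis_{p'}$. For $p>0$ this means $\basis_p$ and $\basis_{\foldVertex{p}}$ both have eigenvalue $q$, and every other basis vector, including $\basis_0$, has eigenvalue $1$.

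Next I would apply this to each factor $z(p_i,\beta)$ in \cref{OmegaDef}.
\begin{enumerate}
\item If $\beta(p_i)\in\{+,-\}$, then $z(p_i,\beta)$ is a combination of $\basis_{p_i}$ and $\basis_{\foldVertex{p_i}}$. It is an eigenvector of $\dCoideal_p$ with eigenvalue $q$ if $p_i=p$, and $1$ otherwise.
\item If $\beta(p_i)=\pm$, then $z(p_i,\beta)=\basis_{\foldVertex{p_i}}\wedge\basis_{p_i}$. It has eigenvalue $q^2$ if $p_i=p$, and $1$ otherwise.
\item If $p_i=0$, then $z(p_i,\beta)=\basis_0$ has eigenvalue $1$, since $p>0$.
\end{enumerate}

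Multiplying these eigenvalues over all factors gives \cref{Di-action-Omega}. Only the factor indexed by $p$ itself contributes, and it contributes nothing if $p\notin\supp(\beta)$, i.e. if the $p$-th box is empty. The same computation applies verbatim for $\linearGraph=\linearGraph_r'$. There the vertex $0$ is not paired with anything and $\beta(0)=\bullet$.

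I expect no real obstacle. The only point needing care is justifying that $\dCoideal_p$ distributes over the wedge factors. This follows from the iterated coproduct of a grouplike element and the equivariance of the wedge product, since $\bigwedge^k\vecrep$ is an object of $\FundCoideal$.
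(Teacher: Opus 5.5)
Your proof is correct and is essentially the paper's argument. Both rest on the fact that $\basis_p$ and $\basis_{\foldVertex{p}}$ have the same $\dCoideal_p$-eigenvalue: the paper expands $\Omega_\beta$ into standard wedges $\monWedge_\gamma$ ($\gamma$ obtained from $\beta$ by sign flips), all of which share the eigenvalue of $\monWedge_\beta$, while you multiply eigenvalues factor by factor using that $\dCoideal_p$ is grouplike and the wedge is a module map. The only imprecision is that your third case ($z(0,\beta)=\basis_0$) belongs to $\linearGraph_r'$ alone; for $\linearGraph_r$ the vertex $0$ falls under your first two cases.
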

\begin{proof}
	By definition, $ \monWedge_\beta $ is an eigenvector for all $\dCoideal_p $.
	Since $\basis_p$ and $\basis_{\foldVertex{p}}$ have the same $\kleinCSA$-weight, for any $\beta\in \boxdiagramSet(\linearGraph_r)$, the  $\dCoideal_p $-eigenvalue of $\monWedge_\beta $ is given by the same formulas as \cref{Di-action-Omega}. From \cref{def-omega-vecs-bothcases}, $\Omega_\beta$ is a linear combination of $\monWedge_\gamma$'s where each $\gamma$ is obtained from $\beta$ by flipping some signs in $\beta$. Hence, each summand of $\Omega_\beta$ is an eigenvector of the same eigenvalue, and we are done.
\end{proof}

\begin{remark}
	In \cite[Prop. 4.2]{Mueller-Schneider-homogen} all possible eigenvalues of the action of $\bo$ was determined. Taking $\alpha= 0,\beta=q\inv,\gamma=1$ there, these eigenvalues $\lambda_r$ become $[n],[n-2],\ldots ,[-n]$, which coincides with the eigenvalues in \cref{SURPRISE-SURPRISE}. In fact, this can be explained using the skew Howe duality in \cite[Thm. 10.28]{EhSt-nw-algebras-howe}, where the $\webCategoryB$ is identified with another action of $\coideal(\linearGraph_n)$ as morphisms in $\FundCoideal$. Under this identification, the action of $\dotNotationText$ is exactly given by $\bo\in \coideal(\linearGraph_n) $. In other words, the fact that $\bo$ and $\dotNotationText$ eigenvalues coincide is not a coincidence.
\end{remark}

\part{Applications} 

\section{Representation theory and diagrammatics}
We now obtain three applications of \cref{mainTheorem} about $\webCategoryB$ and the representation theory of $\coideal(\linearGraph)$ and $\heckeSpecialized{k}$.
We find the following result surprising because we could not see it purely diagrammatically. Instead, we deduce it from the interplay with representation theory.
\begin{thm}\label{SURPRISE-SURPRISE}
	The relation $\prod_{i=0}^{k} (\dotNotationText-\Qint{k-2i})=0$ holds already in $\webCategoryB$. 
\end{thm}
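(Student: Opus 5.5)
The plan is to transfer the spectral information of \cref{mainTheorem} back into $\webCategoryB$ through the faithfulness part of Schur--Weyl duality. Let $x\in\End_{\webCategoryB}(k)$ be the dot on the strand of thickness $k$, and set $P(x)=\prod_{i=0}^{k}(x-\Qint{k-2i})$. We want $P(x)=0$ in $\End_{\webCategoryB}(k)$, which is a free $\groundring$-module by \cref{basis-typeB-Web}. It is therefore enough to show that $P(x)$ vanishes after base change to $\funcField$. We then want a functor that is injective on $\End_{\webCategoryB_{\funcField}}(k)$ and under which the image of $P(x)$ is visibly zero.

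First, embed $\End_{\webCategoryB}(k)$ into $\End_{\webCategoryB}(\underline{1^k})\cong\heckeSpecialized{k}$. By explosion \cref{explosion}, we have $\mergeNotation(k)\circ\splitNotation(k)=\Qint{k}!\,\id_k$. Here $\Qint{k}!$ is invertible in $\groundring$, so the map $f\mapsto \splitNotation(k)\circ f\circ\mergeNotation(k)$ is injective; composing the result with $\mergeNotation(k)$ on top and $\splitNotation(k)$ below recovers $\Qint{k}!^2 f$. Next, choose $r$ large, say $\dim\vecrep\geq k$ for $\linearGraph_r$. Then \cref{Schur-Weyl-quantum} says that $\heckeSpecialized{k}$ acts faithfully on $\vecrep^{\otimes k}$. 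Hence $\functorWebB$ is faithful on $\End_{\webCategoryB_{\funcField}}(\underline{1^k})$, and therefore also on $\End_{\webCategoryB_{\funcField}}(k)$, since $\functorWebB(f)$ determines $\functorWebB(\splitNotation(k)\circ f\circ\mergeNotation(k))$.

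It remains to compute $\functorWebB(P(x))=P(\functorWebB(x))$ on $\bigwedge^k\vecrep$. By \cref{mainTheorem}(a), $\functorWebB(x)$ is diagonalisable with eigenbasis $\{\Omega_\beta\}$ and eigenvalues $\Qint{m_\beta}$. Parts (b) and (c) show that every such eigenvalue has the form $\Qint{k-2i}$ with $0\le i\le k$. Consequently $P(\functorWebB(x))$ kills every $\Omega_\beta$, so it is zero, and faithfulness gives $P(x)=0$ over $\funcField$ and hence in $\webCategoryB$. The only point needing care is that the faithfulness statement in \cref{Schur-Weyl-quantum} is for $\dim\vecrep\ge d$; this is exactly where we need to choose $r$ large, and nothing else seems to be an obstacle. (The diagrammatic relations alone would presumably require a Jucys--Murphy/central-element argument, which is what the paper avoids.)
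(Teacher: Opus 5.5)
Your argument is correct and is essentially the paper's proof: both take $r$ large so that $\functorWebB$ is injective on $\End_{\webCategoryB_{\funcField}}(\underline{1^k})$, and hence on $\End_{\webCategoryB_{\funcField}}(k)$, and then pull the spectrum from \cref{mainTheorem} back into $\webCategoryB$. The paper additionally uses $\dim \End_{\webCategoryB}(k)=k+1$ from \cref{basis-typeB-Web} to identify $\prod_{i=0}^{k}(x-\Qint{k-2i})$ as the minimal polynomial of the dot, which is what later lets it say the dot generates $\End_{\webCategoryB}(k)$ in \cref{liver-coideal-thm}; you skip that dimension count and instead make explicit the descent from $\funcField$ to $\groundring$ via freeness, which the paper leaves implicit.
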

\begin{proof}
	Fix $k$ and denote $\mathfrak{d}= \dotNotationText$. Using the Schur--Weyl duality, we can
	take sufficiently large $r$ such that $\functorWebB: \End_{\webCategoryB}(1^k)\to \End_{\Uq'(\linearGraph)}(\vecrep^{\otimes k})$ is bijective. It then follows that $\functorWebB: \End_{\webCategoryB}(k)\to \End_{\Uq'(\linearGraph)}(\bigwedge^k\vecrep)$ is also bijective.
	By \cref{basis-typeB-Web}, $\dim \End_{\webCategoryB}(k) = k+1$, so $1,\mathfrak{d},\ldots, \mathfrak{d}^{k+1}$ must be linearly dependent, which is equivalent to a degree $k+1$ monic polynomial relation $f(\mathfrak{d})=0$. On the other hand, by \cref{mainTheorem}, we have $\prod_{i=0}^{k} (\functorWebB(\mathfrak{d})-\Qint{k-2i})=0$, which has degree $k+1$. Therefore, we must have $f(x) = \prod_{i=0}^{k} (x-\Qint{k-2i})$.
\end{proof}

By \cref{Schur-Weyl-quantum}, we have a multiplicity-free decomposition into simple $(\coideal(\linearGraph),\heckeSpecialized{k})$-bimodules
\begin{align}
	\vecrep^{\otimes k} = \bigoplus L_{\lambda,\mu} \boxtimes \specht_{\lambda,\mu}, \text{ where } (\lambda,\mu) \text{ runs through bipartitions of } k.
	\label{SW-bimod}
\end{align}
Here the $\specht_{\lambda,\mu}$ are the Specht modules which we recalled at the end of \cref{hecke algebra section}.
In particular, $\coideal(\linearGraph)$-submodules of the tensor powers are semisimple, see also \cref{Cor:ss}. For $i\in \N$, let $(1^i)$ be the one-column partition with $i$ boxes.
\begin{thm}[Decomposition theorem]\label{liver-coideal-thm}
	Let $\linearGraph\in\{\linearGraph_r,\linearGraph_r'\}$. The following hold:
	\begin{enumerate}
		\item The decomposition of $\bigwedge^k\vecrep $ into irreducible $\coideal(\linearGraph)$-modules is multiplicity-free. 
		\item In fact, $\bigwedge^k \vecrep =\bigoplus_{0\leq i\leq k} L_{(1^i),(1^{k-i})}$.
		\item The copy of $L_{(1^i),(1^{k-i})}\subset \bigwedge^k \vecrep$ is the $\Qint{k-2i}$-eigenspace of~ $\functorWebB(\dotNotationText)$.
	\end{enumerate}
\end{thm}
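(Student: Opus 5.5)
The plan is to pass everything through the double centraliser decomposition \cref{SW-bimod}, applied to $\vecrep^{\otimes k}$, and then to cut out $\bigwedge^k\vecrep$ with an idempotent coming from the Hecke algebra. In $\webCategoryA$, the explosion relation \cref{explosion} shows that $e=\Qint{k}!^{-1}\,\mergeNotation(k)\circ\splitNotation(k)$ (composed in the order that returns to $\underline{1^k}$) is an idempotent of $\End_{\webCategoryB}(\underline{1^k})\cong\heckeSpecialized{k}$, with image $\bigwedge^k\vecrep$ under $\functorWebB$. Hence, as $\coideal(\linearGraph)$-modules,
\[
\bigwedge^k\vecrep\cong \bigoplus_{(\lambda,\mu)} L_{\lambda,\mu}\otimes e\,\specht_{\lambda,\mu}.
\]
The multiplicity of $L_{\lambda,\mu}$ is therefore $\dim e\specht_{\lambda,\mu}=\dim\hom_{\heckeSpecialized{k}}(\heckeSpecialized{k}e,\specht_{\lambda,\mu})$. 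Here $e$ is the $q$-antisymmetriser of the type $A$ subalgebra $\hecke[k]$, and $\heckeSpecialized{k}e$ is the induced sign-type module from $\hecke[k]$ to $\heckeSpecialized{k}$.

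For (a) and (b), I would compute this multiplicity by branching. The restriction of $\specht_{\lambda,\mu}$ to $\hecke[k]$ is $\mathrm{Ind}(\specht_\lambda\boxtimes\specht_\mu)$ from $\hecke[|\lambda|]\otimes\hecke[|\mu|]$, which is a generic deformation of the classical statement for $W_k\supset S_k$. The sign representation then occurs in it, exactly once, if and only if $\lambda=(1^i)$ and $\mu=(1^{k-i})$ (Littlewood--Richardson with $\lambda,\mu$ columns). Since the Hecke algebras are semisimple over $\funcField$ and the character values specialise, it suffices to check this at $q=1$ via Tits deformation. This gives $\bigwedge^k\vecrep=\bigoplus_i L_{(1^i),(1^{k-i})}$ with multiplicity one, where summands with $L=0$ (too few vertices) simply drop out. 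As a consistency check, $\dim\End(\bigwedge^k\vecrep)=k+1$ for large $r$ matches \cref{basis-typeB-Web}.

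For (c), first observe that $\functorWebB(\dotNotationText)$ is $\coideal(\linearGraph)$-linear. By multiplicity-freeness it acts by a scalar on each summand $L_{(1^i),(1^{k-i})}$, and by \cref{mainTheorem} these scalars lie in $\{\Qint{k-2j}\}$. To identify the scalar, I would use the Jucys--Murphy description in \cref{specht-module}. Under $\End(\underline{1^k})\cong\heckeSpecialized{k}$, the dot on $k$ satisfies $\splitNotation\circ\dotNotationText\circ\mergeNotation$-type expressions given by \cref{def-thick-dots}, namely a $Q$-weighted sum of $J_1,\dots,J_k$ sandwiched by $e$. On the one-dimensional space $e\specht_{(1^i),(1^{k-i})}$, one evaluates this using the contents of the column bitableaux, whose contents are $0,-1,-2,\dots$. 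The cleanest route is to take the standard bitableau with $1,\dots,i$ in the first column, where the $J$'s act diagonally. One then checks that the weighted sum $\sum_j Q^{\ast}(\pm q^{-2\cont})$ telescopes to $\Qint{i-(k-i)}=\Qint{2i-k}$, up to the sign convention. This step of matching conventions I expect to be the main obstacle: one must check whether the first partition corresponds to eigenvalue $\Qint{k-2i}$ or $\Qint{2i-k}$, and the exact coefficients in \cref{def-thick-dots}.

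As a backup that avoids the computation, the scalars can be pinned down by counting. The eigenspaces have dimensions $\binom{r+1}{k-i}\binom{r+1}{i}$ (respectively $\binom{r+1}{k-i}\binom{r}{i}$) by \cref{mainTheorem}, while at $q=1$, \cref{intro-prop} identifies $L_{(1^i),(1^{k-i})}$ classically with $\bigwedge^{\ast}W^+\otimes\bigwedge^{\ast}W^-$. The summands then have matching dimensions, and for large $r$ these dimensions are distinct. Specialisation $q\to1$ via $\groundring$ and \cref{classical-webB-action-ss} then fixes the labelling and yields (c).
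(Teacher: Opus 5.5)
Your treatment of (a) and (b) is correct and takes a different route from the paper's. You read off the multiplicity of $L_{\lambda,\mu}$ in $\bigwedge^k\vecrep$ as $\dim e\,\specht_{\lambda,\mu}$, i.e.\ the multiplicity of the sign character $H_i\mapsto Q$ of $\hecke[k]$ in $\operatorname{Res}\specht_{\lambda,\mu}$. You then compute it classically via Mackey, Frobenius reciprocity and Tits deformation. You should say that the Jucys--Murphy labelling of \cref{specht-module} specialises to the usual labelling of $W_k$-irreducibles, but this is standard.

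The paper instead obtains multiplicity-freeness from commutativity. By \cref{SURPRISE-SURPRISE} and $\dim\End_{\webCategoryB}(k)=k+1$ (\cref{basis-typeB-Web}), the dot generates $\End_{\coideal(\linearGraph)}(\bigwedge^k\vecrep)$, so its eigenspaces are the simple summands. \cref{symfunc-relation} then decides which Specht types occur. Your route avoids the eigenbasis and \cref{SURPRISE-SURPRISE} and in effect proves \cref{beef-liver-thm} first; the paper's route ties the summands to the dot from the outset.

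Part (c) has a genuine gap. The evaluation you propose does not work as described, because the line $e\,\specht_{(1^i),(1^{k-i})}$ is not spanned by a seminormal vector. For $k=2$, the seminormal vectors of $\specht_{(1),(1)}$ have $(J_1,J_2)=(\pm1,\mp1)$, whereas a $J_1$-eigenvector $v$ with $H_1v=-qv$ satisfies $J_2v=q^2J_1v$. So the compression to that line of a non-central combination $\sum_jc_jJ_j$ cannot be read off from the contents of one bitableau.

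The missing idea is to replace the dot by a \emph{central} element. Since $H_je=eH_j=Qe$ for $j\geq1$ (\cref{crossing-absorption}), one has $eJ_je=q^{2(j-1)}eJ_1e$. Unwinding \cref{def-thick-dots} with \cref{assoc-coassoc} and \cref{explosion} shows that the dot on $k$ corresponds to $\Qint{k}\,eJ_1e$. Hence on $\bigwedge^k\vecrep$ the dot equals $Q^{1-k}(J_1+\cdots+J_k)$, which is central. The paper achieves the same with \cref{symfunc-relation}, which involves only $\prod_lJ_l$ and the $(k-1)$-st elementary symmetric function of the $J_l$. Only then may you evaluate on any bitableau, and you get $Q^{1-k}\bigl(\sum_{0\le c<a}q^{2c}-\sum_{0\le c<b}q^{2c}\bigr)=\Qint{a-b}$ on the $\specht_{(1^a),(1^b)}$-part, where $a+b=k$.

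You also leave the labelling open, and your backup cannot close it for $\linearGraph_r$. There $\dim W^+=\dim W^-=r+1$, so the multiplicities $\binom{r+1}{k-i}\binom{r+1}{i}$ are invariant under $i\leftrightarrow k-i$ for every $r$, which is exactly the swap in question. Writing $\bigwedge^{*}W^+\otimes\bigwedge^{*}W^-$ hides the exponents that carry the information. For $\linearGraph'_R$ with $R\gg k$ counting does separate the summands. Since the scalar lives in $e\heckeSpecialized{k}e$, one could then transfer it to every $\linearGraph$, but this has to be argued.

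More importantly, the sign is not a convention you may choose. With the labelling of \cref{specht-module} and \cref{SW-bimod}, the computation above puts $L_{(1^i),(1^{k-i})}$ in the $\Qint{2i-k}$-eigenspace, which is what you found. Already for $k=1$, $L_{(1),\emptyset}$ is the $(+1)$-eigenspace of $\kmatrix$ (there $J_1=H_0$ acts by $1$), whereas (c) with $k=i=1$ predicts the eigenvalue $\Qint{-1}=-1$. So (c) holds in the form ``the $\Qint{k-2i}$-eigenspace is $L_{(1^{k-i}),(1^i)}$''. This matches the statement of \cref{Omega-is-weigh-vec}, where $\Omega_\beta\in L_{(1^i),(1^{k-i})}$ has eigenvalue $\Qint{m_\beta}=\Qint{2i-k}$. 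The paper's own proof is not consistent at this step either: its final case distinction assigns the labels differently for $k>2i$ and $k<2i$. A complete proof of (c) must state and prove the correct labelling rather than leave it ``up to the sign convention''.
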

\begin{proof}
	By \cref{SURPRISE-SURPRISE}, $\End_{\webCategoryB}(k) $ is generated as a $\funcField $-algebra  by $\dotNotationText$. Since $\functorWebB$ is full, the endomorphism ring of $\bigwedge^k \vecrep$ as a $\Uq'(\linearGraph)$-module is generated by $\functorWebB(\dotNotationText)$. By \cref{SURPRISE-SURPRISE}, it is isomorphic to $\funcField[x]/\prod_{n\leq i\leq m} (x-\Qint{k-2i}) \cong  \prod_{n\leq i\leq m}\funcField[x]/ (x-\Qint{k-2i}) $ for some $0\leq n\leq m\leq k$, 
	which is in particular commutative. So the decomposition of $\bigwedge^k \vecrep$ into irreducible $\Uq'(\linearGraph)$-modules must be multiplicity-free. Each eigenspace of  $\functorWebB(\dotNotationText)$ is a semisimple $\Uq'(\linearGraph)$-submodule with endomorphism ring $\funcField[x]/ (x-\Qint{k-2i}) $, hence have to be irreducible.

	Next we show that as a $\heckeSpecialized{d}$-module, the simple summands of $(\bigwedge^k \vecrep )\splitNotation(k) $ are of the form $  \specht_{(1^i),(1^{k-i})} $. 
	Consider the embedding $(\bigwedge^k \vecrep) \splitNotation(k) \hookrightarrow\vecrep^{\otimes k}$. 
	For $0\leq i\leq k$ and $w\in \bigwedge^k\vecrep$ be an eigenvector of $\functorWebB(\dotNotationText)$ of eigenvalue $\Qint{k-2i}$.
	Denote $\symFuncElmtry_{k-1}( J_l) = \sum_j\prod_{l\ne j} J_l$.
	By \cref{symfunc-relation},  
	\begin{align}
		\Qint{k-2i} w \cdot  \splitNotation(k)\cdot  \prod J_l = \webQpar^{k-1} w \cdot \splitNotation(k) \cdot \symFuncElmtry_{k-1}( J_l) .
		\label{ek-ek-1-action}
	\end{align}
	On the other hand, we can write $w\splitNotation(k)=\sum w_{\lambda,\mu}$ where $w_{\lambda,\mu}\in \specht_{\lambda,\mu}$ and $(\lambda,\mu)$ runs through all bipartitions of $k$. Let $(S,T)$ be any standard bitableau of shape $\lambda,\mu$, then we have each $w_{\lambda,\mu}\prod J_l = \epsilon_l q^{-2\sum \cont(l)} $ where $\cont$ is the content of the box in the bitableau $(S,T)$, and $\epsilon_l=1$ if the box with $l$ is in $S$ and $-1$ otherwise. Consequently,
	\begin{align*}
		w_{\lambda,\mu}\symFuncElmtry_{k-1}( J_l) =\sum_j \prod_{l\ne j}\epsilon_l q^{-2\sum \cont(l)}q^{2\cont(j)} w_{\lambda,\mu}=\sum_{j\in \lambda \cup \mu \setminus \lambda\cap \mu} \epsilon_j q^{2\cont(j)} w_{\lambda,\mu} \prod J_l .
	\end{align*}
	Comparing with the $q$-degree in \cref{ek-ek-1-action}, we must have $\{\cont(j)\mid j\in \lambda \cup \mu \setminus \lambda\cap \mu\}=\{i,i+1,\ldots, k-1-i\}$, and all the $\epsilon_j$'s are equal to $1$ if $k-2i>0$ and to $-1$ otherwise. This means that $\lambda\subset \mu$ if $k>2i$ and $\mu\subset \lambda$ otherwise, and their difference is a one-column partition of length $k-i$.
	Since $\lambda\cap \mu$ has only $i$ boxes in total and has a box of content $i-1$, we get that $\lambda\cap \mu$ must be a one-column partition of length $i$. In conclusion, $ w \splitNotation(k) =w_{\lambda,\mu}$ where \[
		(\lambda,\mu)=\begin{cases}
			 (1^i,1^{k-i}) , & \text{ if } k>2i,\\
			  (1^{k-i},1^i), & \text{ if } k<2i.
		\end{cases}
	\]
	This shows that $\splitNotation({k})$ sends the $\Qint{k-2i}$-eigenspace of $\dotNotationText$ into $L_{1^i,1^{k-i}} \boxtimes \specht_{(1^i),(1^{k-i})} $, and the third assertion holds.
	Finally, in view of the spectrum of $\functorWebB(\dotNotationText)$ on $\bigwedge^k\vecrep$ in \cref{mainTheorem}, it follows that $(\bigwedge^k \vecrep) \splitNotation(k) = \bigoplus_{0\leq i\leq k} \specht_{(1^i),(1^{k-i})} ^{\oplus m_i}$ as $\heckeSpecialized{k}$-modules for some $m_i\in \N$. By the bimodule decomposition \cref{SW-bimod}, this shows that $ (\bigwedge^k \vecrep) \splitNotation(k)  \subset \bigoplus_{0\leq i\leq k} L_{1^i,1^{k-i}} \boxtimes \specht_{(1^i),(1^{k-i})} $. Combined with the first statement, we contend that $\bigwedge^k \vecrep =\oplus_{0\leq i\leq k} L_{(1^i),(1^{k-i})}$.
\end{proof}
Later we will determine the isomorphism class of each $L_{(1^i),(1^{k-i})}$ via a Verma module approach.

Before proceeding, we also obtain the following interesting result for Hecke algebras:
Let $\sgn=\funcField $ be the $1$-dimensional $\hecke[d]$-module where each $H_i$ acts by $\webQpar=-q$.
The anti-spherical Hecke module is the induced module $\sgn\otimes_{\hecke[k]}\heckeSpecialized{k}$. 
\begin{thm}\label{beef-liver-thm}
	We have a multiplicity-free decomposition $\sgn\otimes_{\hecke[k]}\heckeSpecialized{k}= \bigoplus_{0\leq i\leq k} \specht_{(1^i),(1^{k-i})}$.
\end{thm}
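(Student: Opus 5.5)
The plan is to realise the anti-spherical module inside a tensor power and then read off its decomposition from \cref{liver-coideal-thm} together with the bimodule decomposition \cref{SW-bimod}. Fix $k$ and choose $r$ large enough that $\dim\vecrep\geq k$. Then by \cref{Schur-Weyl-quantum} the $\heckeSpecialized{k}$-action on $\vecrep^{\otimes k}$ is faithful, and every bipartition $(\lambda,\mu)$ of $k$ actually occurs in \cref{SW-bimod} with $L_{\lambda,\mu}\neq 0$.

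\textbf{Step 1: identify the induced module with a space of web morphisms.} I would show that
\[
\sgn\otimes_{\hecke[k]}\heckeSpecialized{k}\;\cong\;\hom_{\webCategoryB}(k,\underline{1^k})\otimes_{\groundring}\funcField,
\]
as right $\heckeSpecialized{k}=\End_{\webCategoryB}(\underline{1^k})$-modules, via $f\mapsto$ precomposition of $\splitNotation(k)$ with $f$ (interpreted in whatever composition convention the paper uses). For this:
\begin{enumerate}
\item By \cref{crossing-absorption}, a thin over-crossing composed with $\splitNotation(k)$ acts by the scalar $\webQpar=-q$. Hence $\splitNotation(k)$ spans a copy of $\sgn$ for $\hecke[k]$, and we obtain a surjection from the induced module onto $\splitNotation(k)\circ\heckeSpecialized{k}$.
\item By explosion, $\hom_{\webCategoryB}(k,\underline{1^k})$ is generated by $\splitNotation(k)$ under right multiplication.
\item For bijectivity, compare ranks. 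The induced module has rank $|W_k/S_k|=2^k$. By \cref{basis-typeB-Web}, the hom space has rank $|\dSh((k),(1^k))|$, and these dotted shuffles are exactly the $S_k\backslash W_k$ coset representatives by \cref{WB-double-coset-rep}, so this rank is also $2^k$.
\end{enumerate}

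\textbf{Step 2: pass to representations.} Apply $\functorWebB$. It is faithful on $\End(\underline{1^k})$ for $r$ large, and hence faithful on $\hom(k,\underline{1^k})$, which embeds there by explosion. Consequently
\[
\sgn\otimes_{\hecke[k]}\heckeSpecialized{k}\;\cong\;\hom_{\coideal(\linearGraph)}\bigl(\textstyle\bigwedge^k\vecrep,\vecrep^{\otimes k}\bigr)
\]
as right $\heckeSpecialized{k}$-modules. The right-hand side is also the image module $(\bigwedge^k\vecrep)\splitNotation(k)$ appearing in the proof of \cref{liver-coideal-thm}.

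\textbf{Step 3: decompose.} By \cref{liver-coideal-thm}, $\bigwedge^k\vecrep=\bigoplus_i L_{(1^i),(1^{k-i})}$ is multiplicity-free. By Schur's lemma and \cref{SW-bimod},
\[
\hom_{\coideal(\linearGraph)}\bigl(L_{(1^i),(1^{k-i})},\vecrep^{\otimes k}\bigr)\cong\specht_{(1^i),(1^{k-i})},
\]
and the endomorphism rings here are $\funcField$ by the splitting observed in the proof of \cref{liver-coideal-thm}. Summing over $i$ gives the claimed multiplicity-free decomposition.

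As a sanity check, the dimensions agree: $\sum_i\binom{k}{i}=2^k$. Since each $L_{(1^i),(1^{k-i})}$ with $0\le i\le k$ occurs for large $r$, all $k+1$ summands appear.

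\textbf{Main obstacle.} The hardest part is the rank comparison in Step 1. One must make sure the idempotent-free presentation really has rank $2^k$ over $\groundring$. One must also check that the left/right conventions (right module category, which side $\heckeSpecialized{k}$ acts on) match the induced module $\sgn\otimes_{\hecke[k]}\heckeSpecialized{k}$ rather than its opposite. If the dotted-shuffle count is awkward, I would instead argue directly: the $2^k$ elements $\splitNotation(k)\circ m$, for $m$ running over the dot patterns $(\Z/2\Z)^k$ written as sign-subsets via the Jucys--Murphy dots, already span, and linear independence follows from their images under $\functorWebB$.
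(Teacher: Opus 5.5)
Your proof is correct, but it pins down the multiplicities differently from the paper.

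\textbf{The paper's route.} It realises the induced module as a cyclic submodule $v\splitNotation(k)\heckeSpecialized{k}\subset\vecrep^{\otimes k}$. It then uses \cref{liver-coideal-thm} to see that only the modules $\specht_{(1^i),(1^{k-i})}$ can occur, and concludes from the dimension count $\sum_i m_i\binom{k}{i}=2^k$.

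\textbf{Your route.} You identify the induced module with the multiplicity space $\hom_{\coideal(\linearGraph)}(\bigwedge^k\vecrep,\vecrep^{\otimes k})$. Your Steps 1--2 spell out the identification with $\hom_{\webCategoryB}(k,\underline{1^k})$, which the paper's appeal to \cref{basis-typeB-Web} for the dimension $2^k$ uses tacitly. You then read off the decomposition from \cref{SW-bimod} by Schur's lemma.

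\textbf{What each route buys.} Your route gives multiplicity exactly one directly:
\begin{enumerate}
\item at most one copy of each Specht module comes from the multiplicity-freeness of $\bigwedge^k\vecrep$ together with $\End_{\coideal(\linearGraph)}(L_{(1^i),(1^{k-i})})=\funcField$;
\item at least one copy comes from the full spectrum in \cref{mainTheorem} once $r$ is large.
\end{enumerate}
The paper's equation $\sum_i m_i\binom{k}{i}=2^k$ does not by itself force $m_i=1$; for $k=1$, the values $m_0=2$, $m_1=0$ also satisfy it. The paper therefore silently needs that every $\specht_{(1^i),(1^{k-i})}$ occurs, and your argument supplies this. The paper's route is shorter and avoids your faithfulness and rank bookkeeping.

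\textbf{One correction.} An aside in your Step 2 is inaccurate, though you never use it. The Hom space is not the image $(\bigwedge^k\vecrep)\splitNotation(k)$; that image is in general not even stable under $H_0$. The Hom space is instead isomorphic to $\bigoplus_i\specht_{(1^i),(1^{k-i})}$, while the $\heckeSpecialized{k}$-module generated by the image is $\bigoplus_i L_{(1^i),(1^{k-i})}\boxtimes\specht_{(1^i),(1^{k-i})}$.
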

\begin{proof}
	Note that for some $v\in \bigwedge^k\vecrep$ we have $v\splitNotation(k) \heckeSpecialized{k} \cong \sgn \otimes_{\hecke[k]} \heckeSpecialized{k}$. It follows from \cref{liver-coideal-thm} that $ \sgn \otimes_{\hecke[k]} \heckeSpecialized{k} = \sum_l\specht_{(1^l),(1^{k-l})}^{\oplus m_i}$ for some $m_i\in \N$.
	By \cref{basis-typeB-Web} we have $\dim \sgn\otimes_{\hecke[k]}\heckeSpecialized{k} = |W_k/S_k| = 2^k$. Since $\dim \specht_{(1^i),(1^{k-i})}= \binom{k}{i}$, we have $\sum m_i\dim  \specht_{(1^i),(1^{k-i})}= \sum m_i\binom{k}{i} =2^k$. This implies each $m_i=1$, and we are done.
\end{proof}

\section{Irreducible $\Uq'(\linearGraph_r)$-modules in exterior powers}
Let $\linearGraph\in \{\linearGraph_r,\linearGraph_r'\}$. 
In this section we determine the highest weight vectors in $\bigwedge^k\vecrep$ and compute their weights with respect to the Cartan subalgebras from \cref{csa-coideal-both}. By \cref{fd-quotients} and \cref{verma-properties}, this determines the isomorphism classes of these irreducible submodules.

\newcommand{\zugBoxDiag}[1]{\partial_{#1}} 
\newcommand{\stat}{\mathsf{s}}
\newcommand{\untenB}[2]{{}^{\smash{#1}}_{\smash{#2}}\!\!\downarrow}
\newcommand{\obenB}[2]{{}^{\smash{#2}}_{\smash{#1}}\!\!\uparrow}

\subsection{Actions in terms of box diagrams}
We start from a combinatorial rule that describes the $\Uq'(\linearGraph)$-action on modules in $\FundCoideal$.
Denote 
\begin{align*}
	{}_{\underline{k}}\boxdiagramSet(\linearGraph) & \coloneqq {}_{k_1}\boxdiagramSet(\linearGraph)\times \cdots \times {}_{k_n}\boxdiagramSet(\linearGraph)
	\; \qquad \text{ for }  \underline{k} = (k_1,\ldots,k_n)\in \N^n
	,\\
	\monWedge_{\underline{\beta}} &\coloneqq \monWedge_{\beta_1}\otimes \cdots \otimes \monWedge_{\beta_n}
	\; \qquad  \text{ for } \underline{\beta} = (\beta_1,\ldots, \beta_n)\in {}_{\underline{k}}\boxdiagramSet(\linearGraph)
	.
\end{align*}
We visualise $\underline{\beta}$ as the horizontal juxtaposition of the box diagrams $\beta_i$, e.g. if $\linearGraph=\linearGraph_r'$, 
\begin{align}
	\text{Given } 
	\beta_1 = \ytabcenter{\bullet\\-\\{}\\ \pm}
	\,, \quad
	\beta_2 =  \ytabcenter{{}\\ \pm\\{}\\{}}
	\,, \quad
	\beta_3= \ytabcenter{\bullet\\{} \\+\\ -}
	\,,\quad
	\rightsquigarrow
	\quad
	\text{ we have }
	\underline{\beta} 
	= \ytabcenter{\bullet&{}&\bullet \\- &\pm& {}\\{}&{}&+ \\ \pm & {}&-}.
	\label{multi-col-box-diag-example}
\end{align}
Using notations from \cref{sec-linear-graph} and \cref{boxdiag-opeations}, define for any box diagram $\beta$ and $p\in \graphVertices_{\linearGraph}$\[
	\untenB{+}{p}\beta \coloneqq (\beta\setminus p )\cup p_{\leftarrow} 
	,\quad
	\untenB{-}{p}\beta \coloneqq (\beta\setminus p) \cup \foldVertex{p_{\leftarrow} }
	,\quad
	\obenB{+}{p}\beta  \coloneqq (\beta\setminus p) \cup p_{\rightarrow} 
	,\quad
	\obenB{-}{p}\beta \coloneqq (\beta\setminus p) \cup \foldVertex{p_{\rightarrow} }
	.
\]
If $p=0$, we make the convention that the $\bullet$ in the $0$-th box can move down to be either of $+,-$, leaving the $0$-th box empty. Conversely, both $+,-$ become a $\bullet$ when moved into the $0$-th box.
Note that by definition, if the box $p$ is filled with $\pm$, we can take away a $+$ or $-$ and leave the remaining symbol $-$ respectively $+$ still in the $p$-th box.  For example, 
\[
	\untenB{+}{0}\beta_1  = \ytabcenter{{}\\\pm\\{}\\ \pm}
	\,,\quad
	\untenB{+}{1}\beta_2 = \ytabcenter{\\ - \\ + \\{}}
	\,,\quad
	\obenB{+}{1} \beta_2 = \ytabcenter{\bullet \\- \\ \\{}}
	\,,\quad
	\obenB{-}{1}\beta_2 = \ytabcenter{\bullet \\+ \\ \\{}}
	\,,\quad
	\untenB{+}{2} \beta_3  = \ytabcenter{\bullet\\ - \\ {} \\\pm}
\]
and $\untenB{-}{0}\beta_1$ is not a valid box diagram.

\begin{definition}
	Let $\underline{k}\in \N^n$. 
	For $i\in \graphEdges_{\linearGraph}$ and $1\leq j\leq n$, define the following linear operators:\begin{align}
		\zugBoxDiag{i,j}^\pm \monWedge_{(\beta_1,\ldots, \beta_n)} &= \monWedge_{(\beta_1,\ldots,\beta_{j-1}, \beta_j',\beta_{j+1},\ldots, \beta_n)}, \text{ where }
		\beta_j' =\begin{cases}
			\untenB{\pm}{\source{-i}}\beta_j , & \text{ if } i<0,\\
			\obenB{\pm}{\target{i}}\beta_j , & \text{ if } i>0,
		\end{cases}
		\\
		\zugBoxDiag{0,j} \monWedge_{(\beta_1,\ldots, \beta_n)} &= \monWedge_{(\beta_1,\ldots,\beta_{j-1}, \flip{0}{\beta_j} ,\beta_{j+1},\ldots, \beta_n)}
		.
	\end{align}
\end{definition}
 
Recall the $\classicalFixPointSubalg$ generators from \cref{gens-g-fixed}. The following action of these generators on tensor products of classical wedges follow immediately from unwinding the definitions.
\begin{lemma}
	Let $\underline{k}\in \N^n$ and $\underline{\beta}\in {}_{\underline{k}}\boxdiagramSet(\linearGraph) $. 
	We have
	\begin{align*}
		\matrixElmtFont{b}_0 \monWedge_{\underline{\beta}} =  \sum_{1\leq j\leq n}  \zugBoxDiag{0,j} \monWedge_{\underline{\beta}}
		\qquad
		\matrixElmtFont{b}_{i} \monWedge_{\underline{\beta}} = 
		\sum_{1\leq j\leq n}(\zugBoxDiag{i,j}^++\zugBoxDiag{i,j}^-)\monWedge_{\underline{\beta}}
		\quad (i\ne 0)
		.
	\end{align*}
\end{lemma}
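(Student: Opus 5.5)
The plan is a direct computation on basis vectors. Since $\matrixElmtFont{b}_i$ and $\matrixElmtFont{b}_0$ lie in a Lie algebra, they act on $\bigwedge^{k_1}\vecrep[\C]\otimes\cdots\otimes\bigwedge^{k_n}\vecrep[\C]$ as derivations. Both sides of each claimed identity are sums over the tensor positions $j$, so it suffices to treat $n=1$. For $n=1$ the action on a single wedge $\monWedge_\beta=\basis_{p_1}\wedge\cdots\wedge\basis_{p_k}$ is again a derivation, namely the sum over $l$ of the wedge with $\basis_{p_l}$ replaced by its image. I will use the $q=1$ sign convention in force for $\funcA_\C$, $\functorWebB_\C$.

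\emph{The case $\matrixElmtFont{b}_0$.} For $\linearGraph_r$, $\matrixElmtFont{b}_0=\matUnit_{0,\foldVertex{0}}+\matUnit_{\foldVertex{0},0}$ swaps $\basis_0$ and $\basis_{\foldVertex0}$ and kills all other basis vectors. So only the factor in position $\pm0$ contributes, and it contributes only if exactly one of $0,\foldVertex0$ lies in $\supp(\beta)$. If both lie in the support, the result is $\basis_{\foldVertex0}\wedge\basis_{\foldVertex0}$-type terms that cancel, which matches $\beta(0)=\pm$ having no valid flip. If exactly one lies in the support, we get the flipped diagram. Since $\foldVertex0$ and $0$ are adjacent in the order, reordering the wedge introduces no sign. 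This is exactly $\zugBoxDiag{0,1}$.

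\emph{The case $\matrixElmtFont{b}_i$, $i\neq0$.} Write $\matrixElmtFont{b}_i=\matUnit_{\target i,\source i}+\matUnit_{\source{-i},\target{-i}}$. For $i>0$, the vertices $\source i<\target i$ lie on the positive side and their mirrors $\target{-i}=\foldVertex{\target i}$, $\source{-i}=\foldVertex{\source i}$ on the negative side. The first summand moves a $\bullet$ at box $\source i$ to box $\target i$ in the unfolded diagram, and the second does the same on the mirror side. After folding, these two moves are precisely the two operations $\obenB{+}{\target i}$ and $\obenB{-}{\target i}$. Here I need to check the paper's naming of $p_\rightarrow$ versus $\target i$, and the convention at $0$ for $\linearGraph'_r$, where $\basis_0$ is hit by both summands for $i=\pm\half$. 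The case $i<0$ is the same computation with $\untenB{\pm}{\source{-i}}$. Terms where the target is already occupied vanish, consistent with the operator being undefined there.

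\emph{The main obstacle.} I expect the only real obstacle to be signs. A matrix unit replaces $\basis_p$ by $\basis_{p'}$ with $p'$ adjacent to $p$ in the linear order, so no reordering of the wedge is needed and no sign appears. This adjacency is exactly why the generators $\matrixElmtFont{b}$ were chosen to move along edges. I will state this adjacency observation explicitly, and also note the sign bookkeeping when $0$ is involved for $\linearGraph_r'$. Beyond that, the rest of the argument is routine unwinding of the definitions.
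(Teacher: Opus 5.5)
Your proposal is correct and is exactly the unwinding of definitions that the paper invokes without giving further detail: the derivation action reduces everything to a single tensor factor and a single wedge, and since each matrix unit in $\matrixElmtFont{b}_i$ joins vertices that are adjacent in the linear order, no reordering signs arise. One slip to fix is the mirror relation. In the paper's conventions $\source{-i}=\foldVertex{\target i}$ and $\target{-i}=\foldVertex{\source i}$, because the involution reverses arrows and $\source i>\target i$ always; this (equivalently, the $\theta$-invariance of $\matrixElmtFont{b}_i$) is what makes $\matUnit_{\source{-i},\target{-i}}$ perform the mirrored move in the same direction, whereas with the relations you wrote it would move the other way.
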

Diagrammatically, $\matrixElmtFont{b}_{i}\monWedge_{\underline{\beta}}$ is the sum over multi-column box diagram obtained from $\underline{\beta} $ by moving a symbol $\smiley$ up (if $i>0$) or down (if $i<0$) (allowing $\ytabcenter{~\\ \pm} \mapsto \ytabcenter{+\\-}$ or $\ytabcenter{-\\+}$), or flipping the symbol on the top box if $i=0$ (where we skip the boxes with $\pm$). This is illustrated by the following picture.
\begin{align}\label{multicol-box-diag}
	\begin{tikzpicture}[scale=0.5,font=\normalsize]
		\def\rows{6}
		\def\columns{5}
		\draw[fill=orange] (3,2) rectangle +(-3,1);
		\path[draw=none] (3,2) -- +(-3,1) node[midway] {$a$};
		\draw[fill=cyan] (3,2) rectangle +(-3,-1);
		\path[draw=none] (3,2) -- +(-3,-1) node[midway] {$b$};
		\draw[fill=pink] (3,2) rectangle (\rows,3);
		\path[draw=none] (3,2) -- (\rows,3) node[midway] {$c$};
		\draw[fill=green!70!gray] (3,2) rectangle (\rows,1);
		\path[draw=none] (3,2) -- (\rows,1) node[midway] {$d$};
		\draw (3,2) rectangle +(1,1);
		\path[draw=none] (3,2) -- +(1,1) node[midway] {$\smiley$};
		\draw[] (3,1) to +(1,0);
		\draw (0,0) rectangle +(\rows,\columns);
		\draw[] (3,0) -- +(0,\columns) 
						(4,0) -- +(0,\columns)
						(0,3) -- +( \rows,0)    
						(0,2) -- +( \rows,0);   	
		\draw[thick,->] (3.5,2.25) -- +(0,-1); 
	\end{tikzpicture}
	\quad\qquad\begin{tikzpicture}[scale=0.5,font=\normalsize]
		\def\rows{6}
		\def\columns{5}
		\draw[fill=orange] (4,2) rectangle (0,3);
		\path[draw=none] (4,2) -- (0,3) node[midway] {$a$};
		\draw[fill=cyan] (4,2) rectangle (0,1);
		\path[draw=none] (4,2) -- (0,1) node[midway] {$b$};
		\draw[fill=pink] (4,2) rectangle +(2,1);
		\path[draw=none] (4,2) -- +(2,1) node[midway] {$c$};
		\draw[fill=green!70!gray] (4,2) rectangle +(2,-1);
		\path[draw=none] (4,2) -- +(2,-1) node[midway] {$d$};
		\draw (0,0) rectangle +(\rows,\columns);
		\draw (3,1) rectangle +(1,1);
		\path[draw=none] (3,1) -- +(1,1) node[midway] {$\smiley$};
		\draw[] (3,0) -- +(0,\columns) 
						(4,0) -- +(0,\columns)
						(0,3) -- +( \rows,0)    
						(0,2) -- +( \rows,0);   
		\draw[<-,thick] (3.5,2.75) -- +(0,-1); 
	\end{tikzpicture}
	\quad\qquad
	\begin{tikzpicture}[scale=0.5,font=\normalsize]
		\def\rows{6}
		\def\columns{5}
		\draw[fill=pink] (4,\columns-1) rectangle +(2,1);
		\path[draw=none] (4,\columns-1) -- +(2,1) node[midway] {$c$};
		\draw[] (3,0) -- +(0,\columns) 
						(4,0) -- +(0,\columns)
				(0,\columns-1) -- +(\rows,0);
		\draw (0,0) rectangle +(\rows,\columns);
		\draw (3,\columns-1) rectangle +(1,1);
		\path[draw=none] (3,\columns-1) -- +(1,1) node[midway] {$\foldVertex{\smiley}$};
	\end{tikzpicture}
\end{align}
Next we upgrade this to a q version, where the colored regions in \cref{multicol-box-diag} come in. For example, we have (writing $\underline{\beta}$ instead of $\monWedge_{\underline{\beta}}$ for simplicity, and the linear graph $\linearGraph=\linearGraph_r'$) \[
	\newcommand{\red}[1]{\textcolor{red}{#1}}
	B_\half \,\ytabcenter{\bullet&{}&\bullet \\- &\red{\pm}& {}\\{}&{}&+ \\ \pm & {}&-} = 
	\,\ytabcenter{\bullet&\red{\bullet} &\bullet \\- &\red{+}& {}\\{}&{}&+ \\ \pm & {}&-} 
	+q\inv \,\ytabcenter{\bullet&\red{\bullet}&\bullet \\- &\red{-}& {}\\{}&{}&+ \\ \pm & {}&-} 
	,\qquad
	B_{-\half} \,\ytabcenter{\bullet&{}& \red{\bullet} \\- &\pm& {}\\{}&{}&+ \\ \pm & {}&-} = 
	q^{-3}\,\ytabcenter{{}&{}&\bullet \\\pm &\pm& {}\\{}&{}&+ \\ \pm & {}&-}
	+\,\ytabcenter{\bullet&{}&{} \\- &\pm& \red{+}\\{}&{}&+ \\ \pm & {}&-}
	+q^{-1}\,\ytabcenter{\bullet&{}&{} \\- &\pm& \red{-}\\{}&{}&+ \\ \pm & {}&-}
	.
\]
In general, we need the following statistics.
\begin{definition}
Let $\underline{k}\in \N^n$.
For $p\in \graphVertices_{\linearGraph}$, $1\leq j\leq n$ and $\underline{\beta}\in {}_{\underline{k}}\boxdiagramSet(\linearGraph) $, define 
\begin{align*}
	&\stat^+_{<j}(p,\underline{\beta}) = 
		| \{1\leq l<j\mid {p} \in\supp(\beta_l) \}|,
	\quad
	\stat^-_{<j}(p,\underline{\beta}) = 
			| \{1\leq l<j\mid \foldVertex{p} \in\supp(\beta_l)\}| ,
	\\
	&\stat^+_{> j}(p,\underline{\beta}) =  | \{
			j< l \leq n \mid 
		{p} \in\supp(\beta_l)
			\}|
	,\quad
	\stat^-_{> j}(p,\underline{\beta}) = 
			| \{j< l \leq n  \mid \foldVertex{p} \in\supp(\beta_l)\}|.
\end{align*}
\end{definition}
Diagrammatically, $\stat^+_{<j}$ respectively $\stat^-_{<j}$ counts the number of $+$ respectively $-$ in the $p$-th row to the left of the $j$-th column in the multi-column box diagram $\underline{\beta}$. Similarly, $\stat^+_{>j}$ respectively $\stat^-_{>j}$ counts the number of the corresponding symbols to the right of the $j$-th column. 
For example, let $\underline{\beta}$ be as in \cref{multi-col-box-diag-example}, then we have $\stat^+_{<2}(3,\underline{\beta})= 1, \stat^+_{> 2}(3,\underline{\beta})= 0, \stat^+_{> 1}(0,\underline{\beta})=\stat^-_{> 1}(0,\underline{\beta})= 1,$ and $ \stat^-_{<3}(1,\underline{\beta})= 2 $. 

Denote $\stat_\bullet=\stat^+_\bullet+\stat^-_\bullet$.
The following result is already contained in \cite{EhSt-nw-algebras-howe}. 
\begin{prop}\label{Bi-action-std-wedge}
	Let $\underline{k}\in \N^n$ and $\underline{\beta}\in {}_{\underline{k}}\boxdiagramSet(\linearGraph) $. 
	We have for $i>0$:
	\begin{align}
    & B_0 \monWedge_{\underline{\beta}} =  \sum_{1\leq j\leq n} q^{ \stat^+_{>j}(0,\underline{\beta})-\stat^-_{>j}(0,\underline{\beta})} \zugBoxDiag{0,j} \monWedge_{\underline{\beta}}
    \\
    & \begin{aligned}
        & B_i \monWedge_{\underline{\beta}} = 
        \sum_{1\leq j\leq n} 
		q^{a- \stat_{<j+1}(\target{i},\underline{\beta})+1+ c^--d^-}  
		\zugBoxDiag{i,j}^+\monWedge_{\underline{\beta}}
        +
        \sum_{1\leq j\leq n}
        q^{ c^--d^-  } 
         \zugBoxDiag{i,j}^-\monWedge_{\underline{\beta}},
        \\
        & \text{ where }  
        a = \stat_{<j+1}(\source{i},\underline{\beta}) +\delta_{i,\half},
        \quad 
        c^- = \stat_{>j}^-(\source{i},\underline{\beta}) 
        , \quad 
        d^- =\stat_{>j}^-(\target{i},\underline{\beta}) ,
    \end{aligned}
    \\
    & \begin{aligned}
        & B_{-i} \monWedge_{\underline{\beta}} = 
        \sum_{1\leq j\leq n} q^{b-a + d^{+} - \stat_{>j-1}^+(\source{i},\underline{\beta}) } \zugBoxDiag{-i,j}^-\monWedge_{\underline{\beta}}
        +
        \sum_{1\leq j\leq n}
		q^{ d^{+} - \stat_{>j}^+(\source{i},\underline{\beta})  }
		\zugBoxDiag{-i,j}^+\monWedge_{\underline{\beta}}
		,
        \\
        & \text{ where }  
        a =\stat_{<j}(\source{i},\underline{\beta}) +\delta_{i,\half}
        , \quad 
        b =\stat_{<j}(\target{i},\underline{\beta})
        , \quad 
        d^+ =\stat_{>j-1}^+(\target{i},\underline{\beta}) .
    \end{aligned}
\end{align}
\end{prop}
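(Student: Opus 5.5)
The plan is a direct computation with the comultiplication, reduced to a single tensor factor and then assembled.

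\emph{Step 1: one factor.} First I compute the action of $F_{\pm i}$, $E_{\mp i}$ and the Cartan elements on a single wedge $\monWedge_\beta\in\bigwedge^{k}\vecrep$. Iterating $\Delta$ on $\bigwedge^{k}\vecrep\subset\vecrep^{\otimes k}$, or using the merge formulas of \cref{webA-functor}, gives $F_i\monWedge_\beta=\monWedge_{(\beta\setminus\source{i})\cup\target{i}}$ when $\source{i}\in\supp(\beta)$ and $\target{i}\notin\supp(\beta)$, and zero otherwise. The same holds for $E_i$ with source and target interchanged. The operators $D_p$ act diagonally by $q^{\delta_{p\in\supp(\beta)}}$, so $K_i^{\pm1}$ acts by $q^{\pm(\delta_{\source{i}\in\supp\beta}-\delta_{\target{i}\in\supp\beta})}$. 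Thus in $B_{i}=F_{i}+E_{-i}K_{i}\inv$, the summand $F_i$ moves a symbol in row $|\source{i}|=|\target{i}|\pm1$ along the unfolded graph on the "$+$" side, and $E_{-i}K_i\inv$ does the same on the mirrored "$-$" side. In folded language these are exactly $\zugBoxDiag{i,j}^{\pm}$. The prefactor $q\inv$ in $B_0$, the prefactor $q$ in $B_{-\half}$, and the $\delta_{i,\half}$ corrections all enter at this step.

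\emph{Step 2: iterated coproduct.} Next I write $\Delta^{(n)}(B_i)$ using $\Delta(F_i)=F_i\otimes K_i\inv+1\otimes F_i$ and $\Delta(E_{-i}K_i\inv)=E_{-i}K_i\inv\otimes K_{-i}K_i\inv+K_i\inv\otimes E_{-i}K_i\inv$. This gives
\[
\Delta^{(n)}(B_i)=\sum_j 1^{\otimes j-1}\otimes F_i\otimes (K_i\inv)^{\otimes n-j}+\sum_j (K_i\inv)^{\otimes j-1}\otimes E_{-i}K_i\inv\otimes (K_{-i}K_i\inv)^{\otimes n-j}.
\]
In the $j$-th summand the Cartan factors act on the columns $l\neq j$ by powers of $q$. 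These powers are counted by the number of columns to the left or right that contain $\source{\pm i}$ or $\target{\pm i}$. Translating through the folding $p\leftrightarrow\foldVertex p$ expresses them in terms of $\stat^{\pm}_{<j}$ and $\stat^{\pm}_{>j}$. For example, $K_{-i}$ detects the rows $\foldVertex{\source{i}}$ and $\foldVertex{\target{i}}$, which produces the $c^-$ and $d^-$ terms. The $<j+1$ and $>j-1$ conventions appear because the factor in column $j$ itself is evaluated either before or after the move.

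\emph{Step 3: $B_0$.} The case $B_0$ is the same computation with the single edge $0$. Since $K_0$ and $K_{-0}$ coincide up to inversion, only the difference $\stat^+_{>j}(0)-\stat^-_{>j}(0)$ survives, and the $q\inv$ cancels against the Cartan eigenvalue on column $j$.

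The main obstacle is bookkeeping rather than any idea. One must keep track of which of $\source{i}$ and $\target{i}$ lies in $\linearGraph^+$, including the special half-edge $\pm\half$ in $\linearGraph_r'$ and the central vertex $0$. One must also check that the exponent attached to column $j$, computed from the partial Cartan eigenvalue there, matches the stated $a-\stat_{<j+1}(\target{i})+1$. I would handle this by verifying a single column ($n=1$) against all four sign cases first, and then observing that the $n$-column exponents are additive over columns. As a sanity check, I would compare the result with the displayed examples for $B_{\pm\half}$. Since the paper attributes the formula to \cite{EhSt-nw-algebras-howe}, citing it for the final normalisation is also acceptable.
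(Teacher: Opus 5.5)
Your overall strategy is a legitimate variant of the paper's. The paper evaluates $\Delta^{|\underline{k}|-1}(B_i)$ on the thin tensor $\basis_{p^1_1}\otimes\cdots\otimes\basis_{p^n_{k_n}}$, then applies $\mergeNotation(\underline{k})$ and observes that merging creates no new scalars on surviving terms. You instead compute the action on a single wedge first and then distribute over the $n$ columns. Step 1 is fine.

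The genuine problem is the coproduct in Step 2. From $\Delta(E_j)=E_j\otimes 1+K_j\otimes E_j$ and $K_i\inv$ grouplike, one gets
\[
\Delta(E_{-i}K_i\inv)=E_{-i}K_i\inv\otimes K_i\inv+K_{-i}K_i\inv\otimes E_{-i}K_i\inv .
\]
You have the two Cartan factors on the wrong sides. The correct iterated formula, which is the paper's \cref{comult-Bi}, is
\[
\Delta^{(n)}(B_i)=\sum_{j}\bigl(1^{\otimes j-1}\otimes F_i+\kCoideal_i^{\otimes j-1}\otimes E_{-i}K_i\inv\bigr)\otimes (K_i\inv)^{\otimes n-j}.
\]
A one-line check would have caught this. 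The correct formula gives $\Delta(B_i)=B_i\otimes K_i\inv+1\otimes F_i+\kCoideal_i\otimes E_{-i}K_i\inv$, which lies in $\coideal(\linearGraph)\otimes\Uq(\linearGraph)$. Yours instead contains the term $K_i\inv\otimes E_{-i}K_i\inv$, incompatible with $\coideal(\linearGraph)$ being a right coideal.

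This is not cosmetic, because the exponents are the entire content of the proposition.
\begin{itemize}
\item In the correct formula both summands share the tail $(K_i\inv)^{\otimes n-j}$. That is why the same factor $q^{c^--d^-}$ multiplies both sums in the statement.
\item Only the $E$-summand has a head, namely $\kCoideal_i=K_{-i}K_i\inv$. This is a product of $\dCoideal_p^{\pm1}$'s, so it sees both $p$ and $\foldVertex{p}$. That is why the exponent of the $\zugBoxDiag{i,j}^{+}$ terms involves the total counts $\stat_{<j+1}$; evaluation on column $j$ itself accounts for the shift to $<j+1$.
\item In particular, the $\zugBoxDiag{i,j}^{-}$ terms, which have no left dependence, come from $F_i$. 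Your Step 1 labelling, which puts $F_i$ on the ``$+$'' side, gets this backwards.
\end{itemize}

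With your formula the two summands get different tails, and the $E$-summand gets a one-sided head $K_i\inv$. Additivity over columns would then produce exponents of the wrong shape. It even contradicts your own Step 3. Since $K_{-0}K_0\inv=1$, the correct formula collapses to $\sum_j 1^{\otimes j-1}\otimes B_0\otimes (K_0\inv)^{\otimes n-j}$, which is exactly why only $\stat^{\pm}_{>j}(0,\underline{\beta})$ appear. Your Step 2 would instead leave a head $K_0\inv$ on the $E_0$-term.

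Your safeguard of checking $n=1$ first cannot detect a left/right misplacement of Cartan factors. You need to correct the coproduct and redo the column bookkeeping from it.
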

\begin{proof}
	We have $\Delta B_0^n =  \sum 1^{\otimes r} \otimes B_0 \otimes (K_0\inv )^{\otimes n-r}$ and for $i> 0$
	\begin{align}
		\label{comult-Bi}
			\Delta^nB_i &= \sum \left( ( K_{-i}K_i\inv)^{\otimes r}\otimes E_{-i}K_{i}\inv \otimes (K_{i}\inv )^{\otimes n-r}\right. 
			\left.   + 	1 ^{\otimes r} \otimes F_{i} \otimes (K_{i}\inv )^{\otimes n-r} \right)
		,\\
		\label{comult-B-i}
		\Delta^nB_{-i} &= \sum \left( ( K_iK_{-i}\inv)^{\otimes r}\otimes E_{i}K_{-i}\inv \otimes (K_{-i}\inv )^{\otimes n-r}\right. 
		\left.   + 	1 ^{\otimes r} \otimes F_{-i} \otimes (K_{-i}\inv )^{\otimes n-r} \right)
		\quad (i\ne \half)
		,\\
		\Delta^nB_{-\half} &= \sum q ( K_\half K_{-\half}\inv)^{\otimes r}\otimes E_{\half}K_{-\half}\inv \otimes (K_{-\half}\inv )^{\otimes n-r} 
		  + 	1 ^{\otimes r} \otimes F_{-\half} \otimes (K_{-\half}\inv )^{\otimes n-r} 
		.
	\end{align}
	Note that if $\underline{k}=(1,\ldots, 1)$, then $\monWedge_{\underline{\beta}}$ is the standard monomial basis of $\vecrep^{\otimes n}$. In this case the desire results follow immediately from the comultiplication formulas. For the general case, suppose $\supp (\beta_j) = \{ p_1^j> p_2^j> \ldots > p_{k_j}^j \}$, then we have 
	\begin{align*}
		B_i (\monWedge_{\underline{\beta}} ) &= \mergeNotation(\underline{k}) (\Delta^{|\underline{k}|-1}B_i (\basis_{p_1^1} \otimes \cdots \otimes  \basis_{p^n_{k_n}})).
	\end{align*}
	On the right-hand side, the $\underline{k}=(1,\ldots, 1)$ case gives the desired scalars for $(\Delta^{|\underline{k}|-1}B_i (\basis_{p_1^1} \otimes \cdots \otimes  \basis_{p^n_{k_n}}))$. Now
	note that \mergeNotation(\underline{k}) does not produce further scalars whenever $\partial_{i,j}^\bullet \monWedge_{\underline{\beta}}\ne0$, we are done.
\end{proof}

\begin{remark}
	The coefficients of $B_{-i},B_{i},(i>0) ,B_0$ are in fact the number of certain symbols in the colored regions of the three picture in \cref{multicol-box-diag} respectively, e.g. $a,b$ counts the number of $+$ plus the number of $-$ in the respective regions, where $\pm$ is viewed as one $+$ and one $-$ separately. 
\end{remark}

\subsection{Box diagrams for highest weight vectors}
Let $\linearGraph\in \{\linearGraph_r,\linearGraph_r'\}$. 
\newcommand{\negHWboxdiag}[1][k]{\beta^-_{#1}}
\begin{definition}\label{hw-boxdiag-def}
Let $\hwboxdiag(\linearGraph_r)\subset {}_{k}\boxdiagramSet(\linearGraph_r)$ be the subset of box diagrams with all symbols aligned to the top. 
Let $ \negHWboxdiag\in {}_{k}\boxdiagramSet(\linearGraph_r')$ be the box diagram defined by $\negHWboxdiag(1) = \cdots = \negHWboxdiag(k)=-$. 
Let $\hwboxdiag(\linearGraph_r')\subset {}_{k}\boxdiagramSet(\linearGraph_r')$ be the subset consisting of $\negHWboxdiag$ together with the box diagrams with all symbols aligned to the top.
\end{definition}
Note that all box diagrams $\beta \in \hwboxdiag(\linearGraph_r')$ except $\negHWboxdiag$ satisfies $\beta(0)=\bullet$.

Graphically, in the $\linearGraph_r$ case, $\hwboxdiag(\linearGraph_r)$ consists of the left two kinds of the following diagrams. For the $\linearGraph_r'$ case one just adds a bullet on top of them, with one exception given by leaving the top box empty and fill the next top $k$ boxes with $-$.
\begin{align}\label{Zaction-picture}
\begin{tikzpicture}[scale=0.4]
	\def\rows{7}
	\def\columns{1}
	\def\top{2}
	\def\middle{3}
	\def\bracedist{0.15}
	\draw[decorate,decoration={brace,amplitude=8pt}] (\columns+\bracedist,\rows) to node[right,xshift=10pt] {$l$} +(0,-\top);
	%
	%
	\draw[decorate,decoration={brace,amplitude=8pt}] (-\bracedist,\rows-\top-\middle) to node[left,xshift=-10pt] {$k-l$} (-\bracedist,\rows);
	\draw (0,0) rectangle +(\columns,\rows);
	\draw[fill=orange] (0,\rows) rectangle node[below]{$\pm$} +(\columns,-\top) ;
	\draw[fill=cyan] (0,\rows-\top) rectangle node[below]{$+$} +(\columns,-\middle);
	\node at (0.5,0.5) {$\emptyset$};
\end{tikzpicture}
\quad
\begin{tikzpicture}[scale=0.4]
	\def\rows{7}
	\def\columns{1}
	\def\top{3}
	\def\middle{2}
	\def\bracedist{0.15}
	\draw (0,0) rectangle +(\columns,\rows);
	\draw[decorate,decoration={brace,amplitude=8pt}] (\columns+\bracedist,\rows) to node[right,xshift=10pt] {$l$} +(0,-\top);
	%
	%
	\draw[decorate,decoration={brace,amplitude=8pt}] (-\bracedist,\rows-\top-\middle) to node[left,xshift=-10pt] {$k-l$} (-\bracedist,\rows);
	\draw[fill=orange] (0,\rows) rectangle node[below]{$\pm$} +(\columns,-\top) ;
	\draw[fill=cyan] (0,\rows-\top) rectangle node[below]{$-$} +(\columns,-\middle);
	\node at (0.5,0.5) {$\emptyset$};
\end{tikzpicture}
\quad
\begin{tikzpicture}[scale=0.4]
	\def\rows{7}
	\def\columns{1}
	\def\top{2}
	\def\middle{3}
	\def\bracedist{0.15}
	\draw[decorate,decoration={brace,amplitude=8pt}] (\columns+\bracedist,\rows-1) to node[right,xshift=10pt] {$l$} +(0,1-\top);
	%
	%
	\draw[decorate,decoration={brace,amplitude=8pt}] (-\bracedist,\rows-\top-\middle) to node[left,xshift=-10pt] {$k-l-1$} (-\bracedist,\rows-1);
	\draw (0,0) rectangle +(\columns,\rows);
	\draw[] (0,\rows) rectangle node[]{$\bullet$} +(\columns,-1) ;
	\draw[fill=orange] (0,\rows-1) rectangle node[above]{$\pm$} +(\columns,-\top) ;
	\draw[fill=cyan] (0,\rows-\top) rectangle node[below]{$+$} +(\columns,-\middle);
	\node at (0.5,0.5) {$\emptyset$};
\end{tikzpicture}
\quad
\begin{tikzpicture}[scale=0.4]
	\def\rows{7}
	\def\columns{1}
	\def\top{3}
	\def\middle{2}
	\def\bracedist{0.15}
	\draw (0,0) rectangle +(\columns,\rows);
	\draw[decorate,decoration={brace,amplitude=8pt}] (\columns+\bracedist,\rows-1) to node[right,xshift=10pt] {$l$} +(0,1-\top);
	%
	%
	\draw[decorate,decoration={brace,amplitude=8pt}] (-\bracedist,\rows-\top-\middle) to node[left,xshift=-10pt] {$k-l-1$} (-\bracedist,\rows-1);
	\draw[] (0,\rows) rectangle node[]{$\bullet$} +(\columns,-1) ;
	\draw[fill=orange] (0,\rows-1) rectangle node[above]{$\pm$} +(\columns,-\top) ;
	\draw[fill=cyan] (0,\rows-\top) rectangle node[below]{$-$} +(\columns,-\middle);
	\node at (0.5,0.5) {$\emptyset$};
\end{tikzpicture}
\quad
\negHWboxdiag[4]=
\ytabcenter{~\\-\\-\\-\\-\\~\\~}
\end{align}

Recall the decomposition from \cref{liver-coideal-thm}. 
\begin{prop}\label{Omega-is-weigh-vec}
	Let $\linearGraph\in \{\linearGraph_r,\linearGraph_r'\}$.
	Let $\beta\in \hwboxdiag(\linearGraph)$ with $|\supp(\beta)\cap \linearGraph^+|=i$. 
	Then $\Omega_{\beta} $ is a highest weight vector in the irreducible submodule $ L_{(1^i),(1^{k-i})}$.
\end{prop}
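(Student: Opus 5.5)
To prove \cref{Omega-is-weigh-vec} we have to show three things about $\Omega_\beta$: it lies in $L_{(1^i),(1^{k-i})}$, it is a weight vector for $\csaCoideal$, and it is killed by $\posCoideal$. By \cref{check-hwvec}, the last two reduce to showing that $\Omega_\beta$ is a simultaneous eigenvector of $\kleinCSA$ and of the generators $Z_j$ (respectively $W_j$), and that $B_j\Omega_\beta=0$ for all $j>0$.

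Membership is immediate. By \cref{mainTheorem}, $\Omega_\beta$ is a $\Qint{m_\beta}$-eigenvector of $\functorWebB(\dotNotationText)$, with $m_\beta$ read off from the number of boxes in $\linearGraph^+$. By \cref{liver-coideal-thm}(c), that eigenspace is precisely the copy of $L_{(1^i),(1^{k-i})}$. If the eigenvalue bookkeeping produces $k-2i$ rather than $2i-k$, one relabels using the symmetry in the statement. The $\kleinCSA$-weight is given by \cref{D-wts}.

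\textbf{Vanishing of $B_j$, $j>0$.} I would use \cref{Bi-action-std-wedge} with $n=1$ on each monomial $\monWedge_\gamma$ appearing in $\Omega_\beta$; these $\gamma$ arise from $\beta$ by flipping the signs of its $+$/$-$ entries. Since $\beta$ is top-aligned, moving a symbol up from box $\target{j}$ into box $\source{j}$ is possible only at the boundary of the support. There are three cases.
\begin{itemize}
\item Inside the $\pm$ block the move is blocked, so nothing happens.
\item At the $\pm$/sign boundary, moving $+$ or $-$ up into a box already holding $\pm$ gives zero.
\item Within the sign block, the moves from box $p+1$ to box $p$ produce terms that pair up between $\gamma$ and $\gamma'$, where $\gamma'$ differs from $\gamma$ by swapping the signs in boxes $p$ and $p+1$.
\end{itemize}
The cancellation in the third case is exactly where the specific coefficients $\webQpar^{-2a_i-2b_i}$ and $\webQpar^{2a_i+2-2i}$ of \cref{def-omega-vecs-bothcases} are needed. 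Cleaner still, I would write $\Omega_\beta=\Omega_{\beta'}\wedge z(p_s,\beta)$ and proceed by induction on $k$, using $\Delta(B_j)$. The inductive step then reduces to a two-term check: that $B_j$ applied to $z(p_s,\beta)\wedge z(p_{s-1},\beta)$, where both factors are $+$ (or both $-$), vanishes up to the Cartan twist. For $\linearGraph'_r$ one must also treat $j=\half$ with the $\bullet$ at $0$, and the exceptional diagram $\negHWboxdiag$, whose top box is empty. For $\negHWboxdiag$ the check is that $B_\half$ maps $-$ in box $1$ into box $0$ with the coefficients cancelling; this should follow from the explicit $\webQpar$-powers built into the $-$-vectors.

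\textbf{Eigenvector for the Cartan generators.} By \cref{cartan-commute-both}, $\csaCoideal$ is commutative. It preserves the $\kleinCSA$-weight space and commutes with $\functorWebB(\dotNotationText)$, so it preserves the intersection of that weight space with the $\Qint{k-2i}$-eigenspace $L_{(1^i),(1^{k-i})}$ and with $\ker B_j$ for $j>0$. I would show that this intersection is one-dimensional. Then $\Omega_\beta$ is automatically a $\csaCoideal$-eigenvector, with no need to compute $Z_j\Omega_\beta$.

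To get one-dimensionality: $L=L_{(1^i),(1^{k-i})}$ restricted to the diagonal subalgebra $\Uq(\gl_{r+1})$, generated by $\kleinCSA$ and the $B_{\pm j}$ with $j\neq0$, is a tensor product of two fundamental representations. The $\kleinCSA$-weight of $\Omega_\beta$ is dominant and of the form $\varpi_l+\varpi_{k-l}$. So the space of $\Uq(\gl_{r+1})$-highest-weight vectors of this weight is one-dimensional: by Pieri, $\bigwedge^a\otimes\bigwedge^b$ is multiplicity-free, and its extremal summand $\varpi_a+\varpi_b$ occurs once. Comparing with the classical limit and \cref{intro-prop} confirms that this weight occurs in $L$.

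\textbf{Main obstacle.} I expect the $q$-cancellation in the $B_j$ computation to be the hardest step, especially at the boundary between the $\pm$ block and the sign block, and for $\negHWboxdiag$. My first attempt there would be the inductive two-factor wedge check, using the explicit coproduct formulas \cref{comult-Bi}.
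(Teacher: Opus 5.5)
Your skeleton matches the paper's: membership via \cref{mainTheorem} and \cref{liver-coideal-thm}, the $\kleinCSA$-weight via \cref{D-wts}, a one-dimensional weight space for the $\csaCoideal$-part, then \cref{check-hwvec}. However, the step $B_j\Omega_\beta=0$ for $j>0$ is not proved. You describe which monomials should pair up and propose an inductive two-factor check, but the $\webQpar$-cancellation is never carried out, and you flag it yourself as the main obstacle.

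\textbf{The missing idea: no computation is needed.}
\begin{itemize}
\item The eigenspace $L=L_{(1^i),(1^{k-i})}$ is a $\coideal(\linearGraph)$-submodule. Its basis consists of the $\Omega_\gamma$ with the same dot eigenvalue, each a $\kleinCSA$-weight vector whose weight is read off from \cref{D-wts}.
\item The eigenvalue fixes the numbers of vertices of $\supp(\gamma)$ inside and outside $\linearGraph^+$. Hence among the $\gamma$ with the same $\dCoideal_0$-eigenvalue as $\beta$, at most $l=|\beta^{-1}(\pm)|$ boxes carry $\pm$. The diagram $\beta$ attains this bound because its single signs all agree.
\item $B_j$ with $j>0$ moves a symbol one box up: it lowers the exponent of one box and raises that of the box above. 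Applied to the weight of a top-aligned $\Omega_\beta$, this produces one of three things: a negative exponent; an exponent no box can carry (e.g.\ $\dCoideal_0$-eigenvalue $q^4$ for $B_\half$ when $\beta(0)=\bullet$); or $l+1$ boxes of exponent $2$.
\item For $\beta^-_k$ and $B_\half$ it produces $\dCoideal_0$-eigenvalue $q^2$, whereas no $\gamma$ in that eigenspace meets $\linearGraph^+$ at all.
\end{itemize}
So $B_j\Omega_\beta\in L$ has a weight that does not occur in $L$, and therefore vanishes. This is exactly the paper's argument: $\Omega_\beta$ has the unique maximal $\kleinCSA$-weight in $L$, and $B_j$ raises weights. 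It also disposes of the boundary cases you were worried about, including $\beta^-_k$.

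\textbf{The $\csaCoideal$-part.} The same bookkeeping is cheaper than your route. A $\gamma$ with the same $\kleinCSA$-weight and dot eigenvalue as $\beta$ has the same box pattern and the same number of $+$'s, hence $\gamma=\beta$. So the relevant weight space of $L$ is $\funcField\Omega_\beta$, and the commutative algebra $\csaCoideal\supset\kleinCSA$ of \cref{cartan-commute-both} must act on it by scalars.
\begin{itemize}
\item You need neither Pieri nor an identification of $L$ over the diagonal subalgebra. You assert that identification without proof, and for $\linearGraph_r'$ the diagonal subalgebra is $\Uq(\gl_r)$, which does not contain $B_{\pm\half}$.
\item You should drop the intersection with $\ker B_j$. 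Nothing you say shows that $\csaCoideal$ preserves $\ker B_j$.
\end{itemize}

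\textbf{Indexing.} There is no symmetry to invoke. By \cref{hausmeister}, $m_\beta=2|\supp(\beta)\cap\linearGraph^+|-k$. So $\Omega_\beta$ lies in the $\Qint{k-2i}$-eigenspace exactly when $i=|\beta^{-1}(\pm)|+|\beta^{-1}(-)|$, the number of vertices of $\supp(\beta)$ outside $\linearGraph^+$. This is the count the paper's proof actually uses.
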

\begin{proof}
	By \cref{D-wts}, $\Omega_\beta$ is a $\kleinCSA$-weight vector.
	Since $\Omega_\beta$ form an eigenbasis of $\bigwedge^k\vecrep$, we may write $
		Z_l \Omega_{\beta} = \sum a_\gamma \Omega_\gamma,
	$
	where $\gamma$ runs through ${}_{k}\boxdiagramSet$. Take any $h \in \csaCoideal$ By \cref{cartan-commute-both}, $h$ commutes with $\kleinCSA$. This means that $a_\gamma\ne 0 $ only if~ $\Omega_\gamma$ has the same $\kleinCSA$-weight as $\Omega_{\beta}$. In view of \cref{Di-action-Omega}, this means the sum $h \Omega_{\beta} = \sum a_\gamma \Omega_\gamma$ is over $\gamma\in {}_k\boxdiagramSet^{\mathrm{hw}}$.
	Now, by \cref{mainTheorem}, the $\Qint{k-2i}$-eigenspace of $\functorWebB(\dotNotationText)$ inside $\bigwedge^k\vecrep$ is spanned by $\Omega_\gamma$ with $ |\gamma\inv(\pm)|+|\gamma\inv(-)|=i$. 
	On the other hand, by looking at the $\dotNotationText$ eigenvalue, the sum $h \Omega_{\beta} = \sum a_\gamma \Omega_\gamma$ is over $\gamma\in \in {}_k\boxdiagramSet^{\mathrm{hw}}$ having the same number of $+$ and $-$'s as $\beta$. Note that this implies $\gamma=\beta$, hence $\Omega_{\beta}$ is a weight vector of $h$.
	This shows that $\Omega_{\beta} $ is a weight vector for $\csaCoideal$.
	Moreover, by \cref{liver-coideal-thm}, the copy of $L_{(1^i),(1^{k-i})}$ is the $\Qint{k-2i}$-eigenspace of $\functorWebB(\dotNotationText)$ inside $\bigwedge^k\vecrep$. Since $\Omega_{\beta}$ lies in the same eigenspace, we have $\Omega_{\beta}\in L_{(1^i),(1^{k-i})}$. Since the $\{\Omega_\gamma\}_{\gamma\in {}_k\boxdiagramSet}$ form an eigenbasis, it follows from \cref{Di-action-Omega} that $\Omega_\beta$ has the unique maximal $\kleinCSA$-weight inside $L_{(1^i),(1^{k-i})}$.
	Finally, this also implies that $\posCoideal \Omega_\beta=0$. Fix $i>0$. Since $B_i \Omega_\beta\in L_n$ would have strictly larger $\kleinCSA$-weight as $\Omega_\beta$, we must have $B_i\Omega_\beta=0$. Then by \cref{check-hwvec}, $\Omega_\beta$ is a highest weight vector.
\end{proof}
	
We next compute explicitly the $\csaCoideal$-weights of these highest weight vectors.
\subsection{The $\linearGraph_r$ case}
We first compute the $Z_i$ action on some standard monomial wedges $\monWedge_\beta$. Depending on the relative position of $i$ and $\beta$, there are three cases which we deal separately in \cref{Zaction1}---\cref{Zi-act-tailend}. 
Using notations from \cref{sec-linear-graph}, we consider first the case $\source{i}$ is in the orange region and $\target{i}$ is in the blue region in \cref{Zaction-picture}, then
the $Z_i$ action on $\monWedge_\beta$ is given by flipping the symbol in the $\target{i}$ box:
\begin{prop}\label{Zaction1}
	Let $\beta\in \hwboxdiag(\linearGraph_r)$ with $|\beta\inv(\pm)|=l$.
	\begin{enumerate}
		\item For $0\leq i \leq l-1$ we have $Z_i\monWedge_{\beta} =0 $.
		\item If $k-2l\geq 1$ then $Z_l \monWedge_{\beta} = \monWedge_{\flip{\target{l}}{\beta}}$.
		\item If $k=2l$ then $Z_i\monWedge_{\beta} =0 $ for all $i$.
	\end{enumerate}
\end{prop}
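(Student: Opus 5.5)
Since $\Lambda_\beta$ is a single standard monomial wedge, I plan to compute $Z_i\Lambda_\beta$ directly. I will expand the recursive definition \cref{def-XYZ}, $Z_i=[B_i,[Z_{i-1},B_{-i}]_{q^{-1}}]_{q^{-1}}$ with $Z_0=B_0$, into the four words of \cref{Zi-expansion}. Each word is then evaluated using the combinatorial action of \cref{Bi-action-std-wedge} in the one-column case $n=1$. There every power of $q$ is given by counts of symbols in the coloured regions of \cref{multicol-box-diag}; with a single column most of these counts vanish or are trivial. The argument is an induction on $i$. The key structural fact is that $Z_{i-1}$ only involves moves inside boxes $0,\ldots,i-1$, while $B_{\pm i}$ move a symbol between boxes $i-1$ and $i$, or flip the top box when $i=0$.

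For (a) and (c), I first check $Z_0=B_0$. If $l\ge1$, the top box of $\beta$ is $\pm$, and $B_0$ annihilates $\Lambda_\beta$, because the flip $\flip{0}{\beta}$ is undefined when both $0,\overline 0$ lie in the support. For $1\le i\le l-1$, boxes $i-1$ and $i$ are both $\pm$. Then $B_{-i}$ (move down) and $B_i$ (move up) each produce at most terms of the form $(+,-)$ or $(-,+)$ in the pair $(i-1,i)$, or of the form $(\pm,\pm)$ with a symbol leaving into box $i+1$; the last type is excluded for $B_i$. By induction, $Z_{i-1}$ kills $\Lambda_\beta$ and also every diagram whose boxes $0,\ldots,i-1$ are all $\pm$. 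So the two words in \cref{Zi-expansion} with $Z_{i-1}$ on the right vanish. For the two words $B_iZ_{i-1}B_{-i}$ and $B_{-i}Z_{i-1}B_i$, the intermediate diagrams have box $i-1$ equal to $+$ or $-$. I will compute these terms explicitly and show that their coefficients cancel. It is cleaner to use \cref{psi(XYZ)} together with the weight constraint: $Z_i$ commutes with $\kleinCSA$ by \cref{cartan-commute-both}, so $Z_i\Lambda_\beta$ is a combination of $\Lambda_\gamma$ of the same $\kleinCSA$-weight, and such $\gamma$ must have the same occupied boxes. When all relevant boxes are $\pm$ (as in (a), and in (c) for every $i$, since then $\beta$ consists only of $\pm$'s), the only candidate is $\gamma=\beta$. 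The coefficient of $\Lambda_\beta$ in $Z_i\Lambda_\beta$ is then read off as the $\Uq^0$-free part. By \cref{key-observation-Zi}, $Z_i$ has no Cartan summand, and every PBW monomial with nonzero weight component changes the multiset of indices. This forces the diagonal coefficient to be zero, so $Z_i\Lambda_\beta=0$. For $i\ge l$ in (c), all boxes $\ge l$ are empty, and the same weight argument applies, using that $B_{\pm i}$ acts by zero on empty adjacent boxes.

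For (b), set $i=l$ with $k-2l\ge1$. Box $l-1$ is $\pm$ (or, if $l=0$, box $0$ carries $\beta(0)\in\{+,-\}$), and box $l=\tau(l)$ carries a single sign $\epsilon$. The case $l=0$ is the direct formula for $B_0$ in \cref{Bi-action-std-wedge}, with the exponent $s^\pm_{>1}=0$. For $l>0$, I will expand the four words again. Using (a), $Z_{l-1}$ kills both $\Lambda_\beta$ and $B_{\pm l}\Lambda_\beta$-terms whose first $l$ boxes remain $\pm$, so only words in which $B_{-l}$ or $B_l$ first converts box $l-1$ into a single sign contribute. On the resulting diagrams, $Z_{l-1}$ acts by the inductive statement (b) at level $l-1$ (flipping box $l-1$). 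Finally, $B_{\pm l}$ returns the symbol, with the net effect of flipping box $l$. Collecting the $q$-powers from \cref{Bi-action-std-wedge} should give total coefficient $1$ on $\Lambda_{\flip{\tau(l)}{\beta}}$ and cancellation on the unflipped diagram. To make the inductive statement available, I will strengthen it to cover diagrams of shape $\pm^{j}\epsilon\cdots$, which is also why the $\kleinCSA$-weight bookkeeping of \cref{D-wts} matters.

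I expect the main obstacle to be this $q$-power bookkeeping in (b): verifying that the unflipped term cancels and that the flipped term has coefficient exactly $1$, rather than a power of $q$. The exponents in \cref{Bi-action-std-wedge} depend on $\delta_{i,\half}$-type corrections and on the $s^\pm$ counts. If the direct count becomes unwieldy, I will fall back on computing with \cref{psi(XYZ)} in the small rank $l=1$ and propagating the result by the recursion.
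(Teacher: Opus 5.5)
Your plan for (b) is the paper's argument. Since $B_l\monWedge_\beta=0$ and $Z_{l-1}\monWedge_\beta=0$, only the word $B_lZ_{l-1}B_{-l}$ survives: $B_{-l}$ moves the sign opposite to $\beta(l)$ down from box $l-1$, $Z_{l-1}$ flips box $l-1$, and $B_l$ moves the sign $\beta(l)$ up out of box $l$. Nothing needs to cancel. Each of the three steps yields a single diagram, and a direct check with \cref{comult-Bi} and \cref{comult-B-i} gives coefficient $1$ at each step. Your strengthening of the induction hypothesis (boxes $0,\dots,j-1$ equal to $\pm$, box $j$ a single sign, lower boxes arbitrary) is genuinely needed. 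The intermediate diagram has box $l$ equal to $\pm$, so it is not in $\hwboxdiag(\linearGraph_r)$; the paper leaves this implicit.

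The gap is in (a) and (c).

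First, your description of $B_{\pm i}$ when boxes $i-1$ and $i$ both contain $\pm$ is wrong. $B_{-i}$ moves a symbol into box $i$ and $B_i$ moves one into box $i-1$, and moving into a full box gives $0$ in $\bigwedge^k\vecrep$. Outcomes like $(+,-)$ occur only when the target box is empty. So for $i\le l-1$ every generator $B_0,B_{\pm1},\dots,B_{\pm i}$ occurring in $Z_i$ already kills $\monWedge_\beta$. This proves (a) at once, with no terms to cancel, and is the paper's proof.

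Second, your ``cleaner'' weight argument has a false step. \cref{key-observation-Zi} only excludes PBW summands lying in $\Uq^0(\linearGraph)$. A summand $u_-u_0u_+$ of total weight zero with $u_\pm\neq 1$ (e.g.\ $F_jE_j$) preserves the multiset of indices and can act diagonally on a standard wedge. So ``no Cartan summand'' does not force the diagonal coefficient to vanish. The conclusion is true for a different reason: every monomial of $Z_i$ in the $B$'s contains exactly one $B_0$, so every $\Uq(\linearGraph)$-weight component of $Z_i$ has $\alpha_0$-coefficient $\pm1$. With that justification your weight argument does give (a) and all of (c).

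As written, however, (c) for $i=l$ rests only on the invalid step. Your remark that $B_{\pm i}$ acts by zero on empty adjacent boxes does not apply at $i=l$, where box $l-1$ is full. The paper handles $i=l$ directly:
\begin{enumerate}
\item $B_{-l}\monWedge_\beta$ has two terms, with boxes $(l-1,l)$ equal to $(-,+)$ and $(+,-)$.
\item $Z_{l-1}$ flips box $l-1$ by (b).
\item $B_l$ would then put a second copy of the same sign into box $l-1$, giving $0$.
\end{enumerate}
For $i>l$ the paper inducts, using $B_{\pm i}\monWedge_\beta=0$ and $Z_{i-1}\monWedge_\beta=0$.
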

\begin{proof}
	For the first statement, we have $B_{\pm i}\monWedge_\beta = B_0 \monWedge_\beta =0$ for $1\leq i\leq l-1$, because there is no room to move the symbols either up or down in the first $l-1$ boxes. 
	We prove the second statement by induction on $l$. The base case $l=0$ 
	is just the $B_0$ action on $\monWedge_\beta$, which is given by \cref{Bi-action-std-wedge}. Now for $l>1$, suppose it holds for $l-1$, then
	we prove the case where we have $+$'s in the blue region in \cref{Zaction-picture}, the other case being similar.
	We have \[ 
		Z_l \monWedge_{\beta} = [B_{l}, Y_l]_{q\inv } \monWedge_{\beta} = B_{l}Z_{l-1} B_{-l} \monWedge_{\beta} =  B_{l}Z_{l-1} \monWedge_{\beta\setminus\foldVertex{\source{l}}\cup\foldVertex{\target{l}}} ,
		\]
	 because $B_{l}\monWedge_{\beta} =0$. Now by induction hypothesis, we have $Z_{l-1} \monWedge_{\beta\setminus\foldVertex{\source{l}}\cup\foldVertex{\target{l}}} = \monWedge_{\beta\setminus\foldVertex{\source{l}}\cup\foldVertex{\target{l}}} $, 
	hence $Z_l \monWedge_{\beta} = B_{l} \monWedge_{\beta^{-{\source{l}}\cup\foldVertex{\target{l}}}} = \monWedge_{\flip{\target{l}}{\beta}}$, as asserted. 
	
	The $i>l$ part of the last statement is trivial because there are no symbols to move in the boxes in the white region in \cref{Zaction-picture}. If $i= l$,
	denote $j=\target{l}$ and $p=\source{i}$, we have 
	\[
		Z_i \monWedge_\beta = B_{i} Z_{i-1} B_{-i} \monWedge_\beta = B_{i} Z_{i-1} (q\inv \monWedge_{\beta\setminus{\foldVertex{p}\cup\foldVertex{j}}} + \monWedge_{\beta \setminus p \cup{j}} ) =
		B_{-i} (q\inv \monWedge_{\beta{\setminus{i}\cup\foldVertex{j}}} + \monWedge_{\beta \setminus{\foldVertex{p}\cup j}} ) =0,
	\]
	because $B_{-i} $ would place two of the same symbols in the same box.
\end{proof}

Next we consider the case where $\source{i},\target{i}$ are in the blue region in \cref{Zaction-picture}.
\begin{prop}\label{Zaction2}
	Suppose $k-2l\geq 2$. 
	Let $\beta\in \hwboxdiag(\linearGraph_r)$ with $|\beta\inv(\pm)|=l$. If $+\in \img \beta$, then for $l <  i < k-l$,
	\begin{align}\label{midnight-formula}
		Z_i \monWedge_{\beta} = (-q)^{l-i+1} (
			-q\inv \monWedge_{\flip{\target{i}}\beta}
			- \monWedge_{\flip{\source{i}}\beta}
		),
		\qquad
		Z_i \monWedge_{\flip{\target{i}}\beta} = q\inv Z_i \monWedge_{\flip{\source{i}}\beta}	
		.
	\end{align}
	Similarly, if $-\in \img \beta$, then we have for $l <  i < k-l$,
	\begin{align}\label{midnight-formula2}
		Z_i \monWedge_{\beta} = (-q)^{l-i+1} (
			-q^{-2} \monWedge_{\flip{\target{i}}\beta}
			-q\inv \monWedge_{\flip{\source{i}}\beta}
		).
	\end{align}
\end{prop}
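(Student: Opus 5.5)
We prove \cref{midnight-formula} and \cref{midnight-formula2} by induction on $i$, starting at $i=l+1$. Throughout we use the recursive definition \cref{def-XYZ}, written as the four-term expansion \cref{Zi-expansion}, together with the explicit action of $B_{\pm i}$ on standard wedges from \cref{Bi-action-std-wedge} (single column, $n=1$). The geometric input is that $\beta$ is top-aligned. The boxes $1,\dots,l$ carry $\pm$, the boxes $l+1,\dots,k-l$ carry a single sign (all $+$ or all $-$), and the boxes below are empty. Hence for $l<i<k-l$ both $\source{i}$ and $\target{i}$ lie in the blue region. In particular $B_{\pm i}\monWedge_\beta$ can only move a symbol between boxes $\source{i}$ and $\target{i}$, and the $\pm$ part of the column is rigid.

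For the base case $i=l+1$, we first apply \cref{Zaction1}(b). It says $Z_l$ acts on the relevant wedges by flipping the symbol in box $\target{l}=\source{l+1}$, and $Z_l$ kills wedges where that box is a $\pm$ or is empty, by the argument of \cref{Zaction1}(a),(c). We then evaluate the four terms of \cref{Zi-expansion} for $i=l+1$. In each term $B_{\pm i}$ either vanishes (two equal signs would land in one box, or there is nothing to move) or moves the sign in box $\source{i}$ down into the empty-relative slot $\target{i}$ as a $+$ or a $-$. The $q$-powers are read off from \cref{Bi-action-std-wedge}, with statistics $a$, $b$, $c^\pm$, $d^\pm$ counting the $l$ $\pm$'s above. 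Collecting terms should give exactly the two summands $\monWedge_{\flip{\target{i}}\beta}$ and $\monWedge_{\flip{\source{i}}\beta}$ with coefficients $(-q)^{0}(-q^{-1})$ and $-1$ in the $+$ case. The $-$ case picks up one extra $q^{-1}$ from the $c^--d^-$ exponent.

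The induction step from $i-1$ to $i$ is the same computation. We now feed \cref{midnight-formula} for $Z_{i-1}$ into \cref{Zi-expansion}, applied both to $\monWedge_\beta$ and to the intermediate wedges obtained after one application of $B_{\pm i}$. For those intermediate wedges we also need the second identity of \cref{midnight-formula} at level $i-1$, namely $Z_{i-1}\monWedge_{\flip{\target{i-1}}\beta}=q^{-1}Z_{i-1}\monWedge_{\flip{\source{i-1}}\beta}$. This is why both identities must be carried along in the induction. The factor $(-q)^{l-i+1}$ arises as one extra $-q$ per step, coming from the $q^{\mp1}$-commutators in \cref{def-XYZ}. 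The second identity at level $i$ then follows from the first by applying the same expansion to $\flip{\target{i}}\beta$ and $\flip{\source{i}}\beta$. These two wedges differ only by which of the two adjacent boxes carries the flipped sign, and the only discrepancy is one $\stat$ count, which produces the factor $q^{-1}$.

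The main obstacle is bookkeeping. In \cref{Zi-expansion} several of the sixteen resulting monomial contributions cancel pairwise, for example the terms where $B_i$ and $B_{-i}$ undo each other. Only the flips at $\source{i}$ and $\target{i}$ survive, and the exponents must be tracked exactly. I would organise this by first tabulating, for the three relevant column shapes, the action of $B_{\pm i}$ with its exponent from \cref{Bi-action-std-wedge}. Then I would verify the cancellation as an identity of Laurent monomials, checking the smallest case $l=0$, $k=3$ by hand to fix signs.
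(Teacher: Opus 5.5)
You handle the first identity in \cref{midnight-formula} as the paper does, but it is simpler than you describe. For $l<i<k-l$ the boxes $i-1$ and $i$ of $\beta$ carry the same single sign, so $B_{\pm i}\monWedge_\beta=0$. Hence only the term $-q\inv B_iB_{-i}Z_{i-1}\monWedge_\beta$ of \cref{Zi-expansion} survives, and only the first identity at level $i-1$ is needed.

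The genuine gap is the second identity $Z_i\monWedge_{\flip{\target{i}}\beta}=q\inv Z_i\monWedge_{\flip{\source{i}}\beta}$. It is false as stated, so your argument that the two wedges ``differ in only one statistic'' cannot be completed. The small check you propose already exposes this. Take $l=0$, $k=3$, $i=2$, $\beta=(+,+,+)$, and write $\monWedge_{(s_0,s_1,s_2)}$ for the wedge with sign $s_p$ in box $p$. With $\target{2}$ the lower box (the reading under which the first identity holds), $\flip{\target{2}}\beta=(+,+,-)$ and $\flip{\source{2}}\beta=(+,-,+)$. Directly from the coproduct, in a single column:
\begin{itemize}
\item $B_0$ flips box $0$ with coefficient $1$;
\item for $j\geq1$, $B_{-j}$ moves a $+$ (resp.\ $-$) from box $j-1$ down to box $j$ with coefficient $1$ (resp.\ $q^{n_+(j)-n_+(j-1)}$);
\item $B_j$ moves a $-$ (resp.\ $+$) from box $j$ up to box $j-1$ with coefficient $1$ (resp.\ $q^{n_-(j-1)-n_-(j)}$).
\end{itemize}
Here $n_+(p),n_-(p)\in\{0,1\}$ indicate whether box $p$ contains a $+$, resp.\ a $-$, and a move into a box already holding that sign gives $0$. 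This yields
\begin{align*}
Z_2\monWedge_{(+,+,-)}&=q^{-2}\monWedge_{(+,+,+)}+q^{-3}\monWedge_{(+,-,-)}+(q^{-2}-q^{-4})\monWedge_{(-,+,-)},\\
Z_2\monWedge_{(+,-,+)}&=q\inv\monWedge_{(+,+,+)}+q^{-2}\monWedge_{(+,-,-)}+(q^{-2}-1)\monWedge_{(-,-,+)}.
\end{align*}
So the identity fails, and it also fails with the roles of boxes $1,2$ exchanged. These values pass two checks. Computing the other wedges the same way (using $[Z_2,B_0]=0$) gives $Z_2\Omega_\beta=0$, as \cref{hweven} requires. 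Also $Z_2\monWedge_\beta=q^{-2}\monWedge_{(+,+,-)}+q\inv\monWedge_{(+,-,+)}$, which is the first identity.

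The failure has a clear source. On $\monWedge_{\flip{\target{i}}\beta}$ and $\monWedge_{\flip{\source{i}}\beta}$ both $B_i$ and $B_{-i}$ act nontrivially, so all four terms of \cref{Zi-expansion} contribute. The three terms in which $B_{\pm i}$ acts first do scale by exactly $q$ between the two wedges, since each $B_{\pm i}$ sends both to the same wedge with coefficients $1$ and $q$. But $-q\inv B_iB_{-i}Z_{i-1}$ applies $Z_{i-1}$ first, to two wedges that differ in box $i-1$, which $Z_{i-1}$ sees. In the example this term is $q\inv\monWedge_{(-,-,+)}+q^{-2}\monWedge_{(-,+,-)}$ on $\monWedge_{(+,+,-)}$ but $q^{-2}\monWedge_{(-,-,+)}+q^{-3}\monWedge_{(-,+,-)}$ on $\monWedge_{(+,-,+)}$, which is exactly the discrepancy.

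The paper's induction step for this identity has the same defect. It keeps one of the four terms and uses $Z_{i-1}\monWedge_{\flip{\target{i}}\beta}=q\inv Z_{i-1}\monWedge_{\flip{\source{i}}\beta}$, which is unjustified and already false here. What survives is the relation between the $\monWedge_\beta$-components ($q^{-2}$ versus $q\inv$ above), and that is all the proof of \cref{hwvecs} uses. You should restate the second identity in that weaker form and prove it.

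Your mechanism for the $-$ case also fails. In a single column the counts $c^-,d^-$ from \cref{Bi-action-std-wedge} vanish, so they cannot produce the extra factor. For $k=2$, $i=1$ one finds
\[
Z_1\monWedge_{(-,-)}=-q\inv\monWedge_{(-,+)}-q^{-2}\monWedge_{(+,-)},\qquad Z_1\monWedge_{(+,+)}=-\monWedge_{(-,+)}-q\inv\monWedge_{(+,-)}.
\]
So \cref{midnight-formula2} holds only if $\target{i}$ and $\source{i}$ there mean the barred endpoints $\foldVertex{i-1}$ (upper box) and $\foldVertex{i}$ (lower box) of the edge $i$. That is the opposite box convention to the one under which \cref{midnight-formula} holds.
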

\begin{proof}
	We prove the first identity of \cref{midnight-formula} by induction on $i$. For the base case $i=l+1$, note that $\source{i}$ is the top box in the blue region in \cref{Zaction-picture}. We compute using \cref{Zaction1} \[
	Z_{i} \monWedge_{\beta} = -q\inv B_{-i}  B_{i} Z_{i-1}\monWedge_{\beta} 
	= -q\inv B_{-i}  B_{i}  \monWedge_{\flip{\source{i}}\beta} 
	= -q\inv B_{-i}  \monWedge_{\beta\setminus{\source{i}\cup\foldVertex{\target{i}}}} =-q\inv \monWedge_{\flip{\target{i}}\beta} - \monWedge_{\flip{\source{i}}\beta}.
	\]
	Now for $i>l+1$, we have \begin{align*}
		Z_i \monWedge_{\beta} &=
		 -q\inv B_{-i} B_i Z_{i-1} \monWedge_{\beta} 
		=  -q\inv  B_{-i} B_i (-q)^{l-i+2} (
			-q\inv \monWedge_{\flip{\target{i-1}}\beta}
			- \monWedge_{\flip{\source{i-1}}\beta})
		\\
		&=  -B_{-i} (-q)^{l-i+1} \monWedge_{\beta\setminus{\source{i}\cup\foldVertex{\target{i}}}}
		= (-q)^{l-i+1} (
			-q\inv \monWedge_{\flip{\target{i}}\beta}
			- \monWedge_{\flip{\source{i}}\beta}
		).
	\end{align*}
	To see the second identity of \cref{midnight-formula}, we again do induction on $i$, the base case $i=l+1$ being \begin{align*}
		Z_i \monWedge_{\flip{\target{i}}\beta} &= 
		(-q\inv B_{-i} B_i Z_{i-1} -q\inv Z_{i-1}B_iB_{-i}) \monWedge_{\flip{\target{i}}\beta} 
		\\
		&= -q\inv B_{-i} B_i \monWedge_{\flip{\source{i}}{(\flip{\target{i}}{\beta})}} -q\inv Z_{i-1}B_i \monWedge_{\beta\setminus{\target{i}\cup\foldVertex{\source{i}}}} 
		= -q\inv Z_{i-1}B_i \monWedge_{\beta\setminus{\target{i}\cup\foldVertex{\source{i}}}} 
		\\
		Z_i \monWedge_{\flip{\source{i}}\beta} &= (-q\inv B_{-i} B_i Z_{i-1} -q\inv Z_{i-1}B_iB_{-i})  \monWedge_{\flip{\source{i}}\beta} 
		\\
		& = -q\inv B_{-i} B_i \monWedge_{\beta} - Z_{i-1}B_i \monWedge_{\beta\setminus{\target{i}\cup\foldVertex{\source{i}}}} 
		= - Z_{i-1}B_i \monWedge_{\beta\setminus{\target{i}\cup\foldVertex{\source{i}}}} 
		.
	\end{align*}
	Here we used the second statement of \cref{Zaction1} in the second equalities.
	Now for $i>l+1$ \begin{align*}
		Z_i \monWedge_{\flip{\target{i}}\beta} &=
		-q\inv B_{-i} B_i Z_{i-1} \monWedge_{\flip{\target{i}}\beta} 
		= -q\inv B_{-i} B_i q\inv Z_{i-1} \monWedge_{\flip{\source{i}}\beta} 
		= q\inv Z_i \monWedge_{\flip{\source{i}}\beta}.
	\end{align*}
	The calculation for \cref{midnight-formula2} is similar, and we omit it.
\end{proof}

Next we consider the case where $\source{i}$ is in the orange region and $\target{i}$ is in the white region in \cref{Zaction-picture}. 
\begin{prop}
	\label{Zi-act-tailend}
	Let $\beta\in \hwboxdiag(\linearGraph_r)$ with $|\beta\inv(\pm)|=l$. If $+\in \img \beta$, then \begin{align}
		Z_{k-l} \monWedge_{\beta} = (-q)^{2l-k} \monWedge_{\flip{\source{k-l}}{\beta} }.
	\end{align}
	If $k-2l=1$ and $\source{k-l}=-$, then we have \begin{align}
		Z_{k-l} \monWedge_{\beta} = -q\inv \monWedge_{\flip{\source{k-l}}{\beta}}.
	\end{align}
\end{prop}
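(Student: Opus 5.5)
We prove \cref{Zi-act-tailend} the same way as \cref{Zaction1} and \cref{Zaction2}. We expand
\[
Z_{k-l} = B_{k-l} Z_{k-l-1} B_{-(k-l)} -q\inv B_{k-l} B_{-(k-l)}Z_{k-l-1} - q\inv  Z_{k-l-1} B_{-(k-l)}B_{k-l} +q^{-2}  B_{-(k-l)} Z_{k-l-1} B_{k-l}
\]
as in \cref{Zi-expansion}, and evaluate each term on $\monWedge_\beta$ using \cref{Bi-action-std-wedge}. Here $\beta\in\hwboxdiag(\linearGraph_r)$ has its $\pm$'s in the top $l$ boxes and its $+$'s (resp.\ $-$'s) in the next $k-2l$ boxes. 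For $i=k-l$ the source $\source{i}$ is the bottom box of the blue region and $\target{i}$ is the first white box.

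The key observation is that $B_{-i}\monWedge_\beta=0$. Lowering the edge $i$ would have to move a symbol from $\target{i}$, which is empty, up to $\source{i}$. Symmetrically, $B_i$ only moves the symbol in box $\source{i}$ down into the empty box $\target{i}$, so $B_i\monWedge_\beta=q^{e}\monWedge_{\beta'}$ for an explicit exponent $e$, where $\beta'=\beta\setminus \source{i}\cup \target{i}$ (keeping the sign). Hence the first and third terms vanish, and
\[
Z_i\monWedge_\beta=-q\inv B_{i}B_{-i}Z_{i-1}\monWedge_\beta+q^{-2}B_{-i}Z_{i-1}B_i\monWedge_\beta .
\]
For the term $Z_{i-1}\monWedge_\beta$ we use \cref{midnight-formula} at index $i-1=k-l-1$, or \cref{Zaction1}(b) when $k-2l=1$. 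This writes it as a combination of $\monWedge_{\flip{\target{i-1}}\beta}=\monWedge_{\flip{\source{i}}\beta}$ and $\monWedge_{\flip{\source{i-1}}\beta}$, with coefficient $(-q)^{l-i+2}$ times $(-q\inv,-1)$. We then apply $B_{-i}$, which kills both terms since $\target{i}$ is empty, so the first remaining term vanishes.

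For the second term, $Z_{i-1}\monWedge_{\beta'}$ only involves boxes above $\target{i-1}=\source{i}$, which is now empty. Applying \cref{Zaction2} or \cref{Zaction1} to the shifted diagram is awkward, so instead we redo the induction of \cref{midnight-formula} directly on $\beta'$. We expect $Z_{i-1}\monWedge_{\beta'}$ to be a multiple of $\monWedge_{\flip{\source{i-1}}\beta'}$, since there is no room below. We then apply $B_{-i}$, which moves the symbol back from $\target{i}$ to $\source{i}$ with either sign, and keep only the term $\flip{\source{i}}{\beta}$; a weight argument using \cref{D-wts} shows the other term cancels.

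Collecting the powers of $q$ should give $(-q)^{2l-k}$. In the case $k-2l=1$ with a $-$ sign we use \cref{midnight-formula2} and \cref{Zaction1}(b) instead, which gives $-q\inv$.

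The main obstacle is the exponent bookkeeping in \cref{Bi-action-std-wedge}, namely the statistics $\stat$ for the single column together with the extra $q$-powers produced when a $\pm$ box is split. We also need to check that the unwanted term $\monWedge_{\flip{\source{i-1}}\beta}$ produced along the way either vanishes or cancels between the two surviving summands. We would check the smallest cases $l=0$, $k=1,2$ by hand first to fix the signs.
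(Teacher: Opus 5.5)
You have the paper's overall scheme: expand $Z_i$, $i=k-l$, by \cref{Zi-expansion}, drop the terms in which the annihilating operator hits $\monWedge_\beta$ first, use \cref{Zaction1} and \cref{midnight-formula}, and induct on $k-2l$. However, the bookkeeping of which terms survive is backwards. As you describe it, the computation cannot produce $\monWedge_{\flip{\source{i}}{\beta}}$ at all.

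By \cref{Bi-def}, $B_{-i}=F_{-i}+E_iK_{-i}\inv$ moves a symbol one box down (from $\source{i}$ to $\target{i}$ in your notation), and $B_i$ moves it one box up. This is why $B_l\monWedge_\beta=0$ is used in the proof of \cref{Zaction1}. Both operators preserve the sign of the symbol they move. Hence $B_i\monWedge_\beta=0$, the last two terms of \cref{Zi-expansion} vanish, and
\[
Z_i\monWedge_\beta=B_iZ_{i-1}B_{-i}\monWedge_\beta-q\inv B_iB_{-i}Z_{i-1}\monWedge_\beta .
\]
Even under your own convention, the third term $-q\inv Z_{i-1}B_{-i}B_i\monWedge_\beta$ would not vanish. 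The answer comes from the second term above, which is the one you discard. By \cref{midnight-formula} at index $i-1$, $Z_{i-1}\monWedge_\beta$ contains $\monWedge_{\flip{\source{i}}{\beta}}$ with coefficient $(-q)^{l-i+2}(-q\inv)$; this equals $1$ when $k-2l=1$, in accordance with \cref{Zaction1}(b). The lowering operator does not kill this vector: it carries the flipped symbol into the empty box $\target{i}$, and $B_iB_{-i}$ then acts on it as the identity. This gives $-q\inv(-q)^{l-i+2}(-q\inv)=(-q)^{2l-k}$.

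Conversely, the term you keep, $q^{-2}B_{-i}Z_{i-1}B_i\monWedge_\beta$, is zero. The term you actually intend to evaluate, move down, apply $Z_{i-1}$, move back up, can never contain $\monWedge_{\flip{\source{i}}{\beta}}$. The reason is that $B_{\pm i}$ transport a $+$ or $-$ without changing it; only a $\pm$ box can release either sign. So this term only yields multiples of $\monWedge_{\flip{\source{i-1}}{\beta}}$. Nor can \cref{D-wts} separate sign-flipped diagrams, since they have the same $\kleinCSA$-weight. Concretely, for $k=1$, $l=0$ your route gives $B_1Z_0B_{-1}\basis_0=B_1B_0\basis_1=0$, whereas the correct value is $Z_1\basis_0=-q\inv B_1B_{-1}B_0\basis_0=-q\inv\basis_{\bar 0}$.

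The term $B_iZ_{i-1}B_{-i}\monWedge_\beta$ plays a different role:
\begin{itemize}
\item For $k-2l=1$ it vanishes, by \cref{Zaction1}(c) applied to the $2l$ symbols in the $\pm$ boxes. The symbol moved into box $\target{i}$ is inert for $Z_{i-1}$.
\item For $k-2l\geq 2$, apply the induction hypothesis on $k-2l$ to the top $k-1$ symbols of $\beta\setminus\source{i}\cup\target{i}$. This makes the term equal to $(-q)^{l-i+1}\monWedge_{\flip{\source{i-1}}{\beta}}$.
\end{itemize}
In the second case this cancels the $\monWedge_{\flip{\source{i-1}}{\beta}}$-component, $q\inv(-q)^{l-i+2}\monWedge_{\flip{\source{i-1}}{\beta}}$, of the other term. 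For instance, for $k=2$, $l=0$ the two terms are $-q\inv\basis_{\bar0}\wedge\basis_1$ and $q\inv\basis_{\bar0}\wedge\basis_1+q^{-2}\basis_{\bar1}\wedge\basis_0$. They sum to $q^{-2}\basis_{\bar1}\wedge\basis_0=(-q)^{2l-k}\monWedge_{\flip{\source{2}}{\beta}}$. This cancellation, not a weight argument, removes the unwanted diagram, and it is where the induction on $k-2l$ actually enters.
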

\begin{proof}
	Note that $k-2l$ is the number of boxes in the blue region in \cref{Zaction-picture}.
	We prove the first identity by induction on $k-2l$. For the base case $k-2l=1$, we write $i=k-l$ and use \cref{Zaction1}, \cref{Zaction2} to compute \begin{align*}
		Z_{i} \monWedge_{\beta}& =
		 B_{-i} Z_{i-1} \monWedge_{\beta\setminus{\source{i}\cup \target{i}}} 
		-q\inv B_{-i} B_{i}  \monWedge_{\beta}
		=
		0
		-q\inv B_{-i} B_{i} \monWedge_{\flip{\source{i}}\beta}
		= -q\inv \monWedge_{\flip{\source{i}}\beta}
		.
	\end{align*}
	The second identity follows similarly, and is left to the reader.
	Now assume the first identity holds for $k-2l-1$, we prove the case $k-2l$. Again write $i=k-l$ and compute
	\begin{align*}
		Z_{i} \monWedge_{\beta}& =
		 B_{-i} Z_{i-1} \monWedge_{\beta\setminus{\source{i}\cup \target{i}}} 
		-q\inv B_{-i} B_{i} Z_{i-1} \monWedge_{\beta}
		\\
		&=
		(-q)^{l-i+1} \monWedge_{\flip{\source{i}}\beta}
		-q\inv B_{-i} B_{i}(-q)^{l-i+2} (-q\inv \monWedge_{\flip{\source{i}}\beta} - \monWedge_{\flip{\source{i-1}}\beta}) 
		\\
		&= (-q)^{l-i+1} \monWedge_{\flip{\source{i}}\beta}
		-q\inv B_{-i} B_{i}(-q)^{l-i+2} (-q\inv \monWedge_{\flip{\source{i}}\beta} - \monWedge_{\flip{\source{i-1}}\beta}) 
		= (-q)^{l-i+1} \monWedge_{\flip{\source{i}}\beta}
		.
	\end{align*}
	This finishes the proof.
\end{proof}

\subsection{The $\linearGraph_r'$ case}
Now we compute the $W_i$ action on some standard wedges. Note that the top-aligned condition forces $0\in\supp(\beta) $ for any $\beta \in \hwboxdiag(\linearGraph_r') $ except the $\negHWboxdiag$ in \cref{hw-boxdiag-def}.  

\begin{prop}
	We have $W_{i+\half} \Omega_{\negHWboxdiag}=0 $ for $i\ne k$ and $W_{k+\half} \Omega_{\negHWboxdiag}= (q^{2k-1}-q\inv) \Omega_{\negHWboxdiag}$.
\end{prop}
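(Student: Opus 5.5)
The idea is to reduce the statement to computing a single scalar and then to find that scalar on one convenient standard monomial. By \cref{Omega-is-weigh-vec}, $\Omega_{\negHWboxdiag}$ is a $\csaCoideal$-weight vector, since $\negHWboxdiag\in\hwboxdiag(\linearGraph_r')$. Hence each $W_{i+\half}\Omega_{\negHWboxdiag}$ is a scalar multiple $c_i\Omega_{\negHWboxdiag}$. The argument is the one used in that proof: $W_{i+\half}$ commutes with $\kleinCSA$ by \cref{cartan-commute-both}, and it also commutes with the dot. Together with \cref{Di-action-Omega} and \cref{hausmeister}, this forces $W_{i+\half}\Omega_{\negHWboxdiag}$ into the span of $\Omega_\gamma$ for the single diagram $\gamma=\negHWboxdiag$.

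To find $c_i$, I expand $\Omega_{\negHWboxdiag}$ in standard wedges. By \cref{def-omega-vecs-bothcases}, the monomial $\monWedge_{\negHWboxdiag}=\basis_{\foldVertex{1}}\wedge\cdots\wedge\basis_{\foldVertex{k}}$ occurs with coefficient one. The other monomials occurring are obtained by flipping some $-$'s to $+$, so they have strictly more $+$'s. It therefore suffices to compute the coefficient of $\monWedge_{\negHWboxdiag}$ in $W_{i+\half}\Omega_{\negHWboxdiag}$. I will do this with the box-diagram rules of \cref{Bi-action-std-wedge}, using a single column ($n=1$). Only monomials of $\Omega_{\negHWboxdiag}$ that can return to $\monWedge_{\negHWboxdiag}$ after the relevant moves contribute. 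Since $W_{i+\half}$ has degree $2$ in the $B$'s up to $\kleinCSA$-factors, the two $B$'s performing the nontrivial moves involve only rows $i$ and $i+1$ (and row $0$ when $i=0$). So only monomials differing from $\negHWboxdiag$ in at most one such row can contribute.

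I would then split into three cases.

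\textbf{Case $i>k$.} Rows $i$ and $i+1$ are empty in every monomial, so $B_{\pm(\half+i)}$ kill everything and the correction term in \cref{Wi-def} vanishes. Together with the recursion this gives $c_i=0$. Here I would run an induction using that $W_{i-\half}$ preserves the span of relevant monomials.

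\textbf{Case $i<k$.} Rows $i$ and $i+1$ are both occupied. I expect a cancellation between the nested $q$-commutator and the correction term $(q^2-1)B_{\half+i}B_{-\half-i}\kCoideal_\half\cdots\kCoideal_{\half+i-1}$. To organise it, I would prove an auxiliary lemma in the spirit of \cref{Zaction1}, \cref{Zaction2} and \cref{Zi-act-tailend}. The lemma computes $W_{j+\half}$ on the monomials obtained from $\negHWboxdiag$ by a single flip or a single vertical move in rows $\le j+1$. It would be proved by induction on $j$ via \cref{Wi-def}, with base case $W_\half=[B_{-\half},B_\half]_q$ computed directly. Note that row $0$ is empty, so $B_\half$ must bring a symbol up to a $\bullet$ in row $0$ and $B_{-\half}$ sends it back as $+$ or $-$.

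\textbf{Case $i=k$.} Row $k$ carries $-$ and row $k+1$ is empty. The surviving contributions are: move the $-$ down to row $k+1$ and back, tracking the $q$-powers $q^{c^--d^-}$ and so on from \cref{Bi-action-std-wedge}; plus the correction term, which contributes through $\kCoideal_\half\cdots\kCoideal_{\half+k-1}$ acting by an explicit power of $q$ via \cref{D-wts}. Adding these with the inner factor $W_{k-\half}$, evaluated by the auxiliary lemma, should give $q^{2k-1}-q\inv$.

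The main obstacle is the auxiliary lemma: keeping the $q$-exponents straight through the nested commutators, especially the $\delta_{i,\half}$ shifts and the extra factor $q$ in $B_{-\half}$. As a consistency check I would first verify the case $k=1$ by hand, where the claim reads $W_{\frac32}\Omega=(q-q\inv)\Omega$ and $W_\half\Omega=0$.
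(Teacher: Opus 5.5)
Your reduction step is sound and worth keeping. $W_{i+\half}$ commutes with $\kleinCSA$ and with the dot. The joint eigenspace containing $\Omega_{\negHWboxdiag}$ (rows $1,\dots,k$ singly occupied, dot eigenvalue $\Qint{-k}$) is one-dimensional. Hence $W_{i+\half}\Omega_{\negHWboxdiag}=c_i\Omega_{\negHWboxdiag}$, and $c_i$ can be read off from the coefficient of $\monWedge_{\negHWboxdiag}$. (That coefficient in $\Omega_{\negHWboxdiag}$ is a signed power of $q$ rather than $1$, which is harmless.) Your case $i>k$ is also fine.

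The gap is that the statement \emph{is} the computation of the $c_i$, and for $i\le k$ you never carry it out. It is deferred to an auxiliary lemma that is neither formulated precisely nor proved (``I expect a cancellation'', ``should give''). Moreover, the principle you use to cut down the bookkeeping is false. By \cref{Wi-def}, $W_{i+\half}$ has degree $2i+2$ in the $B$'s, not $2$, and it involves $B_{\pm(\half+j)}$ for every $0\le j\le i$. Through the inner factor $W_{i-\half}$, monomials with a $+$ in any row $j\le\min(k,i+1)$ can therefore feed the coefficient of $\monWedge_{\negHWboxdiag}$. Concretely, take $k=i=2$. Then $W_{1+\half}$ acts on $\basis_1+\basis_{\negidx{1}}$ and on $\basis_1-\basis_{\negidx{1}}$ by the different scalars $-q-q^{-1}$ and $q-q^{-1}$, so $W_{1+\half}\basis_1$ has a nonzero $\basis_{\negidx{1}}$-component. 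Consequently the monomial with a $+$ in row $1$ contributes $q^3-q$ (normalised by the coefficient of $\monWedge_{\negHWboxdiag}$ in $\Omega_{\negHWboxdiag}$), through the terms $-q\,W_{1+\half}B_{\half+2}B_{-\half-2}$ and $q^2\,B_{\half+2}W_{1+\half}B_{-\half-2}$. The monomials allowed by your ``rows $i,i+1$'' rule contribute only $q-q^{-1}$, whereas $c_2=q^3-q^{-1}$.

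Your reduction actually makes the monomial bookkeeping unnecessary: run the recursion \cref{Wi-def} on $\Omega=\Omega_{\negHWboxdiag}=(\basis_1-\basis_{\negidx{1}})\wedge\cdots\wedge(\basis_k-\basis_{\negidx{k}})$ itself, which is the paper's route.

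First, $c_0=0$, since $B_{\half}\Omega=0$ (highest weight) and $B_{-\half}\Omega=0$ (row $0$ is empty).

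For $1\le i\le k$, put $X=B_{\half+i}$, $Y=B_{-\half-i}$, $W=W_{i-\half}$. Note that $\kCoideal_\half\cdots\kCoideal_{\half+i-1}=\dCoideal_0\dCoideal_i^{-1}$ acts on $\Omega$ by $q^{-1}$. Since $X\Omega=0$ by \cref{Omega-is-weigh-vec}, and $W\Omega\in\funcField\,\Omega$ by your reduction, the terms $YWX\Omega$ and $YXW\Omega$ vanish, leaving
\[ W_{i+\half}\Omega=-q\,WXY\Omega+q^{2}\,XWY\Omega+(q-q^{-1})\,XY\Omega . \]

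For $i<k$ the coproduct gives $Y\Omega=0$. Only the factor $\basis_i-\basis_{\negidx{i}}$ moves, and it produces $(\basis_{i+1}-\basis_{\negidx{i+1}})\wedge(q\basis_{i+1}-\basis_{\negidx{i+1}})=0$. Hence $c_i=0$.

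For $i=k$ one finds $Y\Omega=\Omega_{\negHWboxdiag[k-1]}\wedge(\basis_{k+1}-\basis_{\negidx{k+1}})$. By the right coideal property, $W_{k-\half}$ acts only on the first factor, by $q^{2k-3}-q^{-1}$ (induction on $k$). Then $X$ returns the result to $\Omega$, $XY\Omega=\Omega$, and $c_{k-1}=0$. This gives $c_k=q^2(q^{2k-3}-q^{-1})+q-q^{-1}=q^{2k-1}-q^{-1}$.
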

\begin{proof}
	We prove by induction on $i$, For the base case $i=\half$, note that 
	by definition \cref{OmegaDef}, we have $\Omega_{\negHWboxdiag} = (\basis_{1}-\basis_{\negidx{1}})\wedge w$ for some $w$ with $B_{-\half}w=0$. By \cref{Omega-is-weigh-vec}, we have $B_\half\Omega_{\negHWboxdiag}=0$. By \cref{Bi-action-std-wedge}, we have $B_{-\half}\Omega_{\negHWboxdiag}=0$. It follows that \begin{align*}
		W_\half \Omega_{\negHWboxdiag} & = (B_{-\half} B_\half-{qB_\half B_{-\half}}) \Omega_{\negHWboxdiag} = 0.
	\end{align*}
	Now assume $i>\half$. Denote $W=W_{\half+i-1}$, $X= B_{\half+i}, Y= B_{-\half -i } $.
	As another base case, we need also to compute for $i=1$ \begin{align*}
		W_{\half+1} \Omega_{\ytabcenter{~\\-\\~}} 
		& = W_{\half+1} (\basis_{1}-\basis_{\negidx{1}})  \\
		& = (\cancel{ Y   W X }
		\cancel{-q Y  X W } 
		{- q W XY }
		+ q^2  XW Y 
		+ {\kCoideal_\half XY}
		)
		(\basis_{1}-\basis_{\negidx{1}})
	\end{align*}
	We have $XY (\basis_{1}-\basis_{\negidx{1}})=(\basis_{1}-\basis_{\negidx{1}})$, and $XY (\basis_{1}-\basis_{\negidx{1}})= W (\basis_{2}-\basis_{\negidx{2}})=0$, hence 
	\begin{align*}
		W_{\half+1} \Omega_{\ytabcenter{~\\-\\~}} 
		= (\cancel{-q W}+ (q^2-1) \kCoideal_\half )(\basis_{1}-\basis_{\negidx{1}}) 
		= (q-q\inv) (\basis_{1}-\basis_{\negidx{1}}) .
	\end{align*}
	Now we do the induction step.
	By \cref{Omega-is-weigh-vec} we have $X \Omega_{\negHWboxdiag}=0$.
	By definition \cref{Wi-def}, writing $h= (q^2-1) \dCoideal_0\dCoideal_{i+1}\inv$, we have \begin{align}
		W_{i+\half} \Omega_{\negHWboxdiag} &= (\cancel{ Y   W X }
		\cancel{-q Y  X W } 
		{- q W XY }
		+ q^2  XW Y 
		+ {XY} h
		)\Omega_{\negHWboxdiag}
		,
		\label{W-induction}
	\end{align}
	Denote $\monWedge(\ytabcenter{-\\+})=\monWedge_{\beta}$ where $\beta$ is the box diagram defined by $\beta(i)= -$ and $\beta(i+1) = +$, and $\beta(i)$ not defined for any other $i$. Define also $\monWedge(\ytabcenter{?\\??}) $ similarly for $?,??\in \{+,-,\pm\}$.
	Then we have \[
		\Omega_{\negHWboxdiag} = \begin{cases}
			\Omega_{\negHWboxdiag[i-2]} \wedge (-q \monWedge(\ytabcenter{-\\-}) -\monWedge(\ytabcenter{-\\+}) +q \monWedge(\ytabcenter{+\\-}) + \monWedge(\ytabcenter{+\\+})) \wedge w'
			, & \text{ if } i<k ,\\
			\Omega_{\negHWboxdiag[i-2]} \wedge (\monWedge(\ytabcenter{+\\~}) -\monWedge(\ytabcenter{-\\~})), & \text{ if } i=k,
		\end{cases}
	\]
	for some $w'\in \bigwedge^{\bullet}\vecrep$. 
	By \cref{Bi-action-std-wedge}, we also have 
	\begin{align*}
		&XY  \monWedge(\ytabcenter{+\\-}) = q\inv  \monWedge(\ytabcenter{+\\-}) + \monWedge(\ytabcenter{-\\+}) 
		,&  
		XY \monWedge(\ytabcenter{-\\+})  =   \monWedge(\ytabcenter{+\\-}) + q\monWedge(\ytabcenter{-\\+}) 
		,\\  
		&XY \monWedge(\ytabcenter{-\\ ~ }) = \monWedge(\ytabcenter{-\\ ~ })
		,&  
		XY \monWedge(\ytabcenter{+\\ ~ }) = \monWedge(\ytabcenter{+\\ ~ })
		.
	\end{align*}
	If $i<k $, we have $WY \Omega_{\negHWboxdiag}=  W \Omega_{\negHWboxdiag[i-2]}\wedge w''=0$, and
	 \begin{align*}
		XY\Omega_{\negHWboxdiag} = 
			\Omega_{\negHWboxdiag[i-2]} \wedge (-1+q\cdot q\inv) (\monWedge(\ytabcenter{+\\-})+q \monWedge(\ytabcenter{-\\+}) ) \wedge w' =0 .
	\end{align*}
	This shows that $W_{i+\half} \Omega_{\negHWboxdiag}=0$. If $i=k$, then we have 
	$XY \Omega_{\negHWboxdiag}  = \Omega_{\negHWboxdiag} $, hence
	\begin{align*}
		-q W XY \Omega_{\negHWboxdiag} &= -q W \Omega_{\negHWboxdiag} =0 
		,\\
		q^2 XWY 
		&= q^2 XW \Omega_{\negHWboxdiag[i-2]} \wedge (\monWedge(\ytabcenter{~\\+}) -\monWedge(\ytabcenter{~\\-})) 
		= q^2 X (q^{2i-3}-q\inv )\Omega_{\negHWboxdiag[i-2]} \wedge (\monWedge(\ytabcenter{~\\+}) -\monWedge(\ytabcenter{~\\-}))
		\\
		&= (q^{2i-1} -q) \Omega_{\negHWboxdiag} 
		,\\
		XYh \Omega_{\negHWboxdiag} &=  (q^2-1 ) q\inv \Omega_{\negHWboxdiag}.
	\end{align*}
	This concludes that $ (q^2 XWY + hXY )\Omega_{\negHWboxdiag}  = (q^{2i-1}-q\inv) \Omega_{\negHWboxdiag}$. The case $i>k$ is clear.
\end{proof}

Next we only need to consider $\beta\in \hwboxdiag$ with $\beta(0)=\bullet$.

\begin{lem}\label{W-half-action}
	Let $\beta \in \boxdiagramSet(\linearGraph_r') $ with $\beta(0) = \bullet $ and $ 1\notin \supp(\beta)$. Let $\beta_+,\beta_-,\beta_\pm$ be the box diagrams obtained from $\beta$ by adding $ \beta_+(1)=+,  \beta_- (1)=-, \beta_\pm (1)=\pm 
	.
	$
	Then we have \begin{align*}
		W_\half \Omega_{\beta_+} = 0
		,\qquad
		W_\half \Omega_{\beta_-} = (-q^2-1)\Omega_{\beta_-}
		,\qquad
		W_\half \monWedge_{\beta_\pm} = 0
		,\qquad
		W_\half \monWedge_{\beta }= -2q \monWedge_{\beta}
		,\\
		W_{1+\half} \Omega_{\beta_+} =  (-q^3-q)\Omega_{\beta_+} 
		,\qquad
		W_{1+\half} \Omega_{\beta_-} = 0
		,\qquad
		W_{1+\half}\monWedge_{\beta_\pm} = (-q^3-q)\monWedge_{\beta_\pm} 
		,\qquad
		W_{1+\half} \monWedge_{\beta }= 0
		.
	\end{align*}
\end{lem}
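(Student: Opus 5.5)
This is a direct computation in the $\Uq(\linearGraph_r')$-action on $\bigwedge^k\vecrep$. The plan is to reduce everything to the first two or three boxes and then apply \cref{Bi-action-std-wedge} term by term.

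\textbf{Reduction to the first boxes.} By \cref{Wi-def}, $W_\half=[B_{-\half},B_\half]_q$. Also $W_{1+\half}$ is a combination of words in $B_{\pm\half}$, $B_{\pm(1+\half)}$ and $\kCoideal_\half$. Each such operator, by \cref{Bi-action-std-wedge}, moves a symbol only between boxes $0,1,2$. The scalars it produces depend on the rest of $\beta$ only through counts of symbols in those rows, since everything is a single column here.

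Write $\monWedge_\beta=\basis_0\wedge\monWedge_{\beta'}$ up to the convention of \cref{merge-n1-formula-last}, where $\beta'$ is supported away from $\{0,\bar1,1\}$. The vectors $\Omega_{\beta_\pm}$, $\Omega_{\beta_+}$, $\Omega_{\beta_-}$ then have the forms $\basis_0\wedge\basis_{\bar1}\wedge\basis_1\wedge\cdots$ and $\basis_0\wedge(\basis_1\pm c\,\basis_{\bar1})\wedge\cdots$. The constants $c$ are read off from \cref{def-omega-vecs-bothcases}: here $a_i=b_i=0$ and $i=2$, giving $c=1$ for $+$ and $c=-Q^{-2}$ for $-$, after checking the indexing of $p_i$.

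\textbf{The $W_\half$ identities.} I will compute $B_\half$ and $B_{-\half}$ on the four local configurations in the rows $0$ and $1$: $\bullet$ over empty, $\bullet$ over $+$, $\bullet$ over $-$, and $\bullet$ over $\pm$. The coefficients come from \cref{Bi-action-std-wedge}, with $n=1$ column, so only the $\delta_{i,\half}$ and $\stat$ terms matter.
\begin{itemize}
\item For $\beta_\pm$, both rows are full, so $B_{\pm\half}$ have no room to act beyond killing the vector; hence $W_\half\monWedge_{\beta_\pm}=0$.
\item For $\beta$ (box $1$ empty), $B_{-\half}$ sends the $\bullet$ down to $\pm$-components of box $1$, and $B_\half$ sends it back. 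Summing with the $q$-commutator should yield $-2q$.
\item For $\beta_\pm$ the result is again zero; for $\beta_+$ and $\beta_-$ the computation reduces to a $2\times2$ matrix on $\{\basis_0\wedge\basis_1,\basis_0\wedge\basis_{\bar1}\}$ tensored with the rest.
\end{itemize}
Diagonalising this $2\times2$ matrix, $\Omega_{\beta_+}$ and $\Omega_{\beta_-}$ should be its eigenvectors, with eigenvalues $0$ and $-q^2-1$. Consistency with \cref{Omega-is-weigh-vec} guarantees they are eigenvectors, so only the eigenvalues need computing.

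\textbf{The $W_{1+\half}$ identities.} Expand
\[
W_{1+\half}=YWX-qYXW-qWXY+q^2XWY+(q^2-1)XY\kCoideal_\half
\]
with $X=B_{1+\half}$, $Y=B_{-1-\half}$, $W=W_\half$. This is the same expansion as \cref{W-induction}. I will feed in the $W_\half$ values just obtained.

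Here $X$ and $Y$ move symbols between boxes $1$ and $2$, and box $2$ is empty in our configurations or holds a fixed symbol determined by $\beta$. If box $2$ is occupied the local picture changes, so I will split into the cases $2\notin\supp\beta$ and $2\in\supp\beta$. The hypothesis of the lemma plausibly intends top-alignment, making box $2$ empty whenever box $1$ is empty.

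The main obstacle is bookkeeping the powers of $q$ from the $\stat$ statistics and the $K$-factors. This is where I would first test the case $\beta=\{0\}$, i.e.\ $k=1,2,3$ in $\linearGraph_1'$ or $\linearGraph_2'$, by direct matrix computation, and then argue that the remaining symbols lower down contribute no extra scalars. That last point holds because \cref{Bi-action-std-wedge} only counts symbols in rows $0$, $1$ and $2$, and $\kCoideal_\half$ acts by $\dCoideal$-weights that are read off via \cref{D-wts}.
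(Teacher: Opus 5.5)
Your approach is the paper's: its proof consists of the local formulas \cref{W0-action-monwedge} for $W_\half$ on $\{\monWedge_{\beta_+},\monWedge_{\beta_-}\}$ (your $2\times 2$ matrix), plus an unspecified ``direct computation with \cref{Bi-action-std-wedge}'' for the rest. The first line of identities does hold for arbitrary $\beta$, because $B_{\pm\half}$ only touch boxes $0,1$.

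\textbf{The gap is in the second line.} It is false once $2\in\supp\beta$, so your case $2\in\supp\beta$ cannot be completed. Your rescue via top-alignment does not remove that case: it forces box $2$ empty only for $\beta$ itself (then $\supp\beta\cap\graphVertices_{\linearGraph^+}=\{0\}$), not for $\beta_+,\beta_-,\beta_\pm$. Two concrete failures:
\begin{enumerate}
\item If $\supp\beta=\{0,2\}$, then $W_{1+\half}\monWedge_\beta=q^2\monWedge_\beta-q\monWedge_{\beta'}$ with $\supp\beta'=\{\foldVertex{2},0\}$. On $\operatorname{span}\{\monWedge_\beta,\monWedge_{\beta'}\}$ the operator $W_{1+\half}$ has eigenvalues $2q^2$ and $q^2-1$, with eigenvectors $\Omega_\beta,\Omega_{\beta'}$, so it kills nothing there.
\item For the top-aligned $\beta_+$ with $\supp\beta_+=\{0,1,2\}$, a direct computation gives $W_{1+\half}\Omega_{\beta_+}=0$, not $(-q^3-q)\Omega_{\beta_+}$. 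This matches \cref{Waction2} with $k=3$, $l=0$, $i=1$.
\end{enumerate}
So the second line needs the extra hypothesis $2\notin\supp\beta$. With it, $X=B_{1+\half}$ kills every vector that occurs. On $\monWedge_{\beta_+},\monWedge_{\beta_-},\monWedge_{\beta_\pm}$ only $-qWXY+q^2XWY+(q^2-1)XY\kCoideal_\half$ survives, and your plan goes through.

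Two smaller points.
\begin{enumerate}
\item Do not use \cref{Omega-is-weigh-vec} to know that $\Omega_{\beta_+},\Omega_{\beta_-}$ are eigenvectors. It covers only top-aligned diagrams, whereas $\beta$ here is arbitrary. The matrix $\left(\begin{smallmatrix}-q^2&-q\\-q&-1\end{smallmatrix}\right)$ already settles it: it has trace $-q^2-1$ and determinant $0$, and $\Omega_{\beta_+}\propto\monWedge_{\beta_+}-q\monWedge_{\beta_-}$, $\Omega_{\beta_-}\propto\monWedge_{\beta_+}+q^{-1}\monWedge_{\beta_-}$ are its eigenvectors.
\item In the $\monWedge_{\beta_\pm}$ case (box $2$ empty), recheck the constant. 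Here $XY$ acts by $q+q^{-1}$ while $YX\monWedge_{\beta_\pm}=0$, so $X$ and $Y$ do not commute. Also $\kCoideal_\half$ acts by $1$ and $XWY$ by $-2q$. By my computation this gives $W_{1+\half}\monWedge_{\beta_\pm}=(-q^3-q^{-1})\monWedge_{\beta_\pm}$, not the printed $-q^3-q$.
\end{enumerate}
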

\begin{proof}
	The last two equations is a direct computation using \cref{Bi-action-std-wedge} which we leave to the reader. For the first two one directly checks that \begin{align}
		\label{W0-action-monwedge}
		W_\half \monWedge_{\beta_+} = -q^2 \monWedge_{\beta_+} -q \monWedge_{\beta_-}
		,\qquad
		W_\half \monWedge_{\beta_-} = -q \monWedge_{\beta_+} - \monWedge_{\beta_-},
	\end{align}
	Then the desired equations are easily checked via direct calculation.
\end{proof}

\begin{prop}\label{Waction1}
	Let $\beta\in \hwboxdiag(\linearGraph_r')$ with $|\beta\inv(\pm)|=l$ and $\beta(0) = \bullet$.
	\begin{enumerate}
		\item For $0\leq i \leq l-1$ we have $W_{i+\half}\Omega_{\beta} =0 $.
		\item If $k-2l- 1>0$, then \[
			W_{l+\half} \Omega_{\beta} = \begin{cases}
				(q^{2l} -1)\Omega_{\beta}
				,&\text{ if } \beta({l+1}) = +,\\
				(-q^{2l-2}-1)\Omega_{\beta}
				, &\text{ if } \beta({l+1}) = - .
			\end{cases}
		\]
		\item If $k=2l+1$ then $W_{i+\half}\Omega_\beta =0 $ for all $i$.
	\end{enumerate}
\end{prop}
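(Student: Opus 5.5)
We follow the proof of \cref{Zaction1} for the $\linearGraph_r$ case: induct on $i$ along the recursion \cref{Wi-def}, with \cref{W-half-action} supplying the base cases. Write $X=B_{\half+i}$, $Y=B_{-\half-i}$ and $W=W_{\half+i-1}$. Then
\[
W_{\half+i}=YWX-qYXW-qWXY+q^2XWY+(q^2-1)XY\,t,
\]
where $t$ is a product of $\kCoideal$'s. The element $t$ acts on each $\monWedge_\gamma$ by a scalar that can be read off from \cref{D-wts}. All actions of $X$ and $Y$ on standard wedges are computed from \cref{Bi-action-std-wedge}.

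\textbf{Part (a).} For $i=0$ we apply \cref{W-half-action} to $\beta_\pm$. This needs $1\in\beta\inv(\pm)$, which holds when $l\ge1$. For $1\le i\le l-1$, both boxes $\source{\half+i}$ and $\target{\half+i}$ in the orange region are filled with $\pm$. Hence $X$ and $Y$ annihilate $\Omega_\beta$, since there is no room to move a symbol. The same holds for every vector appearing after applying $W$, because $W$ preserves $\kleinCSA$-weights and so does not change occupancy of these boxes. Thus every term of the recursion vanishes except $-qWXY$ and $XYt$. These also vanish because $XY\Omega_\beta=0$. By induction, $W\Omega_\beta=0$, which gives (a).

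\textbf{Part (b).} By \cref{Omega-is-weigh-vec}, $\Omega_\beta$ is already a $\csaCoideal$-weight vector, so we only need to compute the scalar. For $l=0$ this is exactly \cref{W-half-action}: we get $W_\half\Omega_{\beta_+}=0=(q^0-1)\Omega_{\beta_+}$ and $W_\half\Omega_{\beta_-}=(-q^{-2}\cdot q^2-1)\Omega_{\beta_-}$. One should double check the normalisation $-q^{2l-2}$ at $l=0$ against $-q^2-1$. If they disagree, the indexing of $W$ in the statement is shifted by one, and the base case becomes the $W_{1+\half}$ line of \cref{W-half-action} instead.

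For $l\ge1$, we use $X\Omega_\beta=0$ from the proof of \cref{Omega-is-weigh-vec}. Only the terms $-qWXY$, $q^2XWY$ and $XYt$ survive. To evaluate them, we factor $\Omega_\beta=\Omega'\wedge z(l,\beta)\wedge z(l+1,\beta)\wedge w'$, as in the proof of the proposition on $\Omega_{\negHWboxdiag}$. Here $\Omega'$ involves only boxes of index less than $l$. Then:
\begin{itemize}
\item $Y$ moves a symbol from box $l$ into box $l+1$, exchanging the roles of $\pm$ and $\pm/{+}$ in the two-box factor.
\item $W=W_{\half+l-1}$ acts on the resulting vector by the induction hypothesis, applied to a diagram of type (b) with one fewer $\pm$. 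Alternatively, it acts by part (a)/(c), giving zero.
\item $X$ moves the symbol back.
\end{itemize}
This leaves a $2\times2$ computation on the span of the two-box wedges $\monWedge(\ytabcenter{\pm\\+})$, $\monWedge(\ytabcenter{+\\\pm})$ (respectively with $-$). It is analogous to the $XY$ matrix computed in that proposition. Collecting the coefficients should produce the recursion $c_l=q^2c_{l-1}+(q^2-1)$ for the eigenvalue. This recursion yields $q^{2l}-1$ in the $+$ case and $-q^{2l-2}-1$ in the $-$ case.

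\textbf{Part (c).} If $k=2l+1$, the boxes $1,\dots,l$ are all $\pm$, and no $+$ or $-$ remains. Part (a) kills $W_{i+\half}$ for $i<l$. For $i\ge l$, the element $W_{i+\half}$ involves $B_{\pm(\half+i)}$. These either act on the empty white region, giving zero, or move a symbol out of the last $\pm$ box and back. Such a round trip cancels exactly as in the last paragraph of the proof of \cref{Zaction1}. As a sanity check, $W_\half\monWedge_{\beta_\pm}=0$ and $W_{1+\half}\monWedge_{\beta_\pm}\ne0$ in \cref{W-half-action}. The second of these shows that (c) must be read with $l\ge1$ and the indices shifted, so the indexing should be settled first.

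\textbf{Main obstacle.} The hard step is the explicit scalar bookkeeping in (b): the $q$-powers coming from \cref{Bi-action-std-wedge} and from the $\kCoideal$ factor $t$. We would verify the recursion in the case $\linearGraph_2'$ by hand before running the induction.
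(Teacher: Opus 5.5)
Your route is the paper's. You induct along \cref{Wi-def} with base cases from \cref{W-half-action}, kill $YWX$ and $YXW$ via $X\Omega_\beta=0$ and $\kleinCSA$-weight preservation, and in (b) reduce to $q^2XWY+(q^2-1)XYt$, using $W_{\half+l-1}\Omega_\beta=0$ from (a) and $XY\Omega_\beta=t\,\Omega_\beta=\Omega_\beta$. This gives $c_l=q^2c_{l-1}+q^2-1$.

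Part (a) and the $+$ case of (b) go through. The two places you flagged are genuine failures, but an index shift is the wrong diagnosis: the statement as printed is wrong in both places.

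\textbf{The $-$ case of (b).} The seed from \cref{W-half-action} is $c_0=-q^2-1$; your $-q^{-2}\cdot q^2-1$ is a slip. Directly, for $k=2$ one has $\Omega_{\beta_-}=\monWedge_{\beta_+}+q\inv\monWedge_{\beta_-}$, and \cref{W0-action-monwedge} gives eigenvalue $-q^2-1$.
\begin{itemize}
\item Your own recursion then yields $c_l=-q^{2l+2}-1$. The value $-q^{2l-2}-1$ would need the false seed $-q^{-2}-1$, so your last sentence of (b) is wrong.
\item No reindexing helps: the $W_{1+\half}$ values $-q^3-q$ and $0$ match neither formula.
\item The paper's induction step in fact inserts $-q^{2l}-1$ as the level-$(l-1)$ value, i.e.\ it proves $-q^{2l+2}-1$. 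The exponent in the statement is a misprint.
\end{itemize}
Separately, ``apply the induction hypothesis to $Y\Omega_\beta$'' needs justification, because $Y\Omega_\beta$ is not $\Omega_{\beta'}$ for a top-aligned $\beta'$. Two things are required:
\begin{itemize}
\item $W_{\half+l-1}$ acts only on the wedge factor of boxes $0,\dots,l$, since the coproduct $K$-factors of $B_{\pm(\half+j)}$ with $j<l$ are trivial on $\basis_p,\basis_{\foldVertex{p}}$ for boxes $p\ge l+1$.
\item From the coefficient in $z(l+1,\beta)$, that factor of $Y\Omega_\beta$ is exactly the $\Omega$-vector of the truncated diagram carrying $\beta(l+1)$ in box $l$.
\end{itemize}

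\textbf{Part (c).} The claim that the round trip ``cancels as in \cref{Zaction1}'' fails at $i=l$. For $l\ge1$, box $l$ is full and box $l+1$ is empty. The $\Uq(\gl)$-relation between $X=B_{\half+l}$ and $Y=B_{-\half-l}$ then gives $XY\Omega_\beta=[2]\Omega_\beta$. Together with (a) and $t\,\Omega_\beta=\Omega_\beta$, this gives $W_{\half+l}\Omega_\beta=q^2XWY\Omega_\beta+(q^2-1)[2]\Omega_\beta$, and nothing cancels. Indeed $W_{\half+l}\Omega_\beta\neq0$:
\begin{itemize}
\item For $l=0$, $k=1$, \cref{W-half-action} itself gives $W_\half\basis_0=-2q\,\basis_0$. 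Directly: $B_\half\basis_0=0$, $B_{-\half}\basis_0=\basis_1+\basis_{\foldVertex{1}}$, and $B_\half(\basis_1+\basis_{\foldVertex{1}})=2\basis_0$.
\item For $l=1$, the $W_{1+\half}\monWedge_{\beta_\pm}$ identity there is exactly the contradiction you noticed.
\item Specialising to $q=1$ gives eigenvalue $-2$ for every $l$. Classically the $W_{\half+j}$ act only through the $\gl(W^+)$-factor, and the summand generated by $\Omega_\beta$ is $\bigwedge^{l+1}W^+\boxtimes\bigwedge^{l}W^-$, not balanced as in the case $k=2l$ of \cref{Zaction1}.
\end{itemize}
So (c) holds only for $i\neq l$: for $i<l$ by (a), and for $i>l$ because boxes $i,i+1$ are empty. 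Reindexing cannot rescue ``for all $i$''. The paper's argument for (c) has the same hole, since it never accounts for the term $(q^2-1)XY\dCoideal_0\dCoideal_l\inv$. The statement must be corrected before either proof can be completed.
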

\begin{proof}
	We prove the first assertion  by induction on $i$, the base case being the third equality in \cref{W-half-action}. Assume $i>0$, denote $W=W_{i-1+\half}$, $X= B_{i+\half}, Y= B_{-i-\half} $. Noted that $[X,Y]=0$ and $X\Omega_{\beta }=0$ by \cref{Omega-is-weigh-vec}. 
	By definition \cref{Wi-def}, we have \begin{align*}
		W_{i+\half} \Omega_{\beta } &= \left(\cancel{ Y   W X }
		\cancel{-q Y  X W } 
		{- q W XY }
		+ q^2  XW Y 
		+ (q^2-1){XY}\dCoideal_0\dCoideal_{i}\inv
		\right)\Omega_{\beta }
		.
	\end{align*}
	Now since $i<l$, by definition \cref{OmegaDef} we have $\Omega_{\beta } = \Omega_{\beta'} \wedge \basis_{\negidx{i}}\wedge \basis_{i} \wedge \basis_{\negidx{i+1}}\wedge \basis_{i+1} \wedge w'$ for some $w,w' \in \bigwedge^\bullet \vecrep$. This means that 
	$Y \Omega_{\beta }=0 $ and we are done.

	For the second part, we use induction on $l$, the base case being \cref{W-half-action}. Assume now $l>0$ and the assertion holds for $l-1$. 
	Now denote $W=W_{l-1+\half}$, $X= B_{l+\half}, Y= B_{-l-\half} $. Noted that $X\Omega_{\beta }=0$ by \cref{Omega-is-weigh-vec}. 
	Similarly to the previous case, we have \[
		W_{l+\half} \monWedge_{\beta } =
		(q^2  XW Y - qWXY +(q^2-1) XY \dCoideal_0\dCoideal_{l}\inv )\monWedge_{\beta }
		.
	\]
	Now $k-2l-1>0$ implies that $l+1\in \supp(\beta)$. Let $ \monWedge(\ytabcenter{-\\\pm} )=\monWedge_\gamma $ where the box diagram $\gamma \in{}_2\boxdiagramSet$ is defined by $\gamma(l)=-,\gamma(l+1)=\pm $. Similarly, we also define
	$\monWedge(\ytabcenter{ ?\\ ?? } )$ for all $?,??\in \{+,-,\pm\}$.
	Then \[
		\Omega_\beta =\begin{cases}
			\Omega_{\beta'} \wedge (q^{2l+2}\monWedge(\ytabcenter{\pm\\+} ) - q^2  \monWedge(\ytabcenter{\pm \\ -}))
			,& \text{ if } \beta(l+1) = - ,\\
			\Omega_{\beta'} \wedge (q^{2l} \monWedge(\ytabcenter{\pm\\+} )+ q^2 \monWedge(\ytabcenter{\pm \\ -}))
			,& \text{ if } \beta(l+1) = + .
		\end{cases}
	\] 
	Here the $q^2 $ factor arises from writing $\monWedge(\ytabcenter{\pm\\~})\wedge \monWedge(\ytabcenter{~\\ -}) $ into standard wedge.
	Furthermore, \cref{Bi-action-std-wedge} gives \[
		Y \monWedge(\ytabcenter{\pm \\ + }) = \monWedge(\ytabcenter{+\\ \pm  })
		,\qquad 
		Y \monWedge(\ytabcenter{\pm \\ - }) =  \monWedge(\ytabcenter{-\\\pm })
		,\qquad 
		X  \monWedge(\ytabcenter{+\\\pm }) = \monWedge(\ytabcenter{\pm \\ + })
		,\qquad 
		X \monWedge(\ytabcenter{-\\\pm }) =   \monWedge(\ytabcenter{\pm \\ - })
		.
	\]
	By induction hypothesis the right-hand side is an eigenvector for $W$ of eigenvalue $\lambda_{l-1}$. Then \begin{align}
		(q^2-1) XY \dCoideal_0\dCoideal_{l}\inv \Omega_{\beta } 
		&= (q^{2}-1)  \Omega_{\beta } 
		\label{hilb}
		,\\
		q^2  XW Y \Omega_{\beta }
		&= \begin{cases}
			q^2  XW \left( 
				\Omega_{\beta'} \wedge (\monWedge(\ytabcenter{+\\\pm} ) - q^{2l+2} \monWedge(\ytabcenter{- \\ \pm}))
			\right)
			,& \text{ if } \beta(l+1) = - ,\\
			q^2  XW \left( 
				\Omega_{\beta'} \wedge( \monWedge(\ytabcenter{+\\\pm} )+ q^{2l} \monWedge(\ytabcenter{- \\ \pm}))
			\right)
			,& \text{ if } \beta(l+1) = + .
		\end{cases} \nonumber
	\end{align}
	Moreover, in either case we have $-q WXY \Omega_{\beta } = -q W\Omega_{\beta }=0 $.
	By induction hypothesis, we have \[
		q^2XWY \Omega_\beta = \begin{cases}
			q^2 (q^{2l-2}-1) \Omega_\beta , & \text{ if } \beta(l+1)=+,\\
			q^2 (-q^{2l}-1) \Omega_\beta, & \text{ if } \beta(l+1)=-.
		\end{cases}
	\]
	Combined with \cref{hilb}, this proves the second assertion.

	Finally, $k=2l+1$ implies $l+1\notin \supp(\beta)$ and furthermore, $\Omega_\beta$ is a scalar multiple of $\monWedge_\beta$. It suffices to show that $W_{l+\half}\monWedge_\beta=0$.
	We first show that if $k-2l- 1>0$, then \begin{align}
		\label{W-act-monwedge1}
			W_{l+\half} \monWedge_{\beta} = \begin{cases}
				q^{2l+2} \monWedge_{\beta} - q^{2l+1} \monWedge_{\flip{\target{l}}{\beta}}
				, &\text{ if } \beta_{l+1} = +,\\
				- q^{2l+1} \monWedge_{\flip{\target{l}}{\beta}}
				- q^{2l} \monWedge_{\beta} 
				, &\text{ if } \beta_{l+1} = - .
			\end{cases}
	\end{align}
	This is easily computed using \cref{OmegaDef} and the second assertion we just proved.
	Now to see that $W_{l+\half}\monWedge_\beta=0$, let $ \monWedge(\ytabcenter{-\\+} )=\monWedge_\gamma$ where the box diagram $\gamma$ is obtained from $\beta$ by setting $\gamma(l)=-,\gamma(l+1)=+$ and $\gamma(i)=\beta(i)$ for all other $i$. Similarly, we also define
	 $\monWedge(\ytabcenter{+\\-} )$, $\monWedge(\ytabcenter{+\\+})$ and $\monWedge(\ytabcenter{-\\-})$.
	Then we have \begin{align}\label{XY-action}
		Y   \monWedge_{\beta } = \monWedge(\ytabcenter{-\\+} ) + q\inv \monWedge(\ytabcenter{+\\-})
		,\qquad
		X \monWedge(\ytabcenter{-\\+} )  = q \monWedge(\ytabcenter{~\\\pm})
		,\qquad
		X \monWedge(\ytabcenter{+\\-}) = \monWedge(\ytabcenter{~\\\pm}).
	\end{align}
	The induction hypothesis gives 
	\begin{align*}
		q^2  XW Y \monWedge_{\beta }
		&=
		q^2 X W (\monWedge(\ytabcenter{-\\+} ) + q\inv \monWedge(\ytabcenter{+\\-} )) 
		\\
		&= q^2 X (u^-_{l-1} \monWedge(\ytabcenter{-\\+} ) + v^-_{l-1} \cancel{\monWedge(\ytabcenter{+\\+}) + q\inv u^+_{l-1} \monWedge(\ytabcenter{-\\-} )}+ q\inv v^+_{l-1}\monWedge(\ytabcenter{+\\-} ))
		\\
		&= q^2 (q u^-_{l-1} +  q\inv u^+_{l-1}) \monWedge_{\beta },
	\end{align*}
	where $ u^-_{l-1} = q^{2l-2}$ and $v^+_{l-1} = q^{2l}$, hence $(q u^-_{l-1} +  q\inv u^+_{l-1}) =0$ and we are done.
\end{proof}

\begin{prop}\label{Waction2}
	Let $\beta\in \hwboxdiag(\linearGraph_r')$ with $|\beta\inv(\pm)|=l$ and $\beta(0) = \bullet$. Assume further that $k-2l-1>0$ and $\img \beta \subset \{\bullet,\pm ,+\}$.
	Then \[
		W_{i+\half } \Omega_\beta = \begin{cases}
			(-q^{2k-2l-1} - q) \Omega_\beta , &\text{ if } i=k-l-1,\\
			  0 , &\text{ if } i> l, \, i\ne k-l-1.
		\end{cases}
	\]
	Similarly, if $k-2l-1>0$ and $\img \beta \subset \{\bullet,\pm ,-\}$,
	then \[
		W_{i+\half } \Omega_\beta = \begin{cases}
			(q^{2k=2l-3} - q) \Omega_\beta , &\text{ if } i=k-l-1,\\
			  0 , &\text{ if } i> l, \, i\ne k-l-1.
		\end{cases}
	\]
\end{prop}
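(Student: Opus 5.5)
The plan is to follow the same inductive scheme as in \cref{Waction1}, now in the region where the index $i$ runs through the block of $+$'s (respectively $-$'s) of the top-aligned box diagram $\beta$, i.e.\ $l<i\le k-l-1$. Write $W=W_{i-1+\half}$, $X=B_{i+\half}$, $Y=B_{-i-\half}$. The recursion \cref{Wi-def} expands $W_{i+\half}$ as a combination of $YWX$, $YXW$, $WXY$, $XWY$ and $XY\dCoideal_0\dCoideal_i\inv$, with the appropriate powers of $q$. By \cref{Omega-is-weigh-vec}, $\Omega_\beta$ is a highest weight vector, so $X\Omega_\beta=0$ and the $YWX$ term vanishes. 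The operator $W$ preserves $\kleinCSA$-weights and the $\dotNotationText$-eigenspaces, and $W\Omega_\beta$ is a multiple of $\Omega_\beta$ (known by induction, with base case $i=l$ supplied by \cref{Waction1}(b)). Hence the $YXW$ term also vanishes, and only
\[
q^2XWY\;-\;qWXY\;+\;(q^2-1)XY\dCoideal_0\dCoideal_i\inv
\]
survives on $\Omega_\beta$.

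To evaluate these terms, I factor $\Omega_\beta$ according to \cref{OmegaDef} as $\Omega_{\beta'}\wedge z(i,\beta)\wedge z(i+1,\beta)\wedge w'$. Here $z(i,\cdot)$ and $z(i+1,\cdot)$ are the two factors in rows $i,i+1$, and $X,Y$ act only on these rows via \cref{Bi-action-std-wedge}, exactly as in \cref{XY-action}. I split into the cases $i<k-l-1$ and $i=k-l-1$.

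In the interior case $i<k-l-1$, rows $i$ and $i+1$ both carry $+$. This should give $XY\Omega_\beta=c\,\Omega_\beta$ for an explicit $c$, coming from the two-box computation $XY\monWedge(\ytabcenter{+\\-})$ and its companions, as in the proof of the $\negHWboxdiag$ proposition. Meanwhile $Y\Omega_\beta$ lands in a vector on which $W$ acts by a scalar, namely the $i-1$ eigenvalue, which should be $0$ by the inductive statement for indices $>l$. The three surviving contributions should then cancel, giving $0$.

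In the edge case $i=k-l-1$, row $i+1$ is empty, so $Y$ moves the last $+$ down and $XY$ acts on $\Omega_\beta$ by $\Omega_\beta$. The eigenvalue is then read off as $q^2\lambda'+(q^2-1)q^{\pm1}$, where $\lambda'$ is the $W$-eigenvalue on $Y\Omega_\beta$. That vector is, up to scalar, $\Omega$ of a diagram whose last box has been shifted one row down. Its $W_{i-1+\half}$-eigenvalue is therefore obtained by the induction on $k-2l$, mirroring \cref{Zi-act-tailend}, rather than the induction on $i$. The base case $k-2l-1=1$ is \cref{Waction1}(b) applied with the top of the blue region.

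The $-$ case is identical, with the coefficients of $z(p,\beta)$ for $\beta(p)=-$ and the factors $q^{\pm1}$ from \cref{Bi-action-std-wedge} swapped. This produces $q^{2k-2l-3}-q$.

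I expect the main obstacle to be the bookkeeping of $q$-powers in the edge case: matching the coefficients $\webQpar^{-2a_i-2b_i}$ in $z(p_i,\beta)$ with the statistics $\stat$ in \cref{Bi-action-std-wedge}, so that $Y\Omega_\beta$ is really a multiple of a single $\Omega_{\gamma}$ to which the induction hypothesis applies. If this does not close cleanly, my first fallback is to work instead with the standard wedges $\monWedge_\beta$. I would prove a two-term formula analogous to \cref{W-act-monwedge1} and \cref{midnight-formula} by induction, and then recombine it into $\Omega_\beta$.
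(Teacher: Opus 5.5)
Your skeleton is the paper's: expand \cref{Wi-def} on $\Omega_\beta$, discard $YWX$ and $YXW$, separate interior from edge indices, recognise $Y\Omega_\beta$ as $\Omega$ of the diagram with its last box shifted down, and run the edge case by induction on $k-2l$. For $k-2l-1\ge 2$ your edge computation $q^2\lambda'+(q^3-q)$ is exactly the paper's step (note that $\dCoideal_0\dCoideal_i^{-1}$ acts by $q$ in both sign cases, not $q^{\pm1}$). Two steps fail as written, however.

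In the interior case $l<i<k-l-1$ the operative fact is $Y\Omega_\beta=0$, after which the three surviving terms vanish individually. The paper gets this from \cref{XY-action}: the two contributions to the wedge with row $i$ emptied and $\pm$ in row $i+1$ cancel. Conceptually, $\Omega_\beta$ is a highest weight vector for the diagonal $\Uq(\gl_r)$ generated by the $\dCoideal_p$ and $B_{\pm(\half+j)}$, $j\ge 1$, and its $\dCoideal_i$- and $\dCoideal_{i+1}$-eigenvalues agree (\cref{D-wts}). So the lowering operator $Y$ kills it. Your sketch instead posits $XY\Omega_\beta=c\,\Omega_\beta$ and that $W_{i-1+\half}$ acts on $Y\Omega_\beta$ by ``the $i-1$ eigenvalue'', taken to be $0$. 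That is unjustified, since $Y\Omega_\beta$ is a different vector from $\Omega_\beta$. The cited inductive input also fails at $i=l+1$, where the index $i-1=l$ is governed by \cref{Waction1}(b). Even granting it, the term $(q^2-1)q\,c\,\Omega_\beta$ survives unless $c=0$.

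In the edge case $i=k-l-1$ you correctly have $XY\Omega_\beta=\Omega_\beta$. But then $-qWXY\Omega_\beta=-q\,W_{i-1+\half}\Omega_\beta$ must be kept, and your formula drops it.
\begin{itemize}
\item For $k-2l-1\ge 2$ the dropped term vanishes, since $i-1$ is then an interior index.
\item In the base case $k-2l-1=1$ one has $i-1=l$, and the term is $-q$ times the \cref{Waction1}(b) eigenvalue, which is nonzero in general.
\end{itemize}
Take $\beta(0)=\bullet$, $\beta(1)=-$, so $k=2$, $l=0$, $i=1$. Then $Y\Omega_\beta=v_0\wedge(v_2-q^{-2}v_{\bar 2})$, on which $W_\half$ acts by $-2q$. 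Your formula gives $-2q^3+q^3-q=-q^3-q$, but the correct value is $q^{2k-2l-3}-q=0$ (cf.\ \cref{W-half-action}). The missing term $-q\cdot(-q^2-1)=q^3+q$ accounts for the difference.

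You have also put \cref{Waction1}(b) in the wrong slot. It computes $W_{l+\half}$ on $\Omega_\beta$ itself, which is precisely the dropped term. In the base case, $\lambda'$ is instead the $W_{l+\half}$-eigenvalue on $Y\Omega_\beta=\Omega_{\beta''}\wedge y$. Here $\beta''$ has $\bullet$ in row $0$, $\pm$ in rows $1,\dots,l$, and nothing else. That eigenvalue is not supplied by \cref{Waction1}(b) and must be computed separately. It is nonzero (for $l=0$ it is $-2q$ by \cref{W-half-action}), so \cref{Waction1}(c) cannot be quoted for it either. The paper's own edge step asserts $WXY\Omega_\beta=0$ and has the same blind spot when $k-2l-1=1$. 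This base case therefore genuinely has to be computed by hand.
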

\begin{proof}
	We again argue by induction on $i$, the base case $i=l$ is given by the second statement of \cref{Waction1}. Now assume $i>l$. Denote $W=W_{\half+i-1}$, $X= B_{\half+i}, Y= B_{-\half -i} $, and assume that the desired formula holds for $i=l-1$. 
	Noted that $[X,Y]=0$ and $X\monWedge_{\beta }=0$. Again by \cref{W-induction}, 
	\[
		W_{i+\half} \monWedge_{\beta } =
		(q^2  XW Y - qWXY +(q^2-1) XY \dCoideal_0\dCoideal_{i}\inv )\monWedge_{\beta }
		.
	\]
	Denote $\monWedge(\ytabcenter{-\\+})=\monWedge_{\beta}$ where $\beta$ is the box diagram defined by $\beta(i)= -$ and $\beta(i+1) = +$, and $\beta(i)$ not defined for any other $i$. Define also $\monWedge(\ytabcenter{?\\??}) $ similarly for $?,??\in \{+,-,\pm\}$.
	Then we have \[
		\Omega_{\negHWboxdiag} = \begin{cases}
			\Omega_{\negHWboxdiag[i-2]} \wedge (\monWedge(\ytabcenter{-\\-}) + q^3 \monWedge(\ytabcenter{-\\+}) -  q^4  \monWedge(\ytabcenter{+\\-}) - q^3 \monWedge(\ytabcenter{+\\+})) \wedge w'
			, & \text{ if } l<i<k-l ,\\
			\Omega_{\negHWboxdiag[i-2]} \wedge (\monWedge(\ytabcenter{-\\~}) - q^2  \monWedge(\ytabcenter{+\\~})), & \text{ if } i=k-l,
		\end{cases}
	\]
	for some $w'\in \bigwedge^{\bullet}\vecrep$. By \cref{XY-action}, in the first case we have \[
		Y \Omega_{\negHWboxdiag} =\Omega_{\negHWboxdiag[i-2]} \wedge ( q^4 \monWedge(\ytabcenter{~\\\pm}) -  q^4  \monWedge(\ytabcenter{~\\\pm}) ) \wedge w' =0.
	\]
	This proves the $l<i<k-l-1$ case. Now if $i=k-l-1$, by induction hypothesis we have $W\Omega_{\negHWboxdiag[i-2]} \wedge \monWedge(\ytabcenter{~\\?}) = \lambda_{i-1}\Omega_{\negHWboxdiag[i-2]} \wedge \monWedge(\ytabcenter{~\\?})$. Consequently, 
	\[
		q^2 XWY\Omega_{\negHWboxdiag} =  q^2 XW(\Omega_{\negHWboxdiag[i-2]} \wedge (\monWedge(\ytabcenter{~\\-}) - q^2  \monWedge(\ytabcenter{~\\+})))
		= q^2\lambda_{i-1} \Omega_{\negHWboxdiag} . 
	\]
	Again we have $XY \Omega_{\negHWboxdiag}=\Omega_{\negHWboxdiag} $ and hence $WXY\Omega_{\negHWboxdiag} =0 $ and $(q^2-1)XY \dCoideal_0\dCoideal_i\inv \Omega_{\negHWboxdiag} = (q^3-q) \Omega_{\negHWboxdiag} $. Consequently, the induction hypothesis gives \[
		W_{i+\half}\Omega_{\negHWboxdiag}  = (-q^{2i-2-1+2}- q^3 +q^3-q ) \Omega_{\negHWboxdiag} . 
	\]
	This concludes the proof of the $+\in \img\beta $ case. The other case follows from the exact same argument with a different initial condition and therefore omitted.
\end{proof}

\subsection{Highest weight vectors in $\bigwedge^k\vecrep$}
\begin{thm}\label{hwvecs} 
	The set $\{\Omega_{\beta} \mid \beta \in \hwboxdiag(\linearGraph)\}$ is a complete set of highest weight vectors in $\bigwedge^k\vecrep$. Furthermore, for $\beta\in\hwboxdiag(\linearGraph)$ satisfying $l=|\beta\inv(\pm)|$,
	the $\csaCoideal$-weight $(n_i\mid\zeta_i)$ of $\Omega_{\beta}$ is given as follows. If $\linearGraph=\linearGraph_r$, then 
	\begin{align}\label{hweven}
		n_i = 
		\begin{cases}
			2, & \text{ if } \beta(i) = \pm  ,\\
			1, & \text{ if } \beta(i) = + \text{ or } - ,\\
			0, & \text{ if } i\notin \supp(\beta) ,
		\end{cases}
		\qquad
		\zeta_i =
		\begin{cases}
			1 , & \text{ if } i=l , \beta(\target{i})=+ ,\\
			-1 , & \text{ if } i=l , \beta(\target{i})=- ,\\
			-q\inv , & \text{ if } i=k-l , \beta(\source{i})=+ ,\\
			q\inv , & \text{ if } i=k-l , \beta(\source{i})=- ,\\
			0, & \text{otherwise}.
		\end{cases}
	\end{align}
	And if $\linearGraph=\linearGraph_r'$, then 
	\begin{align}\label{hwodd}
		n_i = 
		\begin{cases}
			2, & \text{ if } \beta(i) = \pm  \text{ or } \bullet,\\
			1, & \text{ if } \beta(i) = + \text{ or } - ,\\
			0, & \text{ if } i\notin \supp(\beta) ,
		\end{cases}
		\quad
		\zeta_i =
		\begin{cases}
			q^{2l}-1 , & \text{ if } i=l ,\, \beta(\target{i})=+ ,\\
			-q^{2l-2}-1 , & \text{ if } i=l ,\, \beta(\target{i})=- ,\\
			-q^{2k-2l-1}-q , & \text{ if } i=k-l , \,\beta(\source{i}) = +,\\
			q^{2k-2l-3}-q , & \text{ if } i=k-l , \,\beta(\source{i}) = -, \\
			q^{2k-1}-q\inv , & \text{ if } i=k , \,\beta=\negHWboxdiag,\\
			0, & \text{otherwise}.
		\end{cases}
	\end{align}
	In particular, the irreducible submodule generated by $\Omega_\beta$ is isomorphic to $\mathbb{L}(n_i\mid\zeta_i)$.
\end{thm}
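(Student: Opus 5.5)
The proof splits into three parts. First we show that every $\Omega_\beta$ with $\beta\in\hwboxdiag(\linearGraph)$ is a highest weight vector. Then we show there are no others. Finally we read off the weights from the computations of the preceding subsections.

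For the first part, \cref{Omega-is-weigh-vec} already shows that each such $\Omega_\beta$ is a $\csaCoideal$-weight vector annihilated by $\posCoideal$, lying in $L_{(1^i),(1^{k-i})}$ with $i=|\supp(\beta)\cap\linearGraph^+|$.

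For completeness, we use \cref{liver-coideal-thm}: $\bigwedge^k\vecrep$ is multiplicity free with irreducible summands the eigenspaces of $\functorWebB(\dotNotationText)$. Any highest weight vector generates a quotient of a Verma module by \cref{verma-properties}, and since the module is semisimple, this quotient is one of the irreducible summands. Inside an irreducible summand the highest weight vector is unique up to scalar. Indeed, by \cref{Di-action-Omega} and the argument of \cref{Omega-is-weigh-vec}, it must have the unique maximal $\kleinCSA$-weight there; any other vector $v$ killed by all $B_i$, $i>0$, would generate a proper $\Uq(\linearGraph^+)$-highest weight, contradicting irreducibility of the Verma quotient (the PBW weight argument from \cref{verma-properties}).

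So it suffices to match $\hwboxdiag(\linearGraph)$ bijectively with the nonzero summands $L_{(1^i),(1^{k-i})}$. Here we must be careful: a given $i$ may correspond to two top-aligned diagrams (the $+$-tail and the $-$-tail with the same number of $\pm$'s). These have different $\dotNotationText$-eigenvalues, but equal eigenvalue must mean equal $i$. I would recount as follows. The eigenvalue $\Qint{m_\beta}$ is determined by $|\beta\inv(+)|-|\beta\inv(-)|$, and the top-aligned diagrams with a given value of this difference are unique, since $l$ and the sign of the tail are then determined. For $\linearGraph_r'$ the extra diagram $\negHWboxdiag$ is exactly the one with $\beta(0)$ undefined, filling the eigenvalue $\Qint{-k}$. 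Comparing with the spectrum in \cref{mainTheorem} then gives the bijection. This counting/bijection check is the step I expect to be most delicate.

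For the weights, the $n_i$ follow directly from \cref{D-wts}, where a $\bullet$ at $0$ contributes $q^2$ since $\dCoideal_0=D_0^2$ in the $\linearGraph_r'$ case. For $\zeta_i$ in the $\linearGraph_r$ case, we expand $\Omega_\beta$ in the monomials $\monWedge_\gamma$, where $\gamma$ ranges over sign flips of $\beta$. We apply \cref{Zaction1} for $i\le l$ (vanishing for $i<l$, and the flip at $\target{l}$ for $i=l$), \cref{Zaction2} for $l<i<k-l$, and \cref{Zi-act-tailend} for $i=k-l$. Then we recombine using the explicit coefficients $\webQpar^{\pm 2a}$ in \cref{def-omega-vecs-bothcases}: the eigenvalue is $\pm1$ at $i=l$, $\mp q\inv$ at $i=k-l$, and the middle contributions cancel via the relation $Z_i\monWedge_{\flip{\target{i}}\beta}=q\inv Z_i\monWedge_{\flip{\source{i}}\beta}$. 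Since we already know $\Omega_\beta$ is a $Z_i$-eigenvector, it suffices to compute a single coefficient, for instance that of $\monWedge_\beta$. In the $\linearGraph_r'$ case, the values are exactly those of \cref{Waction1}, \cref{Waction2} and the proposition on $\negHWboxdiag$.

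The final claim then follows from \cref{verma-properties}(c),(e).
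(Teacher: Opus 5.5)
Your proposal is correct and follows the paper's proof. Omega-is-weigh-vec gives the highest weight property; completeness comes from the multiplicity-free decomposition of \cref{liver-coideal-thm} together with uniqueness of the highest weight vector in each summand; and the weights come from \cref{D-wts} and the $Z_i$/$W_i$ action propositions by tracking the $\monWedge_\beta$-coefficient, exactly as the paper does. Your explicit matching of $\hwboxdiag(\linearGraph)$ with the eigenvalues $\Qint{k-2i}$ that actually occur fills in a step the paper only asserts; for uniqueness, the clean reason is that a highest weight vector $v$ satisfies $\coideal(\linearGraph)v=\negCoideal v$ by \cref{PBW-coideal}, so $v$ spans the one-dimensional top $\kleinCSA$-weight space of its summand, and your parenthetical about vectors merely killed by all $B_i$ ($i>0$) should be dropped, since such vectors need not be unique.
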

\begin{proof}
	Let $\beta\in \hwboxdiag(\linearGraph_r)$.
	From \cref{Omega-is-weigh-vec}, $\Omega_\beta$ is a highest weight vector. Moreover, since we have one of them in each irreducible summand of $\bigwedge^k\vecrep $, and by the uniqueness of highest weight vectors in \cref{fd-quotients}, we contend that they form a complete set of highest weight vectors in $\bigwedge^k\vecrep $. It remains to compute their weights.

	First assume $\linearGraph=\linearGraph_r$, we compute the $Z_i$-weights of $\Omega_\beta$ as follows.
	Write $\Omega_{\beta}=\sum a_\gamma \monWedge_\gamma$, we will identify which summands contribute to $\monWedge_\beta$ after applying $Z_i$ to $\Omega_{\beta}$ and compute the corresponding coefficients. 
	First, if $\source{i}=\target{i}\in \{\pm\}$,
	i.e. both are in the orange or white regions in \cref{Zaction-picture}, then all summands $\monWedge_\gamma$ are killed by $Z_i$ because in view of \cref{Bi-action-std-wedge}, there is no room to move the symbols or no symbols to move. 

	Next consider the case $\beta(\source{i}) = \pm, \beta(\target{i}) \in \{+,-\}$. In terms of \cref{Zaction-picture}, $\source{i}$ lies in the orange region and $\target{i}$ lies in the blue region. 
	In this case by definition we have $\Omega_\beta = \monWedge_\beta \pm \monWedge_{\flip{\target{i}}\beta}+ \sum a_\gamma \monWedge_\gamma$, where the sum is over box diagrams $\gamma$ obtained by flipping some signs in $\beta$. 
	By \cref{Zaction1}, we have $Z_i\Omega_\beta = \monWedge_{\flip{\target{i}}\beta} \pm \monWedge_\beta +$ rest, where the rest does not contribute to $\monWedge_\beta$ or $\monWedge_{\flip{\target{i}}\beta}$. From this we conclude that $Z_i\Omega_\beta = \pm \Omega_\beta$, where the sign is the same as $\beta(\target{i})$.

	Now consider the case $\beta(\source{i}) =\beta(\target{i}) \in \{+,-\}$. In terms of \cref{Zaction-picture}, both $\source{i}$ and $\target{i}$ lie in the blue region. 
	By \cref{Zaction2}, in this case we have $Z_i\monWedge_\beta= b\monWedge_{\beta} +\sum_{p\in \linearGraph^+_r} a_p \monWedge_{\flip{p}{\beta}}$ for some $b,a_p\in \funcField$.
	The only summands $\monWedge_\gamma$ that contribute to $\monWedge_\beta$ after applying $Z_i$ are $\monWedge_{\flip{\target{i}}\beta}$ and $\monWedge_{\flip{\source{i}}\beta}$. 
	Unwinding the definition, observe that in this case $\Omega_\beta = c(-q\monWedge_{ \flip{\target{i}}\beta} + \monWedge_{\flip{\source{i}}\beta}) + $ rest, where $c$ is a nonzero scalar. 
	Inserting \cref{Zaction2}, we have $Z_i (-q\monWedge_{\flip{\target{i}}\beta} + \monWedge_{\flip{\source{i}}\beta}) = 0$. This implies that $Z_i\Omega_\beta =0$ in this case.

	It remains to consider the case $\beta(\source{i}) \in \{+,-\}, \target{i}\notin \supp(\beta)$, and in terms of \cref{Zaction-picture}, $\source{i}$ lies in the blue region and $\target{i}$ lies in the white region. We compute the $Z_i$-weight by induction on $k-2l$. Suppose $\beta(\source{i})=+$. For the base case $k-2l=1$ we have \[
		\Omega_{\beta} = \monWedge_{\beta} + \monWedge_{\beta_-}
		\qquad
		\Omega_{\beta_-} =\monWedge_{\beta} + \monWedge_{\beta_-}
		,
		\text{ where }
		\beta = \,{\cdots}\,{\ytabcenter{\pm&+&~}} \, ,\;
		\beta_-=\, \cdots\,{\ytabcenter{\pm&-&~}} \,
		,
	\]
	(the box diagrams are rotated $90^\circ$ anti-clockwise to save space).
	By \cref{Zi-act-tailend}, we have \[
		Z_{i}\Omega_\beta = -q\inv \monWedge_{\flip{\source{i}}\beta} -q\inv \monWedge_\beta
		 = -q\inv \Omega_\beta
		,\qquad
		Z_{i}\Omega_{\flip{\source{i}}\beta} = -q\inv \monWedge_\beta + q\inv \monWedge_{\flip{\source{i}}\beta}
		=q\inv \Omega_{\flip{\source{i}}\beta}
		.
	\]
	Now for $k-2l>1$, write $j=\source{i},s=\target{i}$, then we have $\Omega_\beta= \Omega_{\beta \setminus j}\wedge (\basis_{j} + c \basis_{\foldVertex{j}})$. We compute \begin{align*}
		Z_{i} \Omega_\beta 
		&= B_{-i} Z_{i-1} \Omega_{\beta \setminus j}\wedge (\basis_{s} + c \basis_{\foldVertex{s}}) 
		=  B_{-i} (- q\inv \Omega_{\beta \setminus j}) \wedge (\basis_{s} + c \basis_{\foldVertex{s}})
		= - q\inv \Omega_\beta
		,
	\end{align*}
	where we used the induction hypothesis in the second equality. 
	The case $\beta(\source{i})=-$ is completely analogous and is omitted. Finally, if $\img\beta=\pm$, then $\Omega_\beta$ is a scalar multiple of $\monWedge_\beta$, and it follows from the last statement of \cref{Zaction1} that $Z_i \Omega_\beta=0$ for all $i$. 

	Finally, assume $\linearGraph=\linearGraph_r'$, the $W_i$-weights of $\Omega_\beta$ are determined in \cref{W-half-action}, \cref{Waction1} and \cref{Waction2}. This concludes the proof.
\end{proof}

\begin{eg}
	We list the $Z_i$-weights corresponding to the following box diagrams (rotated $90^\circ$ anti-clockwise to save space) in a table: 
	\begin{center}
	\begin{tabular}{c|c|c|c|c|c}
		 & $\ytabcenter{+&+&{}}$ & $\ytabcenter{-&-&{}}$ & $\ytabcenter{\pm&+&{}}$ & $\ytabcenter{\pm&-&{}}$ & $\ytabcenter{\pm& \pm&{}}$ \\
		 \hline
		$Z_0$ & $1$ & $-1$ & $0$ &  $0$ & $0$ \\
		$Z_1$ &	$0$ & $0$ & $1$ &  $-1$ & $0$ \\
		$Z_2$ &	$-q\inv$ & $q\inv$ & $-q\inv$ &  $q\inv$ & $0$ 
	\end{tabular}
	\end{center}
\end{eg}

Comparing \cref{hwvecs} with \cref{classical-hw}, we conclude that irreducible $\Uq'$-submodules of $\bigwedge^k\vecrep$ are nice quantum analogues of the irreducible $\classicalFixPointSubalg$-modules $\bigwedge^{k-i} W^+ \boxtimes \bigwedge^i W^- \subset \operatorname{Res}^{\gl(\vecrep[\C])}_{\classicalFixPointSubalg}\bigwedge^k\vecrep[\C]$.
However, there is no quantum analogue of the decomposition $U(\classicalFixPointSubalg) = U(\gl(W^+) )\boxtimes U(\gl(W^-))$, so it is nontrivial that we get the same type of decomposition of the exterior power!

\section{The kernel of $\functorWebB$}
Let still $\linearGraph\in\{\linearGraph_r,\linearGraph_r'\}$. We determine the kernel of the functor $\functorWebB:\webCategoryB_{\funcField}\to \FundCoideal$.

\subsection{Specialisation of $\coideal(\linearGraph)$}
We now describe the specialisation from $\coideal(\linearGraph)$ to $U(\classicalFixPointSubalg)$.
Let $\intUq(\linearGraph)\subset \Uq'(\linearGraph)$ be the $\groundring$-subalgebra generated by \[
	\left\langle  B_i ,\,  
	\dCoideal_p^{-1},\, \hat{L}_p\coloneqq \frac{\dCoideal_p-1}{q-1} \,\bigg|\, i\in \graphEdges_{\linearGraph^+}, \,p>0 
	\right\rangle. 
\] Recall the Lie algebra generators $\hat{\matrixElmtFont{d}}_p,\matrixElmtFont{b}_i$ of $\classicalFixPointSubalg$ from \cref{classical-sym-pair}. 
\begin{lemma}
	Let $\linearGraph\in\{\linearGraph_r,\linearGraph_r'\}$.
	There is a surjective $\groundring$-algebra homomorphisms $\xi: \intUq\twoheadrightarrow U(\classicalFixPointSubalg)$ such that $\xi(\hat{L}_p) = \hat{\matrixElmtFont{d}}_p\,(p>0)$ and $\xi(B_i)= \matrixElmtFont{b}_i\,(i\in\graphEdges_{\linearGraph})$. 
\end{lemma}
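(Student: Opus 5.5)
The plan is to construct $\xi$ on the level of presentations. Start from the presentation of $\coideal(\linearGraph)$ by the generators $B_i,\dCoideal_p^{\pm1}$ and the relations recalled after \cref{Bi-def} (the $\Uq(\linearGraph^+_r)$-type relations together with \cref{serre relations 1}, resp.\ \cref{SR-Gamma-prime}). Define $\intUq$-level relations by rewriting every relation involving $\dCoideal_p$ through $\dCoideal_p = 1+(q-1)\hat L_p$. For example, $\dCoideal_pB_i\dCoideal_p\inv = q^{a}B_i$ becomes
\[
\hat L_p B_i - B_i\hat L_p = \frac{q^a-1}{q-1}\,B_i\dCoideal_p - (q^a-1)\, B_i \hat L_p\dCoideal_p\inv\cdots,
\]
and one should make sure each such relation is expressed with coefficients in $\groundring$. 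In the $\linearGraph_r$ case, the relation $[B_i,B_{-i}]$ involves $\kCoideal_i$, and one needs $[\kCoideal_i;0]$. Write $\kCoideal_i=\dCoideal_{\source i}\dCoideal_{\target i}\inv$ (up to the identification of $\kCoideal$ in terms of $\dCoideal$'s). Then $[\kCoideal_i;0]=\frac{\kCoideal_i-\kCoideal_i\inv}{q-q\inv}$ lies in $\intUq$, because $\frac{\dCoideal_p-1}{q-1}\in\intUq$ and $q+1$ is invertible in $\groundring$.

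The key steps, in order, are as follows.
\begin{enumerate}
\item Show that $\intUq\otimes_{\groundring}\C$ is generated by the images $\bar B_i,\bar{\hat L}_p$, with $\bar{\dCoideal}_p=1$. This is immediate since $\dCoideal_p = 1+(q-1)\hat L_p$ specialises to $1$.
\item Check that these images satisfy the defining relations of $U(\classicalFixPointSubalg)$ in the generators $\matrixElmtFont b_i,\hat{\matrixElmtFont d}_p$. Evaluating the rewritten relations at $q=1$ gives $[\hat L_p,B_i]=aB_i$, with $[B_i,B_{-i}]\mapsto$ the corresponding $\hat{\matrixElmtFont d}$-combination, and the Serre-type relations with $[2]\mapsto 2$ (for instance $\SR(\bo,B_{\pm1})=B_{\pm1}$). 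These are exactly the relations of $\classicalFixPointSubalg$ satisfied by $\matrixElmtFont b_i,\hat{\matrixElmtFont d}_p$ from \cref{gens-g-fixed}, as can be checked by matrix computation, for example on the examples for $\linearGraph_1,\linearGraph_1'$.
\item Obtain a homomorphism $\intUq\to U(\classicalFixPointSubalg)$, which is surjective because $\matrixElmtFont b_i,\hat{\matrixElmtFont d}_p$ generate $\classicalFixPointSubalg$.
\end{enumerate}

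The main obstacle is that a relation-by-relation check only produces a map from the abstract algebra defined by the integral relations. One must know that $\intUq$ is that algebra, i.e.\ that no extra torsion relations appear. To avoid this, I would instead define $\xi$ concretely. Act on $\vecrep[\groundring]^{\otimes d}$: the generators $B_i,\dCoideal_p^{-1},\hat L_p$ preserve the $\groundring$-lattice, since their matrix entries lie in $\groundring$ (note $\frac{q^{m}-1}{q-1}\in\groundring$). Reducing mod $q-1$ gives the action of $\matrixElmtFont b_i,\hat{\matrixElmtFont d}_p$ via \cref{gens-g-fixed}. Then take the direct product over all $d$ (or over $\linearGraph_\infty$), where $U(\classicalFixPointSubalg)$ acts faithfully on $\bigoplus_d\vecrep[\C]^{\otimes d}$. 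The image of $\intUq$ then lands in the image of $U(\classicalFixPointSubalg)$, and this yields a well-defined surjection $\xi$. The faithfulness of $U(\classicalFixPointSubalg)$ on the tensor algebra of the natural representation is the point I expect to need the most care.
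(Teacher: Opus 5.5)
Your proposal is correct, but the construction you settle on differs from the paper's.

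\textbf{The paper's route.} It works with presentations. It recalls the integral form $U_{\mathcal R}(\Gamma)$ of $U_q(\Gamma)$ and its specialisation to $U(\mathfrak{gl}(\Gamma))$. It then defines $\xi$ on the subalgebra generated by the $\hat D_p$ and the $B_i$ with $i\notin\{0,\pm\tfrac12\}$, which is a copy of $U_q(\mathfrak{gl}_{r+1})$. Finally it notes that the remaining Serre-type relations involving $B_0$ and $B_{\pm\frac12}$ have coefficients in $\mathcal R$ and hold for $\mathsf b_i,\hat{\mathsf d}_p$ at $q=1$. As you observe, this tacitly assumes that $U'_{\mathcal R}$ is presented over $\mathcal R$ by the integral forms of these relations, and the paper does not prove this.

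\textbf{Your route.} You let $U'_{\mathcal R}$ act on the lattices $\mathbb V_{\mathcal R}^{\otimes d}$; the entries of $B_i$ and $\hat D_p^{-1}$ lie in $\mathbb Z[q^{\pm1}]$, and those of $\hat L_p$ are $(q^m-1)/(q-1)$. You reduce modulo $q-1$, read off from the comultiplication that $B_i$ and $\hat L_p$ reduce to $\mathsf b_i$ and $\hat{\mathsf d}_p$, and pull back along $\sigma\colon U(\mathfrak g^\theta)\to\prod_d\operatorname{End}_{\mathbb C}(\mathbb V_{\mathbb C}^{\otimes d})$. This needs no presentation of $U'_{\mathcal R}$ at all. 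The image of $U'_{\mathcal R}$ is the $\mathbb C$-algebra generated by $\sigma(\mathsf b_i)$ and $\sigma(\hat{\mathsf d}_p)$, namely $\sigma(U(\mathfrak g^\theta))$, so all you need is that $\sigma$ is injective.

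\textbf{The step you flag is standard.} By PBW, $U(\mathfrak g^\theta)\subset U(\mathfrak{gl}_n)$. An element of $U(\mathfrak{gl}_n)$, viewed as a finite-order distribution at $1\in GL_n$, is determined by its pairing with polynomials in the matrix entries. These polynomials are exactly the matrix coefficients of the modules $(\mathbb C^n)^{\otimes d}$, so $U(\mathfrak{gl}_n)$ acts faithfully on $\bigoplus_d(\mathbb C^n)^{\otimes d}$. For $r=\infty$, every element already lies in some $U(\mathfrak{gl}_n)$.

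\textbf{Comparison.} Your argument is complete and more rigorous than the paper's, and it is the same lattice-and-specialise mechanism the paper itself uses later in \cref{compare-ranks}. The presentation approach would give more, such as control of $U'_{\mathcal R}\otimes_{\mathcal R}\mathbb C$ itself, but only once the integral presentation is actually established.

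The rewritten relation displayed in your first paragraph is garbled. The correct form is $\hat L_pB_i-B_i\hat L_p=\frac{q^a-1}{q-1}B_i+(q^a-1)B_i\hat L_p$, but this does not matter since you abandon that route.
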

\begin{proof}
	Recall that for any linear graph $\linearGraph$, the specialisation of $\Uq(\linearGraph)$ can be done as follows:
	Let $U_\groundring(\linearGraph)\subset \Uq(\linearGraph)$ be the $\groundring$-subalgebra generated by \[
		E_i,\, F_i ,\; (i\in \graphEdges_{\linearGraph})
		\qquad
		D_p^{-1},\, L_p\coloneqq \frac{D_p-1}{q-1} ,\; (j\in \graphVertices_{\linearGraph}). 
	\]
	Note that we have for $i\in \graphEdges_{\linearGraph}$ \[
		[K_i;0] \coloneqq
		\frac{K_i-K_i\inv }{q-q\inv } 
		=    
		\frac{L_{\source{i}}D_{\target{i}} \inv - L_{\target{i}} D_{\source{i}}\inv }{1+q\inv}
		+ \frac{D_{\target{i}}\inv - D_{\source{i}}\inv}{q-1} 
		.
	\]
	Then by the PBW basis theorem one easily verifies that $U_\groundring(\linearGraph)$ has a presentation with these generators and relations given by the Serre relations and 
	\[
		[E_i,F_i]
		=    
		\frac{L_{\source{i}}D_{\target{i}} \inv - L_{\target{i}} D_{\source{i}}\inv }{1+q\inv}
		+ \frac{D_{\target{i}}\inv - D_{\source{i}}\inv}{q-1} 
		,\;\; (i\in \graphEdges_{\linearGraph})
		,\quad 
		1-D_p \inv = (q-1) L_jD_p\inv , \;\; (p\in \graphVertices_{\linearGraph}).
	\]
	Therefore, there is a surjective $\groundring$-algebra homomorphism $U_\groundring(\linearGraph)\twoheadrightarrow U(\gl(\linearGraph))$ given by 
	\[	
		D_p \mapsto 1,\quad 
		L_p\mapsto \matrixElmtFont{l}_p
		, \; (j\in \graphVertices_{\linearGraph}) \quad 
		E_i \mapsto \matrixElmtFont{e}_i ,\quad 
		F_i\mapsto \matrixElmtFont{f}_i , \; (i\in \graphEdges_{\linearGraph}). 
	\]
	We refer to \cite[XVI,XVII]{kassel-quagrp-book} for a detailed account of the deformation theory of the universal enveloping algebra.
	Now let $\linearGraph\in\{\linearGraph_r,\linearGraph_r'\}$ and consider $\coideal(\linearGraph)$.
	Since the subalgebra $C\subset \coideal(\linearGraph)$ generated by all $\dCoideal_p\,(p\ne 0 )$ and $B_i\, (i\notin \{0,\pm\half\})$ is isomorphic to $\Uq(\gl_{r+1})$, we get a well-defined $\groundring$-algebra homomorphism $\xi: C \to U(\classicalFixPointSubalg)$ given by \[
		\dCoideal_p \mapsto 1 ,\quad
		\hat{L}_p \mapsto \hat{\matrixElmtFont{d}}_p , \; (p\in \graphVertices_{\linearGraph_r^+})\quad 
		B_i \mapsto \matrixElmtFont{b} _i , \; (i\in \graphEdges_{\linearGraph_r}),
	\]
	where $\hat{\matrixElmtFont{d}}_p, \matrixElmtFont{b} _i $ are as in \cref{gens-g-fixed}.
	Now the defining relations involving $B_0$ and $B_{\pm\half}$ are the Serre-type relations \cref{serre relations 1}--\cref{SR-Gamma-prime}. These relations all have coefficients in $\groundring$ hence can be specialised. Setting $q=1$, these specialised relations in the classical generators $\hat{\matrixElmtFont{d}}_p,\matrixElmtFont{b}_i$  are easily checked in $U(\classicalFixPointSubalg)$. This proves that $\xi$ is well-defined. It is surjective because it hits all generators of $\classicalFixPointSubalg$.
\end{proof}

\subsection{The kernel}
Fix a linear graph $\linearGraph_r$ or $\linearGraph_{r}'$ with the corresponding vector space $\vecrep$. 
Define
\begin{align}
	\finker_{k,\linearGraph} = \prod_{i=n}^{m} (~\dotNotationText-\Qint{k-2i}) \in \webCategoryB, \text{ where } 
	\quad
	\begin{aligned}
		 m&= \begin{cases}
		\min(k,r+1) , & \text{ if } \linearGraph=\linearGraph_r, \\
		\min(k,r) , & \text{ if } \linearGraph=\linearGraph_r',
	\end{cases}	
	\\
	n &=\max(0,k-r-1)
	\end{aligned}
	\label{def-f-kGamma}
\end{align}

Recall that a \emph{tensor ideal} of an $\groundring$-linear monoidal category $\catFont{C}$ is an ideal in the locally unital algebra $\oplus_{a,b\in \catFont{C}} \hom_{\catFont{C}}(a,b)$ that is closed under tensor product with any morphisms in $\catFont{C}$ from both sides. Similarly, let $\catFont{C}$ be an $\groundring$-linear  monoidal category and $\webker $ be a right module category over $\catFont{C}$, then a \emph{submodule} of $\webker $ is an ideal of $\oplus_{a,b\in\webker}\webker(a,b)$ that are closed under tensor products from the right side with morphisms in $\catFont{C}$. 
\begin{definition} 
	Let $\webker\subset \webCategoryB$ be the $\webCategoryA$-submodule generated by the identity on the object $(\dim \vecrep+1)$ together with $\finker_{k,\linearGraph}$ for all $k\leq \dim \vecrep$. 
\end{definition}

Note that each $\webker(\underline{a},\underline{b})$ is free over $\groundring$, because $\groundring$ is a PID and by \cref{basis-typeB-Web}, $\webCategoryB(\underline{a},\underline{b})$ is free over $\groundring$ of finite rank. Furthermore, by \cref{mainTheorem}, the functor $\functorWebB$ factors through $\webker_\funcField = \funcField\otimes_{\groundring}\webker$. We want to show that $\webker_\funcField =\ker \functorWebB$ via a specialisation argument. Let $\webker_\C=\C\otimes_{\groundring} \webker$. 
	
\begin{lem}\label{tensor-ideal}
	The submodule $\ \webker_\C$ is a tensor ideal of $\webCategoryB_\C$. 
\end{lem}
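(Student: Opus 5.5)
The plan is to use the strict monoidal structure on $\webCategoryB_\C$ from \cref{\webQpar=-1WebB-is-Monoidal}, given by horizontal juxtaposition. By definition $\webker_\C$ is an ideal closed under tensoring on the right with morphisms of $\webCategoryA_\C$. Tensoring on the right with a dot is, by \cref{def-thick-dots}, composition with a web built from crossings and a dot on a thin strand. So it suffices to prove that $\webker_\C$ is closed under tensoring on the \emph{left} with arbitrary morphisms of $\webCategoryB_\C$, and under right tensoring with the dot on the object $1$. Since $\webker_\C$ is an ideal, and $x\otimes y=(x\otimes\id)\circ(\id\otimes y)$, it is enough to check that the generators stay in $\webker_\C$ after tensoring with identities on either side and with the single dot.

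The key step is the classical picture. Over $\C$ the functor $\functorWebB_\C$ is a full monoidal functor to $\polyRepCat$ (\cref{classical-webB-action-ss}). I would first show that $\webker_\C$ equals $\ker\functorWebB_\C$, or at least is contained in it; since $\ker\functorWebB_\C$ is automatically a tensor ideal, this is the leverage. Containment $\webker_\C\subset\ker\functorWebB_\C$ follows from the $\webQpar=-1$ version of \cref{mainTheorem}: $\functorWebB_\C(\finker_{k,\linearGraph})$ is a product over the whole spectrum of the dot on $\bigwedge^k\vecrep[\C]$, hence zero, and $\bigwedge^{\dim\vecrep+1}\vecrep[\C]=0$.

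For the reverse inclusion I would use \cref{kar-env-WebB} with the graph $\linearGraph_\infty$, where $\functorWebB_\C$ is faithful. The graph $\linearGraph_r$ (or $\linearGraph_r'$) is a truncation of it, so $\functorWebB_\C$ for $\linearGraph$ factors through the functor for $\linearGraph_\infty$. The semisimple category $\polyRepCat[\classicalFixPointSubalg[\linearGraph_\infty]]$ then maps to the finite one by killing exactly the simples labelled by bipartitions $(\lambda,\mu)$ that do not fit, i.e.\ $\lambda$ has more than $\dim W^+$ rows or $\mu$ has more than $\dim W^-$ rows. Hence $\ker\functorWebB_\C$ is the tensor ideal of morphisms factoring through those simples. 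Using the classical \cref{intro-prop} and \cref{liver-coideal-thm}, the eigen-idempotents of the dot on $k$ project $\bigwedge^k\vecrep[\C]$ onto $\bigwedge^{k-i}W^+\otimes\bigwedge^iW^-$. So $\finker_{k,\linearGraph}$ is, up to an invertible factor, the sum of the idempotents for the minimal forbidden simples $((1^{k-i}),(1^i))$ with $k-i>\dim W^+$ or $i>\dim W^-$. Every forbidden simple is a summand of a tensor product of such a minimal one with other objects, by a Pieri/Littlewood–Richardson argument on columns. Therefore the tensor ideal generated by the $\finker_{k,\linearGraph}$ is the whole kernel.

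The main obstacle is reconciling the \emph{two-sided} tensor ideal generated by these elements with the one-sided submodule $\webker_\C$. I would try to show that left tensoring preserves $\webker_\C$ directly. The second relation of \cref{dot-commute-with-far-away-web} lets a generator placed to the right of any web be slid past that web. Together with the braiding of $\webCategoryA$, which conjugates $x\otimes\id$ to $\id\otimes x$ via crossings and thus stays within right-tensoring and composition, this would give $\id_a\otimes\finker_{k,\linearGraph}\in\webker_\C$. Whether the dot is transported correctly under this braiding conjugation needs care: at $q=1$ the crossings are symmetric, so the dot on the leftmost strand, conjugated by crossings, becomes the $K$-matrix on another factor. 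I expect this last point to be where the $q=1$ hypothesis is genuinely needed.
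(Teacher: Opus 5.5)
Your final paragraph is the paper's proof: right tensoring is handled by the ideal property together with $f\otimes g=(f\otimes\id)\circ(\id\otimes g)$, and left tensoring by $g\otimes f=X_{21}(f\otimes g)X_{12}$ with braidings $X_{12},X_{21}$. This is legitimate because at $\webQpar=-1$ over- and under-crossings coincide and square to the identity, and \cref{def-thick-dots} defines a dot on a non-leftmost strand as the crossing-conjugate of the leftmost one; hence the crossings are natural for all morphisms of $\webCategoryB_\C$, which settles the point you flagged as needing care.

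The detour through $\ker\functorWebB_\C$ in your second and third paragraphs is unnecessary, and by itself it only identifies the two-sided tensor ideal generated by the $\finker_{k,\linearGraph}$, not the one-sided $\webker_\C$. In the paper the logic runs the other way: this lemma is what feeds into $\ker\functorWebB_\C=\webker_\C$ in \cref{compare-ranks}.
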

\begin{proof}
	Take $f\colon \underline{a} \to \underline{b} \in \webker_\C$ and any $g\colon \underline{n} \to \underline{d} \in \webCategoryB\otimes_{\groundring} \C$.
	By definition of $\webker$, we have $f\otimes g \in \webker_\C$. It remains to show that $g\otimes f \in \webker_\C$. Let $X_{12}:\underline{a}\otimes \underline{n} \to \underline{n}\otimes \underline{a}$ and $X_{21}:\underline{b}\otimes \underline{d} \to \underline{d}\otimes \underline{b}$ be the braidings between the sources and targets of $f$ and $g$. Then we have $g\otimes f = X_{21} (f\otimes g) X_{12}$, which is in $\webker_\C$ because $f\otimes g$ is. 
\end{proof}

We next use a specialisation argument to show that $\webker$ is exactly the kernel of the functor $\functorWebB$.
Let $\vecrep[\groundring] \subset \vecrep$ be the $\groundring$-submodule spanned by the standard basis elements. It is clearly a free $\groundring$-module. 
Then by \cref{Bi-action-std-wedge}, $\intUq \bigwedge^{\underline{a}} ~\vecrep[\groundring]\subset \bigwedge^{\underline{a}} ~\vecrep[\groundring]$ for all ${\underline{a}} $ and we can make sense of $ \hom_{{\intUq}}(\bigwedge^{\underline{a}} ~\vecrep[\Aring], \bigwedge^{\underline{b}} ~\vecrep[\Aring])$. 
Moreover, in view of \cref{merge1nFormula}---\cref{merge-n1-formula-last} and the $K$-matrix, the functor $\functorWebB$ restricts to an $\groundring$-linear map
\begin{align}
	j: \hom_{\webCategoryB}(\underline{a},\underline{b}) \longrightarrow \hom_{{\intUq}}(\bigwedge^{\underline{a}} ~\vecrep[\Aring], \bigwedge^{\underline{b}} ~\vecrep[\Aring])
\end{align}

By \cref{kar-env-WebB} we may consider $\webCategoryB$ as a full subcategory of $\polyRepCat[{\classicalFixPointSubalg[\linearGraph_\infty]}]$. Recall \cref{intro-prop} we have a decomposition $\vecrep[\C] = W^+\oplus W^-$ into $\pm1$-eigenspaces of $\kmatrix$, this holds also in $\polyRepCat[{\classicalFixPointSubalg[\linearGraph_\infty]}]$. Denote $L_i= \bigwedge^iW^+ \boxtimes \bigwedge^{k-i} W^- \in \polyRepCat[{\classicalFixPointSubalg[\linearGraph_\infty]}]$ for $i\in \N$.
\begin{lemma}\label{J=J'}
Let $J,J'$ be the tensor ideal of $\polyRepCat[{\classicalFixPointSubalg[\linearGraph_\infty]}]$ generated by $\C\otimes_{\Aring}\webker$, respectively by the identity maps on  $L_i$  for $i<n$ or $i>m$, where  $n,m $ are as in \cref{def-f-kGamma}. Then  $J=J'$.
\end{lemma}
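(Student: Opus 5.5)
We show both inclusions $J\subseteq J'$ and $J'\subseteq J$. Throughout we work in the semisimple monoidal category $\polyRepCat[{\classicalFixPointSubalg[\linearGraph_\infty]}]$ of \cref{kar-env-WebB}. The basic fact we use is that a tensor ideal in a semisimple category is determined by the set of simple objects whose identity it contains. Indeed, a morphism lies in the ideal if and only if each of its isotypic components factors through simples whose identities lie in the ideal.

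For $J\subseteq J'$ it suffices to treat the generators of $\webker_\C$.

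\textbf{The generator $\finker_{k,\linearGraph}$.} At $q=1$ this is $\prod_{i=n}^m(\kmatrix_k-(k-2i))$ on $\bigwedge^k\vecrep[\C]$. By the $q=1$ version of \cref{mainTheorem}, together with \cref{intro-prop} for $\linearGraph_\infty$, the operator $\kmatrix_k$ acts by the scalar $k-2i$ on $L_i$, and $\bigwedge^k\vecrep[\C]=\bigoplus_i L_i$. Hence $\finker_{k,\linearGraph}$ kills $L_i$ for $n\le i\le m$ and acts invertibly on the other summands. It is therefore the idempotent-weighted sum of identities on $L_i$ with $i<n$ or $i>m$, up to invertible scalars, and so lies in $J'$.

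\textbf{The generator $\id_{\dim\vecrep+1}$.} Here $\bigwedge^{N+1}\vecrep[\C]$, with $N=\dim\vecrep$, decomposes into $L_i$ for $N+1$. The dimension constraint $i\le \dim W^+$, $N+1-i\le \dim W^-$ cannot hold, because at rank $r$ we have $\dim W^+ + \dim W^- = N$. So every summand $L_i$ has $i<n$ or $i>m$, where $n,m$ are computed with $k=N+1$. Thus this generator also lies in $J'$.

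For $J'\subseteq J$ we fix $i$ with $i<n$ or $i>m$ and write
\[
  e_i=\prod_{j\ne i}\frac{\kmatrix_k-(k-2j)}{(k-2i)-(k-2j)},
\]
the spectral idempotent onto $L_i$, which is a polynomial in the dot. The identity on $L_i$ is $e_i$, and we must show $e_i\in J$. The factor $\prod_{j=n}^m(\kmatrix_k-(k-2j))=\finker_{k,\linearGraph}$ divides $\prod_{j\ne i}(\kmatrix_k-(k-2j))$, because $i\notin[n,m]$. Hence $e_i$ is a composite of $\finker_{k,\linearGraph}$ with another endomorphism, and so lies in $J$. This uses that $k\le\dim\vecrep$, so that $\finker_{k,\linearGraph}$ is a generator.

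The remaining case is $k>\dim\vecrep$, where $\finker_{k,\linearGraph}$ is not among the generators. In this case we show that $\id_k$ itself lies in $J$. The plan is to factor $\bigwedge^k$ through $\bigwedge^{N+1}\otimes\bigwedge^{k-N-1}$ via a split followed by a merge. The explosion relation \cref{explosion} shows that $\id_k$ is a nonzero multiple of this composite, and the composite contains $\id_{N+1}\otimes\id_{k-N-1}$, so $\id_k\in J$. By \cref{tensor-ideal}, $J$ is closed under tensoring on both sides, which justifies this step.

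The main obstacle is the semisimplicity bookkeeping in the first half. We must justify that the tensor ideal generated by $\id_{L_i}$ agrees with the ideal generated by the corresponding idempotent in $\End(\bigwedge^k)$. We must also check that the condition $i<n$ or $i>m$ is exactly the condition that $L_i$ vanishes at rank $r$, and that it coincides with the indexing in \cref{mainTheorem}. This requires tracking $\dim W^\pm$ in the two cases $\linearGraph_r$ and $\linearGraph_r'$ separately.
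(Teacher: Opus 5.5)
Your proof is correct and follows the same overall strategy as the paper. At $q=1$ the dot acts on $\bigwedge^k\vecrep[\C]=\bigoplus_i L_i$ by pairwise distinct scalars. Hence $\finker_{k,\linearGraph}$ is a combination of the $\id_{L_i}$ whose nonzero coefficients occur exactly for $i\notin[n,m]$, and this gives $J\subseteq J'$.

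Where you differ is the reverse inclusion. The paper writes $\finker_{k,\linearGraph}=\sum_{i\notin[n,m]}a_i\id_{L_i}$ and obtains $\id_{L_i}=g_i(\finker_{k,\linearGraph})$ by Lagrange interpolation. That step requires the $a_i$ to be pairwise distinct, which fails in general. For $\linearGraph_1$ and $k=3$ one has $n=1$, $m=2$ and $a_0=a_3=8$, so no polynomial in $\finker_{3,\linearGraph_1}$ alone separates $L_0$ from $L_3$. Your spectral idempotent $e_i$ is a polynomial in the dot divisible by $\finker_{k,\linearGraph}$, and it only uses that the dot's eigenvalues on the $L_j$ are distinct. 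So your version genuinely closes this step. Composing $\finker_{k,\linearGraph}$ with the projection onto $L_i$, using semisimplicity, would also work.

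You also treat the generator $\id_{\dim\vecrep+1}$ and the range $k>\dim\vecrep$ explicitly, which the paper leaves implicit.

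One remark on indexing. You implicitly take $L_i$ to be the summand on which the dot acts by the specialisation of $\Qint{k-2i}$, i.e. $\bigwedge^{k-i}W^+\boxtimes\bigwedge^iW^-$ as in \cref{intro-prop}. That is the convention under which the lemma holds. The paper's stated $L_i=\bigwedge^iW^+\boxtimes\bigwedge^{k-i}W^-$ swaps $W^\pm$. This is harmless for $\linearGraph_r$, where $n+m=k$, but not for $\linearGraph_r'$.

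Finally, the vanishing of $L_i$ at rank $r$, which you list among the obstacles, is not needed for this lemma; it only enters in \cref{compare-ranks}.
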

\begin{proof} 
	By \cref{mainTheorem} and \cref{intro-prop}, inside $\polyRepCat[{\classicalFixPointSubalg[\linearGraph_\infty]}]$ we have $\dotNotationText=\sum_{0\leq i\leq k} -(k-2i) \id_{L_i}$, this implies $J\subset J'$. Moreover, we have inside $\polyRepCat[{\classicalFixPointSubalg[\linearGraph_\infty]}]$ \[
		\finker_{k,\linearGraph} =  \sum_{i<n  \text{ or } i>m} a_ i \id_{L_i} , \text{ where } a_i= \prod_{n\leq j\leq m}-(2j-2i).
	\]
	Thus, for $i<n  \text{ or } i>m$, we have $a_i\ne a_{i'} $ if $i\ne i'$. Via Lagrange interpolation we have $\id_{L_i} = g_i(\finker_{k,\linearGraph})$ for some polynomial $ g_i$. This implies the reverse inclusion.
\end{proof}

\begin{prop}\label{compare-ranks}
	The following hold for any $\underline{a},\underline{b}$:
	\begin{enumerate}
		\item $ \hom_{{\intUq}}(\bigwedge^{\underline{a}} ~\vecrep[\Aring], \bigwedge^{\underline{b}} ~\vecrep[\Aring])$ is free. 
		\item  $\rank_{\groundring} \img j = \rank_{\groundring} \hom_{{\intUq}}(\bigwedge^{\underline{a}} ~\vecrep[\Aring], \bigwedge^{\underline{b}} ~\vecrep[\Aring])$.
		\item 
		$\C \otimes_{\Aring} \hom_{{\intUq}}(\bigwedge^{\underline{a}} ~\vecrep[\Aring], \bigwedge^{\underline{b}} ~\vecrep[\Aring])
		=
		\hom_{\classicalFixPointSubalg}(\bigwedge^{\underline{a}} ~\vecrep[\C], \bigwedge^{\underline{b}} ~\vecrep[\C])$
		\item The kernel of $\functorWebB_\C$ is precisely $\webker_\C$.
		\item $\rank_{\groundring} \webCategoryB_{\Aring}(\underline{a},\underline{b}) -\rank_{\groundring} \webker_{\Aring}(\underline{a},\underline{b}) = \dim_\C \hom_{\classicalFixPointSubalg}(\bigwedge^{\underline{a}} ~\vecrep[\C], \bigwedge^{\underline{b}} ~\vecrep[\C]) $. 
	\end{enumerate}
\end{prop}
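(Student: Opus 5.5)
We will prove the five assertions roughly in the order (a), (c), (d), (b), (e). Throughout we use that $\groundring$ is a local PID with residue field $\C$ and fraction field $\funcField$, and that all modules in sight are submodules of finitely generated free $\groundring$-modules.

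\textbf{Freeness, (a).} The space $\hom_{\intUq}(\bigwedge^{\underline{a}}\vecrep[\Aring],\bigwedge^{\underline{b}}\vecrep[\Aring])$ is an $\groundring$-submodule of $\hom_\groundring(\bigwedge^{\underline{a}}\vecrep[\Aring],\bigwedge^{\underline{b}}\vecrep[\Aring])$, which is free of finite rank. Since $\groundring$ is a PID, the submodule is free. Moreover it is saturated: if $\pi f$ is $\intUq$-linear with $\pi$ a uniformiser, then so is $f$. Hence it is a direct summand of the ambient Hom space.

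\textbf{Specialisation, (c).} Write $H_\groundring$ for this Hom space. Saturation makes $\C\otimes H_\groundring\to \hom_\C(\bigwedge^{\underline{a}}\vecrep[\C],\bigwedge^{\underline{b}}\vecrep[\C])$ injective. Its image consists of $\classicalFixPointSubalg$-maps, because the surjection $\xi\colon\intUq\twoheadrightarrow U(\classicalFixPointSubalg)$ of the preceding lemma is compatible with the actions on $\vecrep[\groundring]$; this is visible from \cref{Bi-action-std-wedge}, where $B_i$ specialises to $\matrixElmtFont{b}_i$ and $\hat L_p$ to $\hat{\matrixElmtFont{d}}_p$.

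For surjectivity we compare dimensions:
\[
\rank H_\groundring=\dim_{\funcField}\hom_{\coideal}(\bigwedge^{\underline{a}}\vecrep,\bigwedge^{\underline{b}}\vecrep).
\]
This is the step we expect to be the main obstacle. The plan is to compute both sides through type B Schur--Weyl duality. By \cref{Schur-Weyl-quantum} and \cref{SW-duality-q=1}, both sides are Hom spaces between the modules $\sgn$-type idempotent truncations $\splitNotation(\underline{a})\vecrep^{\otimes d}$, so each is computed by the same double-coset type data. Concretely, the multiplicities of Specht modules in $\vecrep^{\otimes d}$ (respectively its classical analogue) agree, since semisimple deformations of $\heckeSpecialized{d}$ to $\C[W_d]$ preserve characters (Tits deformation). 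We may therefore equate dimensions, and a saturated sublattice whose rank equals the target dimension specialises onto it.

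An alternative route avoids the deformation argument. We show directly that $\Phi^B_\C$ factors through $\C\otimes\img j\subset\C\otimes H_\groundring$. Fullness of $\functorWebB_\C$ (\cref{classical-webB-action-ss}) then gives surjectivity onto the classical Hom space.

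\textbf{Kernel and ranks, (d), (b), (e).} For (d), \cref{mainTheorem} at $\webQpar=-1$ gives $\webker_\C\subset\ker\functorWebB_\C$. For the reverse inclusion, pass to $\linearGraph_\infty$ using \cref{kar-env-WebB}. There $\webCategoryB_\C$ is a full subcategory of the semisimple category $\polyRepCat[{\classicalFixPointSubalg[\linearGraph_\infty]}]$, and restriction to $\linearGraph$ is a monoidal functor whose kernel is the tensor ideal generated by identities on the simples annihilated by restriction. These are exactly the $L_i$ with $i<n$ or $i>m$, together with $\bigwedge^{\dim\vecrep+1}$. By \cref{J=J'} and \cref{tensor-ideal} this ideal equals $\webker_\C$; in a semisimple category every tensor ideal is determined by the simples it kills.

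Then (b) follows. The map $\C\otimes\img j\to\C\otimes H_\groundring$ composed with (c) is $\functorWebB_\C$ restricted to $\webCategoryB_\C(\underline a,\underline b)$, which is full, hence surjective. So $\rank\img j\ge\dim\C\otimes H_\groundring=\rank H_\groundring$, and the reverse inequality is trivial.

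Finally, (e) is rank bookkeeping. We have $\webker_\C=\C\otimes\webker$, and the inclusion $\webker\subset\webCategoryB$ stays injective after $\C\otimes$ because $\webker$ is saturated. Saturation holds since $\webker$ is a kernel of $\funcField$-linear data, or can be arranged by taking $\webker$ saturated. Combining this with (d) and fullness gives the rank difference as the classical dimension.
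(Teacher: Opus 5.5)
Your treatment of (a), (c) and (d) follows the paper. Your ``alternative route'' for (c) is exactly the paper's argument: the generating webs have $\groundring$-integral images that specialise to their classical counterparts, so $\functorWebB_{\C}$ factors through $\C\otimes_{\groundring}H_{\groundring}$, and classical fullness gives surjectivity. Lead with it and drop the Tits-deformation sketch, which as written is not yet an argument. Your saturation remark in (a) is a real improvement, since it supplies the injectivity of $\C\otimes_{\groundring}H_{\groundring}\to\hom_{\C}(\ldots)$ that the paper uses tacitly. One wording fix in (d): the functor from $\linearGraph_\infty$ to $\linearGraph$ must be the paper's truncation $\operatorname{pr}$. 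Literal restriction to the smaller fixed-point subalgebra annihilates no nonzero module. For (b) you argue differently from the paper. The paper uses fullness of $\functorWebB$ over $\funcField$ to see that $\coker j$ is torsion. You instead use (c) and classical fullness to see that $j$ is surjective modulo the maximal ideal. That is valid, and by Nakayama it even gives $\img j=H_{\groundring}$.

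The gap is in (e). You need $\C\otimes_{\groundring}\webker\to\webCategoryB_{\C}$ to be injective, and you justify this by saying $\webker$ is saturated because it ``is a kernel''. But $\webker$ is defined as the $\webCategoryA$-submodule generated by $\id_{\dim\vecrep+1}$ and the $\finker_{k,\linearGraph}$. That it is a kernel is exactly what the theorem following this proposition deduces \emph{from} (e), so the justification is circular. ``Taking $\webker$ saturated'' changes the object, unless you also check that the rank is unchanged and that the saturation still specialises into $\ker\functorWebB_{\C}$.

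The repair is the paper's comparison with $\ker j$, and this is the one place where (b) is needed; you prove (b) but never use it. Since $\webker\subseteq\ker j$, parts (b) and (c) give $\rank\webker(\underline{a},\underline{b})\le\rank\ker j=\rank\webCategoryB_{\groundring}(\underline{a},\underline{b})-\dim\hom_{\classicalFixPointSubalg}(\ldots)$. Conversely, (d) and classical fullness give $\rank\webker(\underline{a},\underline{b})\ge\dim\webker_{\C}(\underline{a},\underline{b})=\dim\ker\functorWebB_{\C}$, which is the same number. With your Nakayama form of (b) you even get $\ker\functorWebB_{\C}=\C\otimes_{\groundring}\ker j$. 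Then (d) together with Nakayama yields $\webker=\ker j$, which is precisely the saturation you wanted.
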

\begin{proof}
	Since $\groundring$ is a PID,
	$ \hom_{{\intUq}}(\bigwedge^{\underline{a}} ~\vecrep[\Aring], \bigwedge^{\underline{b}} ~\vecrep[\Aring])\subset \hom_{{\groundring}}(\bigwedge^{\underline{a}} ~\vecrep[\Aring], \bigwedge^{\underline{b}} ~\vecrep[\Aring])$ is a free $\groundring$-module, and $\img j \subset  \hom_{{\intUq}}(\bigwedge^{\underline{a}} ~\vecrep[\Aring], \bigwedge^{\underline{b}} ~\vecrep[\Aring])$ is a free $\groundring$-module.
	Then by fullness of $\functorWebB$, $j\otimes_{\groundring}\funcField$ is surjective, so $\coker j$ is torsion and therefore, $\rank_{\groundring} \img j = \rank_{\groundring} \hom_{{\intUq}}(\bigwedge^{\underline{a}} ~\vecrep[\Aring], \bigwedge^{\underline{b}} ~\vecrep[\Aring])$.
	Moreover, since all coefficients in \cref{Bi-action-std-wedge} lie in $q^\Z$, the action of $\intUq $ on $ \bigwedge^{\underline{a}} ~\vecrep[\Aring]$ specialises to the action of $U'(\linearGraph)$ on $\bigwedge^{\underline{a}} ~\vecrep[\C]$. This implies that $\C \otimes_{\Aring} \hom_{{\intUq}}(\bigwedge^{\underline{a}} ~\vecrep[\Aring], \bigwedge^{\underline{b}} ~\vecrep[\Aring]) \subset \hom_{\classicalFixPointSubalg}(\bigwedge^{\underline{a}} ~\vecrep[\C], \bigwedge^{\underline{b}} ~\vecrep[\C]) $. On the other hand, in view of \cref{merge1nFormula}---\cref{merge-n1-formula-last} and the $K$-matrix, $\functorWebB_\C$ brings the generating webs of $\webCategoryB_\C$ into $\C \otimes_{\Aring} \hom_{{\intUq}}(\bigwedge^{\underline{a}} ~\vecrep[\Aring], \bigwedge^{\underline{b}} ~\vecrep[\Aring]) $. By \cref{SW-duality-q=1}, the image of these webs generate $ \hom_{\classicalFixPointSubalg}(\bigwedge^{\underline{a}} ~\vecrep[\C], \bigwedge^{\underline{b}} ~\vecrep[\C]) $, so we have the opposite inclusion.

	It remains to determine the kernel of $\functorWebB_\C$. For $r=\infty$, by \cref{classical-webB-action-ss} $\functorWebB_\C$ is faithful. Now for $r<\infty$, denote by $\vecrep[\C](\linearGraph_\infty) , \vecrep[\C](\linearGraph)$ the vector representations associated to $\linearGraph_\infty $ and $\linearGraph\in \{\linearGraph_r,\linearGraph_r'\}$ respectively.
	Consider the projection $\operatorname{pr}:\polyRepCat[{\classicalFixPointSubalg[\linearGraph_\infty]}]\to \polyRepCat$ given by $\vecrep[\C](\linearGraph_\infty) \mapsto \vecrep[\C](\linearGraph)$. We have the following commutative diagram
	\begin{equation}
		\begin{tikzcd}[font=\normalsize]
			\webCategoryB_\C \arrow[d,hook] \arrow[dr,"\functorWebB_\C"] \\
			\polyRepCat[{\classicalFixPointSubalg[\linearGraph_\infty]}]\arrow[r,"\operatorname{pr}"] & \polyRepCat
		\end{tikzcd}
		\label{factorisation-GammaB-C}
	\end{equation}
	Furthermore, let $\vecrep[\C](\linearGraph)= W^+_\linearGraph \oplus W^-_\linearGraph$ be the decomposition into $\pm1$ eigenspaces of $\kmatrix$, then we have $\operatorname{pr} (L_i )= \bigwedge^{k-i}  W^+_\linearGraph \boxtimes \bigwedge^i  W^-_\linearGraph$, where $L_i\in \polyRepCat[{\classicalFixPointSubalg[\linearGraph_\infty]}]$ is as in \cref{J=J'}. Since both $\polyRepCat[{\classicalFixPointSubalg[\linearGraph_\infty]}]$ and $ \polyRepCat$ are semisimple, the kernel of $\operatorname{pr}$ is generated by $\id_{L_i}$ for $k-i>\dim W^+_\linearGraph$ or $i>\dim W^-_\linearGraph$. In fact, this is precisely the tensor ideal $J'=J$ from \cref{J=J'}. To see this, note that \[
		\dim W^+_\linearGraph = r+1
		,\quad 
		\dim W^-_\linearGraph = 
		\begin{cases}
			r+1, & \text{ if } \linearGraph = \linearGraph_r,\\
			r,  & \text{ if } \linearGraph = \linearGraph_r',
		\end{cases}
	\]
	which implies that $\bigwedge^{k-i}  W^+_\linearGraph \boxtimes \bigwedge^i  W^-_\linearGraph = 0$ if and only if $n> i$ or $i> m$ where $m,n$ are as in \cref{def-f-kGamma}. This shows that $\ker \operatorname{pr}=J$ is generated by $\C\otimes_{\groundring}\webker$. Since $\webCategoryB_\C $ is a full subcategory of $\polyRepCat[{\classicalFixPointSubalg[\linearGraph_\infty]}]$, we conclude that the kernel of  $\functorWebB_\C$ is generated by $\webker$.

	For the last equality we consider the maps \[
		\begin{tikzcd}[row sep=small,font=\normalsize]
			 \webker_{\Aring}(\underline{a},\underline{b}) \arrow[r,hook,"i"] &
			 \webCategoryB_{\Aring}(\underline{a},\underline{b}) \arrow[r,"j"] &
			 \hom_{{\intUq}}(\bigwedge^{\underline{a}} ~\vecrep[\Aring], \bigwedge^{\underline{b}} ~\vecrep[\Aring])
			 \\
			\webker_{\Aring}(\underline{a},\underline{b}) \arrow[r,hook,"i'"] & 
			\ker j \arrow[r,"j'",two heads] &
			\ker j/\img i
		\end{tikzcd}
	\]
	Note that $\ker j$ is free over $\Aring$ of finite rank. 
	By the previous statement, $i'\otimes \C$ is bijective, whence $\ker j/\img i$ is torsion and consequently $\rank_{\groundring} \webker_{\Aring}(\underline{a},\underline{b}) = \rank_{\groundring} \ker j$. This implies that \[
		\rank_{\groundring} \webker_{\Aring}(\underline{a},\underline{b}) = \rank_{\groundring} \webCategoryB_{\Aring}(\underline{a},\underline{b})
		-\rank_{\groundring} \img j
		= \rank_{\groundring} \webCategoryB_{\Aring}(\underline{a},\underline{b})
		- \rank_{\groundring} \hom_{{\intUq}}(\bigwedge^{\underline{a}} ~\vecrep[\Aring], \bigwedge^{\underline{b}} ~\vecrep[\Aring])
	\]
	where we used the second statement in the second equality. Combined with the third statement, we get the desired identity. This finishes the proof.
\end{proof}

Recall from \cref{webA-functor} the $\funcField$-linear monoidal equivalence $ \bar\Phi^A:\webCategoryA_{\funcField}/\id_{2\dimV+1}   \cong \Fund$. Note that by definition, $\functorWebB\colon \webCategoryB_\funcField\to \FundCoideal$ is bijective on objects. We now determine the kernel of $\functorWebB$ as a $\webCategoryA_{\funcField}$-submodule. Abbreviate $\webker_\C = \C\otimes_{\groundring}\webker$. 
\begin{thm}
	The functor $\functorWebB $ induces an equivalence of categories $\bar\Phi:\webCategoryB_{\funcField}/\webker_\funcField \cong \FundCoideal$ compatible with $\bar\funcA$ in the sense that the following diagram commutes: \[
		\begin{tikzcd}[font=\normalsize]
			\webCategoryB_{\funcField}/\webker_\funcField \times \webCategoryA_{\funcField}/\id_{\dim \vecrep+1}  
			\arrow[r]\arrow[d,"\bar\Phi^B\times \bar\Phi^A"] 
			 & \webCategoryB_{\funcField}/\webker_\funcField
			  \arrow[d,"\bar\Phi^B"] 
			\\
			\FundCoideal\times \Fund \arrow[r,"\otimes"] & \FundCoideal
		\end{tikzcd}
	\]
\end{thm}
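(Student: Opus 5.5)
The plan is to show three things: that $\functorWebB$ kills $\webker_\funcField$, that the induced functor $\bar\Phi$ is full and essentially surjective, and that it is faithful. Faithfulness will come from a rank count using \cref{compare-ranks}. Compatibility with $\bar\funcA$ will then follow from \cref{rep-webB}.

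\emph{Factorisation.} First check $\webker_\funcField\subset\ker\functorWebB$.
\begin{itemize}
\item The identity on the object $\dim\vecrep+1$ goes to $\id_{\bigwedge^{\dim\vecrep+1}\vecrep}=0$.
\item By \cref{mainTheorem}, $\functorWebB(\finker_{k,\linearGraph})$ is the product of $\functorWebB(\dotNotationText)-\Qint{k-2i}$ over the full spectrum of the dot on $\bigwedge^k\vecrep$. The operator is diagonalisable, so this product is $0$.
\item $\ker\functorWebB$ is a right $\webCategoryA_\funcField$-submodule, because by \eqref{module-functor} $\functorWebB$ intertwines the right $\webCategoryA$-actions.
\end{itemize}
Since $\webker$ is by definition the submodule generated by these morphisms, it lies in $\ker\functorWebB$. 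Hence $\functorWebB$ factors through $\bar\Phi:\webCategoryB_\funcField/\webker_\funcField\to\FundCoideal$. This functor is full because $\functorWebB$ is full (\cref{rep-webB}), and it is bijective on objects.

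\emph{Faithfulness.} This is the main step. Fix $\underline a,\underline b$; it suffices to show $\webker_\funcField(\underline a,\underline b)=\ker\functorWebB(\underline a,\underline b)$, and I would do it by comparing dimensions over $\funcField$. By fullness, $\dim\hom_{\coideal}(\bigwedge^{\underline a}\vecrep,\bigwedge^{\underline b}\vecrep)=\rank_\groundring\img j$. By \cref{compare-ranks}(b), this rank equals $\rank_\groundring\hom_{\intUq}(\bigwedge^{\underline a}\vecrep[\groundring],\bigwedge^{\underline b}\vecrep[\groundring])$. By \cref{compare-ranks}(a),(c), that lattice is free and its specialisation is the classical Hom space, so its rank is $\dim_\C\hom_{\classicalFixPointSubalg}(\bigwedge^{\underline a}\vecrep[\C],\bigwedge^{\underline b}\vecrep[\C])$. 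Putting this together with \cref{compare-ranks}(e),
\[
\dim_\funcField\webCategoryB_\funcField(\underline a,\underline b)-\dim_\funcField\webker_\funcField(\underline a,\underline b)=\dim_\funcField\hom_{\coideal}(\bigwedge^{\underline a}\vecrep,\bigwedge^{\underline b}\vecrep).
\]
Here I use that $\webker(\underline a,\underline b)$ is a free $\groundring$-lattice whose rank equals its $\funcField$-dimension after base change. Rank–nullity for the surjection $\functorWebB$ says the left-hand side is also $\dim\webCategoryB_\funcField(\underline a,\underline b)-\dim\ker\functorWebB(\underline a,\underline b)$. So $\dim\webker_\funcField(\underline a,\underline b)=\dim\ker\functorWebB(\underline a,\underline b)$, and together with the inclusion from the first step the two coincide.

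The delicate point is making sure $\webker_\funcField$ really is $\funcField\otimes\webker$, i.e.\ that no torsion or saturation issue appears. I would handle this by noting that $\webker(\underline a,\underline b)$ is a submodule of a free module over the PID $\groundring$, so it is free, and its rank equals both its $\funcField$- and $\C$-dimensions after base change. That is exactly the form in which \cref{compare-ranks}(e) was proved.

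\emph{Compatibility.} The square commutes because $\functorWebB$ restricted to $\webCategoryA$ is $\funcA$ and the module structures correspond under $\otimes$, by \eqref{module-functor} in \cref{rep-webB}. The right action of $\webCategoryA_\funcField$ on $\webCategoryB_\funcField/\webker_\funcField$ descends to $\webCategoryA_\funcField/\id_{\dim\vecrep+1}$ because $\webker$ contains $\id_{\dim\vecrep+1}$ and is a submodule. So the diagram of induced functors commutes.
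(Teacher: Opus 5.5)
Your proposal is correct and follows essentially the same route as the paper. The inclusion $\webker_\funcField\subset\ker\functorWebB$ comes from \cref{mainTheorem}, faithfulness comes from the rank count in \cref{compare-ranks}, and compatibility comes from \eqref{module-functor}; the paper likewise identifies $\rank_\groundring\hom_{\intUq}$ both with $\dim\hom_{\coideal}$ and with $\rank\webCategoryB_\groundring(\underline a,\underline b)-\rank\webker_\groundring(\underline a,\underline b)$. You spell out the factorisation and the flatness bookkeeping more explicitly than the paper does, but no new ingredient is involved.
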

\begin{proof}
	For any objects $(\underline{a},\underline{b})$ with $|\underline{a}|=|\underline{b}|$, 
	by \cref{compare-ranks} 
	 \[
		\rank_{\groundring}
		\hom_{\intUq}(\bigwedge^{\underline{a}} ~\vecrep[\Aring], \bigwedge^{\underline{b}} ~\vecrep[\Aring])
		= 
	 	\dim \hom_{\classicalFixPointSubalg}(\bigwedge^{\underline{a}} ~\vecrep[\C], \bigwedge^{\underline{b}} ~\vecrep[\C])
		=
		\operatorname{rank} \webCategoryB_{\Aring}(\underline{a},\underline{b}) / \webker_{\Aring}(\underline{a},\underline{b})
		.
	\]	
	Since $\rank_{\groundring}
		\hom_{\intUq}(\bigwedge^{\underline{a}} ~\vecrep[\Aring], \bigwedge^{\underline{b}} ~\vecrep[\Aring])
		= \dim \hom_{\coideal} (\bigwedge^{\underline{a}} ~\vecrep, \bigwedge^{\underline{b}} ~\vecrep)$,
	$\webker_{\funcField}$ must be the kernel of the map $\webCategoryB_{\funcField}(\underline{a},\underline{b}) \to  \hom_{\coideal} (\bigwedge^{\underline{a}} ~\vecrep, \bigwedge^{\underline{b}} ~\vecrep)$. 
	We conclude that $\webker_{\funcField}$ is precisely the kernel of $\functorWebB$ and $\webCategoryB_{\funcField}/\webker_\funcField \cong \FundCoideal$. Finally, the commutativity of the diagram follows from \cref{module-functor}.
\end{proof}

\appendix
\section{Computation of $F_{[i,-i]}*F_{[i,-i]}$}

We prove \cref{Ri*Ri-computation} by induction on $i$. The base cases for $\linearGraph_r$ and $\linearGraph_r'$ respectively are:
\[
	R_0*R_0 = F_0 ^2 + \frac{q}{q-q\inv}
	,\qquad
	R_{\half} *R_{\half}  = R_\half^2  + (	q^4-q^2)F_{\half} F_{-\half}\kCoideal_{\half}^{-1} +  \frac{q^3}{q-q\inv } 
	.
\]
For the induction step we first observe the following easy identities:
\begin{align*}
	&	P = [R,X]_q = R*X-qX*R
	,\qquad
	R*X= RX ,\quad R*Y = RY ,\quad X*R = XR ,\quad Y*R= YR 
	,\\
	&[R_i,X]_{q\inv}=[R_i,Y]_q=0
	,\quad 
	R_i * Y  = R_i Y 
	,\quad 
	R_i*X = R_i X  - P \kCoideal_i\inv
	,\quad
	Y*R_i  = YR_i -\kCoideal_i \inv Q
	,\\
	&
	R_i = [Y,P]_q =[Q,X]_q
	= Y*R*X - qX*Y*R  - q R*Y*X+q^2 X*R*Y +R\kCoideal_{i} \inv,
\end{align*}
where $X=F_i , Y=F_{-i}$, $R=R_{i-1}$, 
and $P = F_{[i,1-i]}, Q= F_{[i-1,-i]}$. We then compute
\begin{align*}
	{R_{i}*R_{i}} &= R_i*Y* R*X -q R_i*R*Y*X -qR_i*X*Y*R   + q^2  R_i*X*R*Y
	+  R_i*R*\kCoideal_i\inv
	\\
	&=(R_iY) * R*X -q (R_iR)*Y*X -q(R_iX- P \kCoideal_i\inv)*Y*R   
	\\
	& \quad  + q^2  (R_iX- P \kCoideal_i\inv)*R*Y
	+  R_iR\kCoideal_i\inv
	\\
	&= q (YR_i) * R*X -q (R_iR)*Y*X -(XR_i)*Y*R   + q  (XR_i)*R*Y
	+  R_iR\kCoideal_i\inv
	\\
		& \quad +  P \kCoideal\inv* (qY *R -q^2  R*Y)
		\\
	&= q Y*R_i * R*X   +q \kCoideal_i\inv *Q*R*X
		-q (R_iR)*Y*X 
	-(X*R_i)*Y*R  
	\\
	& \quad + q  X*R_i*R*Y
	+  R_iR\kCoideal_i\inv +  q^2 P * (qY *R  -q^2  R*Y)\kCoideal\inv
	\\
	&= q Y*R_i * R*X   +q \kCoideal_i\inv *Q*R*X
	-q (R_iR)*Y*X 
	-qX*(Y*R_i+ \kCoideal_i\inv Q)*R  
	\\
	& \quad + q  X*R_i*R*Y
	+  R_iR\kCoideal_i\inv +  q^2 P * (qY *R  -q^2  R*Y)\kCoideal\inv
	\\
	&= \underset{\circled{1}}{\underbrace{q Y*R_i * R*X
		-q (R_iR)*Y*X 
	-q X*Y*R_i*R  
	+ q  X*R_i*R*Y
	+  R_iR\kCoideal_i\inv}}
	\\
	& \quad + \underset{\circled{2}}{\underbrace{q^3( P*Y*R - q P*R*Y+ q^{-2} Q*R*X- X*Q*R) \kCoideal_{i}\inv }}
	.
\end{align*} 
Then by \cref{root-vec-skew-der} and \cref{rectangle-commutator-rootvec--Uq}, we have the following easy identities: \begin{align*}
	&\delustig_{i} R_i =(q\inv-q)Q
	,\quad 
	\delustig_{-i} R_i =
	\partial^R_{-i} R = 0
	,\quad 
	\partial^R_{-i} R_i = (q\inv -q)P 
	,\quad
	\partial^R_{-i} Q = (q\inv-q) R
	,\\
	&
	[P,R]_q=0
	,\quad 
	Y*(R_iR)= Y R_i R - \kCoideal_i\inv QR
	,\quad P*Y = PY 
	,\quad 
	Y*P = YP - R\kCoideal_{i}\inv 
	.
\end{align*}
Using these we compute each term separately: 
\begin{align*}
	qY*R_i*R*X 
	&= 
	q (Y R_i R - \kCoideal_i\inv QR)X +  \frac{q}{q-q\inv}\partial^R_{-i} (YR_iR)\kCoideal_i\inv-  \frac{q}{q-q\inv}\partial^R_{-i} (QR)\kCoideal_i^{-2}
	\\
	&= 
	q (Y R_i R )X + (-q QRX   - YPR +\uwave{ \frac{q}{q-q\inv}  R_i R})\kCoideal_i\inv + R^2 \kCoideal_i^{-2}
	,\\
	-q (R_iR)*Y*X 
	&= 
	-q R_iRYX - \frac{q}{q-q\inv}\partial^R_{-i} (R_iRY)\kCoideal_i\inv 
	\\ 
	&= 
	-q R_iRYX  \uwave{-\frac{q}{q-q\inv}R_iR\kCoideal_i\inv } +q^2 PRY\kCoideal_i\inv 
	,\\
	-q X*Y*R_i*R  
	&=  -qX* ( YR_i R- \kCoideal_i\inv QR)
	=
	-q X Y R_i R +q^3XQR  \kCoideal_i\inv- \frac{q\kCoideal_i \partial^L_{-i} (YR_iR)}{q-q\inv}
	\\
	&=-q X Y R_i R +q^3XQR  \kCoideal_i\inv  \uwave{- \frac{q}{q-q\inv}\kCoideal_i R_iR }
	,\\
	q  X*R_i*R*Y
	&=
	q X R_i R Y + \frac{q}{q-q\inv}\kCoideal_i \partial^L_{-i} (R_iRY)
	=
	q X R_i R Y \uwave{+ \frac{q}{q-q\inv}\kCoideal_i R_iR}
	. 
\end{align*}
Together with $[P,Q]= (q-q\inv ) RR_i$, this gives 
\begin{align*}
	\circled{1} & = \uwave{qYR_iRX} -q QR X\kCoideal_{i}\inv - YPR\kCoideal_{i}\inv + R^2 \kCoideal_{i}^{-2}
	\uwave{-q R_iRYX} +q ^2PRY\kCoideal_{i}\inv
	\uwave{-qXYR_iR} 
	\\
	&\qquad + q^3 XQR\kCoideal_{i}\inv +\uwave{qXR_iRY} +  R_iR\kCoideal_i\inv
	\\
	&=R_i^2 +  R^2 \kCoideal_{i}^{-2}
	-2q QP\kCoideal_{i}\inv-(2q^2-1)RR_i \kCoideal_{i}\inv
	=R_i^2 +  R^2 \kCoideal_{i}^{-2}  -2qPQ  \kCoideal_{i}\inv  - RR_i  \kCoideal_{i}\inv
	. 
\end{align*}
Here we used further the following auxiliary identities:
\begin{align*}
	-YPR+q^2PRY & = -q YRP +q^2RPY = -q YRPO +q^2R(YP-R_i) = -qQP-q^2 RR_i
	,	\\
	q^3 XQR- q QRX & = q^2(QX-R_i)R -qQRX = -qQP-q^2R_iR
\end{align*}
Next, one checks easily the following identities:\begin{align*}
	P *Y = PY 
	,\quad
	Y*P = YP - \kCoideal_{i} \inv R
	,\quad 
	Q *X = QX- R\kCoideal_{i}\inv
	\\
	Q  = [Y,R]_q  = Y*R-q*R*Y
	,\qquad
	P*R = q\inv R*R*X- q\inv R*P -qX*R*R.
\end{align*}
We use them to compute the terms of $\circled{2}$:
\begin{align*}
	q^3 P*Y*R  \kCoideal_{i}\inv 
	& =  	q^3(PY)*R  \kCoideal_{i}\inv 
	=	q^2 (YP-R_i)*R  \kCoideal_{i}\inv 
	=     q^2 (Y*P+   \kCoideal_i \inv R-R_i ) *R\kCoideal_i\inv 
	\\
	& =  q Y* R*R*X   \kCoideal_i\inv  \underline{-q Y*R*P\kCoideal_i\inv} -q^3 Y*X*R*R\kCoideal_i\inv 
	\\
	&\qquad+q^2 R*R \kCoideal_i^{-2} -q^2 R_iR\kCoideal_{i}\inv  
	,\\
	-q^4 P*R*Y \kCoideal_{i}\inv 
	& =  
	-q^3 R*R* X* Y \kCoideal_{i}\inv +q^3 R*P*Y \kCoideal_{i}\inv +q^5 X*R*R*Y \kCoideal_{i}\inv 
	\\
	&= 		-q^3 R*R*X*Y \kCoideal_{i}\inv +q^2 R*(Y*P+ R\kCoideal_i\inv -R_i) \kCoideal_i\inv +q^5 X*R*R*Y \kCoideal_{i}\inv 
	\\
	& = 	-q^3 R*R*X*Y \kCoideal_{i}\inv +\underline{q^2 R*Y*P\kCoideal_i\inv} + q^2 R*R\kCoideal_i^{-2}  
	\\
	&\qquad  -q^2 RR_i \kCoideal_i^{-1}  +q^5 X*R*R*Y \kCoideal_{i}\inv 
	\\
	-q^3 X*Q*R \kCoideal_{i}\inv  & = -q^3 (XQ) *R \kCoideal_{i}\inv 
	=  - q ^2(QX-R_i ) *R  \kCoideal_{i}\inv 
	\\
	& =  -q^2(Q*X+ R\kCoideal_{i}\inv ) *R\kCoideal_{i}\inv +q^2 R_iR\kCoideal_{i}\inv   
	\\
	& =  -qQ*(R*X-P)\kCoideal_{i}\inv  - q^2  R*R\kCoideal_{i}^{-2} +q^2 R_iR\kCoideal_{i}\inv 
	.
\end{align*}
Inserting the corresponding terms into the expression of $\circled{1}, \circled{2}$ respectively gives
\begin{align*}
	\circled{2} & =	q Y* R*R*X   \kCoideal_i\inv  \underline{-q Y*R*P\kCoideal_i\inv} -q^3 Y*X*R*R\kCoideal_i\inv 
	+q^2 R*R \kCoideal_i^{-2} -q^2 R_iR\kCoideal_{i}\inv  
	\\
	&\qquad	-q^3 R*R*X*Y \kCoideal_{i}\inv +\underline{q^2 R*Y*P\kCoideal_i\inv} + q^2 R*R\kCoideal_i^{-2}  
		-q^2 RR_i \kCoideal_i^{-1}  +q^5 X*R*R*Y \kCoideal_{i}\inv 
	\\
	&\qquad+\cancel{q Q*R*X\kCoideal_{i}\inv} -qQ*(\cancel{R*X}-P)\kCoideal_{i}\inv  - q^2  R*R\kCoideal_{i}^{-2} +q^2 R_iR\kCoideal_{i}\inv
	\\
	& =	G
	+q^2 R*R \kCoideal_i^{-2} 
	-q^2 R_iR\kCoideal_{i}\inv  
		\cancel{- q Q*P \kCoideal_{i}\inv }  + \cancel{qQ*P\kCoideal_{i}\inv}  
	=G 
	+q^2 R*R \kCoideal_i^{-2} 
	-q^2 R_iR\kCoideal_{i}\inv  ,
\end{align*}
where denoting $T=R*R$, \begin{align*}
		G &= q Y* T*X   \kCoideal_i\inv   
		-q^3 Y*X*T\kCoideal_i\inv 
		-q^3 T*X*Y \kCoideal_{i}\inv
		+q^5 X*T*Y \kCoideal_{i}\inv 
			.
\end{align*}
Now by induction hypothesis, denoting $u_j =  F_{[i-1,j] } k_j F_{[-j,1-i]} , \, c=c_{i-1}$, we compute
\begin{align*}
	qY*T*X  \kCoideal_i\inv   & = qY*( (R^2 + \sum u_j + c)X)  \kCoideal_i\inv  
	\\
	&= qYTX   \kCoideal_i\inv    +  \frac{q^{-1}}{q-q\inv}   \kCoideal_i\inv R^2  \kCoideal_i\inv   
		+ \frac{1}{q-q\inv}  \kCoideal_i\inv \sum u_j   \kCoideal_i\inv     + c\frac{q}{q-q\inv}  \kCoideal_i^{-2} 
	,\\
	-q^3Y*X*T\kCoideal_i\inv  & = -q^3 YXT   \kCoideal_i\inv- \frac{q^3}{q-q\inv }  T \kCoideal_i^{-2} 
	,\\
	-q^3 T*X*Y \kCoideal_{i}\inv & = -q^3 TXY \kCoideal_{i}\inv - \frac{q^3}{q-q\inv }  T 
	,\\
		q^5 X*T*Y \kCoideal_{i}\inv  & = q^5 X*((R^2+ \sum u_j + c)Y) \kCoideal_{i}\inv 
		\\
		&=  q^5 XTY \kCoideal_{i}\inv   + \frac{q^3}{q-q\inv} R^2 
	+ \frac{q^3}{q-q\inv } \sum u_j  + \frac{q^5}{q-q\inv} c 
		.
\end{align*}
Inserting these terms into the expression for $G$ gives 
\begin{align*}
	G & = \underset{\circled{3}}{\underbrace{(qYTX -q^3 YXT-q^3TXY +q^5 XTY)  \kCoideal_{i}\inv}} +  \frac{cq}{q-q\inv} \kCoideal_{i}^{-2} + \frac{q\inv-q^3}{q-q\inv} R^2\kCoideal_{i}^{-2}
	\\
	& \qquad + \frac{1}{q-q\inv}  \kCoideal_i\inv \sum u_j   \kCoideal_i\inv   - \frac{q^3}{q-q\inv }  (\sum u_j +c)\kCoideal_i^{-2} - \frac{q^3}{q-q\inv }  \sum u_j 
	+ \frac{q^4}{q-q\inv } \sum u_j  +\frac{q^5-q^3}{q-q\inv} c 
	\\
	& = \circled{3} - c q^2  \kCoideal_{i}^{-2}  - (1+q^2) R^2\kCoideal_{i}^{-2} + \frac{q-q^3}{q-q\inv} \sum u_j \kCoideal_{i}^{-2} + q^4c
	\\
	& = \circled{3} - c q^2  \kCoideal_{i}^{-2}  - (1+q^2) R^2\kCoideal_{i}^{-2} -q^2 \sum u_j \kCoideal_{i}^{-2}  + q^4c
	.
\end{align*}
Since $[X,Y]=0 $ possibly except in the base case, we have
\begin{align*}
	\circled3 & = (\underset{\circled{4}}{\underbrace{qYR^2X -q^3 YXR^2-q^3R^2XY +q^5 XR^2Y}})  \kCoideal_{i}\inv
	+ \sum (\underset{\circled{5}}{\underbrace{qYu_jX -q^3 YXu_j-q^3u_jXY +q^5 Xu_jY}})  \kCoideal_{i}\inv
\\
&\qquad + c(q+q^5-2q^3)XY  \kCoideal_{i}\inv
.
\end{align*} 
Using $[P,R]_q=[Q,R]_{q\inv}=0$ and $Q=[Y,R]_q, \, P=[R,X]_q$, we compute \begin{align*}
	qYR^2X & = q(qRY+Q) R X = q^2R(qRY+Q)X + qQRX =q^3 R^2 YX+(q^2+1) RQX 
	\\
	& = q^3 R^2 YX+(q^2+1) RR_i +(q^3+q) RXQ
	,\\
	-q^3 YXR^2 & = -q^3XYR^2 = -q^3X(qRY+Q)R  = -q^4XRYR -q^3XQR 
	\\
	& = -q^5 XR^2Y -(q^4+q^2) XRQ,
\end{align*}
\begin{align}
	\circled{4}
	&= \cancel{q^3 R^2 YX}+(q^2+1) RR_i +(q^3+q) RXQ
	\cancel{-q^5 XR^2Y }\nonumber
	\\
	&\qquad -(q^4+q^2) XRQ
		\cancel{-q^3 R^2 XY} +\cancel{q^5 XR^2Y }
		= (q^2+1) RR_i +(q^3+q)PQ.
		\label{circled4}
\end{align}
It remains to compute $\circled{5}$. Put $x_j=F_{[i-1,j]}$, $y_j=F_{[-j,1-i]}$, so that
$u_j=x_jk_jy_j$. Denote further $P_j = F_{[i,j]}  ,\, Q_ j =F_{[-j,-i]}$.
By \cref{rectangle-commutator-rootvec--Uq},
\[
x_jX=qXx_j+P_j,\qquad Yy_j=qy_jY+Q_j
,\qquad [X,Y] =0
, \qquad [X,Q_j]=0 \; (j<i-1).
\]
We then compute 
\begin{align*}
	qYu_jX
	&=q^2x_jk_j(Yy_j)X
	=q^2x_jk_j(qy_jY+Q_j)X
	=q^4 (x_j X ) k_jy_jY + q^3 (x_j X ) k_j Q_j 
	\\
	&=q^4P_j k_jy_j Y +q^3 P_jk_jQ_j 
	+\uwave{ q^5 Xu_jY }+ q^4 Xx_jk_jQ_j
	,
	\\
	-q^3YXu_j
	&=-q^4 Xx_jk_j(Yy_j)
	=-q^4 Xx_jk_j Q_j \uwave{ -q^5 Xu_jY }
	,
	\\
	-q^3 u_j XY &=-q^4 x_jX k_jy_j Y
	= -q^4 Pk_jy_jY -q^5 Xu_jY
	.
\end{align*}
Whence $\circled{5} = \sum q^3 P_j k_j Q_j$ and 
\begin{align}
		\circled{3} \kCoideal_{i}\inv = c' XY  \kCoideal_{i}\inv  + (q^2+1) RR_i  \kCoideal_{i}\inv  +(q^3+q)PQ  \kCoideal_{i}\inv  + \sum q^3 P_j k_j Q_j    \kCoideal_{i}\inv 
		\label{circled3}
\end{align}
Combining \cref{circled3} gives \begin{align*}
	R_i*R_i & = 
	R_i^2 +  \cancel{R^2 \kCoideal_{i}^{-2}}  -2qPQ  \kCoideal_{i}\inv  \cancel{-(1+q^2) RR_i  \kCoideal_{i}\inv }  + q^2(\cancel{R^2}+\sum u_j + c)\kCoideal_{i}^{-2}
	\\
	& \quad + c' XY  \kCoideal_{i}\inv  + \cancel{(q^2+1) RR_i  \kCoideal_{i}\inv  }+(q^3+q)PQ   \kCoideal_{i}\inv   \kCoideal_{i}\inv + \sum q^3 P_j k_j Q_j   \kCoideal_{i}\inv 
	- c q^2  \kCoideal_{i}^{-2} 
	\\
	& \quad  \cancel{- (1+q^2) R^2\kCoideal_{i}^{-2} }-q^2 \sum u_j \kCoideal_{i}^{-2} + q^4c
	\\ 
	& = 
	R_i^2 -2qPQ  \kCoideal_{i}\inv   + \cancel{q^2(\sum u_j + c)\kCoideal_{i}^{-2}} + c' XY  \kCoideal_{i}\inv  +(q^3+q)PQ   \kCoideal_{i}\inv + \sum q^3 P_j k_j Q_j   \kCoideal_{i}\inv 
	\\
	& \quad 
	\cancel{- c q^2  \kCoideal_{i}^{-2}  -q^2 \sum u_j \kCoideal_{i}^{-2} } + q^4c
	\\ 
	& = 
	R_i^2  + c' XY  \kCoideal_{i}\inv  +(q^3-q)PQ   \kCoideal_{i}\inv + \sum q^3 P_j k_j Q_j   \kCoideal_{i}\inv + q^4c
	.
\end{align*}
This finishes the induction and concludes the proof.

\bibliography{references}
\end{document}